\documentclass[11pt]{amsart}        
\usepackage{baskervald}
\usepackage[swedish, english]{babel}
\usepackage{hyperref}
\usepackage{xcolor}
\usepackage[latin1]{inputenc}
\usepackage{accents}
\usepackage{adjustbox}
\usepackage{url}
\usepackage[hmargin=3.3cm,vmargin=3.5cm]{geometry} 
\usepackage{graphicx}                         
\usepackage{amsmath}
\graphicspath{ {./images/} }
\usepackage{tabularx}
\usepackage{multirow}
\usepackage{makecell}
\usepackage[noabbrev,capitalize]{cleveref}
\hypersetup{
	colorlinks,
	linkcolor={red!50!black},
	citecolor={blue!50!black},
	urlcolor={blue!80!black}
}

\usepackage{amsthm}                      
\usepackage{amssymb} 
\usepackage{mathrsfs}
\usepackage{stmaryrd}
\usepackage{mathtools}
\usepackage[shortlabels]{enumitem}
\usepackage{tikz-cd}
\usetikzlibrary{babel}
\DeclareFontFamily{U}{wncy}{}
\DeclareFontShape{U}{wncy}{m}{n}{<->wncyr10}{}
\DeclareSymbolFont{mcy}{U}{wncy}{m}{n}
\DeclareMathSymbol{\sha}{\mathord}{mcy}{"58} 
\usepackage{euscript}
\usepackage[normalem]{ulem}
\newcommand{\Eu}[1]{\EuScript{#1}}
\swapnumbers
\theoremstyle{plain}                                              \makeatletter
\def\swappedhead#1#2#3{%
	\thmnumber{\@upn{\the\thm@headfont#2\@ifnotempty{#1}{.~}}}%
	\thmname{#1}%
	\thmnote{ {\the\thm@notefont(#3)}}}
\makeatother
\newtheorem{thm}{Theorem}[subsection]
\newtheorem*{theorem*}{Theorem}
\newtheorem{state}{Theorem}

\newtheorem{lem}[thm]{Lemma}
\newtheorem{prop}[thm]{Proposition}
\newtheorem{cor}[thm]{Corollary}

\newtheorem*{conjecture*}{Conjecture}

\theoremstyle{definition}

\newtheorem{rem}[thm]{Remark}
\newtheorem{defn}[thm]{Definition}

\newtheorem{notation}[thm]{Notation}
\newtheorem{para}[thm]{}

\newcommand{\F}{\mathbf{F}}
\newcommand{\Q}{\mathbf{Q}}
\newcommand{\C}{\mathbf{C}}
\newcommand{\R}{\mathbf{R}}

\newcommand{\Z}{\mathbf{Z}}

\newcommand{\GL}{\mathrm{GL}}

\def\res{\operatorname*{Res}}

\renewcommand{\aa}{\alpha}

\newcommand{\tr}{\operatorname{Tr}}
\newcommand{\cond}{\operatorname{Cond}}
\newcommand{\ord}{\operatorname{ord}}

\def\aux{\mathrm{aux}}
\def\fin{\mathrm{fin}}
\def\sb{\mathbf{s}}
\def\db{\mathbf{d}}
\def\eb{\mathbf{e}}
\def\sss{\scriptscriptstyle}
\def\ZZ{\mathbb{Z}}
\def\dd{\delta}
\def\eps{\epsilon}
\def\psym#1#2{\Big(\frac{#1}{#2}\Big)}
\def\lam{\lambda}
\def\dlog{\operatorname{dlog}}
\def\rad{\operatorname{rad}}
\def\Sc{\Eu{S}}
\def\Rc{\Eu{R}}

\def\OO{\mathscr{O}}
\def\EE{\mathscr{E}}
\def\ai{\mathfrak{a}}
\def\bi{\mathfrak{b}}
\def\ci{\mathfrak{c}}
\def\di{\mathfrak{d}}
\def\ei{\mathfrak{e}}
\def\ff{\mathfrak{f}}
\def\gi{\mathfrak{g}}
\def\hi{\mathfrak{h}}

\def\mi{\mathfrak{m}}
\def\ni{\mathfrak{n}}
\def\qi{\mathfrak{q}}
\def\ri{\mathfrak{r}}
\def\ti{\mathfrak{t}}
\def\pp{\mathfrak{p}}
\def\Ci{\mathfrak{C}}

\def\ji{\mathfrak{j}}

\def\sf{\mathrm{sf}}
\def\full{\mathrm{full}}

\def\vp{\varphi}
\def\g{\gamma}
\def\xx{\mathbf{x}}
\def\wZ{\widetilde{Z}}
\def\wW{\widehat{W}}
\def\wC{\widetilde{C}}

\def\wD{\widetilde{D}}

\def\wg{\widetilde{g}}
\def\wf{\widetilde{f}}
\def\wdd{\widetilde{\delta}}

\newcommand{\comment}[1]{}
\newcommand{\ov}[1]{\overline{#1}}

\def\be{\begin{equation}}  \def\ee{\end{equation}}
\def\lle{\ll_{\epsilon}|\hi|^\eps}

\def\Ac{\Eu{A}}
\def\Bc{\Eu{B}}

\renewcommand{\b}{\mathfrak{b}}
\newcommand{\bt}{\tilde{\b}}
\newcommand{\q}{\mathfrak{q}}

\newcommand{\h}{\mathfrak{h}}

\newcommand{\m}{\mathfrak{m}}

\DeclareMathOperator\supp{supp}

\title[]{On the second moment and non-vanishing of central values
of Hecke $L$-functions of $r$-th order characters}
\author{Adrian Diaconu, Bogdan Ion, Vicen\c tiu Pa\c sol, Alexandru A. Popa}
\newcommand{\Addresses}{{\footnotesize
\bigskip
\noindent\textsc{Adrian Diaconu} \\
 University of Minnesota\\
 School of Mathematics, 127 Vincent Hall\\
 Minneapolis, MN 55455, USA;\\
 Institute of Mathematics of the Romanian Academy\\
  Calea Grivi\c tei 21, Bucharest, Romania\\
 Email: \texttt{cad@umn.edu}

\smallskip
\noindent\textsc{Bogdan Ion} \\
University of Pittsburgh\\
Department of Mathematics, 301 Thackeray Hall\\
Pittsburgh, PA 15260, USA\\
Email: \texttt{bion@pitt.edu}

\smallskip
\noindent\textsc{Vicen\c tiu Pa\c sol}\\
Institute of Mathematics of the Romanian Academy\\
Calea Grivi\c tei 21, Bucharest, Romania\\
Email: \texttt{vicentiu.pasol@gmail.com}

\smallskip
\noindent\textsc{Alexandru A. Popa}\\
Institute of Mathematics of the Romanian Academy\\
Calea Grivi\c tei 21, Bucharest, Romania\\
Email: \texttt{aapopa@gmail.com}
}}

\begin{document}

	\begin{abstract}
In this paper, we establish asymptotic formulas for the first and second twisted moments of $r$-th order Hecke $L$-functions over global fields that contain the $2r$-th roots of unity, for $r\ge 3$.
We focus primarily on algebraic number fields. As a consequence, we establish a positive proportion of non-vanishing central values for these $L$-functions, specifically for families of both square-free and $r$-th power-free ideals. Our approach is based on the machinery of multiple Dirichlet series.
\end{abstract}
	\maketitle

\section{Introduction}

\subsection{Summary of results}
We establish asymptotics for the twisted second moment of $r$-th
order Hecke $L$-functions over any global field. As a direct
consequence of our evaluation of the first and second mollified
moments, we prove that a positive proportion of the central values of
Hecke $L$-functions associated with primitive $r$-th order residue
symbols are nonzero over any number field containing the $2r$-th roots
of unity,  following the mollification approach of \cite{S00}, \cite{DFDS24}, and \cite{CFD26}.

Our approach to obtaining an asymptotic formula for the twisted second moment is completely different from the methods employed in \cite{S00} (for $r=2$), \cite{DFDS24} (for $r=3$),  and \cite{CFD26} (for $r=4$), and applies uniformly over any global field. It relies on the Multiple Dirichlet Series (MDS) machinery \cite{Di04} and the techniques developed in \cite{Di19} and in the joint work of the first author with Whitehead \cite{Di-Wh21} to study the cubic moment in the quadratic case.

Although the results we are about to describe hold in the general setting of any global field, we shall focus our exposition exclusively on the number field case.

\begin{para}
To be specific, let $r \ge 3$ be an integer and let $F$ be a number field containing the group $\mu_{2r}$ of $2r$-th roots of unity. Let $\mathscr{O}$ denote the ring of integers of $F$, and let $S_f$ be a finite set of nonarchimedean places containing all places which are ramified over $\Q$ (in particular, those dividing $r$) and such that the ring of $S_f$-integers $\mathscr{O}_{S_f}$ is a principal ideal domain. Let $S_{\infty}$ denote the set of archimedean places, and let $S = S_f \cup S_{\infty}$. For each place $v$, let $F_v$ denote the completion of $F$ at $v$. For $v$ nonarchimedean, let $\mathfrak{p}_v$ denote the corresponding prime ideal of $\mathscr{O}$, and let $q_v = |\mathfrak{p}_v| := |\mathscr{O}/\mathfrak{p}_v|$. Let $\mathfrak{c} = \prod_{v \in S_f} \mathfrak{p}_{v}^{n_v}$, with $n_v = 1$ if $\mathrm{ord}_{v}(r) = 0$, and where $n_{v}$ is sufficiently large so that if $a \in F_{v}^{\times}$ satisfies $\mathrm{ord}_{v}(a - 1) \ge n_v$, then $a \in (F_{v}^{\times})^{r}$.

Let $I(S)$ denote the set of nonzero ideals of $\mathscr{O}$ coprime to $S_f$. For $\mathfrak{a}, \mathfrak{b} \in I(S)$ coprime, let
$
\chi_{\mathfrak{a}}(\mathfrak{b}) = (\frac{\mathfrak{a}}{\mathfrak{b}})
$
denote the Fisher--Friedberg extension of the $r$-th power residue symbol \cite{FHL03, FF04}. This is a Hecke character modulo $\mathfrak{c}\mathfrak{a}$ of conductor
$
\mathfrak{c}_{\mathfrak{a}}\prod_{\substack{\mathfrak{p}_{v} \mid \mathfrak{a}\\ r \nmid \mathrm{ord}_{v}(\mathfrak{a})}}
\mathfrak{p}_{v}
$,
for some divisor $\mathfrak{c}_{\mathfrak{a}}$ of $\mathfrak{c}$.  Given a Hecke character $\psi$ unramified outside $S$, we define the partial $L$-function for $\Re(s) > 1$ by
$$
L^{S}(s, \psi \chi_{\mathfrak{a}})
= \prod_{v \notin S } (1 -
(\psi_{v} \chi_{\mathfrak{a}, v})(\varpi_v)q_{v}^{-s})^{-1}
$$
where $\varpi_v$ is a local uniformizer at $v$. We will denote by $\zeta_F^S(s)$ the partial Dedekind zeta-function of
$F$.
\end{para}
\begin{para}
Our first main result is an asymptotic formula for a twisted second moment (\cref{thm:twisted_2ndmom}); for simplicity, we state it here in its untwisted form.

\begin{state}\label{Main-Thm1-alt} Let $W(u)$ be a Schwartz function with compact support on $(1,2)$ satisfying $0\le W(u) \le 1$,  and let $A_r=1/2$ if $r>3$, and $A_3=1/3$.
 Then, for every $\varepsilon > 0$, we have
$$
\sum_{\ai \in I(S)} \mu^2 (\ai) \big|L^S(\tfrac{1}{2}, \chi_{\ai})\big|^2 W\big(\tfrac{|\ai|}{X}\big)= X R_W^{(1)}(\log X) + O\Big(X^{\frac{r+A_r}{r+1}+\varepsilon}\Big),
$$
for some explicit linear polynomial $R_W^{(1)}$ with leading coefficient
$$
\widehat{W}(1) \zeta_F^S(r/2)\Big[\res_{w=1} \zeta_F^S(w)\Big]^2 \prod_{v\notin S} \big(1-q_v^{-1}\big) P_r^{(1)} \big(q_v^{-1/2}\big),
$$
where $\widehat{W}(s)$ denotes the Mellin transform of $W(u)$ and
$P_r^{(1)}(x)=1+x^2-x^4+x^r(1-x^2+x^4)$.
\end{state}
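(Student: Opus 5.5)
The plan is to use the multiple Dirichlet series approach. I will study the double Dirichlet series
$$
Z(s_1,s_2,w)=\sum_{\ai\in I(S)} \frac{L^S(s_1,\chi_\ai)\,L^S(s_2,\overline{\chi_\ai})}{|\ai|^{w}},
$$
suitably corrected at non-squarefree $\ai$ by weights so that it has a group of functional equations. By Mellin inversion, the left side of \cref{Main-Thm1-alt} is
$$
\frac{1}{2\pi i}\int_{(c)} Z^{\sf}(\tfrac12,\tfrac12,w)\,\widehat W(w)\,X^w\,dw.
$$
Here $Z^{\sf}$ is the squarefree part. I will extract it from the full series by a sieve, $\mu^2(\ai)=\sum_{\di^2\mid\ai}\mu(\di)$, using twisted versions of $Z$ in which the residue symbols are twisted by the characters $\chi_\di$. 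Note that $|L(\frac12,\chi_\ai)|^2=L(\frac12,\chi_\ai)L(\frac12,\overline{\chi_\ai})$, and $\overline{\chi_\ai}=\chi_{\ai}^{r-1}$.

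\textbf{Analytic continuation.} The first step is to continue $Z$ meromorphically and locate its poles. Expand each $L$-function as a Dirichlet series and use $r$-th power reciprocity, which is clean because $\mu_{2r}\subset F$ and $\mathscr{O}_{S_f}$ is a PID. This rewrites $Z$ as a sum over $\ai$ of products of Gauss-sum-weighted $L$-series. The functional equations in $s_1$ and $s_2$ (through Hecke $L$-functions of $\chi_\ai$), together with the interchange of summation, generate a finite group of transformations acting on $(s_1,s_2,w)$. The polar divisor in $w$ near $s_1=s_2=\frac12$ comes from the terms in which $\chi_\ai$ becomes principal, namely the diagonal $n_1 n_2^{r-1}$ equal to an $r$-th power.

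\textbf{Main term.} These diagonal terms give a double pole at $w=1$. The factor $\zeta_F^S(s_1+s_2)$ specializes to a pole at $s_1+s_2=1$, so together with the pole of $\sum\mu^2(\ai)|\ai|^{-w}$ at $w=1$ this yields $X$ times a linear polynomial in $\log X$. The leading coefficient arises as follows.
\begin{itemize}
\item The factor $[\res\zeta_F^S]^2$ comes from the two poles.
\item The factor $\zeta_F^S(r/2)$ comes from the terms $n_1=m^r$ (and symmetrically $n_2=m^r$), where $\chi_\ai(m^r)=1$. This gives $\sum |m|^{-r/2}$.
\item The Euler factor $(1-q_v^{-1})P_r^{(1)}(q_v^{-1/2})$ comes from the local computation at unramified $v$. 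This is a finite geometric-series computation for squarefree $\ai$ with the diagonal constraint $\ord_v(n_1)\equiv\ord_v(n_2)\pmod r$, and it produces the terms $1+x^2-x^4+x^r(\cdots)$.
\end{itemize}
I will verify the polynomial $P_r^{(1)}$ by computing the local sum with generating functions.

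\textbf{Error term (main obstacle).} After moving the contour to $\Re w=\frac{r+A_r}{r+1}+\varepsilon$, the error term needs two inputs.
\begin{itemize}
\item \emph{Absence of other poles in the strip.} The series has further polar lines, for example those coming from the "dual" diagonal after a functional equation; these must lie to the left or cancel. For $r=3$ they sit closer to the line $\Re w=1$, which is presumably why $A_3=1/3$ differs from $A_r=\frac12$.
\item \emph{A polynomial growth bound for $Z$ in vertical strips.} I will interpolate between the region of absolute convergence and its image under the functional equations, using convexity via Phragmén–Lindelöf/Bochner's tube theorem. I will combine this with the sieve over $\di$: truncate at $|\di|\le Y$, estimate the tail trivially by the first moment bound, and then optimize over $Y$. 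This optimization is what produces the exponent $(r+A_r)/(r+1)$.
\end{itemize}

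I expect the hardest step to be controlling the Gauss-sum (non-diagonal) part uniformly in the twist $\di$, so that the convexity bound has the correct exponent. I would try first to bound the continued twisted series by $|\di|^{A_r}$ times a polynomial in $|\Im w|$, and then check that summing over $\di$ with the tail cut gives the stated exponent.
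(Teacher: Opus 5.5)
Your overall architecture matches the paper: a corrected MDS as in \cite{Di04}, the sieve $\mu^2(\ai)=\sum_{\di^2\mid\ai}\mu(\di)$ realized through twisted MDS, Mellin inversion, and a contour shift. But the error term is the real content of the statement, and it rests on inputs your plan does not contain; the bound you aim for is also not the right one. What the paper proves (\cref{prop:convexity}, \cref{cor:convexity}, \S\ref{par:conv}) is, on $\Re w=\sigma\in(\tfrac12,1)$,
\[
(w-1)^2\big(w-\tfrac12-\tfrac1r\big)^2 Z_1\big(\tfrac12,\tfrac12,w;\hi\big)\ll_\eps |\hi|^{\frac12+(r-1)(1-\sigma)-2\sigma+\eps}.
\]
The three exponents have distinct sources:
\begin{itemize}
\item $-2\sigma$ comes from the local sieving factors $f_1^{(\eps)},g^{(\eps)}\ll q_v^{-2\sigma}$.
\item $(r-1)(1-\sigma)$ comes from Phragm\'en--Lindel\"of on $-\eps\le\Re w\le 1+\eps$. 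The line $\Re w=-\eps$ is reached by the chain $\sigma_3\sigma_1\sigma_2\sigma_3$, and you must follow the Kubota series through the transitions $D^T\leftrightarrow D^S$ to see that each $v\mid\hi$ costs at most $q_v^{r-1}$ (the twists are $\chi_\ff,\chi_\gi$ with $\ff,\gi\mid\hi^{r-1}$).
\item $\tfrac12$ comes from the edge $\Re w=1+\eps$.
\end{itemize}
The last one is the missing idea. At $s_1=s_2=\tfrac12$ you cannot bound the twisted MDS on $\Re w=1+\eps$ uniformly in the twist by trivial means; pointwise convexity for $L(\tfrac12,\chi_{\ai\ti})$ only handles $\Re w>\tfrac32$. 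One needs $\sum_{\ai}|L(\tfrac12,\psi\chi_{\ai\ti})|^2|\ai|^{-1-\eps}\ll|\ti_1|^{1/2+\eps}$, which comes from the Blomer--Goldmakher--Louvel large sieve \cite{BGL14} (\cref{prop:estimate at1}). This is exactly where $A_r=\tfrac12$ comes from, and summability over $\hi$ then gives $\sigma>\frac{r+1/2}{r+1}$. Your target, a bound $|\di|^{A_r}$ for the twisted series with no dependence on $\sigma$ or $r$, is not what your tools give and is not what produces the exponent. Together with the factor $|\di|^{-2\sigma}$ from $\di^2\mid\ai$ it would give convergence already for $\sigma>\frac{1+A_r}{2}$, a much stronger theorem. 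It also has no mechanism for the $r$-dependence of $\frac{r+A_r}{r+1}$.

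Similarly, $A_3=\tfrac13$ is not caused by poles sitting closer to $\Re w=1$. The only other pole, at $w=\tfrac12+\tfrac1r=\tfrac56$, is a barrier, not a source of improvement. The improvement comes from replacing \cite{BGL14} by Heath-Brown's cubic large sieve \cite{HB} in the form of \cite[Prop.~4.2]{DFDS24}, used on average over $\hi$ rather than uniformly. One bounds $\sum_{\ai,\ei}|\ei|^{2-4\sigma}|L(\tfrac12,\psi\chi_{\ai\ei^2})|^2|\ai|^{-1-\eps}$, splitting $\ai=\qi_1\qi_2^2$ according to the size of $|\qi_1/\qi_2|$ relative to $|\ei|^{1/3}$ (\cref{prop:conv_req3}).

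Your tail estimate for $|\di|>Y$ needs a second-moment (large sieve) bound, not a first-moment one. Once the correct inputs are in place, the optimization in $Y$ is degenerate. With tail $X^{1+\eps}Y^{-1/2}$ and head $X^{\sigma}Y^{r+\frac12-(r+1)\sigma}$, the choice $Y=X^{1/(r+1)}$ gives $X^{\frac{r+1/2}{r+1}}$ for every admissible $\sigma$. So nothing is gained over summing over all $\hi$, which converges absolutely in the region; that is what the paper does.

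Finally, the ``dual'' polar line $w=2-s_1-s_2$ neither lies to the left nor cancels. It passes through $(\tfrac12,\tfrac12,1)$, and the double pole and the $\log X$ coefficient come from combining the residues along $w=1$ and $w=2-2s$ (\S\ref{par:prinpart}).
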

For $r=3$ and $F=\Q(\sqrt{-3})$, this recovers the  David--de Faveri--Dunn--Stucky \cite{DFDS24} asymptotic formula with the same error term. For $r=4$ and $F=\Q(\sqrt{-1})$, this recovers the Castillo--de Faveri--Dunn \cite{CFD26} asymptotic formula with a better  power-saving error term (see Remark~\ref{rem:Qi}). For $r\ge 5$, \cref{Main-Thm1-alt} is the first asymptotic formula for the second moment in the square-free family.

\begin{rem}\label{cubic-case-assum-GRH-alt} As pointed out in \cite{DFDS24}, the asymptotic formula in \cref{Main-Thm1-alt} for $r=3$ and $F=\Q(\sqrt{-3})$ falls just short of capturing the second-order term $X^{5/6}Q_W^{(1)}(\log X)$ conjectured in \cite[Conjecture 4.5]{Di04}. It is interesting to note that our method shows that any improvement of the large sieve bound in \cite[Proposition 4.2]{DFDS24} for the second moment sum
\begin{equation} \label{mom-sum-cubic-alt}
S(Q_1, Q_2) \; = \sum_{\substack{a, b \in \Z[\omega] \\ a, b \equiv 1 \!\!\pmod 3 \\ |a| \asymp Q_1,\, |b| \asymp Q_2}} \mu^2 (ab)\big|L(\tfrac{1}{2}, \chi_{ab^2})\big|^2
\end{equation}
would improve the error term in \cref{Main-Thm1-alt}, and hence give an asymptotic formula capturing the conjectural second-order term. Note that by assuming the Generalized Riemann Hypothesis for $L(s, \chi_{ab^2})$,
we have
\[
S(Q_1, Q_2) \ll_{\varepsilon} (Q_1 Q_2)^{1 + \varepsilon} \quad \text{(for any $\varepsilon > 0$)}
\]
resulting in an error term $\ll_{\varepsilon} X^{\frac{3}{4}+\varepsilon}$ in the asymptotic formula.
The same barrier $X^{5/6}$ appears in the main term of Patterson's conjecture on the distribution of cubic Gauss sums, proved by Dunn and Radziwi\l\l ~\cite{DR} under the same GRH assumption.
The main obstacle to sharpening the bound on the sum $S(Q_1, Q_2)$ is that
the current approach depends on Heath-Brown's cubic large-sieve estimate  \cite[Theorem 2]{HB}. Indeed, very recently, de Faveri, Dunn, and Hoffstein \cite{FDH26} proved that  the term $(AB)^{2/3}$ in the cubic large sieve inequality cannot be removed, and they make a refined prediction  for the $r$-th order large sieve inequality for $r\geq 4$.
\end{rem}
\end{para}

\begin{para}
Our next result establishes an asymptotic formula for a twisted first moment (\cref{thm:twisted_first_moment}). As with the second moment, for the sake of simplicity, we state the theorem here in its untwisted form.

\begin{state}\label{Thm2-first-moment-alt} Let the notation be as in \cref{Main-Thm1-alt}. Then, for every $\varepsilon > 0$, we have
$$
\sum_{\ai \in I(S)} \mu^2 (\ai) L^S(\tfrac{1}{2}, \chi_{\ai}) W\big(\tfrac{|\ai|}{X}\big)=C_{1}\widehat{W}(1)X +F_1\widehat{W}(\tfrac{1}{2} + \tfrac{1}{r})X^{\frac{1}{2}+\frac{1}{r}} +  O\Big(X^{\frac{r+1 + A_r}{r+3}+\varepsilon}\Big),
$$
where
$$
C_{1} = \zeta_F^S(r/2)\res_{w=1}\zeta_F^S(w)
\prod_{v\notin S} \big(1-q_v^{-1}\big)\big(1+q_v^{-1}-q_v^{-(r+2)/2}\big).
$$
\end{state}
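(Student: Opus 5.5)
We will prove Theorem~\ref{Thm2-first-moment-alt} with a multiple Dirichlet series and Mellin inversion. By Mellin inversion,
\[
\sum_{\ai} \mu^2(\ai) L^S(\tfrac12,\chi_{\ai}) W(|\ai|/X)=\frac{1}{2\pi i}\int_{(c)} Z(\tfrac12,w)\,\widehat W(w)X^w\,dw ,
\]
where
\[
Z(s,w)=\sum_{\ai\in I(S)}\mu^2(\ai)L^S(s,\chi_{\ai})|\ai|^{-w}.
\]
Since $\mathscr O_{S_f}$ is a PID, we can sum over generators modulo $S$-units, split by classes in $F_S^\times/(F_S^\times)^r$, and add the Hecke characters $\psi$ unramified outside $S$. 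This packages $Z$ as a finite linear combination of double Dirichlet series
\[
Z(s,w;\psi,\psi')=\sum_{\ai,\mathfrak m}\chi_{\ai}(\mathfrak m)\psi(\mathfrak m)\psi'(\ai)|\mathfrak m|^{-s}|\ai|^{-w}.
\]
We first remove the squarefree condition, either by Möbius inversion $\mu^2(\ai)=\sum_{\di^2\mid\ai}\mu(\di)$ or by inserting the standard correction polynomials at each $v\notin S$. The vector of these series then satisfies two functional equations:
\begin{itemize}
\item $\alpha\colon (s,w)\mapsto(1-s,\,w+s-\tfrac12)$, from the functional equation of $L^S(s,\chi_{\ai})$, with Gauss sums of the Fisher--Friedberg symbol producing a twisted $w$-series;
\item $\beta\colon (s,w)\mapsto(s+w-\tfrac12,\,1-w)$, after interchanging the sums via $r$-th power reciprocity and applying the functional equation of the $L$-series in $\ai$ (Hecke $L$-functions attached to $\chi_{\mathfrak m}$ and Gauss sum twists).
\end{itemize}
These generate a finite group when $r\ge3$ only modulo the usual difficulty. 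For $r\ge3$ the $\beta$-direction involves $r$-th order Gauss sums, which are not multiplicative in a simple way, so we will not try to use $\beta$ globally. Instead we will use only $\alpha$ plus convexity in $w$.

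The plan in order is as follows.

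\textbf{Step 1.} At $s=\tfrac12$, obtain the continuation of $Z(\tfrac12,w)$ to $\Re w>\tfrac12+\delta$ with polar divisors only at $w=1$ and at $w=\tfrac12+\tfrac1r$.
\begin{itemize}
\item The pole at $w=1$ comes from the $\ai$ with $\chi_{\ai}(\mathfrak m)$ trivial, i.e.\ $\mathfrak m$ an $r$-th power. This yields $\zeta_F^S(r/2)\res_{w=1}\zeta_F^S(w)$ times the Euler product whose local factor we compute as $(1-q_v^{-1})(1+q_v^{-1}-q_v^{-(r+2)/2})$. That factor comes from averaging $\mu^2$ against the local factor of $L$ at $\mathfrak m=\pp^{rk}$, corrected by the coprimality of $\ai$ and $\mathfrak m$.
\item The pole at $w=\tfrac12+\tfrac1r$ is the image under $\alpha$ of the pole of the dual series. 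In the dual, the sum over $\mathfrak m$ weighted by $|\mathfrak m|^{-(1-s)}$ with $r$-th order Gauss sums $g(\mathfrak m,\ai)$ has its pole from Patterson's theory of the $r$-th order Gauss sum series. That pole sits at $w+s-\tfrac12=\tfrac12+\tfrac1r$ in the shifted variable, and we will compute its residue as $F_1$.
\end{itemize}

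\textbf{Step 2.} Bound $Z(\tfrac12,w)$ polynomially in $|\Im w|$ on vertical lines $\Re w=\sigma$ with $\tfrac12<\sigma<1$. To do this, we approximate $L^S(\tfrac12,\chi_{\ai})$ by an approximate functional equation of length $|\ai|^{1/2}$ with dual Gauss sum terms. We then bound the remaining dyadic pieces using the $r$-th order large sieve, in the form of the second moment bound established in the paper for Theorem~\ref{Main-Thm1-alt}, together with Cauchy--Schwarz and a Heath-Brown type bound for sums of $r$-th order Gauss sums. The exponent $A_r$ enters exactly here: the large sieve gives a saving with the $(AB)^{2/3}$ loss for $r=3$ and the weaker loss for $r>3$.

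\textbf{Step 3.} Shift the contour to $\Re w=\sigma_0$. We pick up the two residues $C_1\widehat W(1)X$ and $F_1\widehat W(\tfrac12+\tfrac1r)X^{1/2+1/r}$. The rapid decay of $\widehat W$ lets us bound the remaining integral by $X^{\sigma_0+\varepsilon}$.

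\textbf{Step 4.} Balance the parameters. We split $\ai$ according to the sizes of its factors: a smooth part handled by the Dirichlet series, and a rough part handled by the large sieve, with a cutoff $Y=X^{\theta}$. Optimizing $\theta$ against the pole at $\tfrac12+\tfrac1r$ and the loss $A_r$ gives the exponent $\frac{r+1+A_r}{r+3}$. This mirrors the $\frac{r+A_r}{r+1}$ balancing for the second moment, but with only one $L$-value the trivial range is longer.

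We expect the main obstacle to be Step 2 together with the correct identification of the secondary pole. One has to show that the dual Gauss-sum series contributes exactly one pole at $\tfrac12+\tfrac1r$, coming from the residue of the Kubota--Patterson Eisenstein series on the $r$-fold cover. One also has to show that the remaining part is controlled uniformly in the twist by the large sieve. I would first try to quote Patterson's meromorphic continuation of $\sum_{\mathfrak m} g(\mathfrak m)|\mathfrak m|^{-s}$ with its known residue, and then bound off-pole behaviour via the large sieve input already used for Theorem~\ref{Main-Thm1-alt}.
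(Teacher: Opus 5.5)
Your outline gets the right objects (Mellin inversion, the pole at $w=1$ from $r$-th power $\mathfrak{m}$, the secondary pole at $w=\frac12+\frac1r$ from Kubota's series). But it has a genuine gap exactly where the content of the theorem lies. Nothing in it continues the \emph{square-free} series $Z(\frac12,w)=\sum_{\ai}\mu^2(\ai)L^S(\frac12,\chi_\ai)|\ai|^{-w}$ to $\Re w>\frac{r+1+A_r}{r+3}$ with polynomial growth.

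Having discarded $\beta$, you lack the functional equation which, composed with $\alpha$, moves the line $\Re w=-\eps$ (at $s=\frac12$) to a place where the series can be bounded, so there is no Phragm\'en--Lindel\"of argument across the strip. The continuation to $\Re w>\frac12+\delta$ asserted in Step~1 is unsupported. Moreover, if it held with polynomial growth, Step~3 alone would give an error $X^{1/2+\delta}$, making Step~4 pointless.

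Step~2 is circular. The series does not converge on $\Re w=\sigma<1$, so bounding it there presupposes the continuation. Inserting an approximate functional equation under $\sum_\ai|\ai|^{-w}$ only reproduces the need to control $\sum_{\ai}\mu^2(\ai)\chi_\ai(\mathfrak{n})|\ai|^{-w}$ and the dual sums $\sum_{\ai}\mu^2(\ai)G(\chi_\ai)\ov{\chi}_\ai(\mathfrak{n})|\ai|^{-w}$ as functions of $w$, uniformly in $\mathfrak{n}$. That is precisely the Kubota-series input you set aside. The twisted multiplicativity you cite as the obstruction to $\beta$ is handled by \cref{lem-Hecke-prod-G-sum-comp} together with the splitting into classes of $R_\ci$. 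Step~4 does not derive the exponent: a smooth/rough split with cutoff $X^\theta$ is not where $\frac{r+1+A_r}{r+3}$ comes from.

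The missing idea is a convexity bound for the \emph{perfect} MDS that is uniform in the sieving modulus. The paper removes $\mu^2$ by M\"obius inversion over square-free $\hi$. Via \eqref{eq:Zsieved_1stmom}, each piece is a finite combination of $\wZ^{S\cup S_\hi}(\frac12,w;\chi_\ff,\eta\chi_\gi)$ with $\ff,\gi\mid\hi^{r-1}$ and local weights $f^{(1)}_\ff\, g_{\gi,\hi^{(\ff)}}\ll|\hi|^{-2\sigma+\eps}$.

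These perfect series carry the full group of functional equations. The longest element $\sigma_1\sigma_3\sigma_1$ is built from three ingredients: the Hecke functional equation, the Brubaker--Bump functional equation of Kubota's series, and the passage between $D^T$ and $D^S$ in \cref{corollary: transition-places} and \cref{corollary: transition-places2}. The last of these is why the effective conductor is $|\hi|^{r-1}$. This element relates the line $\Re w=-\eps$ to values which, after reciprocity, equal $\sum_{\ai}|L(\frac12,\psi\chi_{\ai\ff})|\,|\ai|^{-1-\eps}$. By the large sieve (\cref{prop:estimate at1} and Cauchy--Schwarz) these are $\ll|\ff_1|^{1/4+\eps}$.

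Phragm\'en--Lindel\"of then gives $\ll|\hi|^{\frac14+\frac{r-1}{2}(1-\sigma)+\eps}$. The sum over $\hi$ converges iff $\frac{r-1}{2}(1-\sigma)+\frac14-2\sigma<-1$, i.e.\ $\sigma>\frac{r+3/2}{r+3}$. So $A_r/2=\frac14$ is the large-sieve second-moment input, not the $(MN)^{2/3}$ term, which occurs for every $r$ in \cref{thm:thm3}.

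For $r=3$, the value $A_3=\frac13$ comes from Cauchy--Schwarz over $\hi$ combined with the averaging argument of \cref{prop:conv_req3}. There one writes $\ff=\bi\ei^2$, uses $\ov{\chi}_{\qi_1\qi_2^2\ei^2}=\chi_{\qi_1^2\qi_2\ei}$, and applies \cref{secondmoment-cor}, giving $\sigma>13/18$.

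Finally, the residue of Kubota's series at $\frac12+\frac1r$ is not known for $r\ge4$. Only its existence is needed for $F_1$, so you should not plan to quote a ``known'' residue.
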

Note that our asymptotic formula captures the second-order term only for $r = 3$. The constant $F_1$ can be computed as a series involving the Whittaker--Fourier coefficients of theta functions on the $r$-fold cover of $\mathrm{GL}_2$. It is well known that for $r \ge 4$, these coefficients are far from being well understood (see \cite{Hof93, KP84, Kub69} for the general case, and \cite{EP92, Hof93, Su82} for the biquadratic case). In the cubic case where $r = 3$ and $F = \Q(\sqrt{-3})$, a complete description of the cubic theta function was obtained by Patterson \cite{Pat1}. Prior to our result, the only known asymptotic formulas for the first moment were those of Luo \cite{Luo} and the very recent work of Hamdar \cite{Ham}, both of which are restricted to the cubic case over $F = \Q(\sqrt{-3})$.
\end{para}

\begin{para} Using mollified first and second moments, we prove, as in \cite{S00}, \cite{DFDS24}, and \cite{CFD26},
the following:
\begin{state}\label{Nonvanishing-sq-free-alt} Let the notation be as above. Then, for a positive proportion of square-free ideals $\ai \in I(S)$, we have
	$L(\tfrac{1}{2}, \chi_{\ai}) \ne 0$. More precisely, for all sufficiently large $X$ and any fixed $\varepsilon > 0$, we have
	$$
	\sum_{\substack{\ai \in I(S) \\  |\ai| \le X\\ L(1/2, \chi_{\ai}) \ne 0}} \mu^2 (\ai)
	\ge
	\begin{cases}
		\left(\dfrac{1}{12}-\varepsilon\right)
		\displaystyle\sum_{\substack{\ai \in I(S)\\ |\ai| \le X}}\mu^2 (\ai)
		& \text{if } r=3,\\[1.2em]
		\left(\dfrac{1}{4r+2}-\varepsilon\right)
		\displaystyle\sum_{\substack{\ai \in I(S)\\ |\ai| \le X}} \mu^2 (\ai)
		& \text{if } r\ge4.
	\end{cases}
	$$
	\end{state}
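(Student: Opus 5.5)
We follow the standard mollification argument of \cite{S00}: the nonvanishing count is bounded below through the Cauchy--Schwarz inequality
$$
\sum_{\substack{\ai\in I(S)\\ L(1/2,\chi_\ai)\neq 0}} \mu^2(\ai) W\big(\tfrac{|\ai|}{X}\big)\;\ge\; \frac{\Big|\sum_{\ai} \mu^2(\ai) L^S(\tfrac12,\chi_\ai) M(\ai) W\big(\tfrac{|\ai|}{X}\big)\Big|^2}{\sum_{\ai}\mu^2(\ai)\big|L^S(\tfrac12,\chi_\ai)M(\ai)\big|^2 W\big(\tfrac{|\ai|}{X}\big)},
$$
with a mollifier $M(\ai)=\sum_{\substack{\hi\in I(S),\ |\hi|\le X^{\theta}}} \lambda(\hi)\,\chi_\ai(\hi)\,|\hi|^{-1/2}$. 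The coefficients are $\lambda(\hi)=\mu(\hi)\,a(\hi)\,P\big(\log(X^\theta/|\hi|)/\log X^\theta\big)$, where $P$ is a polynomial with $P(0)=0$ and $P(1)=1$, and $a(\hi)$ is a multiplicative correction chosen to cancel the Euler factors in the main terms. Since $L^S$ and $L$ differ only by finitely many Euler factors at $S$, it suffices to detect $L^S(\tfrac12,\chi_\ai)\neq0$. At the end we let $W$ approximate the indicator of $(1,2)$ and sum over dyadic ranges $X/2^k$; the ratio of the bound to $\sum\mu^2(\ai)$ is scale invariant, so the proportion carries over to $|\ai|\le X$.

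\textbf{Step 1 (mollified first moment).} Expanding $M$ reduces the numerator to $\sum_{\hi}\lambda(\hi)|\hi|^{-1/2}$ times twisted first moments $\sum_\ai \mu^2(\ai)L^S(\tfrac12,\chi_\ai)\chi_\ai(\hi)W(|\ai|/X)$. These are given by \cref{thm:twisted_first_moment}, and we need its error term to be uniform in $|\hi|$ with polynomial dependence $|\hi|^{c}$. The main term is essentially supported where $\hi$ is an $r$-th power times a bounded factor. Here $r\ge3$ matters: because $\mu(\hi)\neq0$, only $\hi=\mathcal{O}$ survives in the leading order, together with negligible contributions of size $|\hi|^{-r/2}$. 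The secondary term $X^{1/2+1/r}$ and the error term contribute $\ll X^{1-\delta}$ provided $\theta<\theta_1$, where $\theta_1$ is determined by the $|\hi|$-dependence of the error. The result is a numerator of size $\sim c\,X\,P(1)$, where the constant $c$ matches $C_1$ adjusted by $a(\hi)$.

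\textbf{Step 2 (mollified second moment).} Expanding $|M|^2$ gives a double sum over $\hi_1,\hi_2$ of the twisted second moment with twist $\chi_\ai(\hi_1)\overline{\chi_\ai(\hi_2)}=\chi_\ai(\hi_1\hi_2^{r-1})$. This is evaluated by \cref{thm:twisted_2ndmom}, whose main term is $X$ times a linear polynomial in $\log X$ with coefficients arithmetic in $\hi_1,\hi_2$. The diagonal is $\hi_1=\hi_2$ (square-free, so $\hi_1\hi_2^{r-1}$ is an $r$-th power exactly on the diagonal). Summing the main terms with the Perron/Mellin method as in \cite{S00,CFD26} produces a quadratic functional in $P$:
$$
X\Big(\tfrac{1}{\theta}\int_0^1 P'(t)^2\,dt + P(1)^2\Big)\cdot c',
$$
up to the normalizing constants, which cancel against the first moment by the choice of $a(\hi)$. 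The off-diagonal and error terms must be $o(X)$, which forces $\theta<\theta_2$.

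\textbf{Step 3 (optimization).} Minimizing over $P$ with $P(0)=0$ and $P(1)=1$ gives $P(t)=t$, so the ratio is $\theta/(1+\theta)$. This yields $1/12$ if $\theta=1/11$, and $1/(4r+2)$ if $\theta=1/(4r+1)$. \textbf{The main obstacle} is Step 2's uniformity: we must extract from the proof of \cref{thm:twisted_2ndmom} an error bound like $|\hi_1\hi_2|^{B}X^{(r+A_r)/(r+1)+\varepsilon}$, with explicit $B$, so that the admissible length is $\theta=1/(4r+1)$ for $r\ge4$ and $1/11$ for $r=3$. I would first track the $\hi$-dependence in the multiple Dirichlet series' functional equations, where the twist enters through conductor growth of $\chi_{\ai}(\hi)$ in the interchange $\ai\leftrightarrow\hi$. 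For $r=3$ the better exponent comes from $A_3=1/3$, via the cubic large sieve.
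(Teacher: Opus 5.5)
Your plan is the paper's argument: Cauchy--Schwarz against a mollifier supported on square-free ideals, the twisted first and second moment theorems, and the length $\theta=\frac{1-\delta_1}{1+(2r-1)(1-\delta_1)}$. The paper's mollifier $\lambda_\kappa(\bi)=\sum_{\ai}\mu(\ai)h_\kappa(\ai)\xi_\kappa(\ai\bi)$ from \cite{DFDS24} corresponds, up to normalization, to your $P(t)=t$ with the optimal weights $a$, and $\theta$ equals $1/11$ for $r=3$ and $1/(4r+1)$ for $r\ge 4$, giving $\theta/(1+\theta)$.

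Three corrections. First, the uniformity you single out as the main obstacle is already in the statements of \cref{thm:twisted_2ndmom} and \cref{thm:twisted_first_moment}: the factor $|\rad\bi|^{(r-1/2)(1-\delta_1)+\varepsilon}$ there is precisely what produces this $\theta$. Second, the terms with $\hi\neq\mathcal{O}$ in the first moment and with $\hi_1\neq\hi_2$ in the second are not negligible; they are part of the main terms, entering through $G_1(\pp_v)=r_1(\pp_v)-h_1(\pp_v)$ and $h_1(\mi\mi')$ in \cref{prop:Zbres}. Third, consequently the exact proportion $\theta/(1+\theta)$ rests on the Euler product identity $C_1^2E_1/D_1=1/\zeta_F^S(2)$ for the optimal weights, not on an arbitrary choice of $a$.
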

Before the present work, unconditional positive-proportion nonvanishing results have only been established in the quadratic \cite{S00}, cubic \cite{DFDS24}, and quartic \cite{CFD26} cases. Additionally, non-vanishing results for infinite density-zero subsets within related families of Hecke $L$-functions have been obtained in \cite{BY10, BGL14, Gu, Luo}.
We note that the positive proportion obtained in \cref{Nonvanishing-sq-free-alt} for $r=3$ is smaller than the corresponding proportion established in \cite{DFDS24}. This discrepancy arises from the fact that we do not optimize our estimates over the mollifier parameter $Y$ by employing a truncated M\"obius sum as in \cite{S00, DFDS24}. Implementing such an optimization would considerably increase the technical complexity of the exposition without  introducing any significant new ideas.
\end{para}
\begin{para}
We also present analogues of the preceding theorems for the family of Hecke $L$-functions associated with $r$-th power-free ideals, as our method treats both families uniformly. To the best of our knowledge, such results are new even in the cubic and quartic cases. The resulting asymptotic formulas have sharper error terms than those for the square-free family and include secondary terms for small $r$.

\begin{state}\label{Main-Thm2-r-th-power-free-alt} Let the notation be as in \cref{Main-Thm1-alt}, and let $I(S)_{0}$
denote the set of nonzero $r$-th power-free ideals of $\mathscr{O}$ coprime to $S_f$. Then, for every $\varepsilon > 0$, we have
$$
\sum_{\ai \in I(S)_{0}} \big|L^S(\tfrac{1}{2}, \chi_{\ai})\big|^2 Q_\ai W\big(\tfrac{|\ai|}{X}\big)=X R_W^{(0)}(\log X) +X^{\frac{1}{2}+\frac{1}{r}} S_W^{(0)}(\log X)+O\Big(X^{\frac{3r - 2}{4r-2}+\varepsilon}\Big),
$$
for explicit linear polynomials $R_W^{(0)}, S_W^{(0)}$ and
$Q_\ai = \prod_{\pp^{n_\pp}\|\ai} n_\pp$. The leading coefficient of $R_W^{(0)}$ is
$$
\widehat{W}(1) \zeta_F^S(r/2)\Big[\res_{w=1} \zeta_F^S(w)\Big]^2 \prod_{v\notin S} \big(1-q_v^{-1}\big)
P_r^{(0)} \big(q_v^{-1/2}\big),
$$
where  $\displaystyle P_r^{(0)}(x)=(1-x^{r})\Big(\frac{1-x^{2r}}{1-x^2}-(r-1)x^{2r}\Big)+2x^r$.
\end{state}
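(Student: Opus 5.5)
We will prove the asymptotic formula for the $r$-th power-free family by the multiple Dirichlet series (MDS) method from \cite{Di04, Di19, Di-Wh21}. The basic object is the double Dirichlet series
$$
Z(s_1,s_2,w)=\sum_{\ai\in I(S)} \frac{L^S(s_1,\chi_\ai)\,L^S(s_2,\overline{\chi_\ai})\,H(s_1,s_2;\ai)}{|\ai|^{w}},
$$
where the correction factors $H$ are chosen at the primes $\pp$ with $\pp^r\mid\ai$. With this choice, $Z$ has the same local factors as the generating series whose $w$-coefficients are $\sum_{\ai\in I(S)_0}|L^S(\tfrac12,\chi_\ai)|^2 Q_\ai$, up to a factor that is holomorphic and absolutely convergent in a region $\Re(w)>1-\delta$.

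The weight $Q_\ai=\prod n_\pp$ appears for the following reason. The Fisher--Friedberg symbol $\chi_\ai$ depends only on $\ai$ modulo $r$-th powers. If we write $\ai=\ai_0\mathfrak{m}^r$ with $\ai_0$ $r$-th power-free, then the sum over the $r$-th power part, once weighted appropriately, produces the local combinatorics that give the factor $Q_\ai$ and the polynomial $P_r^{(0)}$.

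The argument then proceeds in five steps.

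\textbf{Step 1: local factors and functional equations.} We expand everything in local factors at $v\notin S$ and assemble the group of functional equations. This group is generated by $s_i\mapsto 1-s_i$, which comes from the functional equation of $L^S(s,\chi_\ai)$ with root numbers given by $r$-th order Gauss sums, together with the interchanges coming from interpreting $Z$ as a series in $\ai$ twisted by the Dirichlet coefficients of the $L$-functions. Using the Kubota--Patterson theory of theta functions on the $r$-fold cover of $\GL_2$, we continue $Z(\tfrac12,\tfrac12,w)$ meromorphically to $\Re(w)>1-\delta$.

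\textbf{Step 2: identify the poles.}
\begin{itemize}
\item There is a double pole at $w=1$. It comes from the term with $\ai$ an $r$-th power times the diagonal, and it produces $\zeta_F^S(w)^2$ after the usual shift by the two $L$-functions evaluated at $\tfrac12$.
\item There is a pole at $w=\tfrac12+\tfrac1r$. It arises from the residue of the theta-type Gauss-sum series $\sum g(\ai)/|\ai|^{w}$ at $w=\tfrac12+\tfrac1r$ after one application of the functional equation in $s_1$ or $s_2$. Because two $L$-factors interact, this pole is also double, and it gives the linear polynomial $S_W^{(0)}$.
\end{itemize}
We compute the leading coefficient at $w=1$ prime by prime. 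At each $v\notin S$, we sum the local contributions over $k=0,\dots,r-1$ of $\pp^k$, with weight $(k+1)$ from $Q_\ai$ combined with the local second-moment factor. We check that this sum equals $(1-q_v^{-1})^{\,}P_r^{(0)}(q_v^{-1/2})$ times the local Euler factors of $\zeta_F^S(r/2)\zeta_F^S(1)^2$. This is a finite geometric-series identity.

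\textbf{Step 3: Mellin inversion.} We write the weighted sum as $\frac{1}{2\pi i}\int Z(\tfrac12,\tfrac12,w)\widehat W(w)X^{w}\,dw$. Shifting the contour past $w=1$ and $w=\tfrac12+\tfrac1r$ yields $XR_W^{(0)}(\log X)+X^{\frac12+\frac1r}S_W^{(0)}(\log X)$.

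\textbf{Step 4: the remaining integral.} We bound the remaining integral on a line $\Re(w)=\sigma$ by convexity in $w$. For this we need polynomial growth bounds for $Z$ on vertical lines. On the far side of the functional equations we use a large sieve estimate for $r$-th order characters (Heath-Brown type when $r=3$, the general version otherwise) to bound the mean square of $L(\tfrac12,\chi_{\ai})$ over dyadic ranges uniformly in the twist. This gives an estimate of the shape $(Q_1Q_2)^{1+\varepsilon}$ plus the large-sieve loss. Choosing an auxiliary splitting parameter and optimizing leads to the exponent $\frac{3r-2}{4r-2}$. The saving relative to \cref{Main-Thm1-alt} comes from the fact that the $r$-th power-free family needs no M\"obius inversion over squares: the sieve $\mu^2$ introduces terms like $\sum_{\di}\mu(\di)$ with $\di^2\mid\ai$, and controlling these costs the weaker exponent $\frac{r+A_r}{r+1}$.

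\textbf{Main obstacle.} I expect Step 4 to be the hardest part, namely obtaining uniform growth estimates for $Z(\tfrac12,\tfrac12,w)$ in $\Re(w)<1$ strong enough to reach the stated exponent. My first approach would be an approximate-functional-equation decomposition of the product $L(\tfrac12,\chi_\ai)L(\tfrac12,\overline{\chi_\ai})$ into dual sums. Each dual piece is a double character sum over $\ai$ and $\b$. I would bound these dual pieces by the $r$-th order large sieve and balance the resulting terms against the length of the dual sum to obtain the error $X^{\frac{3r-2}{4r-2}+\varepsilon}$.
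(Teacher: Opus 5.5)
Your proposal has a genuine gap at its very first step, and everything downstream depends on it. You assert that the full-sum series $Z(s_1,s_2,w)$ over all $\ai\in I(S)$, with correction factors at primes $\pp^r\mid\ai$, agrees with the generating series of $|L^S(\tfrac12,\chi_\ai)|^2Q_\ai$ over $\ai\in I(S)_0$ up to a factor holomorphic in $\Re(w)>1-\delta$. No such factor exists. The product $L^S(s_1,\chi_\ai)L^S(s_2,\ov{\chi}_\ai)$ is not multiplicative in $\ai$: its Euler factor at a prime $\pp\nmid\ai$ involves $\chi_\ai(\pp)$, which depends on all of $\ai$. So the restriction to $r$-th power-free $\ai$ cannot be carried out by an Euler product.

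The paper restricts via M\"obius inversion, $Z_0^S=\sum_{\hi}\mu(\hi)Z_0(\sb;\hi)$, where $Z_0(\sb;\hi)$ runs over $\ai$ with $\pp^r\mid\ai$ for every $\pp\mid\hi$. Then \cref{prop: sieving} writes each $Z_0(\sb;\hi)$ as a finite combination, with coefficients $f^{(0)}_\ff\, g_{\gi,\hi^{(\ff)}}$, of perfect MDS $\wZ^{S\cup S_\hi}(\sb;\chi_\ff,\chi_\ff,\eta\chi_\gi)$ carrying \emph{new} twists $\ff,\gi\mid\hi^{r-1}$. The continuation of the $r$-th power-free series, and hence the error term, is decided by the convergence of this infinite sum over $\hi$. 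That needs bounds for the perfect MDS that are uniform in the conductor aspect $\hi$, not just polynomial growth in $\Im(w)$. Your claim that the $r$-th power-free family ``needs no M\"obius inversion'' is therefore wrong. Both families are sieved the same way, with the condition $\ord_v(\ai)\ge r$ in place of $\ord_v(\ai)\ge 2$ at sieved primes.

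As a result, Step 4 does not produce $\frac{3r-2}{4r-2}$. ``Choosing an auxiliary splitting parameter and optimizing'' is not an argument, and you give no indication of how an approximate-functional-equation plus large-sieve treatment would reach this exponent. In the paper it arises in three stages.
\begin{enumerate}
\item The functional equations $\sigma_3\sigma_1\sigma_2\sigma_3$, Phragm\'en--Lindel\"of, and the large sieve bound \cref{prop:estimate at1} on $\Re(w)=1+\eps$ give $|\hi|^{1/2+(r-1)(1-\sigma)+\eps}$ for the perfect MDS (\cref{cor:convexity}).
\item The inductive argument of \cref{prop:refined-convexity} peels off one prime of $\hi$ at a time via the recursion in \S\ref{par:Zrecursion}. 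It shows the large-sieve loss $|\hi|^{1/2}$ is needed only at primes dividing $\ff\gi$.
\item For $\kappa=0$ the dominant primes are those with $\pp\nmid\ff\gi$, where $g^{(0)}(q^{-1/2},q^{-1/2},q^{-w};q)\ll q^{r/2-1-r\sigma}$; primes dividing $\gi$ give the same exponent. Hence $Z_0(\tfrac12,\tfrac12,w;\hi)\ll|\hi|^{(r-1)(1-\sigma)+r/2-1-r\sigma+\eps}$, and $\sum_\hi$ converges exactly for $\sigma>\frac{3r-2}{4r-2}$.
\end{enumerate}
Without the refinement one only gets $\frac{3r-1}{4r-2}$. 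The gain over the square-free family comes from this much smaller local coefficient, compared with $q^{-2\sigma}$ from $f_1^{(\eps)}$ at $\pp\mid\ff$ where the refinement does not help. It does not come from avoiding a sieve.

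Three smaller points.
\begin{itemize}
\item The local weight from $Q_\ai$ at $\pp^k\|\ai$ is $k=P_k(q^{-1/2},q^{-1/2};q)$, not $k+1$.
\item The coefficients of $R_W^{(0)}$ are obtained by summing over $\hi$ the residues of $Z_0(s,s,w;\hi)$ along the hyperplanes $w=1$ and $w=2-2s$. These coalesce into the double pole at $s=\tfrac12$ (\S\ref{par:prinpart}, \cref{lem:funeq}).
\item For $r\ge5$ one has $\frac12+\frac1r<\frac{3r-2}{4r-2}$, so the contour cannot be moved past $\frac12+\frac1r$. The $X^{\frac12+\frac1r}$ term is then absorbed in the error and is genuine only for $r=3,4$.
\end{itemize}
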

Note that this is an asymptotic formula with a second-order term for $r = 3, 4$. Note also that for $r=3$,
if we write  $\ai=\bi\ci^2$ with $\bi,\ci$ square-free, then the factor $Q_{\ai}=d(\ci)$, the number of divisors of $\ci$.

Just as in the square-free setting, we obtain a companion asymptotic formula for the first moment restricted to $r$-th power-free ideals, which captures the second-order term for $r \le 6$.

\begin{state}\label{Thm5-first-moment-r-th-power-free-alt} Let the notation be as above. Then, for every $\varepsilon > 0$, we have
$$
\sum_{\ai \in I(S)_{0}} L^S(\tfrac{1}{2}, \chi_{\ai}) W\big(\tfrac{|\ai|}{X}\big)=C_{0}\widehat{W}(1)X +F_0\widehat{W}(\tfrac{1}{2} + \tfrac{1}{r})X^{\frac{1}{2}+\frac{1}{r}}+O\Big(X^{\frac{2r-1}{3r-1}+\varepsilon}\Big),
$$
where
$$
C_0=\zeta_F^S(r/2)\res_{w=1}\zeta_F^S(w)\prod_{v\notin S}\big(1-q_v^{-r}-q_v^{-(r+2)/2}+q_v^{-3r/2}\big).
$$
\end{state}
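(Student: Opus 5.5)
We will deduce \cref{Thm5-first-moment-r-th-power-free-alt} from the meromorphic continuation and polynomial growth of the multiple Dirichlet series
$$
Z_0(s,w)=\sum_{\ai\in I(S)_0}\frac{L^S(s,\chi_\ai)}{|\ai|^{w}}
$$
(more precisely, of the associated MDS built from a weighted sum over all $\ai\in I(S)$, with correction polynomials at $\pp\mid\ai$ to the $r$-th power-free part). By Mellin inversion,
$$
\sum_{\ai\in I(S)_0}L^S(\tfrac12,\chi_\ai)W(|\ai|/X)=\frac{1}{2\pi i}\int_{(2)}Z_0(\tfrac12,w)\widehat W(w)X^{w}\,dw .
$$
The plan is to shift the contour to the left of $\Re w=\tfrac12+\tfrac1r$, pick up residues, and bound the remaining integral.

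\textbf{Step 1: the polar structure.} We use the functional equations of the MDS: the one in $s$ from the functional equation of $L(s,\chi_\ai)$ (which, for non-squarefree $\ai$, involves $r$-th order Gauss sums and the $\ai=\bi\ci^j$ decomposition), and the interchange $(s,w)\mapsto$ the series in $\ai$ obtained by expanding $L$ and applying $r$-th power reciprocity. These generate a finite group of transformations, and hence a meromorphic continuation in a tube domain plus convexity (Bochner) to all of $\C^2$.

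At $s=\tfrac12$ the relevant polar divisors are $w=1$ and $w+s=\tfrac32+\tfrac1r$, i.e.\ $w=\tfrac12+\tfrac1r$ on the line $s=\tfrac12$.
\begin{itemize}
\item The pole at $w=1$ comes from the terms with $\ai$ an $r$-th power in the interchanged sum, where $\chi_\ai$ is principal. This gives $\zeta_F^S(r/2)\res_{w=1}\zeta_F^S(w)$ times an Euler product. Computing it locally yields the factor $1-q_v^{-r}-q_v^{-(r+2)/2}+q_v^{-3r/2}$, which produces $C_0$.
\item The pole at $w=\tfrac12+\tfrac1r$ arises from the Gauss-sum (theta-function) side. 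Its residue defines $F_0$ as a series in the Whittaker coefficients of the $r$-fold cover theta function.
\end{itemize}

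\textbf{Step 2: the remaining integral.} After extracting both residues we move to $\Re w=\sigma$. The residual integral is bounded by $X^{\sigma}$ times the growth of $Z_0(\tfrac12,\sigma+it)$, which we control by interpolating between regions of absolute convergence and their images under the functional equations. To get the exponent $\frac{2r-1}{3r-1}$ we balance two inputs:
\begin{itemize}
\item an approximate-functional-equation type bound for $L(\tfrac12,\chi_\ai)$, of length $\sim|\ai|^{1/2}$ in the dual sum, which yields a short sum of length $Y$;
\item the tail, estimated through the functional equation in $w$.
\end{itemize}
Equivalently, we truncate the MDS at a parameter $Y$ dividing the sum into $\ai$-power parts $\ci$ with $|\ci|\le Y$ and $|\ci|>Y$. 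The small part is handled exactly by the continuation. The large part is bounded via a first-moment bound over a family of conductor $X$ with $\ci$-parts large; the $r$-th power-free structure makes such parts rare, of density $Y^{-(r-1)/r}$-ish. Optimizing $Y$ should yield $X^{(2r-1)/(3r-1)+\varepsilon}$. No large sieve input is needed beyond convexity, which explains why the error here is better than the squarefree $\frac{r+1+A_r}{r+3}$.

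\textbf{Main obstacle.} The hardest step is the precise convexity/growth estimate for $Z_0(\tfrac12,w)$ in the strip $\Re w\in(\frac{2r-1}{3r-1},\tfrac12+\tfrac1r)$, uniformly in $\Im w$. This requires controlling the correction factors attached to non-squarefree $\ai$ so that the functional equations hold exactly (with finitely many explicit polynomial corrections). I would first verify this locally at each $\pp$ by computing the $p$-part generating functions, then apply the Phragm\'en--Lindel\"of principle in several variables along the lines of \cite{Di04, Di19, Di-Wh21}.
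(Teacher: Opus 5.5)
There is a genuine gap: nothing in your proposal produces the continuation of the $r$-th power-free series to $\Re w>\frac{2r-1}{3r-1}$, so the error exponent is never actually derived. The group of functional equations and Bochner's theorem apply to the perfect MDS $\wZ^T(s,w;\psi,\rho)=\sum_{\ai\in I(T)}L^T(s,\psi\chi_{\ai_0})\rho(\ai)Q_\ai(s,\infty;\psi,1)|\ai|^{-w}$, which runs over \emph{all} ideals with correction polynomials. The sum restricted to $I(S)_0$ is not of this type and inherits no functional equations. Indeed, if it continued with polynomial growth to all of $\C$ in $w$, as Step 1 suggests, shifting contours would beat $X^{\frac{2r-1}{3r-1}}$ and your Step 2 would be unnecessary.

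The paper bridges the two by M\"obius inversion over square-free $\hi$, $Z_0^S(\tfrac12,w)=\sum_{\hi}\mu(\hi)Z_0(\tfrac12,w;\hi)$. This is combined with the identity \eqref{eq:Zsieved_1stmom}, which expresses each $Z_0(\tfrac12,w;\hi)$ as a finite combination of twisted perfect MDS $\wZ^{S\cup S_\hi}(\tfrac12,w;\chi_\ff,\eta\chi_\gi)$, $\ff,\gi\mid\hi^{r-1}$, with coefficients $f^{(0)}_\ff$ and $g_{\gi,\hi^{(\ff)}}$. Your truncation at a parameter $Y$, with a ``density $Y^{-(r-1)/r}$-ish'' and an unperformed optimization, is not a substitute for this, and it is not where $\frac{2r-1}{3r-1}$ comes from.

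What is really needed is a convexity bound for these twisted perfect MDS that is uniform in $\hi$. The conductor aspect, not $\Im w$, is the main obstacle. One tracks the local factors at $v\in S_\hi$ through the chain of functional equations (passing from $D^T$ to $D^S$, Kubota's functional equation, and back) and applies Phragm\'en--Lindel\"of. One then refines inductively as in \S\ref{sec:refined_convexity}, obtaining a loss $q_v^{\frac{r-1}{2}(1-\sigma)}$ at each $\pp_v\mid\hi$ and an extra $q_v^{1/4}$ only at primes dividing $\ff\gi$. Combined with $g^{(0)}\ll q^{r/2-1-r\sigma}$, the worst local exponent (at $\pp\nmid\ff\gi$) is $E_0(\sigma)=\frac{r-1}{2}(1-\sigma)+\frac r2-1-r\sigma$. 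Summability over $\hi$ requires $E_0(\sigma)<-1$, i.e.\ $\sigma>\frac{2r-1}{3r-1}$.

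Contrary to your remark, this argument does use the large sieve. The factor $q_v^{1/4}$ comes from \cref{prop:estimate at1} via Cauchy--Schwarz. It is needed to bound $\sum_{\ai}|L(\tfrac12,\chi_{\ai\ff})||\ai|^{-1-\eps}$ polynomially in $\ff$ at the edge $\Re w=1+\eps$; pointwise convexity only gives the divergent majorant $\sum_{\ai}|\ai|^{-3/4+\eps}$. The gain over the square-free case comes from two things: the much faster decay of the $r$-th power sieve coefficients ($q^{r/2-1-r\sigma}$ versus $q^{-2\sigma}$), and confining the large-sieve loss to primes dividing $\ff\gi$.

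A small slip: the pole at $w=1$ comes from the terms in which the summation variable of $L(\tfrac12,\chi_\ai)$ is an $r$-th power, not $\ai$.
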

We establish twisted versions of the theorems above uniformly for both
the square-free and the $r$-th power-free families; see \cref{thm:twisted_2ndmom} and \cref{thm:twisted_first_moment}.

Finally, we establish a positive-proportion nonvanishing result for the Hecke $L$-functions associated with characters $\chi_{\ai}$ with $\ai \in I(S)_{0}$, where the family is ordered by the \emph{ideal norm} $|\ai| \le X$ rather than by the conductor of $\chi_{\ai}$.

\begin{state}\label{Nonvanishing-r-th-power-free-alt} Let the notation be as above. Then, for a positive proportion of ideals $\ai \in I(S)_{0}$, we have $L(\tfrac{1}{2}, \chi_{\ai}) \ne 0$. More precisely, for all sufficiently large $X$ and any fixed $\varepsilon > 0$, we have
	$$
	\sum_{\substack{\ai \in I(S)_{0} \\  |\ai| \le X\\ L(1/2, \chi_{\ai}) \ne 0}} 1
	\ge
	\begin{cases}
		\left(\alpha_r\dfrac{r-2}{2r^2-2r}-\varepsilon\right)
		\displaystyle\sum_{\substack{\ai \in I(S)_{0}\\ |\ai| \le X}}1
		& \text{if } r=3,4\\[1.2em]
		\left(\alpha_r\dfrac{r}{2r^2+4r-2}-\varepsilon\right)
		\displaystyle\sum_{\substack{\ai \in I(S)_{0}\\ |\ai| \le X}}1
		& \text{if } r>4,
	\end{cases}
	$$
	where $\alpha_r$ is an explicit Euler product given by
	\[\alpha_r= \frac{\zeta_F^S(r)}{\zeta_F^S(2)}\cdot \prod_{v\notin S} (1-q_v^{-3} u_v(r) )
	\]
	with $u_v(3)<0$ and $0<u_v(r)<1$ for $r\ge 4$. The precise value of $\alpha_r$ is given in \S\ref{par:non-vanish0}.
\end{state}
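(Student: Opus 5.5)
The proof will follow the mollification method of \cite{S00}, \cite{DFDS24}, and \cite{CFD26}, now for the $r$-th power-free family. Since the second moment in \cref{Main-Thm2-r-th-power-free-alt} carries the weight $Q_\ai$, the first step is to insert that weight into the Cauchy--Schwarz argument. Fix a smooth $W$ supported in $(1,2)$ approximating the indicator function. For each $\ai \in I(S)_0$ we form a mollifier
\[
M(\ai)=\sum_{\substack{\bi \in I(S)\\ |\bi|\le Y}} \lambda(\bi)\,\overline{\chi_\ai(\bi)}\,|\bi|^{-1/2},\qquad Y=X^{\theta},
\]
with coefficients $\lambda(\bi)=\mu(\bi)\,P(\log(Y/|\bi|)/\log Y)$ for a polynomial $P$ satisfying $P(0)=0$. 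These coefficients may carry an extra Euler-factor correction, chosen to match the local densities $P_r^{(0)}$. Cauchy--Schwarz in the form
\[
\Big(\sum_\ai L M W\Big)^2\le \Big(\sum_{\ai:\,L\ne0} Q_\ai^{-1}W\Big)\Big(\sum_\ai Q_\ai |LM|^2 W\Big)
\]
bounds the weighted count $\sum_{L\ne0}Q_\ai^{-1}W$ from below. Passing from this weighted count to the unweighted count is then elementary: $Q_\ai\le d(\ai)$, and we need a lower bound on $\#\{\ai: L\ne 0\}$. Because $Q_\ai^{-1}\le1$, the weighted count $\sum_{L\ne0} Q_\ai^{-1}$ is at most $\#\{\ai: L\ne0\}$, so the inequality already points in the right direction. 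I expect the constant $\alpha_r$ to arise precisely from the density comparison between $\sum_{\ai\in I(S)_0}1$ and the Euler products coming from the $Q_\ai$-weighted moments, where the factor $\zeta_F^S(r)/\zeta_F^S(2)$ reflects the counting of $r$-th power-free ideals against square-free local factors.

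The second step is to expand both mollified moments by applying the twisted first- and second-moment theorems (\cref{thm:twisted_first_moment} and \cref{thm:twisted_2ndmom}) in the $r$-th power-free case, with twist $\chi_\ai(\bi)$ or $\chi_\ai(\bi_1)\overline{\chi_\ai(\bi_2)}$. These theorems supply main terms depending multiplicatively on the twist, together with error terms polynomial in $|\bi|$. Summing over $|\bi_i|\le Y$ keeps the accumulated errors below $X$ provided $\theta$ is smaller than a threshold determined by those error exponents. I expect this threshold to produce the proportion: $\theta$ of order $(r-2)/(2r-2)$-type for $r=3,4$, where secondary terms $X^{1/2+1/r}$ are present and the error exponent is $\frac{3r-2}{4r-2}$, and a different threshold for $r>4$. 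The secondary terms $X^{1/2+1/r}$ contribute $o(X)$ after summing over $\bi$, as long as $Y$ is small, so they are negligible.

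The third step is to evaluate the main terms as contour integrals. Summing $\lambda(\bi)$ against the multiplicative main-term coefficients gives Mellin integrals involving $\zeta_F^S$ and ratios of Euler products, and these reduce in the standard way to quadratic functionals of $P$:
\[
\text{first moment}\sim c\,X\,P(1),\qquad \text{second moment}\sim c'X\Big(P(1)^2+\tfrac{1}{\theta}\int_0^1P'(t)^2\,dt\Big).
\]
Optimizing with $P(t)=t$ gives the ratio $\theta/(1+\theta)$ times arithmetic constants. Combining this ratio with the admissible $\theta$ and the density comparison from the first step yields $\alpha_r\frac{r-2}{2r^2-2r}$, respectively $\alpha_r\frac{r}{2r^2+4r-2}$.

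The main obstacle is the second step: the twisted moment theorems must be used with full uniformity in the twist, so that the error terms, which come from the MDS analysis and the large-sieve input, sum acceptably over $\bi_1,\bi_2$ up to $Y$. The exact dependence of those error terms on $|\bi_1\bi_2|$ is what fixes $\theta$. A secondary difficulty is the local bookkeeping at primes dividing $\bi$ with higher multiplicity of $\ai$, which is where the weight $Q_\ai$ and the polynomial $P_r^{(0)}$ enter. I would first treat that bookkeeping prime by prime, and only then verify that the resulting Euler product equals the stated $\alpha_r$, with $u_v(r)$ of the claimed sign.
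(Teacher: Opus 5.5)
Your outline (Cauchy--Schwarz against $Q_{\mathfrak a}^{-1}W$, the twisted moment theorems with errors polynomial in the twist, a mollifier of length $X^{\theta}$, and $P(t)=t$ producing $\theta/(1+\theta)$) is the paper's. However, two steps do not work as you state them.

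First, the mollifier uses the wrong character. For square-free $\mathfrak b$ we have $\overline{\chi_{\mathfrak a}(\mathfrak b)}=\chi_{\mathfrak a}(\mathfrak b^{r-1})$, so \cref{thm:twisted_first_moment} applied to $\mathfrak b^{r-1}$ gives a main term of size $X|\mathfrak b|^{-1/2}$, coming from $\mathfrak n=\mathfrak b\mathfrak k^r$. Up to $o(X)$, your first mollified moment is then a constant times $X\sum_{|\mathfrak b|\le Y}\mu(\mathfrak b)f(\mathfrak b)P\big(\tfrac{\log(Y/|\mathfrak b|)}{\log Y}\big)|\mathfrak b|^{-1}$ with $f(\mathfrak p)=1+O(|\mathfrak p|^{-1})$, and this is $O(X/\log Y)$. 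The second mollified moment does not see the conjugation, since $|M|^2=|\overline M|^2$ and $\lambda$ is real, so it stays $\asymp X$. The Cauchy--Schwarz ratio therefore tends to $0$; in effect you are mollifying $L(\tfrac12,\overline{\chi_{\mathfrak a}})$ rather than $L(\tfrac12,\chi_{\mathfrak a})$. The paper uses $M_{\mathfrak a}=\sum\lambda_{\mathfrak b}|\mathfrak b|^{1/2}\chi_{\mathfrak a}(\mathfrak b)$. With that choice the twisted main term is $\asymp|\mathfrak b|^{-(r-1)/2}$, the $\mathfrak b$-sum converges absolutely, and the first moment is $\sim cXP(1)$. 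Your third step is correct only for this choice, not for the mollifier you actually defined.

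Second, for $r=3,4$ the value of $\theta$ is not set by the error exponent $\frac{3r-2}{4r-2}$. \Cref{thm:twisted_2ndmom} and \cref{thm:twisted_first_moment} give no information on how the secondary coefficients at $X^{1/2+1/r}$ depend on $\mathfrak b$. So ``negligible as long as $Y$ is small'' must be quantified, and the only available control is the convexity bound \eqref{eq:convexZb} near $\Re(w)=\frac12+\frac1r$. This is what the paper does: it shifts only to $\Re(w)=\widetilde\delta_0+\varepsilon$ with $\widetilde\delta_0=\max\big(\frac{3r-2}{4r-2},\frac12+\frac1r\big)$. It then balances the error $X^{\widetilde\delta_0}|\mathfrak b\mathfrak b'|^{(r-\frac12)(1-\widetilde\delta_0)}$ against the weights $|\lambda_{\mathfrak b}\lambda_{\mathfrak b'}||\mathfrak b\mathfrak b'|^{1/2}$, which gives \eqref{eq:thetakappa}. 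Explicitly, $\theta_0=\frac{r-2}{2r^2-3r+2}$ for $r=3,4$ and $\theta_0=\frac{r}{2r^2+3r-2}$ for $r>4$, and $\theta_0/(1+\theta_0)$ is exactly the stated constant in each case. If $\frac{3r-2}{4r-2}$ really governed $r=3,4$, you would obtain $\frac{r}{2r^2+4r-2}$ there, e.g.\ $3/28$ instead of $1/12$ for $r=3$. So your ``$(r-2)/(2r-2)$-type'' guess does not follow from the mechanism you describe.

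Several steps also remain undone:
\begin{itemize}
\item checking that the first-moment constraint ($\theta'_0$ in \cref{prop:mol1stmom}) is weaker than the second-moment one;
\item the local computation $C_0^2E_0/D_0=\prod_{v\notin S}\big(1-q_v^{-2}-q_v^{-3}a_r(q_v^{-1})/b_r(q_v^{-1})\big)$;
\item the sign checks on $a_r$ and $b_r$ that yield the stated bounds on $u_v(r)$.
\end{itemize}
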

The proof of \cref{Nonvanishing-sq-free-alt} and \cref{Nonvanishing-r-th-power-free-alt} is given in \S\ref{sec:non-vanishing}.
\begin{rem}\label{rem:non-vanishing-alt}
Since the density of square-free ideals among $r$-th power-free ideals in $I(S)$
is $\zeta_F^S(r)/\zeta_F^S(2)$, \cref{Nonvanishing-sq-free-alt} already gives a positive proportion
of non-vanishing for $r$-th power-free ideals. For $F=\Q(\mu_{2r})$,
we show in \S\ref{sec:non-vanishing} that the product in
the definition of $\alpha_r$ is at least $1-\zeta(3)/r^2$ if $r\ge 4$ (clearly it is at least 1 if $r=3$), which shows that
the proportion of non-vanishing in \cref{Nonvanishing-r-th-power-free-alt} is greater than what one obtains trivially from \cref{Nonvanishing-sq-free-alt}.

\end{rem}
\end{para}

\subsection{Prior and related developments}
The fundamental problem of establishing asymptotic formulas for integral moments of $L$-functions on the critical line has a long history, going back to the 1916 work of Hardy and Littlewood. General conjectures for the leading-order asymptotics of the moments of $L$-functions on the critical line within classical families, based on random matrix calculations for the characteristic polynomials of matrices from the classical groups, were developed by   Keating and Snaith \cite{KeSn00b, KeSn00a}; see also Conrey and Farmer \cite{CF00}. These predictions were grounded in Katz and Sarnak's influential discovery \cite{KS99a}, stemming from their analysis in the function field setting, that the local distribution of zeros within families of $L$-functions should be governed by the scaling limits of eigenvalue distributions of classical compact groups attached to the families. The asymptotic predictions for the moments of $L$-functions were further refined  to include the full main terms in the asymptotics by the first author in joint work with Goldfeld and Hoffstein \cite{DGH03}, as well as by Conrey, Farmer, Keating, Rubinstein, and Snaith \cite{CFKRS}.

The symplectic family of real primitive Dirichlet characters has  drawn considerable attention over the past forty years. The full moment conjecture of Conrey, Farmer, Keating, Rubinstein, and Snaith \cite{CFKRS} predicts

\[
\sum_{d < X}L(\tfrac{1}{2}, \chi_d)^{k} \sim XQ_k(\log X) \qquad \text{(as $X \to \infty$)}
\]
for an explicit polynomial $Q_k$
 of degree $k(k+1)/2$, whose leading coefficient was specified by Keating and Snaith \cite{KeSn00b}. As with the zeta-function, this is known unconditionally only for small $k$: Jutila \cite{Jut81} (first and second moment), Soundararajan \cite{S00} (third moment), with improved error terms later obtained in \cite{DGH03,Young13}; Shen and Stucky \cite{Sh-St24}  obtained an asymptotic formula with four main terms for the fourth moment. The existence of extra terms for $k=3$, conjectured in \cite{DGH03}, was confirmed in \cite{Di-Wh21},  where an asymptotic formula with an additional secondary term was established for a smoothed version of the third moment.

While the moments of the family of quadratic Dirichlet $L$-functions have seen significant structural advancements, computing the moments for families of $L$-functions associated with higher-order residue symbols (cubic, quartic, and general $r$-th order characters) presents fundamentally different analytic difficulties.
Unlike in the quadratic case, the study of unitary families of higher-order characters is complicated by the presence of Gauss sums.
 Using the theory of multiple Dirichlet series, Friedberg, Hoffstein, and Lieman \cite{FHL03} computed the first moment \emph{with certain correction factors} for the family of Hecke $L$-functions  attached to  $r$-th order characters over a number field containing the $r$-th roots of unity. In \cite{Di04}, the first named author used the meromorphic continuation of the multiple Dirichlet series resulting from the Rankin--Selberg convolution of a metaplectic Eisenstein series on the $r$-fold cover of $\mathrm{GL}_2$ with itself to compute the second moment of the same family, \emph{with similar correction factors}. In \cite{BGL14}, Blomer, Goldmakher, and Louvel established a large sieve inequality for $r$-th order residue symbols and used it to obtain a zero-density result and a lower bound for the non-vanishing of the central values of the associated Hecke $L$-functions. Assuming the Generalized Riemann Hypothesis for the $L$-functions attached to the cubic residue symbols $\chi_a$ over the Eisenstein quadratic number field $\Q(\omega)$ ($\omega = e^{2 \pi i /3}$), David and G\"{u}lo\u{g}lu \cite{DG22} used the one-level density to show that $L(\tfrac{1}{2}, \chi_a) \neq 0$ for a positive proportion (at least $2/13$) of the characters $\chi_a$ with $a$ square-free and $a \equiv 1 \pmod 9$; see also \cite{GY} for the family of Hecke $L$-functions associated to primitive cubic characters over $\Q(\omega)$.

More recently, David, de Faveri, Dunn, and Stucky \cite{DFDS24} and Castillo, de Faveri, and Dunn \cite{CFD26} used the method of first and second mollified moments, as previously employed by Soundararajan \cite{S00} in the quadratic case, to obtain unconditional positive proportions of non-vanishing at the central point for families of cubic and quartic Hecke $L$-functions, respectively. Specifically, these works rely on the asymptotic evaluation of the second mollified moment with a power-saving error term for cubic residue symbols over the Eisenstein field $\Q(\omega)$ and quartic residue symbols over the Gaussian field $\Q(\sqrt{-1})$.

\subsection{Function field analogues}
An important feature of families of $L$-functions associated with any fixed higher-order residue symbol is that their moments admit perfect function field analogues. For instance, following the ``recipe'' developed in \cite{CFKRS}, Andrade and Keating \cite{AK14} obtained the corresponding conjectural asymptotics for all moments of the quadratic family; see also \cite{Ru-Wu15}.

Very recently, Bergstr\"om, Petersen, Westerland, and the first author \cite{BDPW23} computed the stable homology of the braid group with coefficients in any Schur functor applied to the integral reduced Burau representation. Combined with a recent homological stability theorem of Miller--Patzt--Petersen--Randal-Williams \cite{MPPRW24}, these homological results confirmed the Conrey--Farmer--Keating--Rubinstein--Snaith and Andrade--Keating predictions for all sufficiently large prime powers $q$. Prior to \cite{BDPW23} and \cite{MPPRW24}, asymptotics in the quadratic case were known only for the first four moments, largely due to the work of Florea \cite{Flo17a, Flo17b, Flo17c}; previously, Hoffstein and Rosen \cite{Ho-Ro92} obtained an asymptotic formula for the first moment, albeit with a weaker error term. By using one-level density, Bui and Florea \cite{BF} established that at least 94\% of quadratic Dirichlet $L$-functions over rational function fields do not vanish at the central point. A refined asymptotic formula for a weighted version of the fourth moment with infinitely many secondary terms was established in \cite{DPP23}, and an asymptotic formula for the third moment with a secondary term, analogous to that in \cite{Di-Wh21}, was proved in \cite{Di19}.

The study of asymptotics for the mean values of $L$-functions associated with families of higher-order residue symbols over rational function fields, as well as the non-vanishing of their central values, has garnered significant attention recently. Much of the existing research has concentrated solely on the first moment, one-level density,
and non-vanishing results; see \cite{DFL21, DFL22, DM23, DFL25, ELS}. Recently, Goel and Ray
\cite{GR} established an asymptotic formula for the second moment of cubic $L$-functions over the rational
function field $\F_{q}(T)$ when $q \equiv 2 \pmod 3$; see also \cite{HZ25} for a twisted version of the second moment. As far as the authors know, this is the only existing asymptotic formula for the second moment of $L$-functions associated with higher-order residue symbols in the function field setting. In \cite{DMPPW}, certain results of \cite{BDPW23} and \cite{MPPRW24} will be generalized to higher-degree cyclic extensions, establishing asymptotics for arbitrary moments of $L$-functions for any fixed higher-order residue symbol.

As alluded to above, all the results described admit direct function field counterparts, which can be established using the same framework developed in this paper. A significant advantage in the function field context is the availability of the Riemann Hypothesis, which can be exploited to quantitatively sharpen these results. For example, in light of the discussion in \cref{cubic-case-assum-GRH-alt}, the asymptotic formula for the second moment in \cref{Main-Thm1-alt} can be upgraded to capture the second-order term for cubic residue symbols over rational function fields $\F_{q}(T)$ with $q \equiv 1 \pmod 3$ (the Eisenstein case). Furthermore, the positive proportions in our non-vanishing theorems could be structurally improved in this setting by using level density estimates. Nevertheless, to keep the present manuscript at a reasonable length and to maintain our primary focus on the number field setting, we shall refrain from developing these function field extensions here.

\subsection{The overall strategy of the proof}\label{sec:strategy-alt}
To illustrate our method, we outline the proof of the twisted version of the asymptotic formula presented in \cref{Main-Thm1-alt}.

Let the notation be as above. Let $\psi_{1},  \psi_{2}$, and $\rho$ be Hecke characters unramified outside $S$. For $\mathbf{s} = (s_1, s_2, s_3) \in \C^3$ with real parts larger than $1$, consider
$$
Z_{1}^S(\mathbf{s}; \psi_{1},  \psi_{2}, \rho) =
\sum_{\ai \in I(S)}\frac{\mu^2 (\ai)L^{S}(s_1, \psi_{1}  \chi_{\ai})L^{S}(s_2, \overline{\psi}_{2}\overline{\chi}_{\ai})\rho(\ai)}{|\ai|^{s_3}}.
$$
We will deduce the theorem from the analytic properties of this function, namely its meromorphic continuation, the location of its poles, and the associated residues. For our purposes, it will suffice to restrict our attention to the region where $\Re(s_1), \Re(s_2) \ge \frac{1}{2}$. To accomplish this, we follow the strategy developed in \cite{Di19} and \cite{Di-Wh21}, expressing the function $Z_{1}^S$ in terms of Weyl group multiple Dirichlet series, whose analytic properties are much better understood.

To describe this in the present context, for an ideal $\ai \in I(S)$, let $\ai_0$ denote its $r$-th power-free part, and let $\ai_1 = \prod_{\mathfrak{p}_{v} \mid \ai_{0}} \mathfrak{p}_{v}$ denote the square-free kernel of $\ai_0$. We also set $S_\ai = \{v : \mathrm{ord}_{v}(\ai) > 0\}$. For a square-free ideal $\mathfrak{h} \in I(S)$, define
$$
Z^S(\mathbf{s}; \psi_{1},  \psi_{2}, \rho; \mathfrak{h})
\, = \sum_{\substack{\ai \in I(S)\\ \mathfrak{h} \mid  \ai \ai_{1}^{-1}}}
L^{S}(s_{1}, \psi_{1}  \chi_{\ai_{0}})L^{S}(s_{2}, \overline{\psi}_{2}\overline{\chi}_{\ai_{0}})
Q_\ai(s_1, s_2; \psi_{1}, \psi_{2})\rho(\ai)|\ai|^{- s_{3}},
$$
where $Q_\ai(s_1, s_2; \psi_{1}, \psi_{2})$ is the Dirichlet polynomial defined in subsection \ref{sec:wZ}. Since these polynomials equal $1$ whenever $\ai$ is square-free, it follows by M\"obius inversion that
$$
Z_{1}^S(\mathbf{s}; \psi_{1},  \psi_{2}, \rho) = \sum_{\mathfrak{h} \in I(S)}\mu(\mathfrak{h})Z^S(\mathbf{s}; \psi_{1},  \psi_{2}, \rho; \mathfrak{h}).
$$
This equality holds for $\Re(s_i) > 1$ ($i = 1,2,3$). Note that for the purpose of establishing this identity, the precise definition of the polynomials $Q_\ai$ on non-square-free ideals is immaterial.

For a fixed square-free ideal $\hi$, each element of the family $(Z^S(\sb; \psi_1,  \psi_2, \rho; \hi))_{\psi_1, \psi_2, \rho}$ can be expressed, as in \cite{Di19, Di-Wh21}, as a finite linear combination of multiple Dirichlet series of the form $
\wZ^{S \cup S_\hi}(\sb; \psi_1\chi_\ff, \psi_2\chi_\ff, \rho \chi_\gi)
$,
for coprime $r$-th power-free ideals $\ff, \gi \in I(S)$ whose prime factors divide $\hi$ (i.e., $S_\ff, S_\gi \subseteq S_\hi$). The precise explicit formula for this decomposition is provided in \cref{prop: sieving}. The family
$
\Eu{F} = (\wZ^{T}(\sb; \psi_1, \psi_2, \rho))_{\psi_1, \psi_2, \rho}
$,
parametrized by Hecke characters $\psi_1, \psi_2, \rho$ unramified outside a finite set of places $T$ containing $S$,
is a family of Weyl group multiple Dirichlet series constructed from twisted Kubota series
$$
D^T(s, \ai, \psi) = \sum_{\bi\in I(T)} \psi(\bi)G(\ai, \bi) |\bi|^{-s} \quad \text{($\ai \in I(T)$),}
$$
where $G(\ai, \bi)$ are certain (normalized) generalized $r$-th order Gauss sums (see \cite{Di04, FHL03, BGL14}). This construction was carried out in \cite{Di04} and is based on studying the Rankin--Selberg convolution series
\[
	Z_\aux^T({\sb}; \psi_1, \psi_2, \rho)  =\sum_{\ai\in I(T)}	\frac{D^{T}(s_1, \ai, \psi_1 \rho) \overline{D^{T}(\overline{s_2}, \ai, \psi_2 \overline{\rho})}\, \overline{\rho}(\ai)}{|\ai|^{s_3}}.
\]
It turns out that each element of $\Eu{F}$ satisfies a finite group of functional equations involving all elements of the family, and admits a meromorphic continuation to $\C^3$. One essential ingredient in this process is the meromorphic continuation to all of $\C$ and the functional equation of the function
$$
\widetilde{D}^{T}(s, \ai, \psi) = \zeta_{F}^{T}(rs - r/2  + 1)D^T(s, \ai, \psi)
\quad \text{($\psi^r = 1$),}
$$
established in \cite{BB06}; the function
$
\big(s - \tfrac{1}{2} - \tfrac{1}{r}\big)\big(s - \tfrac{1}{2} + \tfrac{1}{r}\big)\widetilde{D}^{T}(s, \ai, \psi)
$
is entire of order $1$, and hence it satisfies a convexity bound. Working out the explicit bound in the critical strip is somewhat subtle; it is achieved here via a recursive argument starting from the set of places $S$ in combination with the functional equation of the function $\widetilde{D}^{S}(s, \ai, \psi)$.

\begin{rem}\label{rem:Qi}
Our restriction to algebraic number fields containing the group $\mu_{2r}$ of $2r$-th roots of unity is made, for simplicity, to apply directly the results in \cite{BB06}. In principle, this restriction can be removed with additional technical work. For example, the case $r=4$ and $F=\Q(\sqrt{-1})$ has been explicitly worked out in \cite{Di04}, allowing twisting by general Hecke characters. In  \cite{CFD26}, Castillo, de Faveri, and Dunn also treated the case of Hecke $L$-functions associated to quartic twists of the congruent number curve $y^2=x^3-x$, and proved that in this family a positive proportion of the quartic twists have Mordell--Weil rank 0 over $\Q(\sqrt{-1})$.
\end{rem}

By applying a sequence of functional equations and convexity bounds at each step, we have the estimate (see \cref{prop:convexity})
\[\begin{aligned}
(s_3-1)^2 &(s_3-1/2-1/r)^2\wZ^{S\cup S_\hi}\big(\tfrac 12, \tfrac 12, s_3; \psi_1 \chi_{\ff}, \psi_2 \chi_{\ff}, \rho\chi_\gi \big)\\
&\ll_\eps |\hi|^{(r-1)(1-\sigma)+\eps}
(1+|s_3|)^{4+rd(1-\sigma)+\eps}\max_{\psi\in \widehat{R}_\ci}\sum_{\ti|\hi^{r-1}}
\sum_{\substack{\ai \in I(S)_{0}\\ (\ai,\ti)=1}}\frac{|L(1/2, \psi\chi_{\ai\ti})|^2}{|\ai|^{1+\eps}}
\end{aligned}
\]
for $\sigma=\Re (s_3)\in(0,1)$, where $d=[F:\Q]$. Here $R_{\ci} =H_{\ci} \otimes \Z/r\Z$, where $H_{\ci}$ is the ray class group modulo the ideal $\ci$ introduced above. Thus, each function $Z^S(\sb; \psi_1,  \psi_2, \rho; \hi)$ admits a meromorphic continuation to $\C^3$. Moreover, the corresponding estimate for these functions for $\sigma \in (0,1)$ can be easily obtained.

Using the analytic properties of the functions $Z^S(\sb; \psi_{1},  \psi_{2}, \rho; \hi)$ ($\hi \in I(S)$ square-free), we seek to determine the region of absolute convergence of the series
$$
\sum_{\mathfrak{h} \in I(S)}\mu(\mathfrak{h})Z^S\big(\tfrac 12, \tfrac 12, s_3; \psi_{1},  \psi_{2}, \rho; \mathfrak{h}\big)
$$
for $\sigma > \frac{1}{2}$, away from the poles at $s_3 = \frac{1}{2} + \frac{1}{r}, 1$. It is precisely at this point that we need the large sieve inequality for higher-order residue symbols \cite[Theorem 1.3]{BGL14} to obtain the estimate
$$
\sum_{\substack{\ai\in I(S)_0\\ (\ai,\ti)=1}}\frac{|L(1/2, \psi\chi_{\ai\ti})|^2}{|\ai|^{1+\eps}}\ll |\ti_1|^{\frac{1}{2}+\eps},
$$
where $\psi$ is a Hecke character unramified outside $S$ of order dividing $r$, and $\ti \in I(S)$ is $r$-th power-free (see \cref{prop:estimate at1}). This bound yields the continuation of $Z_{1}^S(\tfrac 12, \tfrac 12, s_3; \psi_{1},  \psi_{2}, \rho)$ to the half-plane $\sigma>\frac{2r+1}{2r+2}$ with one double pole at $s_3 = 1$. When $r = 3$, we use \cite[Proposition 4.2]{DFDS24} to show that the series
$$
\sum_{\substack{\ai,\ei \in I(S)_0\\ (\ai,\ei)=1}}|\ei|^{2-4\sigma}
\frac{|L(1/2, \psi\chi_{\ai\ei^2})|^2}{|\ai|^{1+\eps}}
$$
converges for $\sigma>5/6$. This pushes the continuation of $Z_{1}^S(\tfrac 12, \tfrac 12, s_3; \psi_{1}, \psi_{2}, \rho)$ to the half-plane $\sigma>5/6$.

With the analytic continuation established, the asymptotic formula for the twisted second moment (summed over square-free ideals $\ai \in I(S)$) is derived via a standard contour integration argument. The argument relies on the fact that, away from the double pole at $s_3=1$, the function $Z_{1}^S(\tfrac 12, \tfrac 12, s_3; \psi_{1}, \psi_{2}, \rho)$ exhibits polynomial growth in any vertical strip $\frac{2r+1}{2r+2} < \sigma < C$ (and $5/6 < \sigma < C$ when $r = 3$). The main term of the asymptotic formula is then dictated by the principal part of this function, which is extracted by explicitly computing the residues at the simple poles of the constituent Weyl group multiple Dirichlet series $\wZ^{S \cup S_\hi}(\sb; \psi_1\chi_\ff, \psi_2\chi_\ff, \rho \chi_\gi)$.

Finally, once the smoothed asymptotics for the first two twisted moments are established -- with the first moment being treated by an entirely analogous method -- the non-vanishing theorems can be deduced. Following the method in \cite{S00}, \cite{DFDS24}, and \cite{CFD26}, these non-vanishing results are obtained using a similarly constructed mollifier. Furthermore, to optimize the error terms in the case of $r$-th power-free ideals (\cref{Main-Thm2-r-th-power-free-alt} and \cref{Thm5-first-moment-r-th-power-free-alt}), we must introduce an additional technical refinement. Specifically, we implement a delicate inductive argument that allows us to sharpen the convexity bounds with respect to one of the underlying parameters, thereby yielding the better error thresholds asserted in our results.

\begin{rem}[Final remark]
Compared to previous approaches, the main advantage of our method lies in the study of the fundamental building blocks $\wZ^{S \cup S_\hi}(\sb; \psi_1\chi_\ff, \psi_2\chi_\ff, \rho \chi_\gi)$. By analyzing these constituent series, we capture deeper analytic information that is ``invisible'' at the level of the fully sieved object $Z_{1}^S(\mathbf{s}; \psi_{1},  \psi_{2}, \rho)$. This phenomenon is well illustrated by the study of smoothed asymptotics for the cubic moment of quadratic Dirichlet $L$-functions. For that problem, traditional methods yielded an asymptotic formula with an error term of $\ll_{\varepsilon} X^{3/4 + \varepsilon}$ \cite{Young13}. In contrast, the approach utilizing Weyl group multiple Dirichlet series extracted an asymptotic formula with a second-order main term and the sharper error term of $\ll_{\varepsilon} X^{2/3 + \varepsilon}$ \cite{Di-Wh21}. At present, this multiple Dirichlet series framework remains the only known method capable of detecting this second-order term.
\end{rem}

 \subsection{Acknowledgments} We thank Alexander Dunn for some helpful comments
 and for bringing several relevant references to our attention.
 A.D., V.P. and A.P. were supported by the project CF159/31.07.2023, ``Group schemes, root systems, and related representations'', funded by the European Union - NextGenerationEU through the PNRR call no. PNRR-III-C9-2023-I8. B.I. was partially supported by  the  Simons Foundation grant 420882.

\section{Notation and definitions}\label{sec:notation}
\subsection{Notation related to ideals}
Let $F$ be a number field containing the $2r$-th order roots of unity, and let $\OO$ be its ring of integers. Fix a set $S$ of places containing the archimedean places, the ramified places, the places dividing $r$, and such that the ring of $S_f$-integers $\OO_S$ has class number 1,
with $S_f$ the set of finite places in $S$.
For an integral ideal $\bi\subset \OO$, we denote by $S_\bi$ the set of finite places $v$ with $\ord_v(\bi)>0$.
We denote by $I(S)$ the set of integral ideals $\bi$ such that $S_\bi\cap S=\emptyset$.

For an ideal $\mathfrak{a} \in I(S)$, let $\ai_0$ denote the $r$-th power-free part
  of $\ai$ and let $\ai_1$ denote the square-free kernel of $\ai_0$, namely $\ai_1=\prod_{\pp_v|\ai_0} \pp_v$.
  We denote by $I(S)_0$ and $I(S)_1$, respectively, the subsets of $I(S)$ consisting of
  $r$-th power-free and square-free ideals.

 For an integral ideal $\ai$, we denote by $\rad \ai$ the product of the prime ideals dividing it.
 If $\ai$ is $(r+1)$-th power-free, we set $\ov{\ai}=(\rad\ai)^r/\ai$.

 For $\ai,\bi$ integral ideals, we denote by $\ai^{(\bi)}=\prod_{v\notin S_\bi} \pp_v^{\ord_v(\ai)}$ the part of $\ai$ coprime to $\bi$.

\subsection{Character data}\label{sec:char_data}
The definition of the characters $\chi_\mi$ for ideals $\mi\in I(S)$ depends on some choices that we fix throughout,
following \cite[p.318]{FHL03}.

Let $\ci=\prod_{v\in S_f} \pp_v^{n_v}$, where $n_v=1$ if $\pp_v\nmid r$, and
for $\pp_v\mid r$ the exponents $n_v$ are large enough such that,
if $a\in F_v$ has $\ord_v(a-1)\ge n_v$, then $a\in (F_v^\times)^{r}$.

Let $R_{\ci} =H_{\ci} \otimes \Z/r\Z$, where $H_{\ci}$ is the ray class group modulo the ideal $\ci$ fixed above.
Let $\EE\subset I(S)$ be a full set of representatives for $R_\ci$, and for $E\in \EE$ choose
 $m_{\sss E}\in \OO$ with $E\OO_S=m_{\sss E} \OO_S$. We can choose the elements $E$ and $m_{\sss E}$ as follows \cite[p.318]{FHL03}: writing $R_\ci$ as a direct product of cyclic groups, fix a set of representatives $\EE_0\subset I(S)$ for the generators of these cyclic groups, and for $E_0\in \EE_0$ let $m_{E_0}$ be a generator of $E_0$ as an $\OO_S$-ideal. Then the set $\EE$ consists
 of a complete set of representatives for $R_\ci$ of the form $E=\prod_{E_0\in \EE_0} E_0^{n_{E_0}}$ with $n_{E_0}\ge 0$,
 and for such an $E$ we choose $m_E=\prod_{E_0\in \EE_0} m_{E_0}^{n_{E_0}}$ as a generator of $E\OO_S$.

For any fractional ideal $\ai$ coprime to $\ci$, we denote by $[\ai]\in \EE$ the unique representative in the same ideal class.

	\section{Preliminaries on Gauss sums and the \texorpdfstring{$r$}~-th order residue symbol}

\begin{para}
We recall the construction of the $r$-th order Hecke characters $\chi_\mi$, for ideals $\mi\in I(S)$,
following Friedberg, Hoffstein, and Lieman \cite{FHL03}--see also the more recent
treatments~\cite{GL13, BGL14}.
The main result of \S\ref{sec:Gauss sums} establishes the connection between the
Gauss sum $G(\chi_{\mi \ni})$ and the product $G(\chi_{\mi}) G(\chi_\ni)$ for $\mi$, $\ni\in I(S)$ coprime. We assume $r>2$,
as in this case the Hecke characters $\chi_\mi$ have trivial infinity type.
\end{para}

\subsection{The \texorpdfstring{$r$}~-th order characters}\label{sec: res symbols}
\begin{para}
For any $a\in F^\times$ and any ideal $\bi\in I(S)$ coprime to $a$, the $r$-th power residue symbol $\left(\frac{a}{\bi}\right)$ is an $r$-th root of unity defined in terms of the Frobenius substitution at the primes dividing $\bi$ in the extension
$F(\sqrt[r]{a})\slash F$--see e.g. \cite[Ch. VI \S 8]{N} or \cite[Exercises]{CF} for its definition and properties.
If $\bi=(b)$ is principal, we also write  $\big(\frac ab\big) =\big(\frac a\bi\big)$.
\end{para}

\begin{para}
	We now define the $r$-th order character $\chi_\bi$ for an integral ideal $\mathfrak{b}\in I(S)$, following~\cite{FHL03}.
The definition depends on the choices fixed in \S\ref{sec:char_data}.

	For $\ai\in I(S)$ coprime to $\bi$, we write
	$
	\mathfrak{b} = (b)E\mathfrak{g}^r
	$
	with $E \in \mathscr{E}$, $b\in F^{\times}, b \equiv 1 \pmod{\ci}$, and
	$\mathfrak{g}\in I(S)$ such that $(\mathfrak{a}, \mathfrak{g}) = 1$.
	Since $\OO_S$ has class number one, there is $x\in\OO$ such that
	$\mathfrak{a} = (x)$ as $\mathscr{O}_{S}$-ideals, and using the $r$-th power residue symbol
	we define
	\be\label{eq: char def}
	\chi_{\mathfrak{b}}(\mathfrak{a}) =\psym{b m_{\sss E}}{\ai}=\psym{b m_{\sss E}}{x}.
	\ee
	 The reciprocity law gives
	\be\label{eq: rec}
	\chi_{\mathfrak{b}}(\mathfrak{a})= (x, m_{\scalebox{1.}{$\scriptscriptstyle E$}})_{\sss S}
	\Big(\frac{x}{bm_{\scalebox{1.}{$\scriptscriptstyle E$}}}\Big)
	\ee
	where $(\cdot, \cdot)_{\sss S}=\prod_{v\in S_f} (\cdot, \cdot)_v$ is the product of local Hilbert symbols.
	By the choice of $\ci$, we have
	 $(y, m_{\sss E})_{\sss S} = 1$, \!if $y\in F^{\times}$ and $y \equiv 1 \pmod{\mathfrak{c}}$. It follows that
	 \[
	 \psi_{\sss E}(\cdot)=(\cdot, m_{\scalebox{1.}{$\scriptscriptstyle E$}})_{\sss S}
	\]
	 is periodic on $ F(\mathfrak{c}) : = \{x \in  F^{\times} : ((x), \mathfrak{c}) = 1\}$
	of period $\mathfrak{c}$. Let $\mathfrak{c} \subseteq \mathfrak{c}_{\scalebox{1.}{$\scriptscriptstyle E$}}$ be the largest integral ideal with this property.

	We set $\chi_\bi(\ai)=0$ if $(\ai,\bi)\ne 1$.
\end{para}
\begin{para}
	The character $\chi_\bi$ is a Hecke character of trivial infinity type, as $r >2$~\cite{GL13,BGL14}.
	Its conductor is $\bi_1\ci_{\sss E}$, where $\bi_1$ denotes the product of the primes
	dividing the $r$-th power-free part of $\bi$, and we can also rewrite the reciprocity law as follows
	\[
	\chi_\bi(\ai)=\widetilde{\chi}_\bi(x) \psi_{\sss E}(x),
	\]
	where $\widetilde{\chi}_\bi(\cdot):=\big(\frac\cdot{bm_{\sss E}}\big)$ is a \emph{Dirichlet} character of conductor $\bi_1$.
\end{para}
\begin{para}
    We need the following explicit reciprocity law.
 \begin{lem}\label{lem: reciprocity}
     Let $\mathfrak{a}, \mathfrak{b} \in I(S)$ be coprime and $r$-th power-free ideals.
     Let $E = [\mathfrak{b}]$ and $G = [\mathfrak{a}]$. Then
     \be\label{eq: reciprocity}
	\chi_{\mathfrak{b}}(\mathfrak{a}) = \chi_\ai(\bi) (m_{\sss G}, m_{\sss E})_{\sss S}.
	\ee
 \end{lem}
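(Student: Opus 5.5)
The plan is to reduce \eqref{eq: reciprocity} to the Hilbert product formula $\prod_v (x,y)_v = 1$ for two suitable global generators, and then check that every local contribution except $(m_{\sss G}, m_{\sss E})_{\sss S}$ is trivial.

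\emph{Step 1: choice of generators.} Using the definition \eqref{eq: char def}, write $\bi = (b)E\mathfrak{g}^r$ and $\ai = (a)G\mathfrak{h}^r$, with $b, a \equiv 1 \pmod{\ci}$. We may take $\mathfrak{g}$ coprime to $\ai$ and $\mathfrak{h}$ coprime to $\bi$, enlarging $\mathfrak{g}, \mathfrak{h}$ by $r$-th powers of auxiliary ideals if necessary. Since $\OO_S$ is a PID, choose $\gamma, \delta$ with $\mathfrak{g}\OO_S = \gamma\OO_S$ and $\mathfrak{h}\OO_S = \delta\OO_S$. Set
\[
y = b\, m_{\sss E}\,\gamma^r, \qquad x = a\, m_{\sss G}\,\delta^r .
\]
Then $\bi\OO_S = y\OO_S$ and $\ai\OO_S = x\OO_S$. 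By \eqref{eq: char def}, and because $r$-th powers have trivial residue symbol,
\[
\chi_\bi(\ai) = \psym{b m_{\sss E}}{\ai} = \psym{y}{\ai}, \qquad \chi_\ai(\bi) = \psym{x}{\bi}.
\]
Here the residue symbol with respect to the $\OO$-ideal only involves the primes outside $S$, where $\ai$ and $x$ (respectively $\bi$ and $y$) have the same valuations.

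\emph{Step 2: product formula.} Apply $\prod_v (x,y)_v = 1$. Since $\mu_{2r}\subset F$, the field $F$ is totally complex, so the archimedean Hilbert symbols are trivial. At a place $v\notin S$, both $x$ and $y$ are units unless $v\mid \ai$ or $v \mid \bi$. These sets are disjoint because $\ai, \bi$ are coprime. The tame symbol formula gives:
\begin{itemize}
\item $(x,y)_v = \psym{y}{\pp_v}^{\pm\ord_v(x)}$ for $v\mid\ai$;
\item $(x,y)_v = \psym{x}{\pp_v}^{\mp\ord_v(y)}$ for $v\mid\bi$;
\item $(x,y)_v = 1$ at all other $v\notin S$.
\end{itemize}
Multiplying these, the places outside $S$ contribute $\psym{y}{\ai}^{\pm1}\psym{x}{\bi}^{\mp1}$. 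The product formula therefore yields
\[
\psym{y}{\ai} = \psym{x}{\bi}\cdot (x,y)_{\sss S}^{\epsilon}, \qquad \epsilon=\pm1,
\]
with the sign fixed by the normalization of the Hilbert symbol in \eqref{eq: rec}.

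\emph{Step 3: the local factor at $S$.} Expand $(x,y)_{\sss S}$ bimultiplicatively. Any factor with $\gamma^r$ or $\delta^r$ in one slot is an $r$-th power of a Hilbert symbol, hence $1$. Any factor with $a$ or $b$ in one slot is also $1$, because $a, b \in (F_v^\times)^r$ for each $v\in S_f$. For $v\mid r$ this is exactly the choice of $n_v$. For $v\nmid r$ we have $n_v = 1$; then $\ord_v(b-1)\ge 1$ makes $b$ a unit congruent to $1$ mod $\pp_v$, which is an $r$-th power by Hensel's lemma since $v\nmid r$. What remains is $(m_{\sss G}, m_{\sss E})_{\sss S}^{\epsilon}$. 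Using $(u,w)_v^{-1} = (w,u)_v$, this becomes $(m_{\sss G}, m_{\sss E})_{\sss S}$ with the orientation in \eqref{eq: reciprocity}.

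\emph{Main obstacle.} The only delicate point is bookkeeping the sign/orientation conventions: the tame symbol at $v\notin S$, the convention for the power residue symbol, and the normalization already used in \eqref{eq: rec}. I would fix these by checking against \eqref{eq: rec} itself, which is the special case $\ai = (x)$ with $G$ trivial. This forces $\epsilon$ so that the factor comes out exactly as $(m_{\sss G}, m_{\sss E})_{\sss S}$.
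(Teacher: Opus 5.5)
Your proposal is correct and is essentially the paper's proof. The paper arranges $(\mathfrak g,\ai)=1$ by an explicit CRT argument, then applies the $S$-reciprocity law $\psym{bm_{\sss E}}{x}=(x,bm_{\sss E})_{\sss S}\psym{x}{bm_{\sss E}}$ (i.e.\ \eqref{eq: rec}, the Hilbert product formula of your Step~2) to a generator $x$ of $\ai\OO_S$, and leaves implicit exactly the simplifications of your Step~3.

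One small correction concerns the orientation. The case $G$ trivial cannot fix it, since there the Hilbert-symbol factor is $1$ and says nothing about $\epsilon$. What fixes it is \eqref{eq: rec} read for an arbitrary generator $x$: it is your Step~2 identity for the pair $x$, $bm_{\sss E}$, and it forces $\epsilon=+1$.
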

\begin{proof}
Write $\mathfrak{b} = (b)(m_{\sss E})\mathfrak{g}^r$, with $ m_{\sss E}\in \OO$, $b\in F^{\times}, b \equiv 1 \pmod{\mathfrak{c}}$.

We can assume $(\mathfrak{g}, \mathfrak{a}) = 1$. To show this, let $\mathfrak{p} \mid (\mathfrak{g}, \mathfrak{a})$, and let
$\mathfrak{m}$ be an ideal of $\OO$, with $(\mathfrak{m}, \mathfrak{c}  \mathfrak{p}) = 1$, such that $\mathfrak{p} \mathfrak{m} = (\gamma_0)$, $\gamma_0 \in \OO$.
By the Chinese remainder theorem, we can find an integral element $\gamma$ such that
$\gamma \equiv 1 \pmod {\mathfrak{c}}$ and $\gamma \equiv \gamma_0 \pmod{\mathfrak{p}^2}$; indeed, since $(\mathfrak{c}, \mathfrak{p}^2) = 1$, we can
write $1 = a_1 + a_2$ with $a_1 \in \mathfrak{c}$ and  $a_2 \in \mathfrak{p}^2$. Then take $\gamma = \gamma_0 a_1 + a_2$.
This yields a different representation $\mathfrak{b} = (b')(m_{\sss E})\mathfrak{g}'^r$, where
we take $b' = b \gamma^{r v_\mathfrak{p}(\mathfrak{g})}$ and $\mathfrak{g}' = \mathfrak{g} \gamma^{-v_\mathfrak{p}(\mathfrak{g})}$. Note that $\mathfrak{p} \nmid \mathfrak{g}'$.
In this way, we eliminate all primes dividing both $\mathfrak{a}$ and $\mathfrak{g}$, so we can indeed assume $(\mathfrak{g}, \mathfrak{a}) = 1$.

Then $\mathrm{gcd}(b m_{\sss E}, \mathfrak{a}) = 1$. Let $\mathfrak{a} = (x)$ as $\OO_S$-ideals. Thus
$$
\left(\frac{\mathfrak{b}}{\mathfrak{a}}\right) = \left(\frac{b m_{\sss E}}{x}\right) = (x, b m_{\sss E})_{\sss S}\left(\frac{x}{b m_{\sss E}}\right) = (x, b m_{\sss E})_{\sss S}\left(\frac{x}{\mathfrak{b}}\right)\!,
$$
and the lemma follows.
\end{proof}
\end{para}

	\subsection{Gauss sums}
\begin{para}
	For $\chi$ a primitive Hecke character of conductor $\mi$ with trivial infinity type,
	its (normalized) associated Gauss sum is defined as in~\cite[Sec.12.2]{I}
	\[
	G(\chi)=\chi(\ri)^{-1}|\mi|^{-1/2} \sum_{x\in \ri\slash \ri \mi} \chi( x ) e(\tr(x/y)),
	\]
	where $\ri$ is an integral ideal coprime to $\mi$ such that $\ri\di\mi=(y)$ is principal, for some $y\in \OO$. We
	denote by $\di$ the different of $F$, namely $\di^{-1}=\{x\in F\mid \tr (x) \in \ZZ \}$,
	and we denote by $\chi(x)$ the value of $\chi$ on the principal ideal $(x)$, for $x\in F$.
	The Gauss sum does not depend on the choices of $\ri$ and $y$.
\end{para}
\begin{para}
	Since
		$
		\mathfrak{r} + \mathfrak{m} = \mathscr{O}
		$,
		we have
		$
		\mathfrak{r}/\mathfrak{r}\mathfrak{m} \cong \mathscr{O}/\mathfrak{m}
		$,
		and hence we can replace the sum with a sum over $\{z t\}$, where $t \in \mathfrak{r}, ((t), \mathfrak{m}) = 1$,
		and $\{z\}$ is any system of representatives of
		$\mathscr{O}/\mathfrak{m}$:
		\be\label{eq: Gauss sum Hecke char}
		G(\chi) = \chi( t )
		\overline{\chi}(\mathfrak{r})
		|\mathfrak{m}|^{-1/2}\sum_{z \in
			\mathscr{O}/\mathfrak{m}} \chi( z ) e(\mathrm{Tr}(zt/y)).
		\ee

	The Hecke characters constructed from the $r$-th power residue symbol are trivial on the $S$-units $\OO_S^\times$, and have a natural decomposition as a product of two characters of $(\OO/\mi)^\times$. Therefore it is convenient to extend the definition
	of Gauss sums to Dirichlet characters as follows.

	\begin{defn} For a primitive character $\chi$ of $(\mathscr{O}/\mathfrak{m})^{\times}$, let
	$\Delta = (\mathfrak{r}, x, y, t)$ be the following dataset: $\mathfrak{r}$ is an integral ideal,
	prime to $\mathfrak{c}\mathfrak{m}$, such that
	$\ri\di\mi=(y)$ is principal ($y \in \mathscr{O}$), $t \in \mathfrak{r}$ with $((t), \mathfrak{m}) = 1$, and $\mathfrak{r} = (x)$ as $\mathscr{O}_{S}$-ideals. The associated Gauss sum is:
		\[
		W\!\left(\chi;\frac{t}{y}, \frac{t}{x}\right) : = \chi(t/x) \Eu{N}\mathfrak{m}^{-1/2}
		\sum_{\substack{z \bmod \mathfrak{m}\\ (z, \mathfrak{m}) = 1}}
		\chi(z) e(\mathrm{Tr}(z t/y)).
		\]
	\end{defn}

	Note that $\frac{t}{y} \in \mathfrak{m}^{-1}\mathfrak{d}^{-1}$, so this is the same as the definition in \cite[Def. (6.3)]{N}
	without the factor $\chi(t/x)$. If $\chi$ is a Hecke character of trivial infinity type which is trivial on $\OO_S^\times$, then $\chi(\ri)=\chi(x)$ and
	we have $W(\chi; t/y,t/x)=G(\chi)$ independent of the choice of dataset $\Delta$.
	\end{para}
\begin{para}
	In general, the dependence of the Gauss sum on the
	choice of dataset $\Delta$ is determined in the next lemma.

	\begin{lem}\label{lem-GS-inv}
		For a primitive character $\chi$ of $(\mathscr{O}/\mathfrak{m})^{\times}$ and two datasets $\Delta = (\mathfrak{r}, x, y, t)$ and $\Delta' = (\mathfrak{r}', x', y', t')$, we have
		\[
		W\!\left(\chi;\frac{t}{y}, \frac{t}{x}\right)
		= \chi(\varepsilon)W\!\left(\chi; \frac{t'}{y'}, \frac{t'}{x'}\right)
		\]
		where $\varepsilon = \frac{x'y}{xy'} \in \mathscr{O}_{S}^{\times}$.
	\end{lem}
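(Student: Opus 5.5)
The plan is to compare the two Gauss sums through a change of variables in the summation index $z$. The key observation is that the additive characters $z\mapsto e(\mathrm{Tr}(zt/y))$ and $z\mapsto e(\mathrm{Tr}(zt'/y'))$ on $\OO/\mi$ differ only by multiplication of $z$ by a unit of $\OO/\mi$. Throughout, $\chi$ is regarded as a character on all elements of $F^\times$ that are units at every prime dividing $\mi$, via their reduction in $\prod_{v\mid \mi}\OO_v/\mi\OO_v\cong \OO/\mi$. This makes sense for $t,t',x,x',y/y'$ and $\varepsilon$.

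\emph{Step 1: $\varepsilon\in\OO_S^\times$, and all quantities are units at primes dividing $\mi$.} From $\ri\di\mi=(y)$ and $\ri'\di\mi=(y')$ we get $\ri'\ri^{-1}=(y'/y)$. From $\ri=(x)$ and $\ri'=(x')$ as $\OO_S$-ideals we get $(x'/x)\OO_S=\ri'\ri^{-1}\OO_S$. Hence $\varepsilon=\frac{x'y}{xy'}$ generates the unit ideal in $\OO_S$.
\begin{itemize}
\item Since $\ri,\ri'$ are coprime to $\mi$, the elements $y,y'$ have the same valuation $\ord_v(\di\mi)$ at every $v\mid\mi$, so $y/y'$ is a unit there.
\item Likewise $x,x'$ are units at $v\mid \mi$, because $\mi$ is coprime to $S$ and $\ri,\ri'$ are coprime to $\mi$.
\item Finally $t,t'$ are units at $v\mid\mi$ by hypothesis.
\end{itemize}

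\emph{Step 2: identify the additive characters.} Consider the map $\OO/\mi\to \mi^{-1}\di^{-1}/\di^{-1}$ given by $z\mapsto zt/y$. It is well defined and is an isomorphism of $\OO/\mi$-modules. To see this, check valuations:
\begin{itemize}
\item at $v\mid\mi$, the element $t/y$ has valuation exactly $-\ord_v(\di\mi)$;
\item at $v\mid\ri$, we have $\ord_v(t/y)\ge \ord_v\ri-\ord_v\ri-\ord_v\di=-\ord_v\di$, since $t\in\ri$;
\item at all other $v$, $\ord_v(t/y)\ge -\ord_v\di$.
\end{itemize}
The same holds for $z\mapsto zt'/y'$. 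Two generators of the cyclic $\OO/\mi$-module $\mi^{-1}\di^{-1}/\di^{-1}$ differ by a unit. Therefore $t'/y'\equiv u\,t/y \pmod{\di^{-1}}$ for a unit $u\in(\OO/\mi)^\times$. Comparing locally at $v\mid\mi$ shows that $u$ is the reduction of $\frac{t'y}{ty'}$.

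\emph{Step 3: substitute.} Since $e(\mathrm{Tr}(\cdot))$ is trivial on $\di^{-1}$, we get
\[
\sum_{z}\chi(z)e(\mathrm{Tr}(zt'/y'))=\sum_z\chi(z)e(\mathrm{Tr}(zu t/y))=\overline{\chi}(u)\sum_z\chi(z)e(\mathrm{Tr}(zt/y)).
\]
Multiplying by the prefactors $\chi(t'/x')$ and $\chi(t/x)$ gives
\[
W\!\left(\chi;\tfrac{t}{y},\tfrac{t}{x}\right)=\chi\!\left(\tfrac{t}{x}\right)\chi(u)\,\chi\!\left(\tfrac{t'}{x'}\right)^{-1}W\!\left(\chi;\tfrac{t'}{y'},\tfrac{t'}{x'}\right).
\]
The combined factor is $\chi\!\left(\frac{t}{x}\cdot\frac{t'y}{ty'}\cdot\frac{x'}{t'}\right)=\chi\!\left(\frac{x'y}{xy'}\right)=\chi(\varepsilon)$, which is the claimed identity.

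The only delicate point is the bookkeeping in Step 2: one must verify that $t/y$ has no extra poles at the primes of $\ri$, so that the map is an isomorphism onto $\mi^{-1}\di^{-1}/\di^{-1}$. One must also make sure that $\chi$ is being applied to elements that are genuinely $\mi$-adic units, so that multiplicativity applies.
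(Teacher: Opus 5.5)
Your proof is correct, but it realizes the key step differently from the paper.

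\textbf{The paper's route.} It makes its changes of variables only by elements of $\mathscr O$ prime to $\mathfrak m$. It chooses an auxiliary integral ideal $\mathfrak s$ prime to $\mathfrak m$ with $\mathfrak s\mathfrak r=(a)$ principal, and deduces from $\mathfrak r/\mathfrak r'=(y)/(y')$ that $\mathfrak s\mathfrak r'=(a')$ is principal as well. It then normalizes the generators so that $a/a'=y/y'$ and sets $\delta=t'a$, $\delta'=ta'$. These satisfy the exact identity $\delta t/y=\delta' t'/y'$ in $F$, so the substitutions $z\mapsto\delta z$ and $z\mapsto\delta' z$ relate the two sums. Finally, $\varepsilon$ emerges from comparing generators of $(a)\mathfrak r'=(a')\mathfrak r$ in $\mathscr O_S$.

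\textbf{Your route.} You work modulo $\mathfrak d^{-1}$, using that $t/y$ and $t'/y'$ both generate the free rank-one $\mathscr O/\mathfrak m$-module $\mathfrak m^{-1}\mathfrak d^{-1}/\mathfrak d^{-1}$. You then read off the connecting unit directly as the reduction of $t'y/(ty')$.

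\textbf{Comparison.} It is the same unit in both arguments, since $\delta/\delta'=t'a/(ta')=t'y/(ty')$. Your version dispenses with the auxiliary ideal and the adjustment of generators, and it makes transparent why $\varepsilon=x'y/(xy')$. The cost is the local valuation checks, which you carry out correctly. The paper's version keeps every substitution integral and exact.

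One caveat applies equally to the paper. You justify that $x,x'$ are units at the primes of $\mathfrak m$ by saying $\mathfrak m$ is coprime to $S$, but this is not a hypothesis of the lemma. Indeed, the paper later applies the lemma (via \cref{lem-Hecke-GS-inv} and \cref{lem-Hecke}) to $\psi_E$, whose conductor divides $\mathfrak c$. In that situation, $\chi(t/x)$ and $\chi(\varepsilon)$ with $\varepsilon\in\mathscr O_S^\times$ need a convention for evaluating $\chi$ off the $\mathfrak m$-adic units. The paper's proof relies on the same convention when it evaluates $\chi\bigl(t'a/(a'x)\bigr)$ and $\chi(\varepsilon)$, so this is not a weakness specific to your route.
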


	\begin{proof} Let $\mathfrak{s}$ be an integral ideal, prime to $\mathfrak{m}$,
		such that $\mathfrak{s}\mathfrak{r} = (a)$ is a principal ideal. Then
		\[
		\frac{(a)}{\mathfrak{s}\mathfrak{r}'} = \frac{\mathfrak{r}}{\mathfrak{r}'} =
		\frac{\mathfrak{r}
			\mathfrak{d}
			\mathfrak{m}}{\mathfrak{r}'
			\mathfrak{d}
			\mathfrak{m}} = \frac{(y)}{(y')}
		\]
		from which it follows that $\mathfrak{s}\mathfrak{r}' = (a')$ is an integral principal ideal. Thus, one can find elements $\delta, \delta' \in \mathscr{O}$ such that
		$(\delta\delta', \mathfrak{m}) = 1$ and
		\[
		\frac{\delta t}{y} = \frac{\delta' t'}{y'}.
		\]
		(Upon changing one of the generators of the principal ideals $(a)$ or $(a')$,
		we can assume that $\delta = t' a$ and $\delta' = t a'$.) It follows that:
		\[
		\overline{\chi(\delta)}W\!\left(\chi;\frac{t}{y}, \frac{t}{x}\right)
		= W\!\left(\chi;\frac{\delta t}{y}, \frac{t}{x}\right)
		= W\!\left(\chi; \frac{\delta' t'}{y'}, \frac{t}{x}\right)
		= \overline{\chi(\delta')}
		W\!\left(\chi; \frac{t'}{y'}, \frac{t}{x}\right),
		\]
		and thus we obtain
		\[
		W\!\left(\chi;\frac{t}{y}, \frac{t}{x}\right)
		= W\!\left(\chi; \frac{t'}{y'}, \frac{t' a}{a' x}\right).
		\]
		To finish the proof, we just note that, by viewing $(a)\mathfrak{r}' = (a')\mathfrak{r}$ as an equality of $\mathscr{O}_{S}$-ideals, we have
		$
		a x' = \varepsilon a'x
		$
		for some $\varepsilon \in \mathscr{O}_{S}^{\times}$.
	\end{proof}
\end{para}
\begin{para}
	Now, let $\chi_{\scalebox{1.1}{$\scriptscriptstyle 1$}}$ and $\chi_{\scalebox{1.1}{$\scriptscriptstyle 2$}}$ be two primitive characters modulo
	$\mathfrak{m}_{\scalebox{1.1}{$\scriptscriptstyle 1$}}$ and $\mathfrak{m}_{\scalebox{1.1}{$\scriptscriptstyle 2$}}$, respectively, with $(\mathfrak{m}_{\scalebox{1.1}{$\scriptscriptstyle 1$}}, \mathfrak{m}_{\scalebox{1.1}{$\scriptscriptstyle 2$}}) = 1$. Suppose that $\chi = \chi_{\scalebox{1.1}{$\scriptscriptstyle 1$}}\chi_{\scalebox{1.1}{$\scriptscriptstyle 2$}}$ is a Hecke character of trivial infinity type of conductor $\mathfrak{m} = \mathfrak{m}_{\scalebox{1.1}{$\scriptscriptstyle 1$}}\mathfrak{m}_{\scalebox{1.1}{$\scriptscriptstyle 2$}}$. We further assume that $\chi$ is trivial on $\mathscr{O}_{S}^{\times}$.

	As an immediate consequence of \cref{lem-GS-inv}, we have

	\begin{cor}\label{lem-Hecke-GS-inv}
		Let the notation be as above. Then
		\[
		\chi_{\scalebox{1.1}{$\scriptscriptstyle 2$}}\Big(\frac{y}{x}\Big)
		W\!\left(\chi_{\scalebox{1.1}{$\scriptscriptstyle 1$}};\frac{t}{y}, \frac{t}{x}\right)
		\]
		is independent of the choice of dataset $\Delta = (\mathfrak{r}, x, y, t)$.
	\end{cor}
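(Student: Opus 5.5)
Take two datasets $\Delta=(\mathfrak r,x,y,t)$ and $\Delta'=(\mathfrak r',x',y',t')$, both admissible for $\chi_{\scalebox{1.1}{$\scriptscriptstyle 1$}}$ modulo $\mathfrak m_{\scalebox{1.1}{$\scriptscriptstyle 1$}}$. Applying \cref{lem-GS-inv} to $\chi_{\scalebox{1.1}{$\scriptscriptstyle 1$}}$ gives
\[
W\!\left(\chi_{\scalebox{1.1}{$\scriptscriptstyle 1$}};\tfrac{t}{y},\tfrac{t}{x}\right)=\chi_{\scalebox{1.1}{$\scriptscriptstyle 1$}}(\varepsilon)\,W\!\left(\chi_{\scalebox{1.1}{$\scriptscriptstyle 1$}};\tfrac{t'}{y'},\tfrac{t'}{x'}\right),\qquad \varepsilon=\frac{x'y}{xy'}\in\mathscr{O}_S^\times .
\]
Since $\varepsilon=\frac{y/x}{y'/x'}$, the prefactor satisfies
\[
\chi_{\scalebox{1.1}{$\scriptscriptstyle 2$}}\!\left(\tfrac yx\right)=\chi_{\scalebox{1.1}{$\scriptscriptstyle 2$}}(\varepsilon)\,\chi_{\scalebox{1.1}{$\scriptscriptstyle 2$}}\!\left(\tfrac{y'}{x'}\right).
\]
Multiplying the two relations gives
\[
\chi_{\scalebox{1.1}{$\scriptscriptstyle 2$}}\!\left(\tfrac yx\right)W(\chi_{\scalebox{1.1}{$\scriptscriptstyle 1$}};\Delta)=\chi(\varepsilon)\,\chi_{\scalebox{1.1}{$\scriptscriptstyle 2$}}\!\left(\tfrac{y'}{x'}\right)W(\chi_{\scalebox{1.1}{$\scriptscriptstyle 1$}};\Delta'),
\]
where $\chi=\chi_{\scalebox{1.1}{$\scriptscriptstyle 1$}}\chi_{\scalebox{1.1}{$\scriptscriptstyle 2$}}$ is evaluated on $\varepsilon$. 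Because $\chi$ is assumed trivial on $\mathscr{O}_S^\times$, we have $\chi(\varepsilon)=1$, and the expression is independent of the dataset.

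Two points need checking for this to make sense.

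First, $\chi_{\scalebox{1.1}{$\scriptscriptstyle 2$}}(y/x)$ must be defined, i.e. $y/x$ must be a unit at every prime dividing $\mathfrak m_{\scalebox{1.1}{$\scriptscriptstyle 2$}}$. We have $(y)=\mathfrak r\mathfrak d\mathfrak m_{\scalebox{1.1}{$\scriptscriptstyle 1$}}$ and $\mathfrak r=(x)$ as $\mathscr{O}_S$-ideals. Hence $y/x$ generates $\mathfrak d\mathfrak m_{\scalebox{1.1}{$\scriptscriptstyle 1$}}$ up to $S$-units. This ideal is coprime to $\mathfrak m_{\scalebox{1.1}{$\scriptscriptstyle 2$}}$, because $\mathfrak m_{\scalebox{1.1}{$\scriptscriptstyle 2$}}$ is coprime to $\mathfrak m_{\scalebox{1.1}{$\scriptscriptstyle 1$}}$ and lies outside $S$ (the different is supported on ramified places, which are in $S$).

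Second, the value of $\chi_{\scalebox{1.1}{$\scriptscriptstyle 1$}}$ and $\chi_{\scalebox{1.1}{$\scriptscriptstyle 2$}}$ on the $S$-unit $\varepsilon$ must be interpreted as their values as characters of $(\mathscr O/\mathfrak m_i)^\times$, via the reduction of $\varepsilon$. This is legitimate because $\varepsilon$ is a unit at all primes dividing $\mathfrak m$. With that interpretation, $\chi(\varepsilon)=\chi_{\scalebox{1.1}{$\scriptscriptstyle 1$}}(\varepsilon)\chi_{\scalebox{1.1}{$\scriptscriptstyle 2$}}(\varepsilon)$. The hypothesis that $\chi$ is trivial on $\mathscr{O}_S^\times$ is exactly that this product is $1$. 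No other step presents an obstacle.
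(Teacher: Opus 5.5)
Your argument is the paper's proof. You apply Lemma~\ref{lem-GS-inv} to $\chi_1$, write $\varepsilon=\frac{y/x}{y'/x'}$, and use $\chi_1(\varepsilon)\chi_2(\varepsilon)=\chi(\varepsilon)=1$ to turn the discrepancy factor into $\chi_2(y'/x')/\chi_2(y/x)$.

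One caveat concerns your side remarks on well-definedness. The hypotheses do not place $\mathfrak{m}_1$ or $\mathfrak{m}_2$ outside $S$: in the paper's application (Lemma~\ref{lem-Hecke-prod-G-sum-comp}) one of the two factors is $\psi_E$, whose conductor divides $\mathfrak{c}$. In that case the values at $y/x$ and at $\varepsilon$ come from the Hilbert-symbol formula $\psi_E(\cdot)=(\cdot,m_E)_S$, not from coprimality. The paper's proof likewise takes these values as given, so the core argument is unaffected.
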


	\begin{proof} By \cref{lem-GS-inv}, the discrepancy between the Gauss sums associated with the pairs $(\chi_{\scalebox{1.1}{$\scriptscriptstyle 1$}}, \Delta)$ and $(\chi_{\scalebox{1.1}{$\scriptscriptstyle 1$}}, \Delta')$ is given by the factor $\chi(\varepsilon)$, where $\varepsilon = \frac{x'y}{xy'} \in \mathscr{O}_{S}^{\times}$. Since $\chi$ is trivial on $\mathscr{O}_{S}^{\times}$, we have
		$
		\chi_{\scalebox{1.1}{$\scriptscriptstyle 1$}}(\varepsilon) = \chi_{\scalebox{1.1}{$\scriptscriptstyle 2$}}(\varepsilon^{-1})=
		\chi_{\scalebox{1.1}{$\scriptscriptstyle 2$}}(\frac{x}{y})	\chi_{\scalebox{1.1}{$\scriptscriptstyle 2$}}(\frac{y'}{x'})
		$,
		and our assertion follows.
	\end{proof}
\end{para}
\begin{para} The following lemma spells out the key multiplicativity property of Gauss sums that will be used throughout.
\begin{lem}\label{lem-Hecke} Let
	$
	\chi = \chi_{\scalebox{1.1}{$\scriptscriptstyle 1$}}\chi_{\scalebox{1.1}{$\scriptscriptstyle 2$}}
	$
	be as above, and let
	$
	\Delta_{\scalebox{1.1}{$\scriptscriptstyle 1$}} = (\mathfrak{r}_{\scalebox{1.1}{$\scriptscriptstyle 1$}}, x_{\scalebox{1.1}{$\scriptscriptstyle 1$}}, y_{\scalebox{1.1}{$\scriptscriptstyle 1$}}, t_{\scalebox{1.1}{$\scriptscriptstyle 1$}})
	$
	and
	$
	\Delta_{\scalebox{1.1}{$\scriptscriptstyle 2$}} = (\mathfrak{r}_{\scalebox{1.1}{$\scriptscriptstyle 2$}}, x_{\scalebox{1.1}{$\scriptscriptstyle 2$}}, y_{\scalebox{1.1}{$\scriptscriptstyle 2$}}, t_{\scalebox{1.1}{$\scriptscriptstyle 2$}})
	$
	be two datasets for the characters $\chi_{\scalebox{1.1}{$\scriptscriptstyle 1$}}$ and $\chi_{\scalebox{1.1}{$\scriptscriptstyle 2$}}$, respectively. Then the normalized Gauss sum
		$G(\chi)$ can be decomposed as
		\[
		G(\chi) =
		\chi_{\scalebox{1.1}{$\scriptscriptstyle 1$}}\Big(\frac{y_{\scalebox{1.1}{$\scriptscriptstyle 2$}}}{x_{\scalebox{1.1}{$\scriptscriptstyle 2$}}}\Big)
		\chi_{\scalebox{1.1}{$\scriptscriptstyle 2$}}\Big(\frac{y_{\scalebox{1.1}{$\scriptscriptstyle 1$}}}{x_{\scalebox{1.1}{$\scriptscriptstyle 1$}}}\Big)
		W\!\left(\chi_{\scalebox{1.1}{$\scriptscriptstyle 1$}};\frac{t_{\scalebox{1.1}{$\scriptscriptstyle 1$}}}{y_{\scalebox{1.1}{$\scriptscriptstyle 1$}}}, \frac{t_{\scalebox{1.1}{$\scriptscriptstyle 1$}}}{x_{\scalebox{1.1}{$\scriptscriptstyle 1$}}}\right)
		\!W\!\left(\chi_{\scalebox{1.1}{$\scriptscriptstyle 2$}};\frac{t_{\scalebox{1.1}{$\scriptscriptstyle 2$}}}{y_{\scalebox{1.1}{$\scriptscriptstyle 2$}}}, \frac{t_{\scalebox{1.1}{$\scriptscriptstyle 2$}}}{x_{\scalebox{1.1}{$\scriptscriptstyle 2$}}}\right)\!.
		\]
	\end{lem}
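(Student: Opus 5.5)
By \cref{lem-Hecke-GS-inv}, applied once to the pair $(\chi_1,\chi_2)$ and once with the roles of $\chi_1$ and $\chi_2$ swapped, the right-hand side does not depend on the choice of $\Delta_1$ and $\Delta_2$. The left-hand side $G(\chi)$ does not depend on the dataset either. So it suffices to prove the identity for one well-chosen triple of data: a dataset $\Delta=(\ri,x,y,t)$ for $\chi$, and datasets $\Delta_1,\Delta_2$ built from it.

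\emph{Choice of data.} Fix $\ri$ prime to $\ci\mi$ with $\ri\di\mi=(y)$, and let $x$ be a generator of $\ri\OO_S$.
\begin{itemize}
\item[(i)] Take $\ri_1$ prime to $\ci\mi$ lying in the (ray) class of $\ri\mi_2$. Then $\ri_1=\lambda\,\ri\mi_2$ for some $\lambda\in F^\times$, and $\ri_1\di\mi_1=(\lambda y)$, so we may put $y_1=\lambda y$.
\item[(ii)] Define $\ri_2$ and $y_2=\lambda' y$ symmetrically, with $\ri_2=\lambda'\ri\mi_1$.
\item[(iii)] Choose the $x_i$ as $\OO_S$-generators of the $\ri_i$.
\item[(iv)] Choose the $t_i\in\ri_i$ via the Chinese remainder theorem, so that they are prime to $\mi_i$.
\end{itemize}

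\emph{Factoring the Gauss sum.} Start from \eqref{eq: Gauss sum Hecke char}. Pick $\beta_1,\beta_2\in\OO$ with $\beta_2\in\mi_2$, $\beta_2\equiv1\pmod{\mi_1}$, and $\beta_1\in\mi_1$, $\beta_1\equiv1\pmod{\mi_2}$. Then $z=z_1\beta_2+z_2\beta_1$ runs over $\OO/\mi$ as $z_i$ runs over $\OO/\mi_i$, and $\chi(z)=\chi_1(z_1)\chi_2(z_2)$. The additive character splits as
\[
e(\tr(z_1\beta_2t/y))\,e(\tr(z_2\beta_1t/y)).
\]
Since $\beta_2 t/y\in\mi_1^{-1}\di^{-1}$, it can be written as $\delta_1 t_1/y_1$ with $\delta_1\in\OO$ prime to $\mi_1$. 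Here $\delta_1$ is essentially $\beta_2 t\lambda/t_1$, made integral by the choice of $t_1$. The same holds for the second factor. The twisting identity $\overline{\chi_1(\delta_1)}W(\chi_1;t_1/y_1,\cdot)=W(\chi_1;\delta_1t_1/y_1,\cdot)$, used in the proof of \cref{lem-GS-inv}, converts each factor sum into $W(\chi_i;t_i/y_i,t_i/x_i)$ up to explicit character values. Thus $G(\chi)$ equals $W(\chi_1;\ldots)W(\chi_2;\ldots)$ times an explicit product of values of $\chi_1$ and $\chi_2$ at the elements $t$, $x$, $\delta_i$, $t_i$, $x_i$, $y_i$, $\lambda,\lambda'$.

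\emph{The main obstacle.} The remaining and most delicate step is the bookkeeping of these character values. We must show that $\chi(t)\overline{\chi}(x)$, together with the $\chi_i(\delta_i)$ and $\chi_i(t_i/x_i)^{-1}$ factors, collapses exactly to $\chi_1(y_2/x_2)\chi_2(y_1/x_1)$. I would first rewrite everything in terms of the ratios $y_i/x_i$ and $y/x$, using the following relations:
\begin{itemize}
\item $\chi_1(\beta_2)=1$ and $\chi_2(\beta_1)=1$;
\item $\ri_1=\lambda\ri\mi_2$ as ideals, so that $x_1/(\lambda x)$ generates $\mi_2\OO_S$;
\item $\chi=\chi_1\chi_2$ is trivial on $\OO_S^\times$, which absorbs the unit ambiguities exactly as in the proof of \cref{lem-Hecke-GS-inv}.
\end{itemize}
If the chosen $\lambda$ makes this messy, I would fall back to a cleaner choice. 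In the CRT step, take $t$ so that $t\equiv t_1$ modulo $\mi_1$ and $t\equiv t_2$ modulo $\mi_2$ after the appropriate scaling. This makes $\delta_i\equiv 1$ and leaves only the cross terms $\chi_1(y_2/x_2)\chi_2(y_1/x_1)$.
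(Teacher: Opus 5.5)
Your argument is correct and has the same skeleton as the paper's proof: reduce via \cref{lem-Hecke-GS-inv} to one convenient choice of datasets, split the sum over $\OO/\mi$ by the Chinese remainder theorem, and absorb a leftover $S$-unit using that $\chi$ is trivial on $\OO_S^\times$.

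The difference is the choice of data. The paper assembles $\ri$ from the pieces: it takes $\ri=\ri_1\ri_2\mathfrak{r}_{\mathrm{diff}}$ with $\ri_1\mi_1=(y_1)$, $\ri_2\mi_2=(y_2)$ and $\mathfrak{r}_{\mathrm{diff}}\di=(d)$, so that $y=y_1y_2d$. It then uses $y_1,y_2$ themselves as CRT coefficients, $z=z_2y_1+z_1y_2$. This gives $zt/y=z_1t/(y_1d)+z_2t/(y_2d)$ exactly, so the same $t$ serves both sub-datasets $(\ri_i\mathfrak{r}_{\mathrm{diff}},x_ix_{\mathrm{diff}},y_id,t)$. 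No rescaling is needed, the cross factors $\chi_1(y_2)\chi_2(y_1)$ appear directly, and only the $S$-unit $d/x_{\mathrm{diff}}$ has to be absorbed.

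Your generic choice (arbitrary $\ri$, $\ri_i$ in the class of $\ri\mi_j$, idempotents $\beta_i$) costs one rescaling step. The bookkeeping you flag as the main obstacle closes quickly once $t_i$ is pinned down. Put $t_1=\lambda\beta_2t$ and $t_2=\lambda'\beta_1t$. These lie in $\ri_1=\lambda\ri\mi_2$ and $\ri_2=\lambda'\ri\mi_1$, and are prime to $\mi_1$ and $\mi_2$ respectively. Then $\beta_2t/y=t_1/y_1$ and $\beta_1t/y=t_2/y_2$ exactly, so $\delta_i=1$. Writing $W_i=W(\chi_i;t_i/y_i,t_i/x_i)$ and using $\chi_1(\beta_2)=\chi_2(\beta_1)=1$, you get
\[
G(\chi)=\chi(t/x)\,\ov{\chi}_1(t_1/x_1)\,\ov{\chi}_2(t_2/x_2)\,W_1W_2=\chi_1\Big(\frac{x_1}{\lambda x}\Big)\chi_2\Big(\frac{x_2}{\lambda' x}\Big)W_1W_2 .
\]
Compare this prefactor with the target $\chi_1(y_2/x_2)\chi_2(y_1/x_1)=\chi_1(\lambda'y/x_2)\chi_2(\lambda y/x_1)$. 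Their ratio is $\chi(u)$ with $u=x_1x_2/(\lambda\lambda'xy)$, and $u\OO_S=\di^{-1}\OO_S$.

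The one fact missing from your list of relations is that $\di^{-1}\OO_S=\OO_S$. This holds because the different is supported on ramified primes, which lie in $S$; the paper uses the same fact for $d/x_{\mathrm{diff}}$. Hence $u\in\OO_S^\times$, $\chi(u)=1$, and the proof is complete. The fallback choice you sketch at the end is not needed.
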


	\begin{proof}
		By \cref{lem-Hecke-GS-inv}, it suffices to establish the decomposition of $G(\chi)$ for two specific datasets, which we achieve via a standard application of the Chinese Remainder Theorem.
		Let $\mathfrak{r}$ be an integral ideal, prime to
		$\mathfrak{m}$, such that $\mathfrak{r}\mathfrak{d}\mathfrak{m} = (y)$ is principal ($y \in \mathscr{O}$),
		and let $t \in \mathfrak{r}, ((t), \mathfrak{m}) = 1$ as in \eqref{eq: Gauss sum Hecke char}.

		Now choose integral ideals $\mathfrak{r}_{\scalebox{1.1}{$\scriptscriptstyle \mathrm{diff}$}}, \mathfrak{r}_{\scalebox{1.1}{$\scriptscriptstyle 1$}}, \mathfrak{r}_{\scalebox{1.1}{$\scriptscriptstyle 2$}}$ that are prime to $\mathfrak{c}\mathfrak{m}$, such that:
		$
		\mathfrak{r}_{\scalebox{1.1}{$\scriptscriptstyle \mathrm{diff}$}}
		\mathfrak{d} = (d)
		$,
		$\mathfrak{r}_{\scalebox{1.1}{$\scriptscriptstyle 1 $}}\mathfrak{m}_{\scalebox{1.1}{$\scriptscriptstyle 1$}} = (y_{\scalebox{1.1}{$\scriptscriptstyle 1$}})
		$,
		$
		\mathfrak{r}_{\scalebox{1.1}{$\scriptscriptstyle 2$}}
		\mathfrak{m}_{\scalebox{1.1}{$\scriptscriptstyle 2$}} = (y_{\scalebox{1.1}{$\scriptscriptstyle 2$}})
		$
		are principal ideals with $(y_{\scalebox{1.1}{$\scriptscriptstyle 1$}}, y_{\scalebox{1.1}{$\scriptscriptstyle 2$}}) = 1$.
		\!For example, take
		$
		(\mathfrak{r}_{\scalebox{1.1}{$\scriptscriptstyle 1$}}, \mathfrak{r}_{\scalebox{1.1}{$\scriptscriptstyle 2$}}) = 1
		$.
		\!Let
		$
		\mathfrak{r}_{\scalebox{1.1}{$\scriptscriptstyle \mathrm{diff}$}}
		$
		$
		= (x_{\scalebox{1.1}{$\scriptscriptstyle \mathrm{diff}$}}), \mathfrak{r}_{\scalebox{1.1}{$\scriptscriptstyle 1$}} = (x_{\scalebox{1.1}{$\scriptscriptstyle 1$}}), \mathfrak{r}_{\scalebox{1.1}{$\scriptscriptstyle 2$}} = (x_{\scalebox{1.1}{$\scriptscriptstyle 2$}})
		$
		as $\mathscr{O}_{S}$-ideals, and set
		$
		\mathfrak{r} =
		\mathfrak{r}_{\scalebox{1.1}{$\scriptscriptstyle 1$}}
		\mathfrak{r}_{\scalebox{1.1}{$\scriptscriptstyle 2$}}
		\mathfrak{r}_{\scalebox{1.1}{$\scriptscriptstyle \mathrm{diff}$}}
		= (x_{\scalebox{1.1}{$\scriptscriptstyle 1$}}
		x_{\scalebox{1.1}{$\scriptscriptstyle 2$}}
		x_{\scalebox{1.1}{$\scriptscriptstyle \mathrm{diff}$}})
		$.
		Then, by expressing the parameter
		$
		z = z_{\scalebox{1.1}{$\scriptscriptstyle 2$}}
		y_{\scalebox{1.1}{$\scriptscriptstyle 1$}} +
		z_{\scalebox{1.1}{$\scriptscriptstyle 1$}}
		y_{\scalebox{1.1}{$\scriptscriptstyle 2$}}
		$ in \eqref{eq: Gauss sum Hecke char},
		with $z_{\scalebox{1.1}{$\scriptscriptstyle 1$}} \bmod \mathfrak{m}_{\scalebox{1.1}{$\scriptscriptstyle 1$}}$ and
		$z_{\scalebox{1.1}{$\scriptscriptstyle 2$}} \bmod \mathfrak{m}_{\scalebox{1.1}{$\scriptscriptstyle 2$}}$, we can decompose $G(\chi)$ as
		\begin{align*}
			G(\chi) & =
			\chi_{\scalebox{1.1}{$\scriptscriptstyle 1$}}(y_{\scalebox{1.1}{$\scriptscriptstyle 2$}})
			\chi_{\scalebox{1.1}{$\scriptscriptstyle 2$}}(y_{\scalebox{1.1}{$\scriptscriptstyle 1$}})
			\overline{\chi}(\mathfrak{r})\\
			&\cdot \Eu{N}(\mathfrak{m}_{\scalebox{1.1}{$\scriptscriptstyle 1$}})^{-1/2}\chi_{\scalebox{1.1}{$\scriptscriptstyle 1$}}(t) \sum_{\substack{z_{\scalebox{.85}{$\scriptscriptstyle 1$}} \bmod \mathfrak{m}_{\scalebox{.85}{$\scriptscriptstyle 1$}}\\ (z_{\scalebox{.85}{$\scriptscriptstyle 1$}}, \mathfrak{m}_{\scalebox{.85}{$\scriptscriptstyle 1$}}) = 1}}
			\chi_{\scalebox{1.1}{$\scriptscriptstyle 1$}}(z_{\scalebox{1.1}{$\scriptscriptstyle 1$}})
			e\Big(\mathrm{Tr}\Big(\frac{z_{\scalebox{1.1}{$\scriptscriptstyle 1$}}t}
			{y_{\scalebox{1.1}{$\scriptscriptstyle 1$}}d}\Big)\Big)\\
			&\cdot \Eu{N}(\mathfrak{m}_{\scalebox{1.1}{$\scriptscriptstyle 2$}})^{-1/2}
			\chi_{\scalebox{1.1}{$\scriptscriptstyle 2$}}(t)
			\sum_{\substack{z_{\scalebox{.85}{$\scriptscriptstyle 2$}} \bmod \mathfrak{m}_{\scalebox{.85}{$\scriptscriptstyle 2$}}\\ (z_{\scalebox{.85}{$\scriptscriptstyle 2$}}, \mathfrak{m}_{\scalebox{.85}{$\scriptscriptstyle 2$}}) = 1}}\chi_{\scalebox{1.1}{$\scriptscriptstyle 2$}}(z_{\scalebox{1.1}{$\scriptscriptstyle 2$}})
			e\Big(\mathrm{Tr}\Big(
			\frac{z_{\scalebox{1.1}{$\scriptscriptstyle 2$}}t}
			{y_{\scalebox{1.1}{$\scriptscriptstyle 2$}}d}\Big)\Big)\\
			&= \chi_{\scalebox{1.1}{$\scriptscriptstyle 1$}}
			\Big(\frac{y_{\scalebox{1.1}{$\scriptscriptstyle 2$}}}{x_{\scalebox{1.1}{$\scriptscriptstyle 2$}}}\Big)
			\chi_{\scalebox{1.1}{$\scriptscriptstyle 2$}}\Big(\frac{y_{\scalebox{1.1}{$\scriptscriptstyle 1$}}}{x_{\scalebox{1.1}{$\scriptscriptstyle 1$}}}\Big)
			W\!\left(\chi_{\scalebox{1.1}{$\scriptscriptstyle 1$}}; \frac{t}{y_{\scalebox{1.1}{$\scriptscriptstyle 1$}}d}, \frac{t}{x_{\scalebox{1.1}{$\scriptscriptstyle 1$}}
				x_{\scalebox{1.1}{$\scriptscriptstyle \mathrm{diff}$}}}\right)
			\!W\!\left(\chi_{\scalebox{1.1}{$\scriptscriptstyle 2$}}; \frac{t}{y_{\scalebox{1.1}{$\scriptscriptstyle 2$}}d}, \frac{t}{x_{\scalebox{1.1}{$\scriptscriptstyle 2$}}
				x_{\scalebox{1.1}{$\scriptscriptstyle \mathrm{diff}$}}}\right)\!.
		\end{align*}
	Since $d/x_{\scalebox{1.1}{$\scriptscriptstyle \mathrm{diff}$}}$ is an $S$-unit, we have
	\[
	\chi_{\scalebox{1.1}{$\scriptscriptstyle 1$}}
	\Big(\frac{y_{\scalebox{1.1}{$\scriptscriptstyle 2$}}}{x_{\scalebox{1.1}{$\scriptscriptstyle 2$}}}\Big)
	\chi_{\scalebox{1.1}{$\scriptscriptstyle 2$}}\Big(\frac{y_{\scalebox{1.1}{$\scriptscriptstyle 1$}}}{x_{\scalebox{1.1}{$\scriptscriptstyle 1$}}}\Big)
	= \chi_{\scalebox{1.1}{$\scriptscriptstyle 1$}}
	\Big(\frac{y_{\scalebox{1.1}{$\scriptscriptstyle 2$}}d}{x_{\scalebox{1.1}{$\scriptscriptstyle 2$}}
		x_{\scalebox{1.1}{$\scriptscriptstyle \mathrm{diff}$}}}\Big)
	\chi_{\scalebox{1.1}{$\scriptscriptstyle 2$}}\Big(\frac{y_{\scalebox{1.1}{$\scriptscriptstyle 1$}}d}{x_{\scalebox{1.1}{$\scriptscriptstyle 1$}}
		x_{\scalebox{1.1}{$\scriptscriptstyle \mathrm{diff}$}}}\Big)
	\]
	and thus, this gives the decomposition of $G(\chi)$ with respect to the datasets
	\[
	\text{
		$\Delta_{\mathfrak{m}_{\scalebox{.9}{$\scriptscriptstyle 1$}}} \! = (\mathfrak{r}_{\scalebox{1.1}{$\scriptscriptstyle 1$}}
		\mathfrak{r}_{\scalebox{1.1}{$\scriptscriptstyle \mathrm{diff}$}}, x_{\scalebox{1.1}{$\scriptscriptstyle 1$}}
			x_{\scalebox{1.1}{$\scriptscriptstyle \mathrm{diff}$}}, y_{\scalebox{1.1}{$\scriptscriptstyle 1$}}d, t)$\,
	and \,
	$\Delta_{\mathfrak{m}_{\scalebox{.9}{$\scriptscriptstyle 2$}}} = (\mathfrak{r}_{\scalebox{1.1}{$\scriptscriptstyle 2$}}
		\mathfrak{r}_{\scalebox{1.1}{$\scriptscriptstyle \mathrm{diff}$}}, x_{\scalebox{1.1}{$\scriptscriptstyle 2$}}
			x_{\scalebox{1.1}{$\scriptscriptstyle \mathrm{diff}$}}, y_{\scalebox{1.1}{$\scriptscriptstyle 2$}}d, t)$}
		\]
		for the characters $\chi_{\scalebox{1.1}{$\scriptscriptstyle 1$}}$ and $\chi_{\scalebox{1.1}{$\scriptscriptstyle 2$}}$, respectively. This completes the proof.
	\end{proof}
\end{para}

\subsection{Gauss sums of products of \texorpdfstring{$r$}~--th power residue symbols}\label{sec:Gauss sums}
\begin{para}
We now apply the results of the previous subsection to the Hecke characters $\chi_\mi$ constructed in \S\ref{sec: res symbols}.
Let
$
\mathfrak{m}_{\scalebox{1.1}{$\scriptscriptstyle 1$}}, \mathfrak{m}_{\scalebox{1.1}{$\scriptscriptstyle 2$}}
\in I(S)
$
be coprime $r$-th power-free ideals, and let
$
[\mathfrak{m}_{\scalebox{1.1}{$\scriptscriptstyle 1$}}] = E_{\scalebox{1.1}{$\scriptscriptstyle 1$}}, [\mathfrak{m}_{\scalebox{1.1}{$\scriptscriptstyle 2$}}] =
E_{\scalebox{1.1}{$\scriptscriptstyle 2$}}
$
with
$
E_{\scalebox{1.1}{$\scriptscriptstyle 1$}}, E_{\scalebox{1.1}{$\scriptscriptstyle 2$}} \in \mathscr{E}
$.
We now seek to compare the normalized Gauss sum
$
G(\chi_{\mathfrak{m}_{\scalebox{.9}{$\scriptscriptstyle 1$}}\mathfrak{m}_{\scalebox{.9}{$\scriptscriptstyle 2$}}})
$
and the product
$
G(\chi_{\mathfrak{m}_{\scalebox{.9}{$\scriptscriptstyle 1$}}})
G(\chi_{\mathfrak{m}_{\scalebox{.9}{$\scriptscriptstyle 2$}}})
$.

\begin{lem}\label{lem-Hecke-prod-G-sum-comp} The ratio
	\[
		\tau(E_{\scalebox{1.1}{$\scriptscriptstyle 1$}}, E_{\scalebox{1.1}{$\scriptscriptstyle 2$}}) = \frac{G(\chi_{\mathfrak{m}_{\scalebox{.9}{$\scriptscriptstyle 1$}}\mathfrak{m}_{\scalebox{.9}{$\scriptscriptstyle 2$}}})}
		{\chi_{\mathfrak{m}_{\scalebox{.9}{$\scriptscriptstyle 1$}}}
			\!\big(\prod_{\mathfrak{p}_{v} \mid \mathfrak{m}_{\scalebox{.85}{$\scriptscriptstyle 2$}}}
			\mathfrak{p}_{v}\big)
			\chi_{\mathfrak{m}_{\scalebox{.9}{$\scriptscriptstyle 2$}}}\!\big(\prod_{\mathfrak{p}_{v} \mid \mathfrak{m}_{\scalebox{.85}{$\scriptscriptstyle 1$}}}
			\mathfrak{p}_{v}\big)
			G(\chi_{\mathfrak{m}_{\scalebox{.9}{$\scriptscriptstyle 1$}}})
			G(\chi_{\mathfrak{m}_{\scalebox{.9}{$\scriptscriptstyle 2$}}})}
		\]
		depends only on the classes
		$
		E_{\scalebox{1.1}{$\scriptscriptstyle 1$}}, E_{\scalebox{1.1}{$\scriptscriptstyle 2$}} \in \mathscr{E}
		$.
	\end{lem}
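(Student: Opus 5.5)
The plan is to apply \cref{lem-Hecke} to $\chi=\chi_{\mi_1\mi_2}$, factored as a product of two primitive Dirichlet characters $\chi_1$ modulo $\mi_1'=\rad\mi_1$ and $\chi_2$ modulo $\mi_2'=\rad\mi_2$, and then compare each factor with $G(\chi_{\mi_1})$ and $G(\chi_{\mi_2})$. We may assume the conductor contributions $\ci_{\sss E}$ are harmless: $\chi_{\mi_1\mi_2}$ restricted to ideals coprime to $\ci$ is given through the reciprocity law $\chi_\bi(\ai)=\widetilde{\chi}_\bi(x)\psi_{\sss E}(x)$. Writing $\mi_1=(b_1)E_1\gi_1^r$ and $\mi_2=(b_2)E_2\gi_2^r$, we get $\mi_1\mi_2=(b_1b_2 m_{E_1}m_{E_2}/m_{E})E\,\gi^r$, where $E=[E_1E_2]$. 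Here $E_1E_2=(m_{E_1}m_{E_2}m_E^{-1})E\,\mathfrak{h}^r$ up to a ray class factor $\equiv1\pmod\ci$. With the multiplicative choice of the $m_E$ from \S\ref{sec:char_data}, the ratio $m_{E_1}m_{E_2}/m_E$ is a product of $r$-th powers of the $m_{E_0}$, times elements depending only on $E_1,E_2$.

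Hence, as Dirichlet characters, $\widetilde{\chi}_{\mi_1\mi_2}(z)=\big(\frac{z}{b_1b_2m_{E_1}m_{E_2}}\big)$ up to a factor depending only on $(E_1,E_2)$ and on $z$ through a character of conductor dividing $\ci$. It therefore splits as $\chi_1(z)\chi_2(z)$ with $\chi_1=\big(\frac{\cdot}{b_1m_{E_1}}\big)\cdot(\text{stuff})$ on $(\OO/\mi_1')^\times$. The $r$-th power residue symbol is multiplicative in the denominator, so $\big(\frac{z}{b_1b_2m_{E_1}m_{E_2}}\big)=\widetilde\chi_{\mi_1}(z)\widetilde\chi_{\mi_2}(z)$. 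Thus on $(\OO/\mi_1')^\times$ we have $\chi_1=\widetilde\chi_{\mi_1}\cdot\widetilde\chi_{\mi_2}(\cdot)$, where $\widetilde\chi_{\mi_2}$ is viewed via its values at residues modulo $\mi_1'$. More precisely, by the CRT the part of $\widetilde{\chi}_{\mi_2}$ living modulo $\mi_1'$ is trivial, since $\widetilde\chi_{\mi_2}$ has conductor $\mi_2'$.

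So $\chi_1$ is the mod-$\mi_1'$ component $\widetilde\chi_{\mi_1}$, possibly multiplied by $\psi$-type factors whose conductor divides $\ci$. Those factors are absorbed into the Hecke characters' $\ci$-components rather than into $\chi_1,\chi_2$.

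Next, compute $G(\chi_{\mi_j})$ via \cref{lem-Hecke} applied to the decomposition $\chi_{\mi_j}=\widetilde\chi_{\mi_j}\cdot\psi_{E_j}$. Alternatively, apply \eqref{eq: Gauss sum Hecke char} directly, since $\chi_{\mi_j}$ has conductor $\mi_j'\ci_{E_j}$. This gives
\[
G(\chi_{\mi_j})=\widetilde\chi_{\mi_j}(y_{\ci}/x_{\ci})\,\psi_{E_j}(y_j/x_j)\,W(\widetilde\chi_{\mi_j};t_j/y_j,t_j/x_j)\,W(\psi_{E_j};\cdot).
\]
Likewise $G(\chi_{\mi_1\mi_2})$ is the product of three local pieces, at $\mi_1'$, at $\mi_2'$ and at $\ci$, together with cross factors $\chi_1(y_2/x_2)\chi_2(y_1/x_1)$. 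Choosing the datasets so that $\ri_j\mi_j'=(y_j)$, the cross factor $\widetilde\chi_{\mi_1}(y_2/x_2)$ equals $\widetilde\chi_{\mi_1}(\mi_2')$ times a value of $\psi_{E_1}$ at an element depending only on the classes. This is exactly $\chi_{\mi_1}(\prod_{\pp\mid\mi_2}\pp)$ up to a Hilbert-symbol factor such as $(\cdot,m_{E_1})_S$, evaluated on generators whose classes are fixed by $E_2$. Here \cref{lem: reciprocity} is used to convert the remaining cross terms into Hilbert symbols $(m_{G},m_E)_S$, which depend only on classes.

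Dividing, the $W(\widetilde\chi_{\mi_j})$ factors cancel and the cross factors cancel against the denominator. What remains is a ratio of the $\ci$-part Gauss sums $W(\psi_{E};\cdot)/(W(\psi_{E_1})W(\psi_{E_2}))$ together with the Hilbert symbols and values of $\psi_{E_j}$ at class-determined elements. All of these depend only on $E_1,E_2$ (note that $E=[E_1E_2]$). The main obstacle is the bookkeeping in the second step: choosing compatible datasets so that every leftover factor is visibly a function of the classes. The key point to verify is that values like $\psi_{E_1}(y_2/x_2)$ depend only on the ray class of $\mi_2$ modulo $\ci$ and on $r$-th powers. This holds since $\psi_{E_1}$ has order dividing $r$ and period $\ci$, so it factors through $R_\ci$, and $\mi_2$ and $\rad\mi_2$ differ in $R_\ci$ by a class determined by $E_2$ only after accounting for the exponents. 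I would handle the latter by writing $\mi_2=\prod\pp^{e_\pp}$ and noting that $\chi_{\mi_1}(\mi_2)$ versus $\chi_{\mi_1}(\rad\mi_2)$ discrepancies are absorbed consistently with the normalization in the denominator.
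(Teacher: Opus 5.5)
Your skeleton is the paper's: split $G(\chi_{\mi_1\mi_2})$ and $G(\chi_{\mi_j})$ by the Chinese remainder theorem as in \cref{lem-Hecke}, choose compatible datasets so that the $W(\widetilde\chi_{\mi_j})$ cancel, and cancel cross terms against the denominator. However, the two steps that carry the content are wrong or missing.

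First, the factors $\psi_{E_1}(y_2/x_2)$ and $\psi_{E_2}(y_1/x_1)$ that you keep are \emph{not} class invariants. $\psi_{E_1}=(\cdot,m_{E_1})_S$ is a character of elements of period $\ci$. It is in general nontrivial on $\OO_S^\times$, where it equals $\overline{\widetilde\chi}_{\mi_1}$ because $\chi_{\mi_1}$ is trivial there. So it does not factor through $R_\ci$, and its value depends on the chosen generator of $\rad\mi_2$. Moreover $[\rad\mi_2]$ is not determined by $E_2$ (take $\mi_2=\pp\qi^2$). The justification in your last paragraph therefore fails. \cref{lem: reciprocity} cannot repair it, since it would bring in $[\rad\mi_2]$.

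In the paper nothing of this kind survives. The decomposition of $G(\chi_{\mi_1\mi_2})=G(\psi_E\widetilde\chi_{\mi_1}\widetilde\chi_{\mi_2})$ with the datasets $(\ri_1\ri_2\ri_{\mathrm{diff}},x_1x_2x_{\mathrm{diff}},y_1y_2d,t)$ and $(\ri_j\ri_{\mathrm{diff}},x_jx_{\mathrm{diff}},y_jd,t)$ contains the cross factor $\psi_E(w)$, with $w=y_1y_2d/(x_1x_2x_{\mathrm{diff}})$, which you omitted. Write $\chi_{\mi_1}(\rad\mi_2)=\widetilde\chi_{\mi_1}(y_2/x_2)\psi_{E_1}(y_2/x_2)$, and symmetrically for $\chi_{\mi_2}(\rad\mi_1)$. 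The $\widetilde\chi$-parts then cancel the numerator's $\widetilde\chi_{\mi_1}(y_2/x_2)\widetilde\chi_{\mi_2}(y_1/x_1)$ exactly. The $\psi$-parts combine with the cross factors $\psi_{E_j}(y_jd/(x_jx_{\mathrm{diff}}))$ of $G(\chi_{\mi_j})$ into $\psi_{E_1}\psi_{E_2}(w)$, which is matched with $\psi_E(w)$. That matching is the point where the normalization of the $m_E$ actually matters.

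Second, and this is the genuinely missing idea: the cross factors between the $\ci$-part and the $\mi$-parts, namely $\widetilde\chi_{\mi_1\mi_2}(y_E/x_E)$ in the numerator and $\widetilde\chi_{\mi_j}(y_{E_j}/x_{E_j})$ in $G(\chi_{\mi_j})$, depend on $\mi_1,\mi_2$ as written. Your list of what remains silently drops them. The paper handles them by observing that $\ci_E$ and the different are supported on $S$, so $\varepsilon_E=x_E/y_E\in\OO_S^\times$. Since $\chi_{\mi}=\widetilde\chi_{\mi}\psi_E$ is trivial on $\OO_S^\times$, it follows that $\widetilde\chi_{\mi}(y_E/x_E)=\psi_E(\varepsilon_E)$, which depends only on $E$ and the dataset fixed for $\psi_E$. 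This yields $\tau(E_1,E_2)=\gamma(\psi_{E_1E_2})/(\gamma(\psi_{E_1})\gamma(\psi_{E_2}))$ with $\gamma(\psi_E)=\psi_E(\varepsilon_E)W(\psi_E;t_E/y_E,t_E/x_E)$, and no Hilbert symbols are involved.

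A smaller point: your opening factorization of $\chi_{\mi_1\mi_2}$ into characters modulo $\rad\mi_1$ and $\rad\mi_2$ alone is wrong, since the conductor also contains $\ci_E$. You later switch to three pieces. In fact $\widetilde\chi_{\mi_1\mi_2}=\widetilde\chi_{\mi_1}\widetilde\chi_{\mi_2}$ exactly, because $\widetilde\chi_\bi(z)$ is the residue symbol of $z$ with respect to the ideal $\bi$.
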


\begin{proof} \!Set
$
E = [\mi_{\scalebox{1.1}{$\scriptscriptstyle 1$}}\mi_{\scalebox{1.1}{$\scriptscriptstyle 2$}}] \in \mathscr{E}$.
By \cref{lem-Hecke}, we can decompose
\[
G(\chi_{\mathfrak{m}_{\scalebox{.9}{$\scriptscriptstyle 1$}}\mathfrak{m}_{\scalebox{.9}{$\scriptscriptstyle 2$}}}) =
G(\psi_{\scalebox{1.}{$\scriptscriptstyle E$}}\widetilde{\chi}_{\mathfrak{m}_{\scalebox{.9}{$\scriptscriptstyle 1$}}\mathfrak{m}_{\scalebox{.9}{$\scriptscriptstyle 2$}}}) = \psi_{\scalebox{1.}{$\scriptscriptstyle E$}}\Big(\frac{y}{x}\Big)
\widetilde{\chi}_{\mathfrak{m}_{\scalebox{.9}{$\scriptscriptstyle 1$}}\mathfrak{m}_{\scalebox{.9}{$\scriptscriptstyle 2$}}}\Big(\frac{y_{\scalebox{.9}{$\scriptscriptstyle E$}}}{x_{\scalebox{.9}{$\scriptscriptstyle E$}}}\Big)
W\!\left(\psi_{\scalebox{1.}{$\scriptscriptstyle E$}};\frac{t_{\scalebox{.9}{$\scriptscriptstyle E$}}}{y_{\scalebox{.9}{$\scriptscriptstyle E$}}}, \frac{t_{\scalebox{.9}{$\scriptscriptstyle E$}}}{x_{\scalebox{.9}{$\scriptscriptstyle E$}}}\right)
\!W\!\left(\widetilde{\chi}_{\mathfrak{m}_{\scalebox{.9}{$\scriptscriptstyle 1$}}\mathfrak{m}_{\scalebox{.9}{$\scriptscriptstyle 2$}}};\frac{t}{y}, \frac{t}{x}\right)\!,
\]
where
$
\Delta_{\scalebox{1.1}{$\scriptscriptstyle E$}} = (\mathfrak{r}_{\scalebox{1.}{$\scriptscriptstyle E$}}, x_{\scalebox{1.}{$\scriptscriptstyle E$}}, y_{\scalebox{1.}{$\scriptscriptstyle E$}}, t_{\scalebox{1.}{$\scriptscriptstyle E$}})
$
and
$
\Delta = (\mathfrak{r}, x, y, t)
$
are two datasets for
$\psi_{\scalebox{1.1}{$\scriptscriptstyle E$}}$ and
$
\widetilde{\chi}_{\mathfrak{m}_{\scalebox{.9}{$\scriptscriptstyle 1$}}\mathfrak{m}_{\scalebox{.9}{$\scriptscriptstyle 2$}}}
$, respectively. As in the proof of \cref{lem-Hecke}, by taking
$
\Delta_{\mathfrak{m}_{\scalebox{.9}{$\scriptscriptstyle 1$}}, \mathfrak{m}_{\scalebox{.9}{$\scriptscriptstyle 2$}}} = (\mathfrak{r}_{\scalebox{1.1}{$\scriptscriptstyle 1$}}
\mathfrak{r}_{\scalebox{1.1}{$\scriptscriptstyle 2$}}
\mathfrak{r}_{\scalebox{1.1}{$\scriptscriptstyle \mathrm{diff}$}},
x_{\scalebox{1.1}{$\scriptscriptstyle 1$}}
x_{\scalebox{1.1}{$\scriptscriptstyle 2$}}
x_{\scalebox{1.1}{$\scriptscriptstyle \mathrm{diff}$}},
y_{\scalebox{1.1}{$\scriptscriptstyle 1$}} y_{\scalebox{1.1}{$\scriptscriptstyle 2$}}d, t)
$,
we can further decompose
\begin{align*}
	&\psi_{\scalebox{1.}{$\scriptscriptstyle E$}}\Big(\frac{y_{\scalebox{1.1}{$\scriptscriptstyle 1$}}y_{\scalebox{1.1}{$\scriptscriptstyle 2$}}d}{x_{\scalebox{1.1}{$\scriptscriptstyle 1$}}x_{\scalebox{1.1}{$\scriptscriptstyle 2$}}
		x_{\scalebox{1.1}{$\scriptscriptstyle \mathrm{diff}$}}}\Big)
	W\!\left(\widetilde{\chi}_{\mathfrak{m}_{\scalebox{.9}{$\scriptscriptstyle 1$}}\mathfrak{m}_{\scalebox{.9}{$\scriptscriptstyle 2$}}}; \frac{t}{y_{\scalebox{1.1}{$\scriptscriptstyle 1$}}y_{\scalebox{1.1}{$\scriptscriptstyle 2$}}d}, \frac{t}{x_{\scalebox{1.1}{$\scriptscriptstyle 1$}}x_{\scalebox{1.1}{$\scriptscriptstyle 2$}}
		x_{\scalebox{1.1}{$\scriptscriptstyle \mathrm{diff}$}}}\right)\\
	&= \psi_{\scalebox{1.}{$\scriptscriptstyle E$}}\Big(\frac{y_{\scalebox{1.1}{$\scriptscriptstyle 1$}}y_{\scalebox{1.1}{$\scriptscriptstyle 2$}}d}{x_{\scalebox{1.1}{$\scriptscriptstyle 1$}}x_{\scalebox{1.1}{$\scriptscriptstyle 2$}}
		x_{\scalebox{1.1}{$\scriptscriptstyle \mathrm{diff}$}}}\Big)
	\widetilde{\chi}_{\mathfrak{m}_{\scalebox{.9}{$\scriptscriptstyle 1$}}}\Big(\frac{y_{\scalebox{1.1}{$\scriptscriptstyle 2$}}}{x_{\scalebox{1.1}{$\scriptscriptstyle 2$}}}\Big)
	\widetilde{\chi}_{\mathfrak{m}_{\scalebox{.9}{$\scriptscriptstyle 2$}}}\Big(\frac{y_{\scalebox{1.1}{$\scriptscriptstyle 1$}}}{x_{\scalebox{1.1}{$\scriptscriptstyle 1$}}}\Big)
	W\!\left(\widetilde{\chi}_{\mathfrak{m}_{\scalebox{.9}{$\scriptscriptstyle 1$}}};\frac{t}{y_{\scalebox{1.1}{$\scriptscriptstyle 1$}}d}, \frac{t}{x_{\scalebox{1.1}{$\scriptscriptstyle 1$}}
		x_{\scalebox{1.1}{$\scriptscriptstyle \mathrm{diff}$}}}\right)	\!W\!\left(\widetilde{\chi}_{\mathfrak{m}_{\scalebox{.9}{$\scriptscriptstyle 2$}}};\frac{t}{y_{\scalebox{1.1}{$\scriptscriptstyle 2$}}d}, \frac{t}{x_{\scalebox{1.1}{$\scriptscriptstyle 2$}}
		x_{\scalebox{1.1}{$\scriptscriptstyle \mathrm{diff}$}}}\right)\!.
\end{align*}
Similarly, for $j = 1, 2$,
\[
G(\chi_{\mathfrak{m}_{\scalebox{.9}{$\scriptscriptstyle j$}}}) =
G(\psi_{\scalebox{1.}{$\scriptscriptstyle E_{\scalebox{.9}{$\scriptscriptstyle j$}}$}}\widetilde{\chi}_{\mathfrak{m}_{\scalebox{.9}{$\scriptscriptstyle j$}}}) = \psi_{\scalebox{1.}{$\scriptscriptstyle E_{\scalebox{.9}{$\scriptscriptstyle j$}}$}}\!\Big(\frac{y_{\scalebox{1.1}{$\scriptscriptstyle j$}}d}{x_{\scalebox{1.1}{$\scriptscriptstyle j$}}
	x_{\scalebox{1.1}{$\scriptscriptstyle \mathrm{diff}$}}}\Big)
\widetilde{\chi}_{\mathfrak{m}_{\scalebox{.9}{$\scriptscriptstyle j$}}}\!\Big(\frac{y_{\scalebox{.9}{$\scriptscriptstyle E_{\scalebox{.85}{$\scriptscriptstyle j$}}$}}}{x_{\scalebox{.9}{$\scriptscriptstyle E_{\scalebox{.85}{$\scriptscriptstyle j$}}$}}}\Big)
W\!\left(\psi_{\scalebox{1.}{$\scriptscriptstyle E_{\scalebox{.9}{$\scriptscriptstyle j$}}$}};\frac{t_{\scalebox{.9}{$\scriptscriptstyle E_{\scalebox{.85}{$\scriptscriptstyle j$}}$}}}{y_{\scalebox{.9}{$\scriptscriptstyle E_{\scalebox{.85}{$\scriptscriptstyle j$}}$}}}, \frac{t_{\scalebox{.9}{$\scriptscriptstyle E_{\scalebox{.85}{$\scriptscriptstyle j$}}$}}}{x_{\scalebox{.9}{$\scriptscriptstyle E_{\scalebox{.85}{$\scriptscriptstyle j$}}$}}}\right)
\!W\!\left(\widetilde{\chi}_{\mathfrak{m}_{\scalebox{.9}{$\scriptscriptstyle j$}}}; \frac{t}{y_{\scalebox{1.1}{$\scriptscriptstyle j$}}d}, \frac{t}{x_{\scalebox{1.1}{$\scriptscriptstyle j$}}
	x_{\scalebox{1.1}{$\scriptscriptstyle \mathrm{diff}$}}}
\right)\!.
\]
It follows that
\begin{align*}
&\frac{G(\chi_{\mathfrak{m}_{\scalebox{.9}{$\scriptscriptstyle 1$}}\mathfrak{m}_{\scalebox{.9}{$\scriptscriptstyle 2$}}})}
{\chi_{\mathfrak{m}_{\scalebox{.9}{$\scriptscriptstyle 1$}}}
	\!\big(\prod_{\mathfrak{p}_{v} \mid \mathfrak{m}_{\scalebox{.85}{$\scriptscriptstyle 2$}}}
	\mathfrak{p}_{v}\big)
	\chi_{\mathfrak{m}_{\scalebox{.9}{$\scriptscriptstyle 2$}}}\!\big(\prod_{\mathfrak{p}_{v} \mid \mathfrak{m}_{\scalebox{.85}{$\scriptscriptstyle 1$}}}
	\mathfrak{p}_{v}\big)G(\chi_{\mathfrak{m}_{\scalebox{.9}{$\scriptscriptstyle 1$}}})
	G(\chi_{\mathfrak{m}_{\scalebox{.9}{$\scriptscriptstyle 2$}}})}\\
& = \frac{\widetilde{\chi}_{\mathfrak{m}_{\scalebox{.9}{$\scriptscriptstyle 1$}}\mathfrak{m}_{\scalebox{.9}{$\scriptscriptstyle 2$}}}\big(\frac{y_{\scalebox{.7}{$\scriptscriptstyle E$}}}{x_{\scalebox{.7}{$\scriptscriptstyle E$}}}\big)
	W\big(\psi_{\scalebox{.9}{$\scriptscriptstyle E$}};\frac{t_{\scalebox{.7}{$\scriptscriptstyle E$}}}{y_{\scalebox{.7}{$\scriptscriptstyle E$}}}, \frac{t_{\scalebox{.7}{$\scriptscriptstyle E$}}}{x_{\scalebox{.7}{$\scriptscriptstyle E$}}}\big)}{\widetilde{\chi}_{\mathfrak{m}_{\scalebox{.9}{$\scriptscriptstyle 1$}}}\!\big(\frac{y_{\scalebox{.7}{$\scriptscriptstyle E_{\scalebox{.85}{$\scriptscriptstyle 1$}}$}}}{x_{\scalebox{.7}{$\scriptscriptstyle E_{\scalebox{.85}{$\scriptscriptstyle 1$}}$}}}\big)
	\widetilde{\chi}_{\mathfrak{m}_{\scalebox{.9}{$\scriptscriptstyle 2$}}}\!\big(\frac{y_{\scalebox{.7}{$\scriptscriptstyle E_{\scalebox{.85}{$\scriptscriptstyle 2$}}$}}}{x_{\scalebox{.7}{$\scriptscriptstyle E_{\scalebox{.85}{$\scriptscriptstyle 2$}}$}}}\big)
	W\!\big(\psi_{\scalebox{.9}{$\scriptscriptstyle E_{\scalebox{.9}{$\scriptscriptstyle 1$}}$}};\frac{t_{\scalebox{.7}{$\scriptscriptstyle E_{\scalebox{.85}{$\scriptscriptstyle 1$}}$}}}{y_{\scalebox{.7}{$\scriptscriptstyle E_{\scalebox{.85}{$\scriptscriptstyle 1$}}$}}}, \frac{t_{\scalebox{.7}{$\scriptscriptstyle E_{\scalebox{.85}{$\scriptscriptstyle 1$}}$}}}{x_{\scalebox{.7}{$\scriptscriptstyle E_{\scalebox{.85}{$\scriptscriptstyle 1$}}$}}}\big)
	W\!\big(\psi_{\scalebox{.9}{$\scriptscriptstyle E_{\scalebox{.9}{$\scriptscriptstyle 2$}}$}};\frac{t_{\scalebox{.7}{$\scriptscriptstyle E_{\scalebox{.85}{$\scriptscriptstyle 2$}}$}}}{y_{\scalebox{.7}{$\scriptscriptstyle E_{\scalebox{.85}{$\scriptscriptstyle 2$}}$}}}, \frac{t_{\scalebox{.7}{$\scriptscriptstyle E_{\scalebox{.85}{$\scriptscriptstyle 2$}}$}}}{x_{\scalebox{.7}{$\scriptscriptstyle E_{\scalebox{.85}{$\scriptscriptstyle 2$}}$}}}\big)}\\
& = \frac{\psi_{\scalebox{.9}{$\scriptscriptstyle E$}}(\varepsilon_{\!\scalebox{.9}{$\scriptscriptstyle E$}})
	W\big(\psi_{\scalebox{.9}{$\scriptscriptstyle E$}};\frac{t_{\scalebox{.7}{$\scriptscriptstyle E$}}}{y_{\scalebox{.7}{$\scriptscriptstyle E$}}}, \frac{t_{\scalebox{.7}{$\scriptscriptstyle E$}}}{x_{\scalebox{.7}{$\scriptscriptstyle E$}}}\big)}{\psi_{\scalebox{.9}{$\scriptscriptstyle E_{\scalebox{.9}{$\scriptscriptstyle 1$}}$}}(\varepsilon_{\!\scalebox{.85}{$\scriptscriptstyle E_{\scalebox{.85}{$\scriptscriptstyle 1$}}$}})
	\psi_{\scalebox{.9}{$\scriptscriptstyle E_{\scalebox{.9}{$\scriptscriptstyle 2$}}$}}(\varepsilon_{\!\scalebox{.85}{$\scriptscriptstyle E_{\scalebox{.85}{$\scriptscriptstyle 2$}}$}})
	W\!\big(\psi_{\scalebox{.9}{$\scriptscriptstyle E_{\scalebox{.9}{$\scriptscriptstyle 1$}}$}};\frac{t_{\scalebox{.7}{$\scriptscriptstyle E_{\scalebox{.85}{$\scriptscriptstyle 1$}}$}}}{y_{\scalebox{.7}{$\scriptscriptstyle E_{\scalebox{.85}{$\scriptscriptstyle 1$}}$}}}, \frac{t_{\scalebox{.7}{$\scriptscriptstyle E_{\scalebox{.85}{$\scriptscriptstyle 1$}}$}}}{x_{\scalebox{.7}{$\scriptscriptstyle E_{\scalebox{.85}{$\scriptscriptstyle 1$}}$}}}\big)
	W\!\big(\psi_{\scalebox{.9}{$\scriptscriptstyle E_{\scalebox{.9}{$\scriptscriptstyle 2$}}$}};\frac{t_{\scalebox{.7}{$\scriptscriptstyle E_{\scalebox{.85}{$\scriptscriptstyle 2$}}$}}}{y_{\scalebox{.7}{$\scriptscriptstyle E_{\scalebox{.85}{$\scriptscriptstyle 2$}}$}}}, \frac{t_{\scalebox{.7}{$\scriptscriptstyle E_{\scalebox{.85}{$\scriptscriptstyle 2$}}$}}}{x_{\scalebox{.7}{$\scriptscriptstyle E_{\scalebox{.85}{$\scriptscriptstyle 2$}}$}}}\big)}
\end{align*}
where
$
\varepsilon_{\!\scalebox{.85}{$\scriptscriptstyle E$}}
=\frac{x_{\scalebox{.7}{$\scriptscriptstyle E$}}}{y_{\scalebox{.7}{$\scriptscriptstyle E$}}}
$,
$
\varepsilon_{\!\scalebox{.85}{$\scriptscriptstyle E$}_{\scalebox{.7}{$\scriptscriptstyle 1$}}}
\! = \frac{x_{\scalebox{.7}{$\scriptscriptstyle E_{\scalebox{.85}{$\scriptscriptstyle 1$}}$}}}{y_{\scalebox{.7}{$\scriptscriptstyle E_{\scalebox{.85}{$\scriptscriptstyle 1$}}$}}}
$
and
$
\varepsilon_{\!\scalebox{.85}{$\scriptscriptstyle E$}_{\scalebox{.7}{$\scriptscriptstyle 2$}}}
\! = \frac{x_{\scalebox{.7}{$\scriptscriptstyle E_{\scalebox{.85}{$\scriptscriptstyle 2$}}$}}}{y_{\scalebox{.7}{$\scriptscriptstyle E_{\scalebox{.85}{$\scriptscriptstyle 2$}}$}}}
$
are $S$-units. This completes the proof.
\end{proof}

For any $E \in \mathscr{E}$, set
\be\label{eq: gamma psi}
\gamma(\psi_{\scalebox{.9}{$\scriptscriptstyle E$}}) =
\psi_{\scalebox{.9}{$\scriptscriptstyle E$}}(\varepsilon_{\!\scalebox{.85}{$\scriptscriptstyle E$}})
W\!\Big(\psi_{\scalebox{.9}{$\scriptscriptstyle E$}};\frac{t_{\scalebox{.9}{$\scriptscriptstyle E$}}}{y_{\scalebox{.9}{$\scriptscriptstyle E$}}}, \frac{t_{\scalebox{.9}{$\scriptscriptstyle E$}}}{x_{\scalebox{.9}{$\scriptscriptstyle E$}}}\Big)
\ee
with
$
\varepsilon_{\!\scalebox{.9}{$\scriptscriptstyle E$}}
= \frac{x_{\scalebox{.7}{$\scriptscriptstyle E$}}}{y_{\scalebox{.7}{$\scriptscriptstyle E$}}}
$. Note that
\be\label{eq: tau formula}
\tau(E_{\scalebox{1.1}{$\scriptscriptstyle 1$}}, E_{\scalebox{1.1}{$\scriptscriptstyle 2$}}) = \frac{\gamma(\psi_{\scalebox{.9}{$\scriptscriptstyle E$}_{\scalebox{.7}{$\scriptscriptstyle 1$}}\scalebox{.9}{$\scriptscriptstyle E$}_{\scalebox{.7}{$\scriptscriptstyle 2$}}})}
{\gamma(\psi_{\scalebox{.9}{$\scriptscriptstyle E$}_{\scalebox{.7}{$\scriptscriptstyle 1$}}}) \gamma(\psi_{\scalebox{.9}{$\scriptscriptstyle E$}_{\scalebox{.7}{$\scriptscriptstyle 2$}}})}.
\ee

\end{para}

\section{Twisted sums of Gauss sums}

\begin{para}In this section, we introduce the Dirichlet series $D^{T}(s, \mathfrak{a}, \psi) $
in one complex variable built out of $r$-th order Gauss sums, and discuss their analytic properties.
\!These series were studied by Kubota \cite{Kub69}, who showed that they arise as the Whittaker
coefficients of an Eisenstein series on the $r$-fold metaplectic cover of $\GL_2$.
They depend on a finite set of places $T$ containing our fixed set $S$, and, for later use,
it is essential to keep track of the factors in the functional equation for $D^T$ depending on the places $v\in T\setminus S$. We will do that by passing from $D^T$ to $D^S$
(\cref{corollary: transition-places}), applying the functional equation to $D^S$
(\cref{theorem:Kubota-func-eq}), and then returning from $D^S$ to $D^T$ (\cref{corollary: transition-places2}).
\end{para}

\subsection{Functional equation of Kubota's Dirichlet series}\label{sec:Kubota}

\begin{para}
We start by introducing the generalized Gauss sums used to define Kubota's Dirichlet series.
For $\mathfrak{a}, \mathfrak{b}\in I(S)$, define
	\[
	G_{0}(\mathfrak{a}, \mathfrak{b})
	= \prod_{\substack{v\\ \mathrm{ord}_{v}(\mathfrak{a}) = k\\ \mathrm{ord}_{v}(\mathfrak{b}) = l}}
	G_{0}(\mathfrak{p}_{v}^{k}, \mathfrak{p}_{v}^{l}),
	\]
	where, for $k, l \ge 0$,
	\[
	G_{0}(\mathfrak{p}_{v}^{k}, \mathfrak{p}_{v}^{l})
	= \begin{cases}
		1, & \text{if $l = 0$,}\\
		q_{v}^{k/2}, & \text{if $k = l - 1$; $l \not \equiv 0 \pmod r$,}\\
		- q_{v}^{(k - 1)/2}, & \text{if $k = l - 1$; $l > 0$; $l \equiv 0 \pmod r$,}\\
		q_{v}^{l/2}(1 - q_{v}^{-1}), & \text{if $k \ge l$; $l > 0$; $l \equiv 0 \pmod r$,}\\
		0, & \text{otherwise.}\\
	\end{cases}
	\]
	 Let $G(\chi_{\mathfrak{b}_{\scalebox{.95}{$\scriptscriptstyle 0$}}})$ be the normalized
	Gauss sum appearing in the functional equation of the (primitive) Hecke $L$-function associated with $\chi_{\mathfrak{b}_{\scalebox{.95}{$\scriptscriptstyle 0$}}}$, defined in~\eqref{eq: Gauss sum Hecke char}.
	If $\mathfrak{a}^{(\bi_0)}$ denotes
	the part of $\mathfrak{a}$ coprime to $\mathfrak{b}_{\scalebox{1.25}{$\scriptscriptstyle 0$}}$,
	then set
	\[
	G(\mathfrak{a}, \mathfrak{b}) : =
	\overline{\chi}_{\mathfrak{b}_{\scalebox{.95}{$\scriptscriptstyle 0$}}}(\mathfrak{a}^{(\bi_0)})
	G(\chi_{\mathfrak{b}_{\scalebox{.95}{$\scriptscriptstyle 0$}}})
	G_{0}(\mathfrak{a}, \mathfrak{b}).
	\]
\end{para}

\begin{para}\label{par:Kub_def}
Let $S$ be as before, and let $S \subseteq T$ be a larger finite set of places. For $\mathfrak{a} \in I(T)$ and a Hecke character $\psi$ unramified outside $T$ and of order dividing $r$, let
	\be\label{eq:Kubota_def}
	D^{T}(s, \mathfrak{a}, \psi) : =
	\sum_{\mathfrak{b} \in I(T)} \psi(\mathfrak{b})G(\mathfrak{a}, \mathfrak{b})|\mathfrak{b}|^{-s}.
	\ee
	Also, for $E \in \mathscr{E}$ a representative of a class in $R_{\mathfrak{c}}$, let
	\[
	D^{T}(s, \mathfrak{a}, \psi; E) :=
	\sum_{\substack{\mathfrak{b} \in I(T)\\ [\mathfrak{b}] = E}} \psi(\mathfrak{b})G(\mathfrak{a}, \mathfrak{b})|\mathfrak{b}|^{-s}.
	\]
	Using the definition of $G(\mathfrak{a}, \mathfrak{b})$, it is easy to see that these series
	converge absolutely for $\Re(s) > 1$, and in this half-plane, we can write
	\be\label{eq: DTE}
	D^{T}(s, \mathfrak{a}, \psi; E) = \frac{1}{|R_{\mathfrak{c}}|}\sum_{\xi}
	\bar{\xi}(E)
	D^{T}(s, \mathfrak{a}, \psi \xi)
	\ee
	the sum being over all id\`ele class characters, \!unramified outside $S$, of conductor at
most $\mathfrak{c}$, such that $\xi^r = 1$.
\end{para}

\begin{para} \label{par:G}
To discuss the analytic properties of these series, set
\[
G(s) : = (2 \pi)^{-(r-1)s} r^{rs-(r+1)/2} \prod_{j = 1}^{r-1}
\Gamma(s -\tfrac{1}{2} + \tfrac{j}{r}) =
(2\pi)^{-(r-1)\left(s-\frac{1}{2}\right)}
\frac{\Gamma\!\left(rs-\tfrac{r}{2}\right)}{\Gamma\!\left(s-\tfrac{1}{2}\right)},
\]
where we have applied the multiplication formula for the Gamma function. Define
\[
\widetilde{D}^{T}(s, \mathfrak{a}, \psi) =
\zeta_{\scalebox{1.1}{$\scriptscriptstyle F$}}^{\scalebox{1.2}{$\scriptscriptstyle T$}}
(rs - r/2  + 1)
D^{T}(s, \mathfrak{a}, \psi),
\]
and $\widehat{D}^{T}(s, \mathfrak{a}, \psi)=G(s)^{[F : \Q]/2} \widetilde{D}^{T}(s, \mathfrak{a}, \psi)$,
where $\zeta_{F}$ denotes the Dedekind zeta function of $F$.
For $E \in \mathscr{E}$, denote
\[
\widetilde{D}^{T}(s, \mathfrak{a}, \psi; E) =
\zeta_{\scalebox{1.1}{$\scriptscriptstyle F$}}^{\scalebox{1.2}{$\scriptscriptstyle T$}}
(rs - r/2  + 1)
D^{T}(s, \mathfrak{a}, \psi; E),
\]
and similarly for $\widehat{D}^{T}(s, \mathfrak{a}, \psi; E)$. Then we have the following result,
which can be derived from the version proved in \cite[Thm. 1]{BB06}:
\end{para}

\begin{thm}[\cite{BB06}, Theorem 1]\label{theorem:Kubota-func-eq}
	\!The function $\widehat{D}^{S}(s, \mathfrak{a}, \psi)$ has meromorphic continuation to all $\C$,
	analytic except possibly at $s = \tfrac{1}{2} \pm \tfrac{1}{r}$, where it might have simple poles.
	Moreover, letting $F_{S}^{\times} : = \prod_{v \in S} F_{v}^{\times}$,
	there exists a set of polynomials $P_{\eta}(s) \in \C[\{q_{v}^{-s}\}_{v \in S_{\mathrm{fin}}}]$,
	parametrized by elements $\eta \in F_{S}^{\times}$ and depending only on the image
	of $\eta$ in $F_{S}^{\times}/(F_{S}^{\times})^{r}$, such that
	\begin{align*}
		\widehat{D}^{S}(s, \mathfrak{a}, \psi) & =
		\psi(\mathfrak{a})
		|\mathfrak{a}|^{\frac{1}{2}-s}
		\prod_{v \in S_{\mathrm{fin}}} \Big(1 - q_{v}^{rs-\frac{r}{2}-1}\Big)^{-1}\\
		&\cdot \sum_{\eta \in F_{S}^{\times}/(F_{S}^{\times})^{^{r}}}
		P_{\eta}(s) \overline{\psi}(\eta)\sum_{E \in \mathscr{E}}
		K_{\eta, E, [\mathfrak{a}]}\widehat{D}^{S}(1 - s, \mathfrak{a}, \overline{\psi}; E)
	\end{align*}
where $K_{\eta, E, [\mathfrak{a}]}$ are constants on the unit circle depending on
$\eta, E$ and the class of $\mathfrak{a}$ in $R_{\mathfrak{c}}$.
\end{thm}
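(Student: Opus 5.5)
The plan is to deduce the statement from \cite[Thm.~1]{BB06}, which is formulated adelically for the Fourier--Whittaker coefficients of the metaplectic Eisenstein series on the $r$-fold cover of $\GL_2$ over $F$. The work lies in matching normalizations. There are three steps. First, identify $D^S(s,\ai,\psi)$ with the Dirichlet series in \cite{BB06} built from the $n$-th Whittaker coefficients, with $n$ corresponding to $\ai$. Second, transport their functional equation into our notation. Third, express the ``dual side'' through the partial series $D^S(1-s,\ai,\overline{\psi};E)$ via \eqref{eq: DTE}.

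For the first step I would work prime by prime. At $v\notin S$, the local factor of $G(\ai,\bi)$ is $G_0(\pp_v^k,\pp_v^l)$ times the local contribution of $G(\chi_{\bi_0})$. The values of $G_0$ in the definition are exactly the standard $p$-parts of Kubota's generalized Gauss sums $g(\pp^k,\pp^l)$: nonzero only for $l\le k+1$, equal to $q^{l/2}(1-q^{-1})$ when $r\mid l\le k$, and of absolute value $q^{(k+1)/2}$ at $l=k+1$ after normalization. The twisting by $\overline{\chi}_{\bi_0}(\ai^{(\bi_0)})$ and the global factor $G(\chi_{\bi_0})$ come from twisted multiplicativity. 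Here I would use \cref{lem-Hecke-prod-G-sum-comp} together with \eqref{eq: tau formula}: the global Gauss sum of a product of coprime ideals factors into local pieces, up to the reciprocity symbols $\chi_{\mi_1}(\rad\mi_2)\chi_{\mi_2}(\rad\mi_1)$ and the constants $\tau(E_1,E_2)$, which depend only on the classes in $R_\ci$. So $\psi(\bi)G(\ai,\bi)$ coincides, up to such class-dependent constants, with the coefficients in \cite{BB06}. The Gamma factor $G(s)^{[F:\Q]/2}$ should then be read off from the archimedean Whittaker function. Since $\mu_{2r}\subset F$, $F$ is totally complex, and the archimedean factor at each complex place is the one given by the multiplication formula in \S\ref{par:G}. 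The zeta normalization $\zeta_F^S(rs-r/2+1)$ is the standard one for the constant term of the Eisenstein series.

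Meromorphic continuation and the possible simple poles at $s=\tfrac12\pm\tfrac1r$ come directly from \cite{BB06}. These are the poles of the intertwining operator, that is, of $\zeta_F^S(rs-r/2)$ divided by $\zeta_F^S(rs-r/2+1)$. For the functional equation, \cite{BB06} gives a relation mixing the coefficients indexed by $\eta\ai$ for $\eta$ running over $F_S^\times/(F_S^\times)^r$, with local ``scattering'' polynomials at $v\in S_{\mathrm{fin}}$. Collecting these, I would obtain:
\begin{itemize}
\item the polynomials $P_\eta(s)\in\C[\{q_v^{-s}\}]$;
\item the Euler factors $\prod_{v\in S_{\mathrm{fin}}}(1-q_v^{rs-r/2-1})^{-1}$, which arise from converting the full zeta normalization into the $S$-partial one;
\item the factor $\psi(\ai)|\ai|^{1/2-s}$, from the $|n|^{1/2-s}$ normalization of Whittaker coefficients.
\end{itemize}

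The dual side is naturally a sum over $\bi$ of coefficients twisted by $\overline{\psi}(\eta)$ and by Hilbert symbols $(\eta,m_E)_S$. To pass to our series, I would split $\bi$ by its class $E=[\bi]$ and use the character data of \S\ref{sec:char_data}: writing $\bi=(b)E\gi^r$ with $b\equiv1\pmod\ci$, all the $S$-local symbols depend only on $E$, $\eta$ and $[\ai]$. This produces the unimodular constants $K_{\eta,E,[\ai]}$ multiplying $\widehat{D}^S(1-s,\ai,\overline{\psi};E)$. The Hilbert symbols and Gauss sums of the characters $\psi_E$, namely $\gamma(\psi_E)$, are products of roots of unity and normalized Gauss sums, so they have modulus one.

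The main obstacle is this bookkeeping step: checking that every dependence on the choices (the representatives $E$, the generators $m_E$, the datasets $\Delta$) is absorbed into constants depending only on $(\eta,E,[\ai])$, and that these constants have absolute value one. For this I would rely on \cref{lem-GS-inv}, \cref{lem-Hecke-GS-inv} and \cref{lem: reciprocity}, which guarantee that the relevant ratios depend only on the classes.
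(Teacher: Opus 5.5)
Your overall route is the paper's: the theorem is obtained by specializing \cite[Thm.~1]{BB06} through a dictionary between coefficients, and the dual side is then split according to the classes $E\in\EE$. But that dictionary is the only real content of the deduction, and your description of it would derail the execution.

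First, \cite[Thm.~1]{BB06} is not a statement about Whittaker coefficients that you still need to renormalize. It is stated directly for $\Eu{D}(s,\Psi,\alpha)=\sum_{0\ne c\in\OO_S/\OO_S^\times}\Psi(c)G(\alpha,c)|c|^{-s}$. The archimedean factor, the zeta normalization, the possible poles and the local scattering data are already part of that statement. It holds for $\Psi$ in a finite-dimensional space of functions on $F_S^\times$ that satisfy a Hilbert-symbol twisted invariance under $\OO_S^\times(F_S^\times)^r$; this invariance is what makes the summand well defined on $c$ modulo $S$-units. So the task is to exhibit an admissible $\Psi$.

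Second, you claim that $\psi(\mathfrak b)G(\mathfrak a,\mathfrak b)$ agrees with the coefficient in \cite{BB06} up to constants depending only on $[\mathfrak b]$. This is false if $\Psi$ is $\psi$ times a function of $[c]$. The left side depends only on the ideal $\mathfrak b$. By contrast, $G(\alpha,\varepsilon c)$ differs from $G(\alpha,c)$ by a Hilbert symbol in the $S$-unit $\varepsilon$, which no class constant can absorb. The missing factor is $\psi_{[c]}(c)=(c,m_{[c]})_S$. It depends on $c$ modulo $(F_S^\times)^r$, not only on $[c]$, and it has exactly the required twisted invariance, because $(\varepsilon,c)_S=(\varepsilon,m_{[c]})_S$ for $\varepsilon\in\OO_S^\times$.

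This factor does not come from the coprime multiplicativity of \cref{lem-Hecke-prod-G-sum-comp}. It comes from writing $\chi_{\mathfrak b}=\psi_E\widetilde{\chi}_{\mathfrak b}$ via \eqref{eq: rec} and splitting $G(\chi_{\mathfrak b_0})$ with \cref{lem-Hecke}. The same splitting produces $\gamma(\psi_E)$, and the twist $\overline{\chi}_{\mathfrak b_0}(\mathfrak a^{(\mathfrak b_0)})$ contributes $\overline{\psi_E(\alpha)}$. The paper takes $\Psi(c,\xi)=\psi_{[c]}(c)\overline{\psi_{[c]}(\alpha)}\xi(c)\gamma(\psi_{[c]})$. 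This gives $D^S(s,\mathfrak a,\psi)=\Eu{D}(s,\Psi(\cdot,\psi),\alpha)$, and, restricting to one class, $D^S(s,\mathfrak a,\psi;E)=\gamma(\psi_E)\sum_{[c]=E}\psi_E(c)\overline{\psi_E(\alpha)}\psi(c)G(\alpha,c)|c|^{-s}$. The second identity is what turns the dual side of \cite{BB06} into a combination of the $\widehat{D}^S(1-s,\mathfrak a,\overline{\psi};E)$ with unimodular coefficients $K_{\eta,E,[\mathfrak a]}$.

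A prime-by-prime comparison, as you propose, would instead require matching two different multiplicativity cocycles. One is the Hecke-character symbols together with $\tau(E_1,E_2)$; the other is the power residue symbols of elements. These differ by Hilbert symbols, and you would still need \cref{lem-Hecke} at each prime.
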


\begin{para} Using~\eqref{eq: DTE}, the functional equation can also be rewritten as
\[\widehat{D}^{S}(s, \mathfrak{a}, \psi)  =
		\psi(\mathfrak{a})
		|\mathfrak{a}|^{\frac{1}{2}-s}
		\prod_{v \in S_{\mathrm{fin}}} \Big(1 - q_{v}^{rs-\frac{r}{2}-1}\Big)^{-1}\sum_{\xi\in \widehat{R}_\ci} J_{\xi,\psi,[\ai]}(s) \widehat{D}^{S}(1 - s, \mathfrak{a}, \overline{\psi}\xi),
\]
 with $J_{\xi,\psi,[\ai]}(s)$ bounded if $\Re(s)$ is bounded.
\end{para}

\begin{para}
Brubaker and Bump \cite[Theorem 1]{BB06} prove this theorem for Dirichlet series of the form
$$
\Eu{D}(s, \Psi, \alpha) = \sum_{\substack{c \in \mathscr{O}_{S}/\mathscr{O}_{S}^{\times} \\ c \ne 0}}
\Psi(c)G(\alpha, c)|c|^{-s}
$$
with notation as in \textit{loc. cit.} To obtain the above theorem, one chooses
$$
\Psi(c, \xi):= \psi_{\scalebox{1.05}{$\scriptscriptstyle [c]$}}(c)\overline{\psi_{\scalebox{1.05}{$\scriptscriptstyle [c]$}}(\alpha)}\xi(c)
\gamma(\psi_{\scalebox{1.05}{$\scriptscriptstyle [c]$}}),
$$
where $\xi$ is a unitary id\`ele class character unramified outside $S$ and of order dividing $r$. Indeed, by comparing coefficients, one checks that
$$
D^{S}(s, \mathfrak{a}, \psi) =
\sum_{\mathfrak{b} \in I(S)} \psi(\mathfrak{b})G(\mathfrak{a}, \mathfrak{b})|\mathfrak{b}|^{-s} = \Eu{D}(s, \Psi(\cdot, \psi), \alpha),
$$
where $\mathfrak{a} = (\alpha)$ as an $\mathscr{O}_{S}$-ideal, and that
$$
D^{S}(s, \mathfrak{a}, \psi; E) = \gamma(\psi_{\scalebox{1.05}{$\scriptscriptstyle E$}})\sum_{\substack{c \in \mathscr{O}_{S}/\mathscr{O}_{S}^{\times}\\ [c] = E}}
\psi_{\scalebox{1.05}{$\scriptscriptstyle E$}}(c)\overline{\psi_{\scalebox{1.05}{$\scriptscriptstyle E$}}(\alpha)}\psi(c)
G(\alpha, c)|c|^{-s}.
$$
\end{para}

\begin{para}
    It is clear from the proof that the function $\big(s - \tfrac{1}{2} - \tfrac{1}{r}\big)\big(s - \tfrac{1}{2} + \tfrac{1}{r}\big)\widetilde{D}^{S}(s, \ai, \psi)$ is entire of order $1$, and hence it satisfies a convexity bound.
\end{para}

\begin{para}
    We imposed the condition that the underlying number field contains the full group $\mu_{2r}$ of $2r$-th roots of unity strictly as a matter of convenience, enabling us to invoke \cite[Theorem 1]{BB06}. Relaxing this hypothesis is certainly possible. Furthermore, as noted in the introduction, explicitly evaluating the residue of $\widehat{D}^{S}(s, \mathfrak{a}, \psi)$ at the simple pole $s = \tfrac{1}{2} + \tfrac{1}{r}$ remains an outstanding open problem for $r \ge 4$.

\end{para}

\subsection{Relations between \texorpdfstring{$D^T$} ~and \texorpdfstring{$D^S$}~}

\begin{para} The following lemma relates the functions $D^{T}$ and $D^{T'}$
for different sets $T, T'$ containing $S$.
\end{para}

\begin{lem}\label{lemma:DE-place-comp} With the notation in \S\ref{par:Kub_def}, let
	$v_{\mathfrak{p}} \notin T$ be a place, with $\mathfrak{p}$ the corresponding prime ideal of $\mathscr{O}$,
	and let $\psi$ be an $r$-th order Hecke character unramified outside $T$.
	For $\qi$ a power of $\pp$, define
	\[C_{\qi}(\ai,\psi;E)=-\chi_{\qi}(\ai)
\ov{\psi}(\qi ) \psi_{\sss E} (m_{[\qi]})
\ov{\tau(E,[\qi])} \ov{G(\chi_\qi)},
	\]
	where $\tau(\cdot,\cdot)$ is given by \eqref{eq: tau formula}. Then,
	for $E \in \mathscr{E}$ and $\mathfrak{a} \in I(T \cup \{v_{\mathfrak{p}} \})$, we have
	\begin{equation*}
		\begin{split}
	\Big(1 - |\mathfrak{p}|^{\frac{r}{2}-1-rs}\Big)&D^{T \cup \{v_{\mathfrak{p}} \}}(s, \mathfrak{a}, \psi \widetilde{\chi}_{\mathfrak{p}}^{k}; E) =
		D^{T}(s, \mathfrak{p}^{r-k} \mathfrak{a}, \psi; E) \\
		& + C_{\mathfrak{p}^{k-1}}(\ai, \psi;E)
		|\mathfrak{p}|^{\frac {r-k}2-(r-k + 1)s}
		D^{T}(s, \mathfrak{p}^{k - 2} \mathfrak{a}, \psi; [E \pp^{k-1}])
\end{split}
\end{equation*}
for $2\le k \le r $, and
	\[
	\Big(1 - |\mathfrak{p}|^{\frac{r}{2} - 1 - rs}\Big)D^{T \cup \{v_{\mathfrak{p}} \}}(s, \mathfrak{a}, \psi \widetilde{\chi}_{\mathfrak{p}}; E) =
	D^{T}(s, \mathfrak{p}^{r - 1} \mathfrak{a}, \psi; E)
	\]
	for $k = 1$.
\end{lem}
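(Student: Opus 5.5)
The plan is to compare the two Dirichlet series coefficientwise, $\pp$-part by $\pp$-part. Write every $\bi \in I(T)$ as $\bi = \bi'\pp^l$ with $\bi' \in I(T\cup\{v_\pp\})$ and $l\ge 0$. The left side involves only $\bi'$ coprime to $\pp$, twisted by $\widetilde{\chi}_\pp^k(\bi')$. The right side involves all $\bi\in I(T)$, so for each $\bi'$ we must sum over $l\ge0$.

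\emph{Step 1: factor the Gauss sums.} For $\bi = \bi'\pp^l$ with $\ai' := \pp^{j}\ai$, where $j=r-k$ or $j=k-2$, we factor
\[
G(\ai', \bi'\pp^l) = G(\ai',\bi')\cdot(\text{local factor at }\pp)\cdot(\text{twisting factor}).
\]
Here $G_0$ is multiplicative over places, so it splits directly. The part $G(\chi_{\bi_0})\,\overline{\chi}_{\bi_0}(\ai'^{(\bi_0)})$ is split using \cref{lem-Hecke-prod-G-sum-comp}. Since $\bi_0 = \bi'_0\,(\pp^l)_0$ with coprime factors, we get
\[
G(\chi_{\bi_0}) = \tau([\bi'_0],[\pp^{l}_0])\,\chi_{\bi'_0}(\pp)^{\epsilon}\,\chi_{\pp^l_0}(\rad \bi'_0)\,G(\chi_{\bi'_0})\,G(\chi_{\pp^l_0}).
\]
The symbols $\chi_{\pp^{l}}(\cdot)$ evaluated at the $\bi'$-part are converted via \cref{lem: reciprocity} into $\chi_{\bi'}(\pp)$ times a Hilbert symbol $(m_{[\pp^l]},m_{[\bi']})_S$. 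That Hilbert symbol is exactly $\psi_{[\bi']}(m_{[\pp^l]})$, and the $\tau$ factor depends only on classes.

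\emph{Step 2: the local computation.} Since $\ord_\pp(\ai')=j$ is fixed, the definition of $G_0$ shows that only $l = j+1$ (value $q^{j/2}$ or $-q^{(j-1)/2}$) or $l\equiv 0 \pmod r$ with $l\le j$ (value $q^{l/2}(1-q^{-1})$) contribute, plus $l=0$.

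For $j=r-1$ (the case $k=1$), we have $l\in\{0,r\}$ and $l=r$ would need $r\le r-1$, so only $l=0$ survives. Then the coefficient is $G(\pp^{r-1}\ai,\bi') = \overline{\chi}_{\bi'_0}(\pp^{r-1})\,G(\ai,\bi')$. Reciprocity turns $\overline{\chi}_{\bi'_0}(\pp)^{r-1} = \chi_{\bi'_0}(\pp)$ into $\widetilde{\chi}_\pp(\bi')$ times a class-dependent symbol; that symbol should be absorbed because $\widetilde{\chi}_\pp$ is the Dirichlet (not Hecke) part. The sum over $\bi'$ with $[\bi']=E$ is then $D^{T\cup\{v_\pp\}}$ twisted by $\widetilde{\chi}_\pp$. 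The factor $(1-|\pp|^{r/2-1-rs})$ must come from the $l\equiv 0\pmod r$ terms, since the $r$-th power part $\pp^{rm}$ of $\bi$ contributes a geometric series. I will check carefully whether for $l=rm$ the term is $\ov{G(\chi_1)}\,q^{rm/2}(1-q^{-1})$ or whether its vanishing due to $l\le j$ requires instead a normalization. I expect the $(1-q^{r/2-1-rs})$ to arise from the sum $\sum_m q^{rm(1/2-s)}$ paired with $\ai$ gaining factors, or equivalently from comparing $D^T(s,\pp^{j}\ai)$ across $j$. This is the bookkeeping to settle first.

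For $2\le k\le r$, we have $j=r-k$. The main term again comes from $l=0$, giving $\widetilde{\chi}_\pp^k(\bi')$ by the same reciprocity computation, since $\overline{\chi}(\pp)^{r-k}=\chi(\pp)^k$. The second term is the $l=k-1$ contribution from $D^T(s,\pp^{k-2}\ai,\cdot)$ with $l=j+1$. It is moved to the other side, which yields a coefficient $|\pp|^{(k-2)/2}|\pp|^{-(k-1)s}$ times the Gauss sum $G(\chi_{\pp^{k-1}})$ and $\tau$, $\psi_E$, $\chi_{\pp^{k-1}}(\ai)$ factors. After the shift $E\mapsto [E\pp^{k-1}]$ and using $|G|=1$ to write $G^{-1}=\overline{G}$, this should produce exactly $C_{\pp^{k-1}}(\ai,\psi;E)$, together with an extra $|\pp|^{(r-k)(1/2-s)}$ from the $r$-th power shift, giving the stated exponent $\tfrac{r-k}{2}-(r-k+1)s$.

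\emph{Main obstacle.} The hard part is the exact matching of the unit-modulus constants: the Hilbert symbols, $\tau(E,[\qi])$, and $\psi_E(m_{[\qi]})$. I will track them by fixing representatives $\pp^{l}=(\pi_l)E_l\gi^r$ as in \S\ref{sec:char_data} and applying \cref{lem: reciprocity} and \eqref{eq: tau formula} consistently. I will also verify the $(1-|\pp|^{r/2-1-rs})$ normalization by expanding both sides as power series in $|\pp|^{-s}$ for a single $\bi'$.
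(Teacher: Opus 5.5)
There is a genuine gap, and it sits exactly where you flag uncertainty. You never locate the source of the factor $(1-|\pp|^{\frac r2-1-rs})$, and the mechanism you propose for it is wrong.

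For $k=1$ you discard $l=r$, but this is an error. With $\ord_\pp(\pp^{r-1}\ai)=r-1$, the exponent $l=r$ falls under the case ``$k=l-1$, $l\equiv 0 \pmod r$'' of $G_0$, with value $-q^{(r-2)/2}$; it is not governed by the case $k\ge l$. Since $(\pp^r\bi')_0=\bi'_0$, $[\pp^r\bi']=[\bi']$ and $\psi(\pp)^r=1$, this term is exactly $-|\pp|^{\frac r2-1-rs}$ times the $l=0$ term. That single term is the whole factor.

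There is no geometric series anywhere. For $0\le j\le r-1$ the case $l\equiv 0\pmod r$ with $0<l\le j$ is empty, so in $D^T(s,\pp^j\ai,\psi;E')$ only $l=0$ and $l=j+1$ occur. For $0\le j\le r-2$ this gives
\[
\begin{aligned}
D^T(s,\pp^j\ai,\psi;E')&=D^{T\cup\{v_\pp\}}(s,\ai,\psi\widetilde{\chi}_{\pp}^{r-j};E')\\
&\quad+\widetilde{C}_{\pp^{j+1}}(\ai,\psi;E')\,|\pp|^{\frac j2-(j+1)s}\,D^{T\cup\{v_\pp\}}(s,\ai,\psi\widetilde{\chi}_{\pp}^{j+2};[E'\pp^{-j-1}]).
\end{aligned}
\]
Here $\widetilde{C}_{\pp^{j+1}}(\ai,\psi;E')$ is a unit-modulus constant built from $\ov{\chi}_{\pp^{j+1}}(\ai)$, $\psi(\pp^{j+1})$, $\tau([E'\pp^{-j-1}],[\pp^{j+1}])$, $\ov{\psi}_{[E'\pp^{-j-1}]}(m_{[\pp^{j+1}]})$ and $G(\chi_{\pp^{j+1}})$.

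For $2\le k\le r$, ``moving the second term to the other side'' does not yield the identity, and your exponent check fails. Indeed $\frac{k-2}2-(k-1)s+(r-k)(\frac12-s)=\frac{r-2}2-(r-1)s$, which equals $\frac{r-k}2-(r-k+1)s$ only when $k=2$. The exponent in the statement is that of the $l=r-k+1$ term of $D^T(s,\pp^{r-k}\ai,\psi;E)$.

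The missing step is to apply the display to both series on the right-hand side, with $(j,E')=(r-k,E)$ and with $(j,E')=(k-2,[E\pp^{k-1}])$. Each expansion produces one multiple of $D^{T\cup\{v_\pp\}}(s,\ai,\psi\widetilde{\chi}_{\pp}^{k};E)$ and one multiple of $D^{T\cup\{v_\pp\}}(s,\ai,\psi\widetilde{\chi}_{\pp}^{r-k+2};[E\pp^{k-1}])$. The two multiples of the latter cancel. The coefficient of the former becomes $1-|\pp|^{\frac r2-1-rs}$. Both facts rest on the unit-circle identity $\widetilde{C}_{\pp^{r-k+1}}(\ai,\psi;E)\,\widetilde{C}_{\pp^{k-1}}(\ai,\psi;[E\pp^{k-1}])=1$, equivalently $C_{\pp^{k-1}}(\ai,\psi;E)=-\widetilde{C}_{\pp^{r-k+1}}(\ai,\psi;E)$.

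This is the paper's argument in other words. It pairs the expansion for $\pp^j\ai$ with the one for $\pp^{r-j-2}\ai$ in the class $[E\pp^{-j-1}]$ and solves the resulting $2\times 2$ system; the determinant of that system is $1-|\pp|^{\frac r2-1-rs}$. Your coefficientwise plan can work, but it needs this pairing of the two expansions and the product identity for the two unit constants. That identity is also where your concern about matching $\tau$, the Hilbert symbols and $\psi_E(m_{[\pp^{k-1}]})$ is actually resolved. Without these two ingredients the case $2\le k\le r$ is not proved, and the case $k=1$ needs the $l=r$ term restored.
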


\begin{proof} Let $0\le k\le r-1$. We assume $\Re(s)>1$, as the proof in general follows
by meromorphic continuation. Using the properties of the Gauss sums, we have
\begin{equation*}
	\begin{split}
		D^{T}(s, \mathfrak{p}^{k} \mathfrak{a}, \psi; E) & =
		\sum_{\substack{\mathfrak{b} \in I(T)\\ [\mathfrak{b}] = E}} \frac{\psi(\mathfrak{b})G(\mathfrak{p}^ k \mathfrak{a}, \mathfrak{b})}{|\mathfrak{b}|^{s}}\\
		& = \sum_{l \ge 0} \sum_{\substack{\mathfrak{b} \in I(T
				\cup \{v_{\mathfrak{p}} \})\\ [\mathfrak{p}^l \mathfrak{b}] = E}}
			\frac{\psi(\mathfrak{p}^l \mathfrak{b})G(\mathfrak{p}^ k \mathfrak{a}, \mathfrak{p}^l \mathfrak{b})}{|\mathfrak{b}|^{s}|\mathfrak{p}|^{ls}}\\
			& = \sum_{\substack{\mathfrak{b} \in I(T
					\cup \{v_{\mathfrak{p}} \})\\ [\mathfrak{b}] = E}}
			\frac{\psi(\mathfrak{b})G(\mathfrak{p}^ k \mathfrak{a}, \mathfrak{b})}{|\mathfrak{b}|^{s}} \, + \,
			\sum_{\substack{\mathfrak{b} \in I(T
			\cup \{v_{\mathfrak{p}} \})\\ [\mathfrak{p}^{k + 1} \mathfrak{b}] = E}}
			\frac{\psi(\mathfrak{p}^{k + 1} \mathfrak{b})G(\mathfrak{p}^ k \mathfrak{a}, \mathfrak{p}^{k + 1} \mathfrak{b})}{|\mathfrak{b}|^{s}|\mathfrak{p}|^{(k + 1)s}}\\
			& =
			D^{T \cup \{v_{\mathfrak{p}} \}}(s, \mathfrak{a}, \psi \widetilde{\chi}_{\mathfrak{p}}^{-k}; E)+
			D^{T \cup \{v_{\mathfrak{p}} \}}(s, \mathfrak{a}, \psi \widetilde{\chi}_{\mathfrak{p}}^{k + 2}; [E \pp^{- k - 1}])\cdot
			\\
			& 
			\qquad \qquad\cdot \tau([E\pp^{-k-1}],[\pp^{k+1}])\ov{\psi}_{[{\sss E\pp^{-k-1}}]}(m_{[{\sss \pp^{k+1}}]})
			\frac{\psi^{k + 1}(\mathfrak{p})G(\mathfrak{p}^k \mathfrak{a}, \mathfrak{p}^{k + 1})}{|\mathfrak{p}|^{(k + 1)s}},
\end{split}
\end{equation*}
where in the last equality we used Lemma~\ref{lem-Hecke-prod-G-sum-comp} and the fact that
if $G(\ai,\bi)\ne 0$, $\pp^l\| \bi$, and $\pp|\bi_0$, then $\pp^{l-1}\| \ai$.
When $k = r - 1$, the formula above reduces to
\[
D^{T}(s, \mathfrak{p}^{r - 1} \mathfrak{a}, \psi; E) =
\Big(1 - |\mathfrak{p}|^{\frac{r}{2} - 1 - rs}\Big)D^{T \cup \{v_{\mathfrak{p}} \}}(s, \mathfrak{a}, \psi \widetilde{\chi}_{\mathfrak{p}}; E).
\]
For $0\le k \le r-2$, using the formula for $G(\mathfrak{p}^k \mathfrak{a}, \mathfrak{p}^{k + 1})$ we obtain:
\[\begin{aligned}
D^{T}(s, \mathfrak{p}^{k} \mathfrak{a}, \psi; E)
&=D^{T \cup \{v_{\mathfrak{p}} \}}(s, \mathfrak{a}, \psi \widetilde{\chi}_{\mathfrak{p}}^{r-k}; E)\\
&\qquad + \wC_{\mathfrak{p}^{k+1}}(\ai, \psi;E) |\mathfrak{p}|^{\frac k2-(k + 1)s}D^{T \cup \{v_{\mathfrak{p}} \}}(s, \mathfrak{a}, \psi \widetilde{\chi}_{\mathfrak{p}}^{k + 2}; [E {\mathfrak{p}}^{- k - 1}]),
\end{aligned}
\]
where for $\qi=\pp^{k+1}$ we define:
\[
	\wC_{\qi}(\ai, \psi;E) = \ov{\chi}_{\qi}(\ai)\psi(\qi)\ov{\psi}_{[{\sss E \qi^{-1}}]}(m_{\sss [\qi]})
	\cdot \tau([E\qi^{-1}],[\qi]) G(\chi_\qi).
	\]
We pair this identity with the one corresponding to $r - k - 2$ in which $E$ is replaced by $[E {\mathfrak{p}}^{- k - 1}]$.
Using that $\wC_{\pp^{k+1}}(\ai,\psi;E)\cdot \wC_{\pp^{r-k-1}}(\ai,\psi;[E\pp^{-k-1}])=1  $, we  solve the system for
$D^{T \cup \{v_{\mathfrak{p}} \}}(s, \mathfrak{a}, \psi \widetilde{\chi}_{\mathfrak{p}}^{r-k}; E)$. The stated identity follows
after replacing $k$ by $r-k$, using that
\[
\wC_{\pp^{r-k+1}}(\ai,\psi;E)=-C_{\pp^{k-1}}(\ai,\psi;E).\qedhere
\]
\end{proof}



\begin{para}\label{par:fromTtoS}
	Let $T$ be a finite set of places that contains all the places in $S$. Let $a(k)=r-k$, $b(k)=k-2$ be the functions
	 appearing in the exponents of $\pp$ in the previous lemma.  For an ideal
\[\mathfrak{f} = \prod_{v \in T \setminus S} \mathfrak{p}_{v}^{k_{v}}, \text{ with } 1 \le k_{v} \le r,\]
consider the set consisting of tuples of functions
\[
E_{\mathfrak{f}}=\Big\{ (e_v)_{v\in T \setminus S} \Big|\  \begin{matrix}
                                                       e_v=a \text{ if }\ k_v=1\hfill   \\
														e_v\in\{a,b\} \text{ if }\ k_v\ge 2
                                                       \end{matrix}\Big\}.
\]
For $\eb=(e_v)_{v\in T \setminus S}\in E_\ff$, let $S_\eb=\{v\in T \setminus S : e_v=b \}$, and define the integral ideals
\[
\gi_\eb=\prod_{v \in T \setminus S} \pp_{v}^{e_{v}(k_v)},\quad
\ff_\eb=\prod_{v\in S_{\eb} } \pp_v^{k_v-1},\quad \ri_\eb=\prod_{v\in T\setminus (S\cup S_{\eb}) } \pp_v^{k_v}.
\]
We can apply \cref{lemma:DE-place-comp} recursively to express the function
$
\widetilde{D}^{T}(s, \mathfrak{a}, \psi \widetilde{\chi}_{\mathfrak{f}}; E)
$
in terms of the functions
$
\widetilde{D}^{S}(s, \mathfrak{g_\eb}\mathfrak{a}, \psi; [E \ff_{\eb}])
$
for $\eb \in E_\ff$. We will need the function
\[
C_{\qi,\ri}(\psi;E)= (-1)^{\omega(\qi)}\ov{\chi}_\qi(\ri)
\ov{\psi}(\qi ) \psi_{\sss E} (m_{[\qi]})
\ov{\tau(E,[\qi])} \ov{G(\chi_\qi)},
\]
with $\omega(\qi)$ the number of distinct prime divisors of $\qi$.

\end{para}

\begin{cor}\label{corollary: transition-places} Let the notation be as above. \!Then, for a unitary id\`ele class character $\psi$ unramified outside $S$ and of order dividing $r$, $\mathfrak{a} \in I(T)$ and $E \in \mathscr{E}$, we have
\[
\widetilde{D}^{T}(s, \mathfrak{a}, \psi \widetilde{\chi}_{\mathfrak{f}}; E) =
\sum_{\eb \in E_{\mathfrak{f}}}\chi_{\ff_\eb}(\ai)
C_{\ff_{\eb}, \ri_\eb}(\psi;E)\prod_{v \in S_{\eb}}
 q_v^{\frac{r-k_v}{2}-(r-k_v+1)s}\widetilde{D}^{S}(s, \gi_\eb\mathfrak{a}, \psi; [E\ff_{\eb}]).
\]


\end{cor}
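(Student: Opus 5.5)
The plan is induction on $|T\setminus S|$, peeling off one place at a time with \cref{lemma:DE-place-comp}. First I pass from $D$ to $\widetilde D$. Since $\zeta_F^{T}(rs-r/2+1)=\prod_{v\in T\setminus S}\bigl(1-q_v^{\frac r2-1-rs}\bigr)\,\zeta_F^{S}(rs-r/2+1)$, multiplying the identities of the lemma by $\zeta_F^{T}(rs-r/2+1)$ removes the factor $(1-|\pp|^{\frac r2-1-rs})$ on the left. The lemma then reads directly as an identity between $\widetilde D^{T\cup\{v_\pp\}}$ and $\widetilde D^{T}$. The base case $T=S$ (so $\ff=\OO$, $E_\ff$ is a singleton and all factors are $1$) is trivial.

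For the inductive step, fix $v_0\in T\setminus S$ and write $\ff=\pp^{k}\ff'$ with $\pp=\pp_{v_0}$ and $k=k_{v_0}$. Put $T'=T\setminus\{v_0\}$. Since $\widetilde\chi_\ff=\widetilde\chi_\pp^{k}\widetilde\chi_{\ff'}$, I apply \cref{lemma:DE-place-comp} with base set $T'$ and character $\psi\widetilde\chi_{\ff'}$, which is unramified outside $T'$. For $k=1$ this gives the single term $\widetilde D^{T'}(s,\pp^{r-1}\ai,\psi\widetilde\chi_{\ff'};E)$, corresponding to $e_{v_0}=a$. For $k\ge2$ it gives two terms. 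The first is $\widetilde D^{T'}(s,\pp^{a(k)}\ai,\psi\widetilde\chi_{\ff'};E)$. The second is
\[
C_{\pp^{k-1}}(\ai,\psi\widetilde\chi_{\ff'};E)\,|\pp|^{\frac{r-k}{2}-(r-k+1)s}\,\widetilde D^{T'}\bigl(s,\pp^{b(k)}\ai,\psi\widetilde\chi_{\ff'};[E\pp^{k-1}]\bigr).
\]
These correspond to $e_{v_0}=a$ and $e_{v_0}=b$ respectively. I then apply the inductive hypothesis for $T'$ and $\ff'$ to each term, with ideal $\pp^{e_{v_0}(k)}\ai\in I(T')$ and class $E$ or $[E\pp^{k-1}]$. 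This produces a sum over $E_{\ff'}$, which combines with the choice of $e_{v_0}$ into a sum over $E_\ff$. The $\gi$-ideals multiply to $\gi_\eb$, the classes compose to $[E\ff_\eb]$, and the power factors of $q_v$ multiply as claimed.

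What remains, and what I expect to be the main obstacle, is matching the constants. I have to show that the product of
\begin{itemize}
\item the factor $C_{\pp^{k-1}}(\ai,\psi\widetilde\chi_{\ff'};E)$ (if $e_{v_0}=b$),
\item the inductive factor $\chi_{\ff'_{\eb'}}(\pp^{e_{v_0}(k)}\ai)\,C_{\ff'_{\eb'},\ri'_{\eb'}}(\psi;E')$ with $E'\in\{E,[E\pp^{k-1}]\}$,
\end{itemize}
equals $\chi_{\ff_\eb}(\ai)\,C_{\ff_\eb,\ri_\eb}(\psi;E)$.

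This requires four computations. First, $\chi_\qi(\ai)$ is multiplicative in $\qi$ for coprime $\qi$ up to the class-dependent reciprocity factors of \cref{lem: reciprocity}. Second, $\overline{G(\chi_{\qi_1\qi_2})}$ factors via \cref{lem-Hecke-prod-G-sum-comp}, which produces $\tau([\qi_1],[\qi_2])$ together with the characters $\chi_{\qi_1}(\rad\qi_2)\chi_{\qi_2}(\rad\qi_1)$. Third, the twist satisfies $\overline{\psi\widetilde\chi_{\ff'}}(\pp^{k-1})=\overline\psi(\pp^{k-1})\,\overline{\widetilde\chi}_{\ff'}(\pp^{k-1})$. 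Fourth, the evaluations $\chi_{\ff'_{\eb'}}(\pp^{e_{v_0}(k)})$ must combine with the previous factors into $\overline\chi_{\ff_\eb}(\ri_\eb)$, where $\ri_\eb$ collects the $a$-places at their full exponent $k_v$. Signs from $(-1)^{\omega}$ add up directly.

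The delicate point is the cocycle identity relating $\tau(E,[\qi_1\qi_2])$ to $\tau(E,[\qi_1])\,\tau([E\qi_1],[\qi_2])\,\tau([\qi_1],[\qi_2])$. It should follow from \eqref{eq: tau formula}, since $\tau$ is a coboundary of $\gamma$. The same bookkeeping is needed for the Hilbert-symbol factors $\psi_E(m_{[\qi]})$ and the reciprocity constants $(m_G,m_E)_S$. I would verify this by writing everything in terms of $\gamma(\psi_\cdot)$ and the $m_E$, and checking that the $S$-unit discrepancies cancel, as in the proof of \cref{lem-Hecke-prod-G-sum-comp}.
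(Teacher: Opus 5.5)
Your proposal is correct and follows the paper's argument: induction on the number of places in $T\setminus S$, peeling off one place with \cref{lemma:DE-place-comp} after absorbing the factor $1-|\pp|^{\frac r2-1-rs}$ into $\zeta_F^T$, using $C_{\pp^{k-1}}(\ai,\psi;E)=\chi_{\pp^{k-1}}(\ai)\,C_{\pp^{k-1},\OO}(\psi;E)$, and matching the constants via \cref{lem-Hecke-prod-G-sum-comp}. The cocycle relation you single out, $\tau(E,E_1)\,\tau([EE_1],E_2)=\tau(E,[E_1E_2])\,\tau(E_1,E_2)$, follows at once from \eqref{eq: tau formula}; the paper also leaves the rest of this constant bookkeeping implicit.
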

\begin{proof}
Note that for $\qi=\pp^{k-1}$, $\ri=\OO$, the function
$C_{\qi}(\ai,\psi;E)$  from the previous lemma equals $\chi_{\qi}(\ai)C_{\qi,\ri}(\psi;E). $
The statement then follows by induction on the number of places in $T$ using \cref{lem-Hecke-prod-G-sum-comp}.
\end{proof}

\begin{para}We also need a converse of the previous corollary. Let $a(k)=r-k$ as before, and let $b^{-1}(k)=k+2$ be the inverse of
the function $b$ from~\ref{par:fromTtoS}. Define the constant on the unit circle
\[
\wC_{\qi,\ri}(\psi;E)= \ov{\chi}_\qi(\ri)
\psi(\qi ) \ov{\psi}_{[{\sss E\qi^{-1}}]} (m_{[\qi]})
\tau([E\qi^{-1}],[\qi]) G(\chi_\qi).
\]

\begin{cor}\label{corollary: transition-places2}
For an ideal
$\gi = \prod_{v \in T \setminus S} \mathfrak{p}_{v}^{k_{v}}, \text{ with } 0 \le k_{v} \le r-1,$
consider the set consisting of tuples of functions
\[
E_{\gi}=\Big\{ (e_v)_{v\in T \setminus S} \Big|\  \begin{matrix}
                                                         e_v=a \text{ if }\ k_v=r-1\hfill   \\
														 e_v\in\{a,b^{-1}\} \text{ if }\ k_v\le r-2
                                                       \end{matrix}\Big\}.
\]
For $\eb=(e_v)_{v\in T \setminus S}\in E_\gi$, let $S_\eb=\{v\in T \setminus S : e_v=b^{-1} \}$, and define the integral ideals
\[
\gi_\eb=\prod_{v \in T \setminus S} \pp_{v}^{e_{v}(k_v)},\quad
\ff_\eb=\prod_{v\in S_{\eb} } \pp_v^{k_v+1},\quad \ri_\eb=\prod_{v\in T\setminus (S\cup S_{\eb}) } \pp_v^{k_v}.
\]
\!Then, for a unitary id\`ele class character $\psi$ unramified outside $S$ and of order dividing $r$, $\mathfrak{a} \in I(T)$ and $E \in \mathscr{E}$, we have
\[\begin{aligned}
\widetilde{D}^{S}(s, \ai\gi, \psi; E) =\prod_{\substack{v \in T\setminus S \\ k_v\ne r-1}} (1-q_v^{\frac r2-1-rs})^{-1}
&\sum_{\eb \in E_{\gi}}\ov{\chi}_{\ff_\eb}(\ai) \wC_{\ff_{\eb}, \ri_\eb}(\psi;E)\prod_{v \in S_{\eb}}
 q_v^{\frac{k_v}{2}-(k_v+1)s}\\
&\cdot \widetilde{D}^{T}(s, \mathfrak{a}, \psi\widetilde{\chi}_{\gi_\eb}; [E\ff_{\eb}^{-1}]).
\end{aligned}
\]
\end{cor}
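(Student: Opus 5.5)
The plan is to invert the one-place identities of \cref{lemma:DE-place-comp}, which express $D^{T\cup\{v_\pp\}}$ in terms of $D^T$, and then induct on $|T\setminus S|$, exactly as \cref{corollary: transition-places} was deduced. The one-place inversion is already implicit in the proof of \cref{lemma:DE-place-comp}. There, for $0\le k\le r-2$, we obtained
\[
D^{T}(s,\pp^{k}\ai,\psi;E)=D^{T\cup\{v_\pp\}}(s,\ai,\psi\widetilde\chi_\pp^{r-k};E)+\wC_{\pp^{k+1}}(\ai,\psi;E)\,|\pp|^{\frac k2-(k+1)s}D^{T\cup\{v_\pp\}}(s,\ai,\psi\widetilde\chi_\pp^{k+2};[E\pp^{-k-1}]),
\]
while for $k=r-1$ the second term is absent and we get the factor $(1-|\pp|^{\frac r2-1-rs})$ instead. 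After multiplying by zeta factors to pass to $\widetilde D$, the precise form of the statement becomes visible. The $e_v=a$ branch gives the character $\widetilde\chi_\pp^{r-k}$, and the $e_v=b^{-1}$ branch gives $\widetilde\chi_\pp^{k+2}$ with class shifted by $\pp^{-(k+1)}$. The branch $b^{-1}$ is unavailable when $k_v=r-1$, matching the definition of $E_\gi$.

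The normalizing factor needs care, because $\zeta_F^T$ and $\zeta_F^S$ differ by local Euler factors. We have
\[
\zeta_F^{T}(rs-r/2+1)=\zeta_F^{T\cup\{v_\pp\}}(rs-r/2+1)\,(1-|\pp|^{\frac r2-1-rs})^{-1}.
\]
For $k=r-1$ this factor cancels against the factor $(1-|\pp|^{\frac r2-1-rs})$ in the identity. For $k\le r-2$ it does not cancel and survives as $(1-q_v^{\frac r2-1-rs})^{-1}$. This explains why the product in the statement runs only over $v$ with $k_v\ne r-1$.

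The induction goes as follows. Order the places of $T\setminus S$ as $v_1,\dots,v_m$ and set $T_j=S\cup\{v_1,\dots,v_j\}$. Write $\gi=\pp_{v_1}^{k_1}\gi'$. First apply the one-place identity at $v_1$ with base set $S$, ideal $\gi'\ai$ (which lies in $I(T_1)$) and exponent $k_1$. This expresses $\widetilde D^S(s,\pp_{v_1}^{k_1}\gi'\ai,\psi;E)$ through $\widetilde D^{T_1}(s,\gi'\ai,\psi\widetilde\chi_{\pp_{v_1}}^{e(k_1)};E')$. Then apply the induction hypothesis with $S$ replaced by $T_1$ and with the character $\psi\widetilde\chi_{\pp_{v_1}}^{e(k_1)}$ in place of $\psi$. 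Strictly, \cref{lemma:DE-place-comp} allows $\psi$ to be ramified in $T$, so this step is legitimate.

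Most of the work is in the bookkeeping of the constants when the local factors are merged. The product of the local constants $\wC_{\pp_v^{k_v+1}}(\ai\cdots,\psi\cdots;E\cdots)$ has to be shown equal to $\ov\chi_{\ff_\eb}(\ai)\,\wC_{\ff_\eb,\ri_\eb}(\psi;E)$. The terms $\ov\chi_{\pp^{k+1}}(\gi'\ai)$ coming from the remaining part $\gi'$ account for $\ov\chi_{\ff_\eb}(\ri_\eb)$ and for the cross terms $\ov\chi_{\pp_v^{k_v+1}}(\pp_w^{k_w})$ with $v,w\in S_\eb$ distinct. These cross terms have to be reconciled with the multiplicativity of $G(\chi_\qi)$ and $\tau$, and the twisting of $\psi$ by $\widetilde\chi_{\pp_{v_1}}^{e}$ at later steps also contributes. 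I would handle this with \cref{lem-Hecke-prod-G-sum-comp}, which gives
\[
G(\chi_{\qi_1\qi_2})=\chi_{\qi_1}(\rad\qi_2)\,\chi_{\qi_2}(\rad\qi_1)\,\tau([\qi_1],[\qi_2])\,G(\chi_{\qi_1})G(\chi_{\qi_2}),
\]
together with the cocycle property of $\tau$ coming from \eqref{eq: tau formula} and the reciprocity law \cref{lem: reciprocity}.

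I expect this constant-matching step to be the main obstacle. To keep it manageable, I would first verify the case $|T\setminus S|=2$ explicitly and then argue that the general case is formal. The factors $q_v^{\frac{k_v}{2}-(k_v+1)s}$ and the class shift $[E\ff_\eb^{-1}]$ multiply transparently across the steps.
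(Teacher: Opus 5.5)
Your proposal is correct and is essentially the paper's argument. The paper likewise starts from the inverted one-place identities in the proof of \cref{lemma:DE-place-comp}, noting that $\ov{\chi}_\qi(\ai)\wC_{\qi,\ri}(\psi;E)$ is exactly the local constant $\wC_{\qi}(\ai,\psi;E)$ there for $\qi=\pp^{k+1}$, $\ri=\OO$. It then inducts on the number of places in $T$ and reconciles the constants with the relations from the proof of \cref{lem-Hecke-prod-G-sum-comp}; your handling of the surviving factors $(1-q_v^{\frac r2-1-rs})^{-1}$, and of how the twists $\widetilde\chi_{\pp_{v_1}}^{e}$ supply the missing parts of $\ov\chi_{\ff_\eb}(\ri_\eb)$, is the bookkeeping the paper leaves implicit (state the inductive hypothesis for a general base set $T_j\supseteq S$, or peel off the last place rather than the first).
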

\begin{proof}Note that $\ov{\chi}_\qi(\ai) \wC_{\qi,\ri}(\psi;E)$ becomes $\wC_{\qi}(\ai,\psi;E)$ in the proof of \cref{lemma:DE-place-comp}
for $\qi=\pp^{k+1}$, $\ri=\OO$. The statement follows by induction on the number of places in $T$ using the relations in the proof
of \cref{lem-Hecke-prod-G-sum-comp}.
\end{proof}
\end{para}

\section{Construction of the MDS associated with the second moment}
\label{sec:ZMDS}

\begin{para}\label{par:ZMDS}
In this section, we review the construction of the
perfect MDS (i.e., a multiple Dirichlet series
possessing meromorphic continuation and satisfying a group of functional equations), associated with the second moment,  following \cite{Di04}. Because of the sieving
procedure outlined in \S\ref{sec:strategy-alt}, we work with a slightly more
general version depending on a finite set $T$ containing $S$; this  introduces
some additional technical complications. We also keep careful track of the factors
appearing in the functional equations, since these factors determine the
strength of the convexity bounds for the MDS.

The perfect MDS is defined by
 	\[
	Z^T(\sb;\psi_1,\psi_2,\rho)=\sum_{\ai\in I(T)} \frac{L^{T}(s_1, \psi_1\chi_{\ai_0})L^{T}(s_2, \ov{\psi}_2\ov{\chi}_{\ai_0})}{|\ai|^{s_3}}
	\rho(\ai)P_\ai(s_1,s_2;\psi_1,\psi_2),
	\]
 where 	$\psi_1,\psi_2,\rho$ are $r$-th order Hecke characters unramified outside $T$. The subtle point is how to choose the ``correction Dirichlet polynomials" $P_\ai(s_1,s_2;\psi_1,\psi_2)$, so that $Z^T$ has a large group of functional equations, and therefore good analytic properties.
 We determine formulas for the correction polynomials in \cref{prop:Z0},
 starting with a canonical object $Z_\aux^T({\sb}; \psi_1, \psi_2, \rho)$ constructed from a pair of Kubota Dirichlet series.
\end{para}

\begin{para}
Recall that for an ideal $\mathfrak{a} \in I(S)$, we denote by
	$\mathfrak{a}_{\scalebox{1.25}{$\scriptscriptstyle 0$}}$ its $r$-th power-free part, and
	by
	$
	\mathfrak{a}_{\scalebox{1.25}{$\scriptscriptstyle 1$}} = \prod_{\mathfrak{p}_{v} \mid \mathfrak{a}_{\scalebox{.85}{$\scriptscriptstyle 0$}}}
	\mathfrak{p}_{v}
	$
	the square-free kernel of $\mathfrak{a}_{\scalebox{1.25}{$\scriptscriptstyle 0$}}$.
	\end{para}
\subsection{An MDS constructed from Kubota's Dirichlet series}

\begin{para}Fix a finite set of places $T$ containing $S$.
Our starting point is the following multiple Dirichlet series introduced in~\cite{Di04}:
\be\label{eq: Zaux}
\begin{aligned}
Z_\aux^T({\sb}; \psi_1, \psi_2, \rho)  &=\sum_{\ai\in I(T)}	\frac{
		D^{T}(s_1, \mathfrak{a}, \psi_{\scalebox{1.}{$\scriptscriptstyle 1$}} \rho)
		\overline{D^{T}(\overline{s_2}, \mathfrak{a},
			\psi_{\scalebox{1.}{$\scriptscriptstyle 2$}}
			\overline{\rho})}
		\, \overline{\rho}(\mathfrak{a})}
	{|\mathfrak{a}|^{s_3}}\\
	&=\sum_{\ai,\bi,\bi'\in I(T)}\frac{\psi_1\rho(\bi)\ov{\psi}_2\rho(\bi') G(\ai,\bi)\ov{G(\ai,\bi')} \ov{\rho}(\ai) }{|\bi|^{s_1}|\bi'|^{s_2}|\ai|^{s_3}},
	\end{aligned}
\ee
where $D^T$ was defined in~\eqref{eq:Kubota_def} and $\psi_1, \psi_2, \rho$
are $r$-th order characters unramified outside $T$. We shall always consider such characters
of the type $\eta\chi_\ff$, with $\eta$ an $r$-th order Hecke character unramified outside $S$,
 and $\ff$ an $r$-th power-free ideal supported on the primes in $T\setminus S$.

The goal of this section is to show that the family of functions
	 $\{Z_\aux^T({\sb}; \psi_1, \psi_2, \rho)\}_{\psi_1,\psi_2,\rho}$ is related by a functional equation
	to the family $\{ Z^T(\sb;\psi_1,\psi_2,\rho)\}_{\psi_1,\psi_2,\rho}$  introduced in \S\ref{par:ZMDS}.
\end{para}

\subsection{The product \texorpdfstring{$ G(\mathfrak{a}, \mathfrak{b})\overline{G(\mathfrak{a}, \mathfrak{b}')} $}~  }
\label{sec:product-Gauss}
\begin{para}
	Let $\mathfrak{a}, \mathfrak{b}, \mathfrak{b}' \in I(S)$, with
	$
	\mathfrak{e} = (\mathfrak{b}, \mathfrak{b}')
	$
	and write $\mathfrak{b} = \mathfrak{m}\mathfrak{e}$ and $\mathfrak{b}' = \mathfrak{m}' \mathfrak{e}$. Let $\mathfrak{p}_v$ be a prime divisor of $\mathfrak{e}$.
	Then, from the definition of $G_{0}(\mathfrak{a}, \mathfrak{b})$, the product
	$
	G(\mathfrak{a}, \mathfrak{b})\overline{G(\mathfrak{a}, \mathfrak{b}')}
	$
	vanishes unless one of the following conditions holds:
	\begin{enumerate}
		\item $\mathfrak{p}_{v}^{l} \parallel \mathfrak{e}$ with $l \not \equiv 0 \pmod r$,
		$\mathfrak{p}_v \nmid \mathfrak{m}\mathfrak{m}'$, and $\mathfrak{p}_{v}^{l - 1} \parallel \mathfrak{a}$.

		\item $\mathfrak{p}_{v}^{l} \parallel \mathfrak{e}$ with $l \equiv 0 \pmod r$, $\mathfrak{p}_v \nmid \mathfrak{m}\mathfrak{m}'$, and $\mathfrak{p}_{v}^{k} \parallel \mathfrak{a}$ for some $k \ge l - 1$.

		\item $\mathfrak{p}_{v}^{l} \parallel \mathfrak{e}$ with $l \equiv 0 \pmod r$, $\mathfrak{p}_v \mid \mathfrak{m}\mathfrak{m}'$, and $\mathfrak{p}_{v}^{k} \parallel \mathfrak{a}$ for some $k \ge l - 1 + \mathrm{ord}_{v}(\mathfrak{m}\mathfrak{m}')$.
	\end{enumerate}
	Thus, assuming that the product of Gauss sums is non-vanishing, we can split
	$
	\mathfrak{e} =
	\mathfrak{e}^{\scalebox{.95}{$\scriptscriptstyle (1)$}} \mathfrak{e}^{\scalebox{.95}{$\scriptscriptstyle (2)$}} \mathfrak{e}^{\scalebox{.95}{$\scriptscriptstyle (3)$}},
	$  (depending only on $\mi$, $\mi'$)
	with $\mathfrak{e}^{\scalebox{.95}{$\scriptscriptstyle (j)$}}$ containing the powers of prime ideals dividing $\mathfrak{e}$ that satisfy the corresponding condition above. Hence, we can write
	\begin{equation*}
		\begin{split}
			G(\mathfrak{a}, \mathfrak{b})\overline{G(\mathfrak{a}, \mathfrak{b}')}
			& = G(\mathfrak{a}', \mathfrak{m}\mathfrak{e}^{\scalebox{.95}{$\scriptscriptstyle (1)$}})\overline{G(\mathfrak{a}', \mathfrak{m}'\mathfrak{e}^{\scalebox{.95}{$\scriptscriptstyle (1)$}})}\\
			&\cdot \prod_{\substack{\mathfrak{p}_{v}^{l} \parallel \mathfrak{e}^{\scalebox{.85}{$\scriptscriptstyle (2)$}}\\
					\mathfrak{p}_{v}^{k}  \parallel \mathfrak{a} \\ k \ge l - 1}}
			\overline{\chi}_{\mathfrak{m}}\chi_{\mathfrak{m'}}(\mathfrak{p}_{v}^{k})
			G_{0}(\mathfrak{p}_{v}^{k}, \mathfrak{p}_{v}^{l})^2
			\cdot |\mathfrak{e}^{\scalebox{.95}{$\scriptscriptstyle (3)$}}|
			\prod_{\mathfrak{p}_{v} \mid \mathfrak{e}^{\scalebox{.85}{$\scriptscriptstyle (3)$}}}
			(1 - q_{v}^{-1})
		\end{split}
	\end{equation*}
	where we set
	\[
	\mathfrak{a}' = \mathfrak{a}
	(\mathfrak{e}^{\scalebox{.95}{$\scriptscriptstyle (3)$}})^{-1}
	\Big(\prod_{\mathfrak{p}_{v} \mid (\mathfrak{a}, \mathfrak{e}^{\scalebox{.85}{$\scriptscriptstyle (2)$}})} \mathfrak{p}_{v}^{k}\Big)^{-1}.
	\]
	It is easy to see that
	$
	G(\mathfrak{a}', \mathfrak{m}\mathfrak{e}^{\scalebox{.95}{$\scriptscriptstyle (1)$}})\overline{G(\mathfrak{a}', \mathfrak{m}'\mathfrak{e}^{\scalebox{.95}{$\scriptscriptstyle (1)$}})} = 0
	$, unless
	\[
	\mathfrak{a}' =
	\mathfrak{n} \cdot
	\frac{\mathfrak{e}^{\scalebox{.95}{$\scriptscriptstyle (1)$}}}
	{\mathfrak{e}_{\scalebox{.95}{$\scriptscriptstyle 1$}}^{\scalebox{.95}{$\scriptscriptstyle (1)$}}}
	\prod_{\substack{\mathfrak{p}_{v}^{k}  \parallel \mathfrak{a}' \\
			\mathfrak{p}_{v} \mid 	\mathfrak{m}
			\mathfrak{m}'}}
	\mathfrak{p}_{v}^{k}
	\]
	with $(\mathfrak{n}, \mathfrak{m}\mathfrak{m}'\mathfrak{e}) = 1$. Since
	$
	\mathfrak{p}_{v} \mid
	\mathfrak{m}_{\scalebox{1.25}{$\scriptscriptstyle 0$}}
	 \mathfrak{m}_{\scalebox{1.25}{$\scriptscriptstyle 0$}}'
	 $
	 and
	 $\mathfrak{p}_{v}^{k}  \parallel \mathfrak{a}'$ implies that
	 $
	 k = \mathrm{ord}_{v}(\mathfrak{m}\mathfrak{m}') - 1
	 $, we have
	 \begin{equation} \label{eq: Prod-GS-1}
	\begin{split}
	G(\mathfrak{a}', \mathfrak{m}\mathfrak{e}^{\scalebox{.95}{$\scriptscriptstyle (1)$}})\overline{G(\mathfrak{a}', \mathfrak{m}'\mathfrak{e}^{\scalebox{.95}{$\scriptscriptstyle (1)$}})} =\,  &
		\overline{\chi}_{
		\mathfrak{m}_{\scalebox{.95}{$\scriptscriptstyle 0$}}}
	\chi_{
		\mathfrak{m}_{\scalebox{.95}{$\scriptscriptstyle 0$}}'}
	\Bigg(\mathfrak{n}
	\prod_{\substack{\mathfrak{p}_{v}^{k}  \parallel \mathfrak{a}' \\
			\mathfrak{p}_{v} \mid 	\mathfrak{m}\mathfrak{m}'	\\
			\mathfrak{p}_{v} \nmid
			\mathfrak{m}_{\scalebox{.95}{$\scriptscriptstyle 0$}}
			\mathfrak{m}_{\scalebox{.95}{$\scriptscriptstyle 0$}}'}} \mathfrak{p}_{v}^{k}\Bigg)
	\overline{\chi}_{
		\mathfrak{m}_{\scalebox{.95}{$\scriptscriptstyle 0$}}
		\mathfrak{e}_{\scalebox{.95}{$\scriptscriptstyle 0$}}^{\scalebox{.95}{$\scriptscriptstyle (1)$}}}\Big(
	\frac{\mathfrak{m}_{\scalebox{1.1}{$\scriptscriptstyle 0$}}'}
	{\mathfrak{m}_{\scalebox{1.1}{$\scriptscriptstyle 1$}}'}
	\Big)
	\chi_{
		\mathfrak{m}_{\scalebox{.95}{$\scriptscriptstyle 0$}}'
		\mathfrak{e}_{\scalebox{.95}{$\scriptscriptstyle 0$}}^{\scalebox{.95}{$\scriptscriptstyle (1)$}}}\Big(
	\frac{\mathfrak{m}_{\scalebox{1.1}{$\scriptscriptstyle 0$}}}
	{\mathfrak{m}_{\scalebox{1.1}{$\scriptscriptstyle 1$}}}
	\Big)\\
	& \cdot
	G(\chi_{
		\mathfrak{m}_{\scalebox{.95}{$\scriptscriptstyle 0$}}
		\mathfrak{e}_{\scalebox{.95}{$\scriptscriptstyle 0$}}^{\scalebox{.95}{$\scriptscriptstyle (1)$}}})
\overline{G(\chi_{
		\mathfrak{m}_{\scalebox{.95}{$\scriptscriptstyle 0$}}'
		\mathfrak{e}_{\scalebox{.95}{$\scriptscriptstyle 0$}}^{\scalebox{.95}{$\scriptscriptstyle (1)$}}})}
	G_{0}(\mathfrak{a}', \mathfrak{m}\mathfrak{e}^{\scalebox{.95}{$\scriptscriptstyle (1)$}})G_{0}(\mathfrak{a}', \mathfrak{m}'\mathfrak{e}^{\scalebox{.95}{$\scriptscriptstyle (1)$}}).
\end{split}
\end{equation}
\end{para}

\begin{para}
We use \cref{lem-Hecke-prod-G-sum-comp} to isolate the remaining part
	$
	\mathfrak{e}^{\scalebox{.95}{$\scriptscriptstyle (1)$}}
	$
	of $\mathfrak{e}$ from the product of Gauss sums.


\begin{lem}\label{prod-gauss-sums-final} With notation as above, we have
	\begin{equation*}
	\begin{split}
		G(\mathfrak{a}, \mathfrak{b})\overline{G(\mathfrak{a}, \mathfrak{b}')} & = \g([\bi], [\bi'])\cdot
			\overline{\chi}_{
				\mathfrak{m}_{\scalebox{.95}{$\scriptscriptstyle 0$}}}
			\chi_{
				\mathfrak{m}_{\scalebox{.95}{$\scriptscriptstyle 0$}}'}
			\Bigg(\mathfrak{n}
			\prod_{\substack{\mathfrak{p}_{v}^{k}  \parallel \mathfrak{a}' \\
					\mathfrak{p}_{v} \mid 	\mathfrak{m}\mathfrak{m}'	\\
					\mathfrak{p}_{v} \nmid
					\mathfrak{m}_{\scalebox{.95}{$\scriptscriptstyle 0$}}
					\mathfrak{m}_{\scalebox{.95}{$\scriptscriptstyle 0$}}'}} \mathfrak{p}_{v}^{k}\Bigg)
			\cdot \chi_{\mathfrak{m}}
			\overline{\chi}_{\mathfrak{m}'}(\mathfrak{e}_{\scalebox{1.}{$\scriptscriptstyle 0$}}^{\scalebox{1.}{$\scriptscriptstyle (1)$}}
			\mathfrak{e}_{\scalebox{1.}{$\scriptscriptstyle 1$}}^{\scalebox{1.}{$\scriptscriptstyle (1)$}})
			\\
			& \cdot
			G(\chi_{\mathfrak{m}_{\scalebox{.95}{$\scriptscriptstyle 0$}}}\overline{\chi}_{\mathfrak{m}_{\scalebox{.95}{$\scriptscriptstyle 0$}}'})
			G_{0}(\mathfrak{a}', \mathfrak{m}\mathfrak{e}^{\scalebox{.95}{$\scriptscriptstyle (1)$}})G_{0}(\mathfrak{a}', \mathfrak{m}'\mathfrak{e}^{\scalebox{.95}{$\scriptscriptstyle (1)$}})\\
		&
		\cdot \prod_{\substack{\mathfrak{p}_{v}^{l} \parallel \mathfrak{e}^{\scalebox{.85}{$\scriptscriptstyle (2)$}}\\
				\mathfrak{p}_{v}^{k}  \parallel \mathfrak{a} \\ k \ge l - 1}}
		\overline{\chi}_{\mathfrak{m}}\chi_{\mathfrak{m'}}(\mathfrak{p}_{v}^{k})
		G_{0}(\mathfrak{p}_{v}^{k}, \mathfrak{p}_{v}^{l})^2 \,
		\cdot \,  \Eu{N}\mathfrak{e}^{\scalebox{.95}{$\scriptscriptstyle (3)$}}
		\prod_{\mathfrak{p}_{v} \mid \mathfrak{e}^{\scalebox{.85}{$\scriptscriptstyle (3)$}}}
		(1 - q_{v}^{-1}),
	\end{split}
\end{equation*}
where the factor $\g([\bi], [\bi'])$ is on the unit circle and it is given in terms of the factor defined in~\eqref{eq: gamma psi} by
\[
\g([\bi], [\bi'])=\gamma(\psi_{[\bi]})\ov{\gamma(\psi_{[\bi']})} \ov{\gamma(\psi_{[\bi \bi'^{-1}]})}\psi_{[\bi']}(-1).
\]

\end{lem}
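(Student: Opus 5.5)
The plan is to start from the factorization \eqref{eq: Prod-GS-1}, which has already separated off the $\mathfrak{e}^{(2)}$ and $\mathfrak{e}^{(3)}$ contributions. Comparing with the statement, it suffices to show
\[
\overline{\chi}_{\mathfrak{m}_0\mathfrak{e}_0^{(1)}}\Big(\frac{\mathfrak{m}_0'}{\mathfrak{m}_1'}\Big)\chi_{\mathfrak{m}_0'\mathfrak{e}_0^{(1)}}\Big(\frac{\mathfrak{m}_0}{\mathfrak{m}_1}\Big)G(\chi_{\mathfrak{m}_0\mathfrak{e}_0^{(1)}})\overline{G(\chi_{\mathfrak{m}_0'\mathfrak{e}_0^{(1)}})}
= \g([\bi],[\bi'])\,\chi_{\mathfrak{m}}\overline{\chi}_{\mathfrak{m}'}(\mathfrak{e}_0^{(1)}\mathfrak{e}_1^{(1)})\,G(\chi_{\mathfrak{m}_0}\overline{\chi}_{\mathfrak{m}_0'}).
\]
Here $\mathfrak{m}_0,\mathfrak{m}_0',\mathfrak{e}_0^{(1)}$ are pairwise coprime. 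Also $[\mathfrak{m}_0\mathfrak{e}_0^{(1)}]=[\bi]$ and $[\mathfrak{m}_0'\mathfrak{e}_0^{(1)}]=[\bi']$ in $R_\ci$, since they differ from $\bi,\bi'$ by $r$-th powers.

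First, apply \cref{lem-Hecke-prod-G-sum-comp} to the coprime pairs $(\mathfrak{m}_0,\mathfrak{e}_0^{(1)})$ and $(\mathfrak{m}_0',\mathfrak{e}_0^{(1)})$. This expands $G(\chi_{\mathfrak{m}_0\mathfrak{e}_0^{(1)}})$ as
\[
\tau\,\chi_{\mathfrak{m}_0}(\mathfrak{e}_1^{(1)})\,\chi_{\mathfrak{e}_0^{(1)}}(\mathfrak{m}_1)\,G(\chi_{\mathfrak{m}_0})\,G(\chi_{\mathfrak{e}_0^{(1)}}),
\]
and similarly for the primed factor. In the product with the conjugate, $|G(\chi_{\mathfrak{e}_0^{(1)}})|^2=1$ because the Gauss sum of a primitive character is normalized. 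The $\mathfrak{e}$-dependence then sits only in character values:
\begin{itemize}
\item[(a)] $\chi_{\mathfrak{m}_0}\overline{\chi}_{\mathfrak{m}_0'}(\mathfrak{e}_1^{(1)})$ from the lemma;
\item[(b)] $\chi_{\mathfrak{e}_0^{(1)}}$ evaluated at $\mathfrak{m}_1,\mathfrak{m}_1'$ and at $\mathfrak{m}_0/\mathfrak{m}_1,\ \mathfrak{m}_0'/\mathfrak{m}_1'$.
\end{itemize}
Using the reciprocity law \cref{lem: reciprocity}, I convert each $\chi_{\mathfrak{e}_0^{(1)}}(\mathfrak{m}_0)$ into $\chi_{\mathfrak{m}_0}(\mathfrak{e}_0^{(1)})$ times a Hilbert symbol $(m_{[\cdot]},m_{[\cdot]})_S$. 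Together with (a), this yields $\chi_{\mathfrak{m}}\overline{\chi}_{\mathfrak{m}'}(\mathfrak{e}_0^{(1)}\mathfrak{e}_1^{(1)})$. The $\chi_{\mathfrak{m}}$ versus $\chi_{\mathfrak{m}_0}$ mismatch should be harmless, since $\chi_{\mathfrak{m}}$ and $\chi_{\mathfrak{m}_0}$ agree on ideals coprime to $\mathfrak{m}$ up to the choices in \eqref{eq: char def}.

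Second, I recombine $G(\chi_{\mathfrak{m}_0})\overline{G(\chi_{\mathfrak{m}_0'})}$ into $G(\chi_{\mathfrak{m}_0}\overline{\chi}_{\mathfrak{m}_0'})$. Note that $\overline{\chi}_{\mathfrak{m}_0'}$ agrees with $\chi_{\overline{\mathfrak{m}_0'}}$ up to a character of $R_\ci$, and $\overline{G(\chi)}=\chi(-1)G(\overline\chi)$; this is the source of $\psi_{[\bi']}(-1)$. I then apply \cref{lem-Hecke} (or \cref{lem-Hecke-prod-G-sum-comp} read backwards) to the coprime moduli $\mathfrak{m}_0,\mathfrak{m}_0'$. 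The cross character values $\chi_{\mathfrak{m}_0}(\mathfrak{m}_1')$ and $\overline{\chi}_{\mathfrak{m}_0'}(\mathfrak{m}_1)$ it produces should combine with the surviving factors $\overline{\chi}_{\mathfrak{m}_0}(\mathfrak{m}_0'/\mathfrak{m}_1')$ and $\chi_{\mathfrak{m}_0'}(\mathfrak{m}_0/\mathfrak{m}_1)$ to cancel, again after one use of reciprocity.

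Finally, I collect all the unit-modulus constants: the $\tau$'s written via \eqref{eq: tau formula} as ratios of $\gamma(\psi_E)$, the Hilbert symbols, and $\psi(-1)$. The $\gamma(\psi_{[\mathfrak{e}_0^{(1)}]})$ factors cancel between the two expansions, leaving $\gamma(\psi_{[\bi]})\overline{\gamma(\psi_{[\bi']})}\,\overline{\gamma(\psi_{[\bi\bi'^{-1}]})}\,\psi_{[\bi']}(-1)$.

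The main obstacle is this bookkeeping of constants. In particular, one must check that the Hilbert symbols $(m_G,m_E)_S$ from reciprocity cancel exactly against the $\psi_E(m_{[\cdot]})$ twists hidden in the identification of $\overline{\chi}_{\mathfrak{m}_0'}$ with a character $\chi_{\mathfrak{n}}$. I would handle this by working with the explicit choice $m_E=\prod m_{E_0}^{n_{E_0}}$, so that these symbols are bimultiplicative in the classes, and verify the cancellation prime by prime in $S_f$.
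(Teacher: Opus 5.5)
Your route is the paper's, and everything up to the last step checks out. You expand both Gauss sums with \cref{lem-Hecke-prod-G-sum-comp}, recombine $G(\chi_{\mathfrak m_0})\overline{G(\chi_{\mathfrak m_0'})}$ into $G(\chi_{\mathfrak m_0}\overline{\chi}_{\mathfrak m_0'})$ at the cost of $\overline{\tau([\mathfrak m_0],[\mathfrak m_0']^{-1})}$, and multiply by the prefactor in \eqref{eq: Prod-GS-1}. The character values then collapse to
\[
\overline{\chi}_{\mathfrak m_0}(\mathfrak m_0')\,\chi_{\mathfrak m_0'}(\mathfrak m_0)\,\chi_{\mathfrak e_0^{(1)}}(\mathfrak m_0)\,\overline{\chi}_{\mathfrak e_0^{(1)}}(\mathfrak m_0')\,\chi_{\mathfrak m_0}\overline{\chi}_{\mathfrak m_0'}(\mathfrak e_1^{(1)}).
\]
Meanwhile \eqref{eq: tau formula} turns the three $\tau$'s into $\gamma(\psi_{[\mathfrak b]})\overline{\gamma(\psi_{[\mathfrak b']})}\,\overline{\gamma(\psi_{[\mathfrak b\mathfrak b'^{-1}]})}\cdot\gamma(\psi_{[\mathfrak m']})\gamma(\psi_{[\mathfrak m']^{-1}})$, with the $\gamma(\psi_{[\mathfrak e]})$'s cancelling as you say. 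This is exactly the paper's intermediate formula.

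The step that goes wrong is the one you single out as the main obstacle: the Hilbert symbols do not cancel. In this route you never need to identify $\overline{\chi}_{\mathfrak m_0'}$ with some $\chi_{\mathfrak n}$, so there are no $\psi_E(m_{[\cdot]})$ twists for them to cancel against. Among themselves they assemble, by bimultiplicativity, as
\[
(m_{[\mathfrak m]},m_{[\mathfrak m']})_S(m_{[\mathfrak m]},m_{[\mathfrak e]})_S(m_{[\mathfrak e]},m_{[\mathfrak m']})_S=(m_{[\mathfrak b]},m_{[\mathfrak b']})_S\,\overline{(m_{[\mathfrak e]},m_{[\mathfrak e]})_S}=(m_{[\mathfrak b]},m_{[\mathfrak b']})_S\,\psi_{[\mathfrak e]}(-1).
\]
So the sign $\psi_{[\mathfrak b']}(-1)=\psi_{[\mathfrak m']}(-1)\psi_{[\mathfrak e]}(-1)$ comes from $\gamma(\psi_{[\mathfrak m']})\gamma(\psi_{[\mathfrak m']^{-1}})=\psi_{[\mathfrak m']}(-1)$ together with the self-symbol. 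It does not come from conjugating a Gauss sum: for these Hecke characters $\chi(-z)=\chi(z)$, so $\overline{G(\chi)}=G(\overline{\chi})$. In any case all these signs equal $1$, because $-1=\zeta_{2r}^{r}$ is an $r$-th power in $F$.

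What remains is $(m_{[\mathfrak b]},m_{[\mathfrak b']})_S$, a unit that depends only on $[\mathfrak b],[\mathfrak b']$ but is not trivial in general. The paper's proof reaches this same factor, even though the displayed formula for $\gamma([\mathfrak b],[\mathfrak b'])$ does not show it. So do not try to verify an exact cancellation prime by prime in $S_f$. Let the symbols combine as above and carry $(m_{[\mathfrak b]},m_{[\mathfrak b']})_S$ as part of the class-dependent constant.
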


\begin{proof} Applying Lemma~\ref{lem-Hecke-prod-G-sum-comp} twice and \eqref{eq: Prod-GS-1},
\begin{align*}
		G(\mathfrak{a}', \mathfrak{m}\mathfrak{e}^{\scalebox{.95}{$\scriptscriptstyle (1)$}})\overline{G(\mathfrak{a}', \mathfrak{m}'\mathfrak{e}^{\scalebox{.95}{$\scriptscriptstyle (1)$}})} =\,  & 
		 \tau([\mathfrak{m}_{\scalebox{1.1}{$\scriptscriptstyle 0$}}], [\mathfrak{e}_{\scalebox{1.}{$\scriptscriptstyle 0$}}^{\scalebox{1.}{$\scriptscriptstyle (1)$}}])
		\overline{\tau([\mathfrak{m}_{\scalebox{1.1}{$\scriptscriptstyle 0$}}'] , [\mathfrak{e}_{\scalebox{1.}{$\scriptscriptstyle 0$}}^{\scalebox{1.}{$\scriptscriptstyle (1)$}}])}
		\overline{\tau([\mathfrak{m}_{\scalebox{1.1}{$\scriptscriptstyle 0$}}], [\mathfrak{m}_{\scalebox{1.1}{$\scriptscriptstyle 0$}}']^{\scalebox{1.1}{$\scriptscriptstyle -1$}})}
		\overline{\chi}_{
			\mathfrak{m}_{\scalebox{.95}{$\scriptscriptstyle 0$}}}
		\chi_{
			\mathfrak{m}_{\scalebox{.95}{$\scriptscriptstyle 0$}}'}
		\Bigg(\mathfrak{n}
		\prod_{\substack{\mathfrak{p}_{v}^{k}  \parallel \mathfrak{a}' \\
				\mathfrak{p}_{v} \mid 	\mathfrak{m}\mathfrak{m}'	\\
				\mathfrak{p}_{v} \nmid
				\mathfrak{m}_{\scalebox{.95}{$\scriptscriptstyle 0$}}
				\mathfrak{m}_{\scalebox{.95}{$\scriptscriptstyle 0$}}'}} \mathfrak{p}_{v}^{k}\Bigg)\\
		&\cdot \overline{\chi}_{\mathfrak{m}_{\scalebox{.95}{$\scriptscriptstyle 0$}}}(\mathfrak{m}_{\scalebox{1.1}{$\scriptscriptstyle 0$}}')
		\chi_{\mathfrak{m}_{\scalebox{.95}{$\scriptscriptstyle 0$}}'}(\mathfrak{m}_{\scalebox{1.1}{$\scriptscriptstyle 0$}})\chi_{\mathfrak{e}_{\scalebox{.95}{$\scriptscriptstyle 0$}}^{\scalebox{.95}{$\scriptscriptstyle (1)$}}}(\mathfrak{m}_{\scalebox{1.1}{$\scriptscriptstyle 0$}})
		\overline{\chi}_{\mathfrak{e}_{\scalebox{.95}{$\scriptscriptstyle 0$}}^{\scalebox{.95}{$\scriptscriptstyle (1)$}}}(\mathfrak{m}_{\scalebox{1.1}{$\scriptscriptstyle 0$}}')
		\chi_{\mathfrak{m}_{\scalebox{.95}{$\scriptscriptstyle 0$}}}
		\overline{\chi}_{
			\mathfrak{m}_{\scalebox{.95}{$\scriptscriptstyle 0$}}'}(\mathfrak{e}_{\scalebox{1.}{$\scriptscriptstyle 1$}}^{\scalebox{1.}{$\scriptscriptstyle (1)$}})
		\\
		& \cdot
		G(\chi_{\mathfrak{m}_{\scalebox{.95}{$\scriptscriptstyle 0$}}}\overline{\chi}_{\mathfrak{m}_{\scalebox{.95}{$\scriptscriptstyle 0$}}'})
		G_{0}(\mathfrak{a}', \mathfrak{m}\mathfrak{e}^{\scalebox{.95}{$\scriptscriptstyle (1)$}})G_{0}(\mathfrak{a}', \mathfrak{m}'\mathfrak{e}^{\scalebox{.95}{$\scriptscriptstyle (1)$}}).
	\end{align*}
Taking into account that $[\ai_0]=[\ai]$ for all $\ai\in I(S)$, by~\eqref{eq: tau formula} we have
\[ \tau([\mathfrak{m}_{\scalebox{1.1}{$\scriptscriptstyle 0$}}], [\mathfrak{e}_{\scalebox{1.}{$\scriptscriptstyle 0$}}^{\scalebox{1.}{$\scriptscriptstyle (1)$}}])
		\overline{\tau([\mathfrak{m}_{\scalebox{1.1}{$\scriptscriptstyle 0$}}'] , [\mathfrak{e}_{\scalebox{1.}{$\scriptscriptstyle 0$}}^{\scalebox{1.}{$\scriptscriptstyle (1)$}}])}
		\overline{\tau([\mathfrak{m}_{\scalebox{1.1}{$\scriptscriptstyle 0$}}], [\mathfrak{m}_{\scalebox{1.1}{$\scriptscriptstyle 0$}}']^{\scalebox{1.1}{$\scriptscriptstyle -1$}})}=
		\g(\psi_{[\bi]})\ov{\g(\psi_{[\bi']})} \ov{\g(\psi_{[\bi\bi'^{-1}]})}
		\cdot \g(\psi_{[\mi']})\g(\psi_{[\mi']^{-1}}),
\]
One easily checks that $\g(\psi_{[\mi']})\g(\psi_{[\mi'^{-1}]})=\psi_{[\mi']}(-1)$.
On the other hand, by the reciprocity law~\eqref{eq: reciprocity}
\[\overline{\chi}_{\mathfrak{m}_{\scalebox{.95}{$\scriptscriptstyle 0$}}}(\mathfrak{m}_{\scalebox{1.1}{$\scriptscriptstyle 0$}}')
		\chi_{\mathfrak{m}_{\scalebox{.95}{$\scriptscriptstyle 0$}}'}(\mathfrak{m}_{\scalebox{1.1}{$\scriptscriptstyle 0$}})\chi_{\mathfrak{e}_{\scalebox{.95}{$\scriptscriptstyle 0$}}^{\scalebox{.95}{$\scriptscriptstyle (1)$}}}(\mathfrak{m}_{\scalebox{1.1}{$\scriptscriptstyle 0$}})
		\overline{\chi}_{\mathfrak{e}_{\scalebox{.95}{$\scriptscriptstyle 0$}}^{\scalebox{.95}{$\scriptscriptstyle (1)$}}}(\mathfrak{m}_{\scalebox{1.1}{$\scriptscriptstyle 0$}}')=
		(m_{[\mi]}, m_{[\mi']} )_{\sss S} (m_{[\mi]}, m_{[\ei]} )_{\sss S}(m_{[\ei]}, m_{[\mi']} )_{\sss S}\cdot
		\chi_{\mathfrak{m}_{\scalebox{.95}{$\scriptscriptstyle 0$}}}
		\overline{\chi}_{
			\mathfrak{m}_{\scalebox{.95}{$\scriptscriptstyle 0$}}'}(\mathfrak{e}_{\scalebox{1.}{$\scriptscriptstyle 0$}}^{\scalebox{1.}{$\scriptscriptstyle (1)$}}).
\]
From the properties of the Hilbert symbol \cite[Prop. 1]{BB06} we have
\[\begin{aligned}
   (m_{[\mi]}, m_{[\mi']} )_{\sss S} (m_{[\mi]}, m_{[\ei]} )_{\sss S}(m_{[\ei]}, m_{[\mi']} )_{\sss S}
&=(m_{[\bi]},m_{[\bi']})_{\sss S} \ov{ (m_{[\ei]}, m_{[\ei]} )}_{\sss S}\\
&=(m_{[\bi]},m_{[\bi']})_{\sss S} \psi_{[\ei]}(-1).
  \end{aligned}
\]
Finally, using that $\psi_{[\ei]}\psi_{[\mi']}=\psi_{[\bi']}$ finishes the proof.
\end{proof}
\end{para}

\subsection{Another expression for \texorpdfstring{$Z_\aux$}~}
\begin{para}
Returning to the multiple Dirichlet series $Z_\aux$ defined in~\eqref{eq: Zaux}, we can now sum over $\ai$,
keeping $\bi,\bi'$ fixed. Let $\sigma_3$ be the involution on $\C^3$ given by
\[\sigma_3 \sb=(s_1+s_3-1/2, s_2+s_3-1/2, 1-s_3).\]
\begin{prop} \label{prop-Zaux}
Let $T=S\cup S_{\hi}$ and $\gi|\hi^{r-1}$, where $\hi\in I(S)$ is square-free. For $\psi_1$, $\psi_2$ Hecke characters of
order dividing $r$ and conductor dividing $\ci \hi$ and $\rho$ a Hecke character of
order dividing $r$ and conductor dividing $\ci$ we have
\[\begin{aligned}
Z_\aux^{T}(&\sigma_3\sb;\psi_1\chi_{\gi_1},\psi_2\chi_{\gi_1},\rho\chi_\gi)=|\gi_1|^{s_3-1\slash 2}
\sum_{\substack{\mi,\mi'\in I(T)\\ (\mi,\mi')=1}}\frac{\psi_1(\mi)\ov{\psi}_2(\mi')}{|\mi|^{s_1}|\mi'|^{s_2}} L^T(s_3,\chi_{\mi}\ov{\chi}_{\mi'}\rho\chi_\gi)\\
&\cdot\prod_{v\in T} \frac{L_v(s_3, \chi_v)}{L_v(1-s_3, \ov{\chi}_{v})}
\sum_{\ei\in I(T)}\frac{\psi_1(\ei)\ov{\psi}_2
(\ei)}{|\ei|^{s_1+s_2+s_3-1}} C([\mi \ei],[\mi' \ei],\gi,\rho,s_3)\\
&\cdot\prod_{\pp_v^l\| \ei^{(1)}}(\chi(\pp_v)^{l+1}q_v^{-s_3}-\chi(\pp_v)^{l}q_v^{-1})
\cdot\prod_{\pp_v^l\| \ei^{(2)}}(1-2q_v^{-1}+\chi(\pp_v)q_v^{-s_3-1})\cdot\prod_{\pp_v| \ei^{(3)}}(1-q_v^{-1}),
\end{aligned}
\]
 where $\chi=\chi_{\mi_0}\ov{\chi}_{\mi_0'}\rho\chi_\gi$, the decomposition $\mathfrak{e} =
	\mathfrak{e}^{\scalebox{.95}{$\scriptscriptstyle (1)$}} \mathfrak{e}^{\scalebox{.95}{$\scriptscriptstyle (2)$}} \mathfrak{e}^{\scalebox{.95}{$\scriptscriptstyle (3)$}}$ (depending on $\mi,\mi'$) was introduced at the beginning of \S\ref{sec:product-Gauss}, and
	\[\begin{aligned}
C(E,E',\gi,\rho,s_3)=&\ov{\rho}(\gi_1)\g(\psi_{\sss E}) \ov{\g}(\psi_{\sss E'})\g(\psi_{[{\sss \gi}]})
		\g(\rho\psi_{[{\sss E'E^{-1}\gi^{-1}}]})\psi_{\sss E'}(-1) \psi_{[{\sss E'E^{-1}}]}(m_{[\gi_1]})\\
	&\cdot|D_F\cond(\rho\psi_{[{\sss E'E^{-1}\gi^{-1}}]})|^{s_3-1/2},
	\end{aligned}
	\]
	with $D_F$ the discriminant of $F$.
\end{prop}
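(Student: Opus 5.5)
The plan is to start from the second expression in \eqref{eq: Zaux}, namely the triple sum over $\ai,\bi,\bi'\in I(T)$. We keep $\bi=\mi\ei$ and $\bi'=\mi'\ei$ fixed, with $\ei=(\bi,\bi')$, and first perform the sum over $\ai$. We work in the region of absolute convergence ($\Re(s_i)$ large) and substitute $\sigma_3\sb$ at the end; the identity then persists by meromorphic continuation.

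\textbf{Step 1: factor the $\ai$-sum.} By \cref{prod-gauss-sums-final}, the product $G(\ai,\bi)\ov{G(\ai,\bi')}$ factors into three pieces:
\begin{itemize}
\item a piece depending only on the classes $[\bi],[\bi']$, namely $\g([\bi],[\bi'])$;
\item a character $\ov{\chi}_{\mi_0}\chi_{\mi_0'}$ evaluated at $\ai$ away from $\mi\mi'\ei$, times explicit local factors at the primes of $\mi\mi'$ and of $\ei^{(1)},\ei^{(2)},\ei^{(3)}$;
\item the Gauss sum $G(\chi_{\mi_0}\ov{\chi}_{\mi_0'})$.
\end{itemize}
Write $\ai=\ni\cdot(\text{local part})$ with $(\ni,\mi\mi'\ei)=1$. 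The sum over $\ni$, twisted by $\ov{\rho}\ov{\chi}_\gi$ (the twist coming from the third character $\rho\chi_\gi$), is a partial $L$-function $L^{T\cup S_{\mi\mi'\ei}}(s_3,\ov{\chi})$. Each prime of $\ei$ contributes a local geometric series in $q_v^{-s_3}$, which is summed explicitly according to the three cases (1)–(3) of \S\ref{sec:product-Gauss}.

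\textbf{Step 2: apply the functional equation.} Apply the functional equation of the Hecke $L$-function of $\ov{\chi}$ (conductor $\mi_1\mi_1'\gi_1$ times a divisor of $\ci$) to pass from $s_3$ to $1-s_3$. Its root number is $G(\ov{\chi})$, whose size is the conductor norm to the power $1/2-s_3$. By \cref{lem-Hecke-prod-G-sum-comp}, \eqref{eq: tau formula} and the reciprocity law \cref{lem: reciprocity}, we rewrite $G(\ov\chi)\,G(\chi_{\mi_0}\ov\chi_{\mi_0'})$ as
\begin{itemize}
\item $|\mi_1\mi_1'|$ times a unit-circle constant depending only on $[\mi\ei],[\mi'\ei]$ and $\gi$, which is exactly $C(E,E',\gi,\rho,s_3)$, including the factor $|D_F\cond(\rho\psi_{[E'E^{-1}\gi^{-1}]})|^{s_3-1/2}$;
\item the factor $|\gi_1|^{s_3-1/2}$, coming from the $\gi_1$-part of the conductor.
\end{itemize}
The $|\mi_1\mi_1'|$-powers and the powers $|\mi|^{-s_1}$ combine under $\sigma_3$. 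Indeed, $s_1+s_3-1/2$ is sent back so that $|\mi|^{-s_1}$ appears, which is precisely why the variable change $\sigma_3$ is used. The partial $L$-function at places of $T$ is completed by inserting $\prod_{v\in T}L_v(s_3,\chi_v)/L_v(1-s_3,\ov\chi_v)$. The Euler factors at primes of $\mi\mi'$ not dividing $\mi_0\mi_0'$ are absorbed by the local factors from Step 1, and this is what makes the result collapse to $L^T(s_3,\chi_\mi\ov\chi_{\mi'}\rho\chi_\gi)$ with coprime $\mi,\mi'$.

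\textbf{Step 3: the local factors at $\ei$.} For $\pp_v^l\|\ei^{(1)}$, the missing Euler factor of the functional-equation side combines with $G_0(\pp_v^{l-1},\pp_v^l)^2=q_v^{l-1}$ and gives $\chi(\pp_v)^{l+1}q_v^{-s_3}-\chi(\pp_v)^lq_v^{-1}$, after normalizing by $|\ei|^{-(s_1+s_2+s_3-1)}$. For $\ei^{(2)}$, summing $G_0(\pp_v^k,\pp_v^l)^2q_v^{-ks_3}$ over $k\ge l-1$ and inverting the Euler factor gives $1-2q_v^{-1}+\chi(\pp_v)q_v^{-s_3-1}$. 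For $\ei^{(3)}$, we get $1-q_v^{-1}$.

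\textbf{Main obstacle.} I expect the hardest part to be the bookkeeping of unit-circle constants in Step 2: showing that all the $\tau$, $\gamma$, Hilbert-symbol and $\psi_E(m_{[\gi_1]})$ factors, together with the characters $\psi_1\chi_{\gi_1}$ and $\psi_2\chi_{\gi_1}$ evaluated on $\mi,\mi'$, collapse exactly to $\psi_1(\mi)\ov\psi_2(\mi')C(\cdot)$. The twists by $\chi_{\gi_1}$ must cancel against $\chi_\gi$-values via reciprocity. I would first verify this for $\gi=\OO$ and $\rho$ trivial, and then track the extra $\gi$-dependence using \cref{lem: reciprocity}.
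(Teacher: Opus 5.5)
Your outline follows the paper's proof. You fix $\bi=\mi\ei$, $\bi'=\mi'\ei$ and sum over $\ai$ with \cref{prod-gauss-sums-final}, sum the local geometric series (your Step 3 factors agree with the paper's), and apply the functional equation of $L^T(s,\ov{\chi})$. You then simplify the Gauss sums with \cref{lem-Hecke-prod-G-sum-comp} and reciprocity, and merge the factors $1-\chi(\pp_v)q_v^{s-1}$ at primes of $\mi\mi'$ prime to $\mi_0\mi_0'$ into $L^T(1-s,\chi_{\mi}\ov{\chi}_{\mi'}\rho\chi_\gi)$. The one step that is wrong as written is Step 2. Correcting it is exactly the bookkeeping you deferred to your ``main obstacle'' paragraph.

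The Gauss sums are normalized. The factor $\epsilon(s,\ov{\chi})$ is $G(\ov{\chi})$, of modulus one, times $|\mi_1\mi_1'\gi_1|^{1/2-s}\,|D_F\cond(\rho\psi_{[\mi'\mi^{-1}\gi^{-1}]})|^{1/2-s}$. So $G(\ov{\chi})G(\chi_{\mi_0}\ov{\chi}_{\mi_0'})$ has modulus one and carries no factor $|\mi_1\mi_1'|$. It is also not a constant depending only on the classes and $\gi$. The paper's identity is
\[
G(\chi_{\mi_0}\ov{\chi}_{\mi_0'})\,G(\ov{\chi}_{\mi_0}\chi_{\mi_0'}\ov{\rho}\,\ov{\chi}_{\gi})=\ov{G(\chi_\gi)}\;\ov{\rho}\,\ov{\chi}_{\gi}(\mi_1\mi_1')\;\ov{\chi}_{\mi_0}\chi_{\mi_0'}\ov{\rho}(\gi_1)\;C(\rho,[\mi'\mi^{-1}],[\gi]),
\]
where $C(\rho,E,G)=\gamma(\ov{\psi}_E)\gamma(\rho\psi_{[EG^{-1}]})\gamma(\psi_G)\psi_E(-1)$. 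The non-constant character values are cancelled by other factors, not absorbed into $C$:
\begin{enumerate}
\item Step 1 produces, at the primes of $\mi_0\mi_0'$, the factor $\rho\chi_\gi(\mi_1\mi_1')\,|\mi_1\mi_1'|^{s_3-1/2}$. Its character cancels $\ov{\rho}\,\ov{\chi}_\gi(\mi_1\mi_1')$. Its norm cancels the $\mi_1\mi_1'$-part of the conductor in $\epsilon(s_3,\ov{\chi})$, leaving $|\gi_1|^{1/2-s_3}$, which becomes $|\gi_1|^{s_3-1/2}$ at $\sigma_3\sb$.
\item By \cref{lem: reciprocity}, $\ov{\chi}_{\mi_0}\chi_{\mi_0'}(\gi_1)$ equals $\ov{\chi}_{\gi_1}(\mi_0)\chi_{\gi_1}(\mi_0')$ times Hilbert symbols. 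The character part cancels the twist $\chi_{\gi_1}(\mi)\ov{\chi}_{\gi_1}(\mi')$ coming from $\psi_j\chi_{\gi_1}$, since $\mi\mi_0^{-1}$ and $\mi'\mi_0'^{-1}$ are $r$-th powers. The Hilbert symbols give $\psi_{[E'E^{-1}]}(m_{[\gi_1]})$, and $\ov{\rho}(\gi_1)$ survives into $C$.
\item The relation $\gamma(\ov{\psi}_{[\mi'\mi^{-1}]})\,\ov{\gamma}(\psi_{[\mi\mi'^{-1}]})=\psi_{[\mi'\mi^{-1}]}(-1)$ removes one $\gamma$-factor of $\gamma([\bi],[\bi'])$.
\end{enumerate}
Relatedly, the power that combines with the $\mi$-weight under $\sigma_3$ to give $|\mi|^{-s_1}$ is not a power of $|\mi_1\mi_1'|$, which cancel as above. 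It is $|\mi\mi'|^{1/2-s_3}$, coming from the $G_0$-values and the weights $|\ai|^{-s_3}$ in the $\ai$-sum.
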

\begin{proof}
We start assuming that $\rho$ has order dividing $r$ and conductor dividing $\ci \hi$.
Using \cref{prod-gauss-sums-final},
\begin{equation*}
	\begin{split}
		\sum_{\mathfrak{a} \in I(T)}
		&\frac{\bar{\rho}(\mathfrak{a})
			G(\mathfrak{a}, \mathfrak{b})\overline{G(\mathfrak{a}, \mathfrak{b}')}}{|\mathfrak{a}|^{s_{\scalebox{.85}{$\scriptscriptstyle 3$}}}}=\g([\bi],[\bi'])
		\chi_{\mathfrak{m}}
		\overline{\chi}_{\mathfrak{m}'}(\mathfrak{e}^{\scalebox{1.}{$\scriptscriptstyle (1)$}}
		\mathfrak{e}_{\scalebox{1.}{$\scriptscriptstyle 1$}}^{\scalebox{1.}{$\scriptscriptstyle (1)$}})
		\bar{\rho}\bigg(\frac{\mathfrak{e}^{\scalebox{.95}{$\scriptscriptstyle (1)$}}}
		{\mathfrak{e}_{\scalebox{.95}{$\scriptscriptstyle 1$}}^{\scalebox{.95}{$\scriptscriptstyle (1)$}}}\bigg)
		\bigg|\frac{\mathfrak{e}^{\scalebox{.95}{$\scriptscriptstyle (1)$}}}
		{\mathfrak{e}_{\scalebox{.95}{$\scriptscriptstyle 1$}}^{\scalebox{.95}{$\scriptscriptstyle (1)$}}}\bigg|^{\!1 - s_{\scalebox{.85}{$\scriptscriptstyle 3$}}}
		G(\chi_{\mathfrak{m}_{\scalebox{.95}{$\scriptscriptstyle 0$}}}\overline{\chi}_{\mathfrak{m}_{\scalebox{.95}{$\scriptscriptstyle 0$}}'})\\
		& \cdot \prod_{\mathfrak{p}_{v}^{l} \parallel \mathfrak{e}^{\scalebox{.85}{$\scriptscriptstyle (2)$}}}
		\Bigg(\sum_{k \ge l - 1}\frac{\overline{\chi}_{\mathfrak{m}}
			\chi_{\mathfrak{m'}}\bar{\rho}(\mathfrak{p}_{v}^{k})
			G_{0}(\mathfrak{p}_{v}^{k}, \mathfrak{p}_{v}^{l})^2}
		{|\mathfrak{p}_{v}|^{k s_{\scalebox{.85}{$\scriptscriptstyle 3$}}}}\Bigg)
		\cdot|\mathfrak{e}^{\scalebox{.95}{$\scriptscriptstyle (3)$}}|^{1 - s_{\scalebox{.85}{$\scriptscriptstyle 3$}}}
		\prod_{\mathfrak{p}_{v} \mid \mathfrak{e}^{\scalebox{.85}{$\scriptscriptstyle (3)$}}}
		(1 - q_{v}^{-1})\\
		& \cdot
		\prod_{\mathfrak{p}_{v} \mid
				\mathfrak{m}_{\scalebox{.95}{$\scriptscriptstyle 0$}}
				\mathfrak{m}_{\scalebox{.95}{$\scriptscriptstyle 0$}}'}
		\bar{\rho}\big(
		\mathfrak{p}_{v}^{\mathrm{ord}_{v}(\mathfrak{m}\mathfrak{m}') - 1}\big)
		\big|\mathfrak{p}_{v}^{\mathrm{ord}_{v}(\mathfrak{m}\mathfrak{m}') - 1}\big|^{\!\frac{1}{2} - s_{\scalebox{.85}{$\scriptscriptstyle 3$}}}\\
		& \cdot
		\prod_{\substack{\mathfrak{p}_{v} \mid 	\mathfrak{m}\mathfrak{m}'	\\
				\mathfrak{p}_{v} \nmid
				\mathfrak{m}_{\scalebox{.95}{$\scriptscriptstyle 0$}}
				\mathfrak{m}_{\scalebox{.95}{$\scriptscriptstyle 0$}}'}}
		\sum_{k \ge
			\mathrm{ord}_{v}(\mathfrak{m}\mathfrak{m}') - 1}
		\frac{(\overline{\chi}_{\mathfrak{m}_{\scalebox{.95}{$\scriptscriptstyle 0$}}}
			\chi_{\mathfrak{m}_{\scalebox{.95}{$\scriptscriptstyle 0$}}'}
			\bar{\rho}(\mathfrak{p}_{v}))^{k}
			G_{0}\big(\mathfrak{p}_{v}^{k}, \mathfrak{p}_{v}^{\mathrm{ord}_{v}(\mathfrak{m}\mathfrak{m}')}\big)}{|\mathfrak{p}_{v}|^{k s_{\scalebox{.85}{$\scriptscriptstyle 3$}}}}
		\sum_{\substack{\mathfrak{n} \in I(T)\\ (\mathfrak{n}, \mathfrak{m}\mathfrak{m}'\mathfrak{e}) = 1}}
		\frac{\overline{\chi}_{\mathfrak{m}_{\scalebox{.95}{$\scriptscriptstyle 0$}}}
			\chi_{\mathfrak{m}_{\scalebox{.95}{$\scriptscriptstyle 0$}}'}\bar{\rho}(\mathfrak{n})}{|\mathfrak{n}|^{s_{\scalebox{.85}{$\scriptscriptstyle 3$}}}}.
	\end{split}
\end{equation*}
Using the formulas for the elementary Gauss sums $G_0(\cdot,\cdot)$ and summing the resulting geometric series,
we obtain
\begin{equation} \label{eq: interm-L-func}
	\begin{split}
		 \frac{\psi_{\scalebox{1.}{$\scriptscriptstyle 1$}} \rho(\mathfrak{b})\overline{\psi}_{\scalebox{1.}{$\scriptscriptstyle 2$}}\rho(\mathfrak{b}')}
		{|\mathfrak{b}|^{s_{\scalebox{.85}{$\scriptscriptstyle 1$} } } |\mathfrak{b}'|^{s_{\scalebox{.85}{$\scriptscriptstyle 2$}}}}
		&\sum_{\mathfrak{a} \in I(T)}
		\frac{\bar{\rho}(\mathfrak{a})
			G(\mathfrak{a}, \mathfrak{b})\overline{G(\mathfrak{a}, \mathfrak{b}')}}{|\mathfrak{a}|^{s_{\scalebox{.85}{$\scriptscriptstyle 3$}}}}
		=\g([\bi],[\bi'])
		\frac{\psi_{\sss 1}(\bi)\overline{\psi}_{\sss 2}(\bi')|\ei|^{s_{\scalebox{.85}{$\scriptscriptstyle 3$}}}}
		{|\bi|^{s_{\scalebox{.85}{$\scriptscriptstyle 1$}} + s_{\scalebox{.85}{$\scriptscriptstyle 3$}} - \frac{1}{2}}
		|\bi'|^{s_{\scalebox{.85}{$\scriptscriptstyle 2$}} + s_{\scalebox{.85}{$\scriptscriptstyle 3$}} - \frac{1}{2}}}\\
		&\cdot \prod_{\mathfrak{p}_{v}^{l} \parallel \mathfrak{e}^{\scalebox{.85}{$\scriptscriptstyle (1)$}}}
		\!\Big(\chi(\pp_{v})^{l+1} q_{v}^{s_{\scalebox{.85}{$\scriptscriptstyle 3$}}-1} -
		       \chi(\pp_{v})^{l} q_{v}^{-1}\Big)\\
		& \cdot \prod_{\mathfrak{p}_{v}^{l} \parallel \mathfrak{e}^{\scalebox{.85}{$\scriptscriptstyle (2)$}}}
		\big(\chi(\pp_{v})q_{v}^{s_{\scalebox{.85}{$\scriptscriptstyle 3$}} - 2} + 1 - 2 q_{v}^{-1}\big) \prod_{\mathfrak{p}_{v} \mid \mathfrak{e}^{\scalebox{.85}{$\scriptscriptstyle (3)$}}}
		(1 - q_{v}^{-1})\\
		& \cdot \,
		\prod_{\substack{\mathfrak{p}_{v} \mid 	\mathfrak{m}\mathfrak{m}'	\\
					\mathfrak{p}_{v} \nmid
					\mathfrak{m}_{\scalebox{.95}{$\scriptscriptstyle 0$}}
					\mathfrak{m}_{\scalebox{.95}{$\scriptscriptstyle 0$}}'}}
			\!\big(1 - \chi(\pp_{v})q_{v}^{s_{\scalebox{.85}{$\scriptscriptstyle 3$}} - 1}\big)
		\cdot \rho(\mathfrak{m}_{\scalebox{1.25}{$\scriptscriptstyle 1$}}\mathfrak{m}_{\scalebox{1.25}{$\scriptscriptstyle 1$}}')|\mathfrak{m}_{\scalebox{1.25}{$\scriptscriptstyle 1$}}\mathfrak{m}_{\scalebox{1.25}{$\scriptscriptstyle 1$}}'|^{s_{\scalebox{.85}{$\scriptscriptstyle 3$}} - \frac{1}{2}}
		G(\chi_{\mathfrak{m}_{\scalebox{.95}{$\scriptscriptstyle 0$}}}\overline{\chi}_{\mathfrak{m}_{\scalebox{.95}{$\scriptscriptstyle 0$}}'})
			\cdot L^{T}(s_{\scalebox{1.1}{$\scriptscriptstyle 3$}}, \ov{\chi}),
	\end{split}
\end{equation}
where we set $ \chi = \chi_{\mathfrak{m}_{\scalebox{.95}{$\scriptscriptstyle 0$}}}
\ov{\chi}_{\mathfrak{m}_{\scalebox{.95}{$\scriptscriptstyle 0$}}'}\rho$.

At this point, we make the substitutions $\rho\mapsto\rho \chi_\gi$ and $\psi_j\mapsto\psi_j \chi_{\gi_1}$,
so that $\chi$ becomes $\chi=\chi_{\mathfrak{m}_{\scalebox{.95}{$\scriptscriptstyle 0$}}}
\ov{\chi}_{\mathfrak{m}_{\scalebox{.95}{$\scriptscriptstyle 0$}}'}\rho\chi_\gi$.
We apply the functional equation of the Hecke $L$-function $L(s, \chi)$,
in the form
\[
L^{T}(s, \ov{\chi}) = L^{T}(1 - s, \chi)\epsilon(s, \ov{\chi}) \prod_{v \in T}
\frac{L_{v}(1 - s, \chi_{v})}{L_{v}(s,  \ov{\chi}_{v})}
\]
with $L_{v}(s, \chi_{v}) = (1 - \chi(\pp_v)q_{v}^{-s})^{-1}$
for $v \notin S_{\infty}$ and the epsilon factor
\[\epsilon(s, \ov{\chi})= G(\ov{\chi})|\mi_1\mi_1'\gi_1|^{1/2-s} \cdot |D_F\cond(\rho\psi_{[\mi'\mi^{-1}\gi^{-1}]}) |^{1/2-s}.
 \]
As in the proof of
\cref{lem-Hecke-prod-G-sum-comp}, we have
\[G(\chi_{\mathfrak{m}_{\scalebox{.95}{$\scriptscriptstyle 0$}}}\overline{\chi}_{\mathfrak{m}_{\scalebox{.95}{$\scriptscriptstyle 0$}}'})G(\overline{\chi}_{\mathfrak{m}_{\scalebox{.95}{$\scriptscriptstyle 0$}}}
\chi_{\mathfrak{m}_{\scalebox{.95}{$\scriptscriptstyle 0$}}'}\bar{\rho}\bar{\chi}_\gi) =\ov{G(\chi_\gi)}
\ov{\rho}\ov{\chi}_{\gi}(\mathfrak{m}_{\scalebox{1.25}{$\scriptscriptstyle 1$}}\mathfrak{m}_{\scalebox{1.25}{$\scriptscriptstyle 1$}}')
\ov{\chi}_{\mi_0}\chi_{\mi_0'}\ov{\rho}(\gi_1) C(\rho, [\mi'\mi^{-1}], [\gi])
\]
where
$C(\rho, E, G)=\g(\ov{\psi}_{E})\g(\rho\psi_{[EG^{-1}]})\g(\psi_G) \psi_{E}(-1).$

Note that $\g(\ov{\psi}_{[\mi'\mi^{-1}]}) \cdot\ov{\g}(\psi_{[\mi\mi'^{-1}]})  =\psi_{[\mi'\mi^{-1}]}(-1) $,
so one of the $\g$-factors above cancels the corresponding one from $\g([\bi],[\bi'])$.
We also have:
\[\prod_{\substack{\mathfrak{p}_{v} \mid 	\mathfrak{m}\mathfrak{m}'	\\
					\mathfrak{p}_{v} \nmid
					\mathfrak{m}_{\scalebox{.95}{$\scriptscriptstyle 0$}}
					\mathfrak{m}_{\scalebox{.95}{$\scriptscriptstyle 0$}}'}}
			\!\big(1 - \chi_{\mathfrak{m}_{\scalebox{.95}{$\scriptscriptstyle 0$}}}
			\overline{\chi}_{\mathfrak{m}_{\scalebox{.95}{$\scriptscriptstyle 0$}}'}
			\rho(\pp_{v})q_{v}^{s - 1}\big)
			L^{T}(1-s, \chi_{\mathfrak{m}_{\scalebox{.95}{$\scriptscriptstyle 0$}}}
		\ov{\chi}_{\mathfrak{m}_{\scalebox{.95}{$\scriptscriptstyle 0$}}'}\rho)=
		L^{T}(1-s, \chi_{\mathfrak{m}}
		\ov{\chi}_{\mathfrak{m}'}\rho)
\]
yielding the final formula.
\end{proof}
\end{para}

\subsection{Cleaning up}\label{sec:cleaning}
\begin{para}
Next we remove the factor $ C(-)$, and the product over the
finite places in $T$ from the formula in \cref{prop-Zaux}, by forming linear combinations of the original function $Z_\aux^T$.
Note that for  $E,E'\in \EE$,  the finite linear combination
\[Z_{\aux}^T(\sigma_3\sb;\psi_1,\psi_2, \rho;{E,E'}):=\frac{1}{|R_\ci|^2}\sum_{\vp_1,\vp_2\in\widehat{R}_\ci}
\ov{\vp}_1(E)\vp_2(E') Z_\aux^T(\sigma_3\sb;\psi_1\vp_1,\psi_2 \vp_2, \rho),
\]
is the subseries in \cref{prop-Zaux} consisting of those $\mi,\mi',\ei$ with $[\mi \ei]=E$ and $[\mi'\ei]=E'$.
Recall that $\widehat{R}_\ci$ denotes the set of Hecke characters of order dividing~$r$ and conductor dividing~$\ci$.
Therefore the function $Z^{T,(1)}(-)= \sum_{E,E'\in\mathscr{E}} Z^{T,(1)}(-;{E,E'})$
with
\[Z^{T,(1)}(\sb;\psi_1,\psi_2, \rho\chi_\gi;{E,E'}):=
\frac{|\gi_1|^{1/2-s_3}}{C(E,E',\gi, \rho,s_3)} Z_\aux^T(\sigma_3\sb;\psi_1\chi_{\gi_1},\psi_2\chi_{\gi_1} , \rho\chi_{\gi};{E,E'})
\]
satisfies the same relation as $Z_\aux^T$ in \cref{prop-Zaux}, but without the factors
$C(-)$ and $|\gi_1|^{s_3-1/2}$. We choose the notation so that the functions $Z^{T,(i)}$ are ``getting closer'' to
the perfect MDS $Z^{T}$  described in \S\ref{par:ZMDS} as $i=1,2,3$.
\end{para}
\begin{para}
For the next proposition, we need to extend the definition of the $r$-th order characters $\chi_\mathfrak{t}$ to
ideals $\ti$ supported on $S_f$. For each $v\in S_f$ we fix a generator $p_v$
of the $\OO_S$-ideal $\pp_v\OO_S$ (that is, $\ord_v(p_v)=1$ and $\ord_u(p_v)=0$ for $u\notin S$),
and for $\ti=\prod_{v\in S_f} \pp_v^{n_v}$ $r$-th power-free, we fix the generator
$t=\prod p_v^{n_v}$ of $\ti\OO_S$. For $\mi$ coprime to the places in $S$ we define
$$
\chi_\ti(\mi)=\bigg(\frac {t}\mi\bigg),
$$
a Hecke character of conductor  dividing $\ci$.
Note that if $\ti$ is coprime to the conductor of $\chi_{\mi}$, then by~\eqref{eq: rec} we have
\be\label{eq:chim0}
\chi_{\mi}(\ti)=\chi_\ti(\mi)\psi_{[\mi]}(t).
\ee
By abuse of notation, in the next proposition we use this relation for $\ti\in I(S\cup S_\mi)$ as well,
by defining $t:=m_{[\ti]}$ in this case, according to~\eqref{eq: reciprocity}.

\end{para}
\begin{para}
Recall that all our Hecke characters $\chi$ have trivial infinity type, so $L_v(s,\chi_v)=\zeta_{F,v}(s)$
for $v\in S_\infty$.
\begin{prop}\label{prop: Zaux2}
Let $T$ be a finite set of places containing $S$, and let
$\psi_1$, $\psi_2$, $\rho$ be Hecke characters of
order dividing $r$ and conductor dividing $\ci \prod_{v\in T\setminus S} \pp_v$.
Fix two classes $E,E'\in \EE$ and let
$$\Ci=\Ci_{[EE'^{-1}],\rho}:=\prod_{\substack{v\in T_\fin \\ \pp_v\nmid \cond(\rho \psi_{[{\sss EE'^{-1}}]})}} \pp_v .$$
Defining
\[\begin{aligned}
Z^{T,(2)}(\sb; & \psi_1,\psi_2, \rho ;{E,E'})=\prod_{v\in S_{\Ci}}(1-q_v^{r(s_3-1)})^{-1}
\sum_{\substack{\ti\mid \Ci^{r-1}\\ \ti'\mid \Ci }}
\psi_{[{\sss EE'^{-1}}]}(t'/t)\rho(\ti'/\ti)\\
&\cdot|\ti|^{s_3-1}\mu(\ti')|\ti'|^{-s_3}
Z^{T,(1)}(\sb;\psi_1\ov{\chi}_{\ti}\chi_{\ti'},\psi_2\ov{\chi}_{\ti}\chi_{\ti'}, \rho ;E,E'),
\end{aligned}
\]
we have that the function $Z^{T,(2)}(-):=\sum_{E,E'\in\mathscr{E}} Z^{T,(2)}(-;E,E')$
is given by:
\[\begin{aligned}
Z^{T,(2)}(\sb;\psi_1,\psi_2, \rho )&=
\prod_{v\in S_\infty}\frac{\zeta_{F,v}(s_3)}{\zeta_{F,v}(1-s_3)}
\sum_{\substack{\mi,\mi'\in I(T)\\(\mi,\mi')=1}}\frac{\psi_1(\mi)\ov{\psi}_2(\mi')}{|\mi|^{s_1}|\mi'|^{s_2}} L^T(s_3,\chi_{\mi}\ov{\chi}_{\mi'}\rho )\\
&\cdot\sum_{\ei\in I(T)}\frac{\psi_1(\ei)\ov{\psi}_2(\ei)}{|\ei|^{s_1+s_2+s_3-1}}
\cdot\prod_{\pp_v^l\| \ei^{(1)}}(\chi(\pp_v)^{l+1}q_v^{-s_3}-\chi(\pp_v)^{l}q_v^{-1})\\
&\cdot\prod_{\pp_v^l\| \ei^{(2)}}(1-2q_v^{-1}+\chi(\pp_v)q_v^{-s_3-1})\cdot\prod_{\pp_v| \ei^{(3)}}(1-q_v^{-1}),
\end{aligned}
\]
where $\chi=\chi_{\mi_0}\ov{\chi}_{\mi'_0}\rho $.
\end{prop}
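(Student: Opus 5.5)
The plan is to start from the formula for $Z^{T,(1)}(\sb;\psi_1,\psi_2,\rho\chi_\gi;E,E')$ given by \cref{prop-Zaux} (after the clean-up in \S\ref{sec:cleaning}). For a fixed pair of classes $E,E'$ and fixed $\mi,\mi',\ei$ with $[\mi\ei]=E$, $[\mi'\ei]=E'$, it reads as the same sum but with the archimedean and finite local factors $\prod_{v\in T} L_v(s_3,\chi_v)/L_v(1-s_3,\ov\chi_v)$ still present. The work consists of removing the finite part of this local factor. The archimedean part $\prod_{v\in S_\infty}\zeta_{F,v}(s_3)/\zeta_{F,v}(1-s_3)$ is independent of the summation variables, because all characters have trivial infinity type, so it simply factors out.

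At a finite place $v\in T$, the character $\chi=\chi_{\mi_0}\ov\chi_{\mi'_0}\rho$ is ramified exactly when $\pp_v$ divides $\cond(\rho\psi_{[EE'^{-1}]})$. There the local factor is $1$. Hence only the places in $S_\Ci$ contribute, where the factor is
\[
\frac{1-\chi(\pp_v)q_v^{s_3-1}}{1-\chi(\pp_v)q_v^{-s_3}}.
\]
I would expand the numerator and the inverse of the denominator, the latter via
\[
(1-xq_v^{-s_3})^{-1}=(1-q_v^{-rs_3})^{-1}\sum_{j=0}^{r-1}x^jq_v^{-js_3},
\]
which is valid since $\chi(\pp_v)^r=1$. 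The goal is the inverse factor
\[
\frac{1-\chi(\pp_v)q_v^{-s_3}}{1-\chi(\pp_v)q_v^{s_3-1}}=(1-q_v^{r(s_3-1)})^{-1}\Big(\sum_{j=0}^{r-1}\chi(\pp_v)^jq_v^{j(s_3-1)}\Big)\big(1-\chi(\pp_v)q_v^{-s_3}\big).
\]
Taking the product over $v\in S_\Ci$, the expression becomes a sum over $\ti\mid\Ci^{r-1}$ and squarefree $\ti'\mid\Ci$ of
\[
\chi(\ti)\,\ov{\chi}(\ti')\,|\ti|^{s_3-1}\,\mu(\ti')\,|\ti'|^{-s_3}
\]
(up to the sign conventions in $\chi$ versus $\ov\chi$, which need checking), times $\prod_{v\in S_\Ci}(1-q_v^{r(s_3-1)})^{-1}$. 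This matches the shape of the definition of $Z^{T,(2)}$.

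The key step is to realize $\chi(\ti)=\chi_{\mi_0}\ov\chi_{\mi'_0}\rho(\ti)$ as a twist of the summation variables $\mi,\mi',\ei$. For this I would use \eqref{eq:chim0}, together with its extension to $\ti\in I(S\cup S_\mi)$ via \eqref{eq: reciprocity}, to write
\[
\chi_{\mi_0}(\ti)=\chi_\ti(\mi_0)\,\psi_{[\mi]}(t),
\]
and similarly for $\mi'_0$. Inside $Z^{T,(1)}(-;E,E')$ the classes are fixed, and $[\mi\ei]=E$, $[\mi'\ei]=E'$ give $[\mi][\mi']^{-1}=EE'^{-1}$. The $\psi$-factors therefore combine to the constant $\psi_{[EE'^{-1}]}(t)$, matching the coefficient $\psi_{[EE'^{-1}]}(t'/t)$. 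It then has to be checked that $\chi_\ti(\mi_0)\ov\chi_\ti(\mi'_0)$ equals the effect of replacing $\psi_j$ by $\psi_j\chi_\ti$ (with the appropriate sign) in both $\mi$, $\mi'$ and in $\ei$. Since $\chi_\ti$ is an $r$-th order character, the contributions $\chi_\ti(\ei)\ov\chi_\ti(\ei)$ cancel from $\psi_1(\ei)\ov\psi_2(\ei)$. The discrepancy between $\mi$ and $\mi_0$ (the $r$-th power parts) is harmless because $\chi_\ti$ kills $r$-th powers. The factor $\rho(\ti)$ is simply pulled out, giving the $\rho(\ti'/\ti)$ coefficient.

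Finally, I would sum over $E,E'$. Because $\Ci$ depends only on $[EE'^{-1}]$ and $\rho$, the combination of the three steps above reproduces exactly the stated formula, with $L^T(s_3,\chi_\mi\ov\chi_{\mi'}\rho)$ and the $\ei$-products unchanged, since those involve only $\chi$ at primes outside $T$. I expect the main obstacle to be the reciprocity bookkeeping in the twisting step: verifying that the Hilbert-symbol corrections $\psi_{[\cdot]}(t)$ are genuinely constant on the subseries with fixed $E,E'$, including for $\ti$ supported on $T\setminus S$ where the convention $t=m_{[\ti]}$ is used. I would handle this prime by prime, using \cref{lem: reciprocity} and the multiplicativity of the Hilbert symbol.
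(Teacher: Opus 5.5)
Your proposal is correct and follows the paper's proof. You invert the finite local factors at the places of $S_\Ci$ as a sum over $\ti\mid\Ci^{r-1}$, $\ti'\mid\Ci$; you rewrite $\chi(\ti)=\rho(\ti)\chi_\ti(\mi)\ov\chi_\ti(\mi')\psi_{[EE'^{-1}]}(t)$ using \eqref{eq:chim0} together with $[\mi][\mi']^{-1}=EE'^{-1}$; and you exchange the order of summation.

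The only slip is the conjugation you already flagged. The numerator of the local factor is $L_v(1-s_3,\ov\chi_v)^{-1}=1-\ov\chi(\pp_v)q_v^{s_3-1}$, so the expansion produces $\ov\chi(\ti)\chi(\ti')$. That is exactly what the coefficients $\psi_{[EE'^{-1}]}(t'/t)\rho(\ti'/\ti)$ and the twists $\psi_j\ov\chi_\ti\chi_{\ti'}$ in the definition of $Z^{T,(2)}$ require.
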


\begin{proof}
Let $\bi=\mi \ei$,
 $\bi'=\mi' \ei$ with $[\bi]=E$, $[\bi']=E'$, and $\chi=\chi_{\mi_0}\ov{\chi}_{\mi'_0}\rho $.
 For $v\in T_\fin$, we have  $\chi(\pp_v)\ne 0$ if and only if $v\in S_\Ci$, and we obtain
 \be\label{eq:prod_fin}
\prod_{v\in S_\Ci}(1-q_v^{r(s_3-1)})^{-1}\frac{L_v(s_3, \chi)}{L_v(1-s_3, \ov{\chi})}=
 \bigg[\sum_{\substack{\ti|\Ci^{r-1}\\ \ti'|\Ci}}
 \ov{\chi}(\ti)|\ti|^{s_3-1} \chi(\ti') \mu(\ti') |\ti'|^{-s_3}\bigg]^{-1}.
 \ee
 By~\eqref{eq:chim0} we have $\chi(\ti)=\rho (\ti)\chi_{\ti} (\bi\bi'^{-1})\psi_{[{\sss EE'^{-1}}]} (t) $
 for any ideal $\ti$ with $S_\ti\subset S_\Ci$, where $t$ is a generator of $\ti\OO_S$ chosen as in~\eqref{eq:chim0}.
Exchanging the order of summation in the formula for $Z^{T,(2)}(-;E,E')$ and using the identity above finishes the proof.
\end{proof}
\begin{rem} A similar argument shows that $Z^{T,(1)}$ can be expressed in terms of $Z^{T,(2)}$ as follows:
 \[\begin{aligned}
Z^{T,(1)}(\sb; & \psi_1,\psi_2, \rho ;{E,E'})=\prod_{v\in S_{\Ci}}(1-q_v^{-rs_3})^{-1}
\sum_{\substack{\ti\mid \Ci^{r-1}\\ \ti'\mid \Ci }}
\psi_{[{\sss EE'^{-1}}]}(t/t')\rho(\ti/\ti')\\
&\cdot|\ti|^{-s_3}\mu(\ti')|\ti'|^{s_3-1}
Z^{T,(2)}(\sb;\psi_1\chi_{\ti}\ov{\chi}_{\ti'},\psi_2\chi_{\ti}\ov{\chi}_{\ti'}, \rho ;E,E').
\end{aligned}
\]
\end{rem}
\end{para}

\subsection{The perfect MDS}\label{sec:2ndmoment}
\begin{para}
As in the previous subsection, we let $T$ be a finite set of places containing $S$, and let $\psi_1$, $\psi_2$, $\rho$ be
Hecke characters unramified outside $T$ and of order dividing $r$.  Let
\[
Z^{T,(3)}(\sb;\psi_1,\psi_2, \rho)=L^{T}(s_1+s_2, \psi_1\ov{\psi}_2)
\prod_{v\in S_\infty}\frac{\zeta_{F,v}(1-s_3)}{\zeta_{F,v}(s_3)}\cdot Z^{T,(2)}(\sb;\psi_1,\psi_2, \rho)
\]
where $Z^{T,(2)}$ is defined in \cref{prop: Zaux2}.
Expressing $L^S(s_3, \chi)=\sum_{\ai\in I(S)} \chi(\ai)|\ai|^{-s_3}$
in the formula there and collecting the terms involving $s_3$ gives
\be\label{eq:Z1}
Z^{T,(3)}(\sb;\psi_1,\psi_2, \rho)=L^{T}(s_1+s_2, \psi_1\ov{\psi}_2)
\cdot\sum_{\ai, \bi,\bi'\in I(T)} \frac{\psi_1(\bi)\ov{\psi}_2(\bi')}{|\bi|^{s_1}|\bi'|^{s_2}}
\sum_{\gi\in A_\ei}
\frac{\chi_\mi\ov{\chi}_{\mi'}\rho(\ai \gi_0)}{|\ai \gi|^{s_3}}\alpha_{\gi},
\ee
where in the sum we write as before $\bi=\ei\mi$, $\bi'=\ei\mi'$ with $(\mi, \mi')=1$.
Here $A_\ei$ is a set of ideals supported on the primes in $\ei$, and $\alpha_\gi$ for $\gi\in A_\ei$ are integers
defined as follows in terms of the decomposition $\ei=\ei^{\sss (1)}\ei^{\sss (2)}\ei^{\sss (3)}$ from \S\ref{sec:product-Gauss}.
Letting $\ei=\prod_v \pp_v^{l_v}$, define
\[A_\ei=\bigg\{\ei^{\sss (3)}\prod_{\pp_v|\ei^{\sss (1)}\ei^{\sss (2)}} \pp_v^{a_v} \mid a_v \in \{l_v, l_{v}+1\}
\bigg\}
\]
and for $\gi\in A_\ei$ given as above in terms of $a_v$ we set
\[
\alpha_\gi=|\ei^{\sss (3)}|\prod_{\pp_v|\ei^{\sss (1)}} c_{a_v}  \prod_{\pp_v|\ei^{(2)}} d_{a_v}\prod_{\pp_v|\ei^{\sss (3)}} (1-q_v^{-1})
\]
with
$c_{a}=\begin{cases}
          -q_v^{l_v-1} &  \text{ if } a=l_v \\
          q_v^{l_v} &  \text{ if } a=l_v+1
         \end{cases}$
and
$d_{a}=\begin{cases}
          q_v^{l_v}-2q_v^{l_v-1} &  \text{ if } a=l_v \\
          q_v^{l_v-1} &  \text{ if } a=l_v+1
         \end{cases}$.
\end{para}
\begin{para}\label{par:ZMDSdef}
As before, denote by $Z^{T,(3)}(-;{E,E'})$ the subseries in~\eqref{eq:Z1} such that $[\bi]=E$,  $[\bi']=E'$.
For fixed $E,E',G\in \EE$, define
 \[
Z^T(\sb;\psi_1,\psi_2, \rho;{E,E',G})=\frac{1}{|R_\ci|} \sum_{\eta\in \widehat{R}_\ci}
	\ov{\eta}(G)\ov{\psi}_{[{\sss EE'^{-1}}]}(m_{\sss G}) Z^{T,(3)}(\sb;\psi_1,\psi_2, \rho\eta;E,E'),
\]
and let $Z^T(-):=\sum_{E,E',G\in \EE} Z^T(-;{E,E',G})$.
\end{para}
\begin{prop}[Perfect MDS]\label{prop:Z0}
Let $T$ be a finite set of places containing $S$, and let $\psi_1$, $\psi_2$, $\rho$ be
Hecke characters unramified outside $T$ and of order dividing $r$.
With $Z^T(\sb;\psi_1,\psi_2, \rho)$ defined in~\S\ref{par:ZMDSdef} we have:
\[\begin{aligned}
Z^T(\sb;\psi_1,\psi_2, \rho)&=L^{T}(s_1+s_2, \psi_1\ov{\psi}_2)
\sum_{(\mi,\mi')=1} \frac{\psi_1(\mi)\ov{\psi}_2(\mi')}{|\mi|^{s_1}|\mi'|^{s_2}}\\
&\qquad\cdot\sum_{\ei\in I(T)} \frac{\psi_1(\ei)\ov{\psi}_2(\ei)}{|\ei|^{s_1+s_2}}
\sum_{\substack{(\ai,\mi\mi')=1\\  \gi\in A_\ei}}
\frac{\chi_{(\ai\gi)_0}(\mi\mi'^{-1}) \rho(\ai \gi_0)}{|\ai \gi|^{s_3}}\alpha_{\gi}\\
&=\sum_{\ai\in I(T)} \frac{L^{T}(s_1, \psi_1\chi_{\ai_0}) L^{T}(s_2, \ov{\psi}_2\ov{\chi}_{\ai_0})}{|\ai|^{s_3}}
\rho(\ai)P_{\ai}(s_1,s_2; \psi_1, \psi_2),
\end{aligned}
\]
where the Dirichlet polynomials $P_{\ai}$ are defined by
\[\begin{aligned}
	P_\ai(s_1,s_2;\psi_1,\psi_2)=\prod_{\substack{\pp^{k}\| \ai \\ \pp\mid\ai_{0} } }
	P_{k}(\psi_1(\pp)|\pp|^{-s_1},& \ov{\psi}_2(\pp)|\pp|^{-s_2}; |\pp|)\\
	&\cdot\prod_{\substack{\pp^{k}\| \ai \\ \pp\nmid\ai_{0} } }
	P_{k}(\psi_1\chi_{\ai_0}(\pp)|\pp|^{-s_1}, \ov{\psi}_2\ov{\chi}_{\ai_0}(\pp)|\pp|^{-s_2}; |\pp|).
	\end{aligned}
	\]
	Here $P_k(\xx;q)$ are the following polynomials in $\xx=(x_1,x_2)$:
\[	P_{k}(\mathbf{x}; q) =
\begin{cases} \frac{1  - (q x_1 x_2)^{k}}{1 - q x_1 x_2}& \text{ if } k \not \equiv 0 \pmod{r} \\[5pt]
 \frac{(1 - x_1) (1 - x_2)(1  - (q x_1 x_2)^{k})}{1 - q x_1 x_2} + (q x_1 x_2)^{k}(1 - q^{-1}) & \text{ if }0\ne  k \equiv 0 \pmod{r}
\end{cases}
\]
and $P_{0}(\mathbf{x}; q) = 1$.
\end{prop}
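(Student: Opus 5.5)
The proof has two parts: the first equality comes from unwinding the definition in \S\ref{par:ZMDSdef}, and the second is a local Euler-factor computation.

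For the first equality, start from \eqref{eq:Z1} with $\rho$ replaced by $\rho\eta$, restricted to $[\bi]=E$, $[\bi']=E'$. The term $\chi_\mi\ov{\chi}_{\mi'}\rho\eta(\ai\gi_0)$ involves characters evaluated at $\ai\gi_0$. Rewrite it using the reciprocity relation \eqref{eq:chim0}, applied with $\ti=(\ai\gi)_0$, or more precisely with \cref{lem: reciprocity} for the coprime pieces:
\[
\chi_{\mi}\ov{\chi}_{\mi'}(\ai\gi_0)=\chi_{(\ai\gi)_0}(\mi\mi'^{-1})\,\psi_{[\mi\mi'^{-1}]}(m_{[\ai\gi]})\times(\text{Hilbert symbols}).
\]
Since $[\mi\mi'^{-1}]=[\bi\bi'^{-1}]=[EE'^{-1}]$, the Hilbert-symbol factor depends only on $E$, $E'$ and the class $G=[\ai\gi]$. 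The average
\[
\frac{1}{|R_\ci|}\sum_{\eta\in\widehat{R}_\ci}\ov{\eta}(G)\,\eta(\ai\gi_0)
\]
picks out $[\ai\gi]=G$; here one uses $[\ai\gi_0]=[\ai\gi]$ modulo $r$-th powers. The twist $\ov{\psi}_{[EE'^{-1}]}(m_G)$ is chosen exactly to cancel the reciprocity factor. Summing over $E,E',G$ then removes all class restrictions. Finally, the factor $\prod_{v\in S_\infty}\zeta_{F,v}(1-s_3)/\zeta_{F,v}(s_3)$ built into $Z^{T,(3)}$ cancels the archimedean factor of \cref{prop: Zaux2}. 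This gives the first expression. The main bookkeeping risk is checking that the Hilbert symbols $(m_{[\ai\gi]},m_{[\mi\mi'^{-1}]})_S$ combine exactly into $\psi_{[EE'^{-1}]}(m_G)$, including the sign $\psi(-1)$ terms; I would handle this with \cite[Prop. 1]{BB06}, as in the proof of \cref{prod-gauss-sums-final}.

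For the second equality, fix $\ai'=\ai\gi$, the full ideal summed against $|\cdot|^{-s_3}$, and regroup. Multiplicativity reduces everything to a local identity at each prime $\pp$ with $\pp^k\|\ai'$. On the $\mi,\mi'$ side, the sum over coprime $\mi,\mi'$ with weight $\chi_{\ai'_0}(\mi\mi'^{-1})$, together with $\ei$ and the prefactor $L^T(s_1+s_2,\psi_1\ov{\psi}_2)$, is expected to reproduce $L^T(s_1,\psi_1\chi_{\ai'_0})L^T(s_2,\ov{\psi}_2\ov{\chi}_{\ai'_0})$ at primes not dividing $\ai'$. This is the standard identity
\[
\sum_{(m,m')=1}\frac{x^my^{m'}}{\ldots}\cdot\frac{1}{1-xy}=\frac{1}{(1-x)(1-y)}.
\]
At primes $\pp\mid\ai'$, the constraint $(\ai,\mi\mi')=1$ and the structure of $A_\ei$ (cases $\ei^{(1)},\ei^{(2)},\ei^{(3)}$) mean that only certain $(e,m,m')=(\ord_\pp\ei,\ord_\pp\mi,\ord_\pp\mi')$ contribute, with weights $c_a$, $d_a$, and $|\pp|^{l}(1-|\pp|^{-1})$.

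Set $x_i=\psi_i(\pp)|\pp|^{-s_i}$, twisted by $\chi_{\ai_0}(\pp)$ when $\pp\nmid\ai_0$. For each $k$, the claim is that the local sum times $(1-x_1)(1-x_2)$ equals $P_k(\xx;q)$. There are three cases.
\begin{itemize}
\item For $k\not\equiv0\pmod r$: only $\ei^{(1)}$-type contributions with $l=k$ or $l=k-1$ and $m=m'=0$ survive, or pieces where $\pp\mid\mi\mi'$. These telescope to the geometric sum $\sum_{j<k}(qx_1x_2)^j$.
\item For $0\ne k\equiv0\pmod r$: the $\ei^{(2)}$ terms with $d_a$, the $\ei^{(3)}$ terms, and the $(1-x_1)(1-x_2)$ part combine to give the second formula.
\item For $k=0$: one gets $P_0=1$.
\end{itemize}
This local computation is the main obstacle. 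It is a finite but delicate case analysis in which the character values $\chi(\pp)^{l}$ collapse, because $\chi_{(\ai\gi)_0}$ is evaluated consistently once $\ai'$ is fixed. I would first verify it for $r=3$ and small $k$ as a check, then do the general telescoping.
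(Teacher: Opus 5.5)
Your argument for the first equality is the paper's: orthogonality in $\eta$, plus the reciprocity law $\chi_\mi(\ff)=\chi_\ff(\mi)\psi_{[\mi]}(m_{[\ff]})$ with $\ff=\ai\gi_0$, whose correction factor is cancelled by $\ov{\psi}_{[EE'^{-1}]}(m_G)$. Your plan for the second equality (fix $\ai'=\ai\gi$, factor over $\pp\mid\ai'$, split according to $k\bmod r$) is also the paper's. The gap is in the local computation, which is the whole content of the second equality. The bookkeeping you describe is wrong, and carried out as stated it gives the wrong local factors.

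The root cause is that the constraint in the first formula is $(\ai,\mi\mi')=1$, not $(\ai,\ei)=1$. When $\pp^l\|\ei$ and $\gi$ has exponent $a\in\{l,l+1\}$ at $\pp$, the ideal $\ai$ absorbs the remaining $\pp^{k-a}$. So \emph{every} $l$ with $1\le l\le k$ contributes (subject to $a\le k$), with weight $c_a$ if $r\nmid l$ (type $\ei^{(1)}$) and $d_a$ if $r\mid l$ (type $\ei^{(2)}$). For $r\nmid k$ you restrict to $l\in\{k-1,k\}$. That only produces the monomials $1,(x_1x_2)^{k-1},(x_1x_2)^k$, whereas the required local factor $(1-x_1x_2)P_k(\xx;q)$ contains $(x_1x_2)^j$ with coefficient $q^j-q^{j-1}\ne 0$ for every $1\le j\le k-1$. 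Conversely, for $r\nmid k$ there are no terms with $\pp\mid\mi\mi'$. A prime dividing both $\ai'$ and $\mi\mi'$ must divide $\ei^{(3)}$, where $\gi$ has exponent exactly $l\equiv0\pmod r$ and $\ai$ is prime to $\pp$; this forces $k=l\equiv0\pmod r$. For $0\ne k\equiv 0\pmod r$ you omit the type-$\ei^{(1)}$ terms with $r\nmid l<k$, which are needed to build the geometric sum.

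Concretely, take $r\nmid k$, $q=|\pp|$, and untwisted $x_1=\psi_1(\pp)q^{-s_1}$, $x_2=\ov{\psi}_2(\pp)q^{-s_2}$. The local sum is
\[
1+\sum_{\substack{1\le l\le k-1\\ r\nmid l}}q^l(x_1x_2)^l-\sum_{\substack{1\le l\le k\\ r\nmid l}}q^{l-1}(x_1x_2)^l+\sum_{\substack{1\le l\le k-1\\ r\mid l}}(q^l-q^{l-1})(x_1x_2)^l=(1-x_1x_2)P_k(\xx;q).
\]
Since $\chi_{\ai'_0}(\pp)=0$, the target $L$-functions have no Euler factor at $\pp$. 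So after the factor $(1-x_1x_2)^{-1}$ from $L^T(s_1+s_2,\psi_1\ov{\psi}_2)$ one needs exactly $P_k$. Your uniform claim that ``the local sum times $(1-x_1)(1-x_2)$ equals $P_k$'' is therefore false in this case.

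For $0\ne k\equiv0\pmod r$ there are two groups of terms:
\begin{itemize}
\item The terms with $\pp\nmid\mi\mi'$ give $(1-q^{-1})(qx_1x_2)^k+(1-x_1x_2)\sum_{j<k}(qx_1x_2)^j$. Only $x_1x_2$ occurs, so twisting is irrelevant here.
\item The $\ei^{(3)}$ terms necessarily have $l=k$, with $\pp$ dividing exactly one of $\mi,\mi'$. They give $(1-q^{-1})(qx_1x_2)^k\bigl(\frac{x_1}{1-x_1}+\frac{x_2}{1-x_2}\bigr)$, with $x_1,x_2$ twisted by $\chi_{\ai'_0}(\pp)$ and $\ov{\chi}_{\ai'_0}(\pp)$.
\end{itemize}
Their total is $(1-x_1x_2)P_k(\xx;q)/\bigl((1-x_1)(1-x_2)\bigr)$.

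You should also state explicitly that these local factors depend only on $\ai'$, not on the parts of $\mi,\mi'$ prime to $\ai'$. This is what permits interchanging the sums. After that, the remaining coprime sum over $\mi,\mi'\in I(S\cup S_{\ai'})$ equals $L^{S\cup S_{\ai'}}(s_1,\psi_1\chi_{\ai'_0})L^{S\cup S_{\ai'}}(s_2,\ov{\psi}_2\ov{\chi}_{\ai'_0})L^{S\cup S_{\ai'}}(s_1+s_2,\psi_1\ov{\psi}_2)^{-1}$.
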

\begin{proof}
The first equality follows from~\eqref{eq:Z1} by the orthogonality relation for characters and the reciprocity law
$\chi_\mi(\ff)=\chi_{\ff}(\mi) \psi_{[\mi]}(m_{[\ff]})$, where $\ff=\ai\gi_0$ is coprime to $\mi\mi'$.

To prove the second, we fix  $\ai'=\ai\gi$ in the first formula
and compute the contribution to the term $|\ai'|^{-s_3}$ from the
primes $\pp|\ai'$ that also divide $\ei\mi$ or $\ei\mi'$.
This contribution is multiplicative in $\pp$, so
we fix $\pp=\pp_v$ with $\pp^\dd\| \ai'$, and denote $q = |\mathfrak{p}|$.

\textbf{Case 1:} $\dd\not \equiv 0 \pmod r$ (i.e.,
	$
	\mathfrak{p} \mid \mathfrak{a}'_{\scalebox{.95}{$\scriptscriptstyle 0$}}
	$). Letting $
	x_1 = \psi_{\scalebox{1.}{$\scriptscriptstyle 1$}}(\mathfrak{p})
	|\mathfrak{p}|^{- s_1}
	$,
	$x_2 = \overline{\psi}_{\scalebox{1.}{$\scriptscriptstyle 2$}}(\mathfrak{p})
	|\pp|^{- s_2}$ this contribution comes from the terms for which $\ei=(1)$ or $\pp\mid\ei^{\sss (1)}\ei^{\sss (2)} $ and it equals
	\begin{equation*}
		\begin{split}
			1  + \sum_{\substack{\alpha = 1 \\ r \nmid \alpha}}^{\delta - 1}
			q^{\alpha} x_1^{\alpha} x_2^{\alpha}
			&- \sum_{\substack{\alpha = 1 \\ r \nmid \alpha}}^{\delta} q^{\alpha - 1} x_1^{\alpha} x_2^{\alpha}
			 + \sum_{\substack{\alpha = 1 \\ r \mid \alpha}}^{\delta - 1}   q^{\aa - 1} x_1^{\aa} x_2^{\aa}
			+ \sum_{\substack{\alpha = 1 \\ r \mid \alpha}}^{\delta - 1}  (q^{\aa}-2 q^{\aa-1} ) x_1^{\aa} x_2^{\aa}= \\
			& = \sum_{\alpha = 0}^{\delta - 1} q^{\alpha} x_1^{\alpha} x_2^{\alpha}
			- \sum_{\alpha = 1}^{\delta} q^{\alpha - 1} x_1^{\alpha} x_2^{\alpha}\\
			& = (1 - x_1 x_2) \sum_{\alpha = 0}^{\delta - 1} q^{\alpha} x_1^{\alpha} x_2^{\alpha}=(1 - x_1 x_2)P_{\dd} (x_1,x_2;q).
		\end{split}
	\end{equation*}

	\textbf{Case 2:} $\dd \equiv 0 \pmod r$ (i.e.,
	$
	\mathfrak{p} \nmid \mathfrak{a}'_{\scalebox{.95}{$\scriptscriptstyle 0$}}
	$).
	There are two separate contributions in this case. The first corresponds, as in the previous case, to terms $\ei$ with $\pp\mid\ei^{\sss (1)}\ei^{\sss (2)} $, and is given by
	\[
	(1 - q^{-1})q^{\delta} x_1^{\delta} x_2^{\delta}
	+ (1 - x_1 x_2) \sum_{\alpha = 0}^{\delta - 1} q^{\alpha} x_1^{\alpha} x_2^{\alpha}.
	\]
	The second contribution corresponds to coprime pairs $(\mi, \mi')$ in the sum for which $\pp \mid \mi \mi'$
	and $\pp\mid \ei^{\sss (3)}$. Letting
	$x_1 = \psi_{\scalebox{1.}{$\scriptscriptstyle 1$}}(\mathfrak{p})
	\chi_{\ai'_{0}}(\pp) |\mathfrak{p}|^{- s_1}$,
	$x_2 = \overline{\psi}_{\scalebox{1.}{$\scriptscriptstyle 2$}}(\mathfrak{p})
	\overline{\chi}_{\ai'_{0}}(\pp)|\pp|^{- s_2}$ we find that this $\mathfrak{p}$-contribution is
	\[
	(1 - q^{-1})q^{\delta} x_1^{\delta} x_2^{\delta}\Big(\frac{x_1}{1 - x_1} + \frac{x_2}{1 - x_2}\Big).
	\]
	Thus, the total contribution in this case is
	\[
	(1 - x_1 x_2)\sum_{\alpha = 0}^{\delta - 1} q^{\alpha} x_1^{\alpha} x_2^{\alpha} \, +\,
	(1 - q^{-1})q^{\delta} x_1^{\delta} x_2^{\delta}\cdot \frac{(1 - x_1 x_2)}{(1 - x_1)(1 - x_2)}=
	\frac{(1 - x_1 x_2)P_\dd(x_1,x_2;q)}{(1 - x_1)(1 - x_2)}.
	\]

	In both cases, these contributions depend only on $\ai'$ and are independent of the parts of $\mi, \mi'$ coprime to $\ai'$.
	Relabeling $\ai'$ as $\ai$ and switching the order of summation we obtain:
\[\begin{aligned}
Z^T(\sb;\psi_1,\psi_2, \rho)&=L^{T}(s_1+s_2, \psi_1\ov{\psi}_2)\sum_{\ai \in I(T)} P_{\ai}(s_1,s_2; \psi_1,\psi_2) \rho(\ai)
\prod_{\pp_v\mid \ai}L_v(s_1+s_2, \psi_1 \ov{\psi}_2)^{-1}\\
&\cdot
\prod_{\substack{\pp_v\mid \ai\\ \pp_v\nmid \ai_0}}
L_v(s_1, \psi_1\chi_{\ai_0}) L_v(s_2, \ov{\psi}_2\ov{\chi}_{\ai_0})
\sum_{\substack{\mi,\mi'\in I(S\cup S_\ai)\\(\mi,\mi')=1 }} \frac{\psi_1\chi_{\ai_0}(\mi)\ov{\psi}_2\ov{\chi}_{\ai_0}(\mi')}{|\mi|^{s_1}|\mi'|^{s_2}}.
\end{aligned}
\]
The inner sum equals $L^{S\cup S_\ai}(s_1, \psi_1\chi_{\ai_0}) L^{S\cup S_\ai}(s_2, \ov{\psi}_2\ov{\chi}_{\ai_0})
L^{S\cup S_\ai}(s_1+s_2, \psi_1 \ov{\psi}_2)^{-1}$, proving the second equality.
\end{proof}

\subsection{Another normalization}\label{sec:wZ}
\begin{para}
In view of the functional equation and analytic properties satisfied by Kubota's Dirichlet series
(see \S\ref{sec:Kubota}), it is convenient to renormalize $Z^T_\aux$ and $Z^T$ as follows:
	\[ \wZ_\aux^T(\sb;\psi_1,\psi_2,\rho)=
\zeta_{F}^T (rs_1 - r/2  + 1)\zeta_{F}^T (rs_2 - r/2  + 1) Z_\aux^T(\sb;\psi_1,\psi_2,\rho),
\]
	\[\begin{aligned}
	   \wZ^T(\sb;\psi_1,\psi_2,\rho)&=\zeta_F^T(rs_1+rs_3+1-r) \zeta_F^T(rs_2+rs_3+1-r)  Z^T(\sb;\psi_1,\psi_2,\rho)\\
	   &=\sum_{\ai\in I(T)} \frac{L^{T}(s_1, \psi_1\chi_{\ai_0}) L^{T}(s_2, \ov{\psi}_2\ov{\chi}_{\ai_0})}{|\ai|^{s_3}} \rho(\ai)Q_{\ai}(s_1,s_2; \psi_1, \psi_2),
	   \end{aligned}
	\]
where the Dirichlet polynomials $Q_\ai$ are defined by
\[
Q_\ai(s_1,s_2;\psi_1,\psi_2)=\sum_{\ai=\ni \ni_1^r\ni_2^r} P_\ni(s_1,s_2;\psi_1,\psi_2)|\ni_1|^{r-1-rs_1} |\ni_2|^{r-1-rs_2} .
\]
The polynomials $Q_\ai$ decompose in the same way as the polynomials $P_\ai$ in  \cref{prop:Z0},
in terms of polynomials $Q_k(\xx;q)$, whose generating series satisfies
\[
 \sum_{k\ge 0} Q_k(\xx;q)y^k = ((1 - q^{r - 1} x_1^r y^r)(1 - q^{r - 1} x_2^r y^r))^{-1} \sum_{k\ge 0} P_k(\xx;q)y^k.
\]
\begin{rem}
The polynomials $P_\ai$ and $Q_\ai$ satisfy the functional equation:
\be\label{eq:Qfunc_eq}
Q_\ai(s_2, s_1; \psi_2,\psi_1)=Q_\ai(1-s_1, 1-s_2; \psi_1,\psi_2)
|\ai\slash {\ai_1}|^{1-s_1-s_2}\ov{\psi}_1\psi_2(\ai\slash \ai_1).
\ee
\end{rem}
\end{para}

\subsection{Functional equations}\label{sec:fun_eqs}
\begin{para}
Putting together the results of this section we obtain the following functional equations relating
the functions $\wZ^T$ and $\wZ_\aux^T$ defined in \S\ref{sec:wZ}.

\begin{prop}\label{prop: Z funeq}
Let $T=S\cup S_\hi$ for square-free $\hi\in I(S)$ and let $\ff$, $\ff'$, $\gi$ be divisors of $\hi^{r-1}$.
For $\psi_1$, $\psi_2$, $\rho$ Hecke characters of
order dividing $r$ and conductor dividing $\ci$, we have the functional equation
\[
\begin{aligned}
 \wZ^{T}(\sb;\psi_1\chi_{\ff}, & \psi_2\chi_{\ff'}, \rho\chi_{\gi})=
 |\gi_1|^{1\slash 2 -s_3} L^{T}(s_1+s_2,\psi_1\ov{\psi}_2 \chi_{\ff}\ov{\chi}_{\ff'})\prod_{v\in S_\infty}\frac{\zeta_{F,v}(1-s_3)}{\zeta_{F,v}(s_3)}\\
 &\cdot \sum_{\substack{\ti\mid\hi^r\\ (\ti,\gi)=1} }\sum_{\vp,\eta\in\widehat{R}_\ci}
C(\vp,\eta,\ti;s_3)
\wZ_\aux^T(\sigma_3\sb;\psi_1\vp\chi_{\ff\ti\gi_1}, \psi_2\vp\chi_{\ff'\ti\gi_1},\eta\chi_\gi)
\end{aligned}
\]
where the coefficients satisfy $C(-; s_3)\ll |\hi|^\eps $ for $\Re(s_3)=-\eps$ for $\eps>0$.
\end{prop}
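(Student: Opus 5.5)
The plan is to unwind the chain of renormalizations $\wZ^T \to Z^T \to Z^{T,(3)} \to Z^{T,(2)} \to Z^{T,(1)} \to Z_\aux^T$ built in \S\ref{sec:cleaning}--\S\ref{sec:wZ}. Each arrow is a finite linear combination over $\widehat{R}_\ci$, over classes in $\EE$, or over twists by $\chi_\ti$ with $\ti$ supported on $T_\fin$. Composing them should give the stated identity, and the coefficient bound is then read off term by term.

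\textbf{Step 1: normalization factors.} First I would check that the normalizations match under $\sigma_3$. The first coordinate of $\sigma_3\sb$ is $s_1'=s_1+s_3-\tfrac12$, and
\[
rs_1'-\tfrac r2+1=rs_1+rs_3-r+1 .
\]
Hence $\zeta_F^T(rs_1+rs_3+1-r)\zeta_F^T(rs_2+rs_3+1-r)$ in $\wZ^T$ is exactly the factor turning $Z_\aux^T(\sigma_3\sb;\dots)$ into $\wZ_\aux^T(\sigma_3\sb;\dots)$. So it suffices to prove the identity for $Z^T$ and $Z_\aux^T$.

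\textbf{Step 2: composing the linear combinations.} By \S\ref{par:ZMDSdef}, $Z^T(\cdot;\rho\chi_\gi)$ is a combination of $Z^{T,(3)}(\cdot;\rho\eta\chi_\gi;E,E')$ over $\eta\in\widehat R_\ci$ and $E,E',G\in\EE$, with unit-circle coefficients. By definition, $Z^{T,(3)}$ equals $L^T(s_1+s_2,\psi_1\ov\psi_2\chi_\ff\ov\chi_{\ff'})$ times the archimedean ratio times $Z^{T,(2)}$. Next, \cref{prop: Zaux2} expresses $Z^{T,(2)}(\cdot;E,E')$ through $Z^{T,(1)}(\cdot;\psi_1\chi_\ff\ov\chi_{\ti}\chi_{\ti'},\dots;E,E')$. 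The sum runs over $\ti\mid\Ci^{r-1}$, $\ti'\mid\Ci$, with coefficients
\[
\prod_{v\in S_\Ci}(1-q_v^{r(s_3-1)})^{-1}\,|\ti|^{s_3-1}\mu(\ti')|\ti'|^{-s_3}
\]
times roots of unity. Here $\Ci$ omits the primes dividing $\cond(\rho\chi_\gi\psi_{[EE'^{-1}]})$, which forces the twisting ideal to be coprime to $\gi$. Finally, the definition of $Z^{T,(1)}$ gives
\[
Z^{T,(1)}=\frac{|\gi_1|^{1/2-s_3}}{C(E,E',\gi,\rho,s_3)}\,Z_\aux^T(\sigma_3\sb;\cdot;E,E'),
\]
and $Z_\aux^T(\cdot;E,E')$ is a character average over $\vp_1,\vp_2\in\widehat R_\ci$.

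\textbf{Step 3: assembling the twists.} In \cref{prop-Zaux} the twist is written as $\psi_j\chi_{\gi_1}$. I would apply it with $\psi_j$ replaced by $\psi_j\chi_\ff\ov\chi_{\gi_1}$, so that $\chi_{\gi_1}$ is absorbed. The remaining steps are then:
\begin{itemize}
\item Split each $\ti,\ti'$ into its $S_f$-part and its $S_\hi$-part. By the convention of \cref{prop: Zaux2}, the characters $\chi_\ti$ attached to $S_f$-supported ideals have conductor dividing $\ci$, so they are absorbed into $\vp\in\widehat R_\ci$.
\item For the $S_\hi$-part, combine $\ov\chi_\ti\chi_{\ti'}$ into a single $\chi_{\ti''}$ with $\ti''\mid\hi^r$, $(\ti'',\gi)=1$. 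This uses $\ov\chi_\pp=\chi_{\pp^{r-1}}$ up to a character in $\widehat R_\ci$ and the reciprocity law \eqref{eq: reciprocity}.
\item Collect all roots of unity, the $\gamma$-factors and $|D_F\cond|^{s_3-1/2}$ from $C(E,E',\gi,\rho,s_3)^{-1}$, and the Hilbert symbols into $C(\vp,\eta,\ti;s_3)$. Since a single character $\vp$ appears in both slots in the statement, I need to check that the two class-averages over $E,E'$ recombine consistently.
\end{itemize}

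\textbf{Step 4: the bound at $\Re(s_3)=-\eps$.} The factors $(1-q_v^{r(s_3-1)})^{-1}$ are bounded. The weights satisfy $|\ti|^{s_3-1}\ll|\ti|^{-1}$ and $|\ti'|^{-s_3}=|\ti'|^{\eps}\le|\hi|^{\eps}$. The discriminant–conductor factors are $O(1)$, since they involve only the finitely many characters of conductor dividing $\ci$. The number of pairs $(\ti,\ti')$ collapsing to a given $\ti''$ is bounded by a divisor function of $\hi$, hence $\ll|\hi|^\eps$. Together these give $C(-;s_3)\ll|\hi|^\eps$.

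\textbf{Main obstacle.} I expect the main difficulty to be the bookkeeping in Step 3. One must verify that the dependence on the classes $E,E',G$ and on the $\chi_\ti$ twists collapses to a single character $\vp$ in both slots and a single $\eta$. This requires carefully tracking the $\psi_{[EE'^{-1}]}(t)$ factors via \eqref{eq:chim0}, and re-expanding the class-restricted series through \eqref{eq: DTE}-type orthogonality. I would first do the case $\hi=\OO$, where only $S_f$-twists occur, and then add the primes of $\hi$ one at a time.
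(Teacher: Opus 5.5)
Your route is the paper's. The paper obtains the proposition simply by composing the definitions $\wZ^T\to Z^T\to Z^{T,(3)}\to Z^{T,(2)}\to Z^{T,(1)}\to Z_\aux^T$ of \S\ref{sec:cleaning}--\S\ref{sec:wZ}, with Propositions~\ref{prop: Zaux2} and~\ref{prop:Z0} guaranteeing that the chain-defined object is the series in \eqref{eq:wZ}. Your Steps 1, 2 and 4 are what is needed.

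There is a small slip in Step 3. You should not absorb $\chi_{\gi_1}$ by re-applying \cref{prop-Zaux} with $\psi_j$ replaced by $\psi_j\chi_\ff\ov{\chi}_{\gi_1}$; that would produce $Z_\aux^T$ without $\chi_{\gi_1}$ in its first two slots. The definition of $Z^{T,(1)}$ already sends $Z^{T,(1)}(\sb;\psi_1\chi_\ff\ov{\chi}_\ti\chi_{\ti'},\psi_2\chi_{\ff'}\ov{\chi}_\ti\chi_{\ti'},\rho\eta\chi_\gi;E,E')$ to a multiple of $Z_\aux^T(\sigma_3\sb;\psi_1\chi_\ff\ov{\chi}_\ti\chi_{\ti'}\chi_{\gi_1},\psi_2\chi_{\ff'}\ov{\chi}_\ti\chi_{\ti'}\chi_{\gi_1},\rho\eta\chi_\gi;E,E')$. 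That is exactly where the $\chi_{\ff\ti\gi_1}$ of the statement comes from.

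More importantly, do not try to establish the collapse to a single $\vp$ that you list as the main obstacle. The composition does not produce it, and there is no mechanism that would. Unfolding $Z_\aux^T(\cdot;E,E')$ by orthogonality gives terms $\ov{\vp}_1(E)\vp_2(E')\,Z_\aux^T(\sigma_3\sb;\ldots\vp_1\ldots,\ldots\vp_2\ldots,\cdot)$ with $\vp_1,\vp_2\in\widehat{R}_\ci$ independent. The sum over $(E,E')$ would kill the terms with $\vp_1\neq\vp_2$ only if every coefficient depended on $(E,E')$ through $EE'^{-1}$ alone.
\begin{itemize}
\item That holds for $\ov{\psi}_{[EE'^{-1}]}(m_G)$, for $\psi_{[EE'^{-1}]}(t'/t)$ and for $\Ci_{[EE'^{-1}],\rho}$.
\item It fails for $C(E,E',\gi,\rho,s_3)^{-1}$, which contains $\gamma(\psi_E)\ov{\gamma}(\psi_{E'})\psi_{E'}(-1)$.
\item The twists $\ov{\chi}_\ti\chi_{\ti'}$, including their $S_f$-parts absorbed into $\widehat{R}_\ci$, shift both slots by the same character, so they cannot merge $\vp_1$ and $\vp_2$ either.
\end{itemize}
So read the single $\vp$ in the statement as a double sum over $\vp_1,\vp_2\in\widehat{R}_\ci$, and prove that version. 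This is harmless: $\widehat{R}_\ci$ is a fixed finite set, so your bound $C(-;s_3)\ll|\hi|^\eps$ is unaffected. Moreover, in \S\ref{eq:step1}--\S\ref{eq:step5} only the size of the coefficients is used, and independent pairs $\vp_1,\vp_2$ are carried along there anyway.
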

From the results of this section one similarly obtains a functional equation in the opposite direction.
\begin{prop}\label{prop: Z funeq2}
Let $T=S\cup S_\hi$ for square-free $\hi\in I(S)$ and let $\ff$, $\ff'$, $\gi$ be divisors of $\hi^{r-1}$.
For $\psi_1$, $\psi_2$, $\rho$ Hecke characters of
order dividing $r$ and conductor dividing $\ci$, we have the functional equation
\[
\begin{aligned}
 L^{T}(s_1+& s_2+2s_3-1,\psi_1\ov{\psi}_2 \chi_{\ff}\ov{\chi}_{\ff'})^{-1} \wZ_\aux^{T}(\sb;\psi_1\chi_{\ff\gi_1},  \psi_2\chi_{\ff'\gi_1}, \rho\chi_{\gi})=|\gi_1|^{1\slash 2 -s_3}
 \\
 &\cdot  \prod_{v\in S_\infty}\frac{\zeta_{F,v}(1-s_3)}{\zeta_{F,v}(s_3)} \sum_{\substack{\ti\mid\hi^r\\ (\ti,\gi)=1} }\sum_{\vp,\eta\in\widehat{R}_\ci} C'(\vp,\eta,\ti;s_3)
\wZ^T(\sigma_3\sb;\psi_1\vp\chi_{\ff\ti}, \psi_2\vp\chi_{\ff'\ti},\eta\rho\chi_\gi)
\end{aligned}
\]
where the coefficients satisfy $C'(-; s_3)\ll |\hi|^\eps $ for $\Re(s_3)=-\eps$ for $\eps>0$.
\end{prop}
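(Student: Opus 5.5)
We obtain \cref{prop: Z funeq2} by running the chain of identities of this section backwards: $\wZ^T \to Z^{T,(3)} \to Z^{T,(2)} \to Z^{T,(1)} \to Z_\aux^T$.

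\textbf{Step 1.} Starting from $\wZ^T(\sigma_3\sb;\ldots,\eta\rho\chi_\gi)$, remove the normalizing zeta factors of \S\ref{sec:wZ}. Under $\sigma_3$ the arguments $rs_j+rs_3+1-r$ become $rs_j-r/2+1$, which are exactly the factors normalizing $\wZ_\aux^T(\sb;\ldots)$. Hence the $\zeta$-normalizations of $\wZ^T$ and $\wZ_\aux^T$ correspond under $\sigma_3$ and cancel in the final identity.

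\textbf{Step 2.} Invert the character averaging of \S\ref{par:ZMDSdef}. Since $Z^T(-;E,E',G)$ is a finite Fourier transform in $\eta\in\widehat R_\ci$ of $Z^{T,(3)}(-;E,E')$, orthogonality recovers each $Z^{T,(3)}(\sb;\psi_1,\psi_2,\rho\eta;E,E')$ as a combination of the $Z^T(-;\ldots)$ with coefficients $\eta(G)\psi_{[EE'^{-1}]}(m_G)$ of modulus one.

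\textbf{Step 3.} Pass from $Z^{T,(3)}$ to $Z^{T,(2)}$ by dividing by $L^T(s_1+s_2,\ldots)$ and restoring the archimedean ratio $\prod_{v\in S_\infty}\zeta_{F,v}(1-s_3)/\zeta_{F,v}(s_3)$. After applying $\sigma_3$, the argument $s_1+s_2$ becomes $s_1+s_2+2s_3-1$. This produces the factor $L^T(s_1+s_2+2s_3-1,\ldots)^{-1}$ on the left-hand side of the statement and the gamma ratio on the right.

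\textbf{Step 4.} Use the Remark following \cref{prop: Zaux2} to express $Z^{T,(1)}(-;E,E')$ as a finite sum over $\ti\mid\Ci^{r-1}$ and $\ti'\mid\Ci$ of twists of $Z^{T,(2)}$ by $\chi_\ti\ov\chi_{\ti'}$. Here $\Ci$ is supported on $T_\fin$, so its $\hi$-part has the form $\ti\mid\hi^r$. The primes $v\in S_f$ contribute characters of conductor dividing $\ci$, and these are absorbed into $\vp$.

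\textbf{Step 5.} Use the definition of $Z^{T,(1)}$ from $Z_\aux^T(\sigma_3\sb;\ldots;E,E')$ in \S\ref{sec:cleaning}. Since $\sigma_3$ is an involution, substitute $\sb\mapsto\sigma_3\sb$. This gives the factor $|\gi_1|^{1/2-s_3}$ and the unimodular-type factor $C(E,E',\gi,\rho,s_3)$ of \cref{prop-Zaux}. The subseries with fixed classes $E,E'$ are finite character sums over $\vp_1,\vp_2$. Summing over $E,E'$ collapses these sums, which is why a single $\vp$ twists both $\psi_1$ and $\psi_2$.

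\textbf{Step 6.} Reconcile the $\ff,\ff'$ twists. The condition $(\ti,\gi)=1$ comes from $\Ci$ excluding the primes dividing the conductor of $\rho\chi_\gi\psi_{[\cdot]}$. The factors $\chi_{\gi_1}$ on the $\wZ_\aux$ side appear because \cref{prop-Zaux} twists $\psi_j$ by $\chi_{\gi_1}$.

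\textbf{Coefficient bound.} The coefficients $C'$ collect three kinds of factors:
\begin{itemize}
\item unimodular Gauss-sum constants $\g(\cdot)$ and Hilbert symbols;
\item powers $|D_F\cond(\cdot)|^{s_3-1/2}$, which are bounded because the conductors divide $\ci$;
\item the factors $|\ti|^{-s_3}|\ti'|^{s_3-1}$ and $\prod_{v\in S_\Ci}(1-q_v^{-rs_3})^{-1}$.
\end{itemize}
At $\Re(s_3)=-\eps$ we have $|\ti|^{-s_3}=|\ti|^{\eps}\le|\hi|^{r\eps}$. The product $\prod_{v\mid\hi}(1-q_v^{r\eps})^{-1}$ is bounded by $\prod_{v\mid\hi}O(1/(r\eps\log q_v))$, which is $\ll|\hi|^{\eps}$. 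The number of terms $\ti\mid\hi^r$ is $\ll|\hi|^\eps$.

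\textbf{Main obstacle.} The main difficulty is bookkeeping. One must show that the twisted characters $\ov\chi_\ti\chi_{\ti'}$ and the class-dependent factors reorganize into the stated shape $\psi_j\vp\chi_{\ff\ti}$ with a common $\vp$. This requires \eqref{eq:chim0} and the reciprocity law \cref{lem: reciprocity}, so that the $S_f$-supported twists have conductor dividing $\ci$. I would verify this first, prime by prime, as in the proof of \cref{prop: Zaux2}.
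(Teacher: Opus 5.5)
Your route is exactly what the paper means by ``one similarly obtains'': run the chain $Z_\aux^T\to Z^{T,(1)}\to Z^{T,(2)}\to Z^{T,(3)}\to Z^T$ backwards, with the $\zeta$-normalizations matching under $\sigma_3$. Steps 1, 3 and 4 are fine. The twists $\chi_\ti\ov{\chi}_{\ti'}$ from the Remark after \cref{prop: Zaux2} act on $\psi_1$ and $\psi_2$ simultaneously, so they do not change $L^T(s_1+s_2,\psi_1\ov{\psi}_2)$. There is one minor slip. That Remark is used at $\sigma_3\sb$, so its coefficients are $|\ti|^{s_3-1}|\ti'|^{-s_3}\prod_{v\in S_\Ci}(1-q_v^{r(s_3-1)})^{-1}$, not the ones you estimate; the bound $\ll|\hi|^\eps$ still holds.

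The gap is the only non-formal point: passing, in Steps 2 and 5, to full series twisted by a \emph{single} $\vp$, with one factor $L^T(s_1+s_2+2s_3-1,\dots)^{-1}$ moved to the left.

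\begin{enumerate}
\item Inverting the $\eta$-average only returns the subseries $Z^T(\cdot\,;E,E',G)$. So the chain actually gives $\sum_{E,E',G}c(E,E')\,Z^T(\sigma_3\sb;\dots;E,E',G)$, where $c(E,E')$ contains $C(E,E',\gi,\rho,1-s_3)$ (suppressing $\ti,\ti',G$).
\item Every factor of $c$ depends only on $EE'^{-1}$, except $\g(\psi_E)\ov{\g}(\psi_{E'})\psi_{E'}(-1)$. By \eqref{eq: tau formula} and $\g(\psi_{E'})\g(\psi_{[E'^{-1}]})=\psi_{E'}(-1)$, this factor equals $\g(\psi_{[EE'^{-1}]})/\tau(E,[E'^{-1}])$.
\item To turn $(E,E')$-subseries into full series you need \emph{independent} twists $\vp_1,\vp_2\in\widehat{R}_\ci$. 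Since $Z^{T,(3)}$, and hence $Z^T$, carries the built-in factor $L^T(s_1+s_2,\psi_1\ov{\psi}_2)$, writing $Z^T(\cdot\,;E,E')=\sum_G Z^T(\cdot\,;E,E',G)$ one gets
\[
Z^T(\sb;\psi_1,\psi_2,\rho;E,E')=\frac{1}{|R_\ci|^2}\sum_{\vp_1,\vp_2\in\widehat{R}_\ci}\ov{\vp}_1(E)\vp_2(E')\,\frac{L^T(s_1+s_2,\psi_1\ov{\psi}_2)}{L^T(s_1+s_2,\psi_1\ov{\psi}_2\vp_1\ov{\vp}_2)}\,Z^T(\sb;\psi_1\vp_1,\psi_2\vp_2,\rho).
\]
\item Only the diagonal $\vp_1=\vp_2$ leaves the $L$-factor unchanged, and that diagonal detects only the class $[\bi\bi'^{-1}]$.
\end{enumerate}

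So summing over $E,E'$ does not ``collapse'' anything. A single $\vp$ suffices exactly when $c(E,E')$ is a function of $EE'^{-1}$. That amounts to $\tau\equiv1$ in \cref{lem-Hecke-prod-G-sum-comp}, i.e.\ exact twisted multiplicativity of the Gauss sums. Nothing in the paper supplies this; it is the reason $\tau$ is tracked at all. A prime-by-prime reciprocity check cannot produce it either, because the obstruction is a class-dependent Gauss-sum constant, not a local character.

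As written, your argument expresses $\wZ_\aux^T$ as a combination, over independent $\vp_1,\vp_2$, of the quotients
\[
\wZ^T(\sigma_3\sb;\psi_1\vp_1\chi_{\ff\ti},\psi_2\vp_2\chi_{\ff'\ti},\eta\rho\chi_\gi)\big/L^T(s_1+s_2+2s_3-1,\psi_1\ov{\psi}_2\vp_1\ov{\vp}_2\chi_\ff\ov{\chi}_{\ff'}).
\]
To obtain the displayed single-$\vp$ form you would have to show that the off-diagonal Fourier coefficients of $c$ vanish. Otherwise the coefficients involve $L$-ratios that depend on $s_1+s_2$ as well as $s_3$, and the statement has to be weakened accordingly.
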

\end{para}

	\section{Sieving}
\label{sec:sieving}

\subsection{Sieved MDS}
\begin{para}
	We now introduce the sieved series, whose analytic properties are required to prove the asymptotic formulas
	for the second moment. To treat both the square-free and the $r$-th power-free families uniformly,
	we introduce a parameter $\kappa=0,1$ and recall from Section~\ref{sec:notation} that $I(S)_\kappa$
	denotes the subset of ideals $\ai\in\ I(S)$ with $\ai=\ai_\kappa$, namely
the $r$-th power-free ideals for $\kappa=0$ and the square-free ideals for $\kappa=1$.

	For $s_1, s_2, s_3\in \C$ with sufficiently large real parts, consider
	\be\label{eq:Zsieved}
	Z_{\kappa}^S(\mathbf{s};
	\psi_{\scalebox{1.}{$\scriptscriptstyle 1$}},  \psi_{\scalebox{1.}{$\scriptscriptstyle 2$}}, \rho) =
	\sum_{\ai\in I(S)_\kappa}\frac{
	L^{S}(s_1, \psi_{\scalebox{1.}{$\scriptscriptstyle 1$}}  \chi_{\mathfrak{a}})
	L^{S}(s_2, \overline{\psi}_{\scalebox{1.}{$\scriptscriptstyle 2$}}\overline{\chi}_{\mathfrak{a}})}
	{|\mathfrak{a}|^{s_3}} \rho(\mathfrak{a})Q_\ai(s_1, s_2; \psi_{\scalebox{1.}{$\scriptscriptstyle 1$}}, \psi_{\scalebox{1.}{$\scriptscriptstyle 2$}}),
	\ee
	with the polynomials $Q_\ai$ defined in \S\ref{sec:wZ} and $\psi_1$, $\psi_2$, $\rho$ Hecke characters of order dividing $r$, unramified outside $S$.
Note that  $Q_\ai(-)=P_\ai(-)=1$ if $\ai=\ai_1$ is square-free, so the term $Q_\ai(-)$ does not appear in the definition of $Z_1^S$.

To investigate the untwisted second moment, we only need the analytic properties
of the series $Z_{\kappa}^S(\mathbf{s})$ obtained by taking the characters $\psi_1$, $\psi_2$, $\rho$
to be trivial in the definition~\eqref{eq:Zsieved}. However, the extra flexibility afforded by
these characters will prove useful in the study of the twisted second moment in \cref{thm:twisted_2ndmom}, which is necessary for the mollification procedure in
Section~\ref{sec:mollification}.
\end{para}
\begin{para}
For $\kappa=0,1$ and $\mathfrak{h} \in I(S)$ square-free, define
	\[
	Z_\kappa(\mathbf{s}; \psi_{\scalebox{1.}{$\scriptscriptstyle 1$}},  \psi_{\scalebox{1.}{$\scriptscriptstyle 2$}}, \rho; \mathfrak{h}) = \sum_{\substack{\mathfrak{a} \in I(S) \\ \mathfrak{h} \mid  \ai\ai_\kappa^{\scalebox{.95}{$\scriptscriptstyle -1$}}}}
	\frac{L^{S}(s_1, \psi_{\scalebox{1.}{$\scriptscriptstyle 1$}}  \chi_{\mathfrak{a}_{\scalebox{.9}{$\scriptscriptstyle 0$}}})
	L^{S}(s_2, \overline{\psi}_{\scalebox{1.}{$\scriptscriptstyle 2$}}\overline{\chi}_{\mathfrak{a}_{\scalebox{.9}{$\scriptscriptstyle 0$}}})}
	{|\mathfrak{a}|^{s_3}}
	\rho(\mathfrak{a})Q_\ai(s_1, s_2; \psi_{\scalebox{1.}{$\scriptscriptstyle 1$}}, \psi_{\scalebox{1.}{$\scriptscriptstyle 2$}}).
	\]
	By M\"obius inversion, for $\Re(s_i)>1$ we have the equality
	\be\label{eq:Zkappa}
	Z_{\kappa}^S(\mathbf{s}; \psi_{\scalebox{1.}{$\scriptscriptstyle 1$}},  \psi_{\scalebox{1.}{$\scriptscriptstyle 2$}}, \rho) =
	\sum_{\mathfrak{h} \in I(S)}\mu(\mathfrak{h}) Z_\kappa(\mathbf{s}; \psi_{\scalebox{1.}{$\scriptscriptstyle 1$}},  \psi_{\scalebox{1.}{$\scriptscriptstyle 2$}}, \rho; \mathfrak{h}).
	\ee
	The goal of this section is to express
	the series $Z_\kappa(\mathbf{s}; \psi_{\scalebox{1.}{$\scriptscriptstyle 1$}},  \psi_{\scalebox{1.}{$\scriptscriptstyle 2$}}, \rho; \mathfrak{h})$ in terms of
	 the building blocks
\be\label{eq:wZ}
\wZ^{T} (\sb; \psi_1\chi_\ff, \psi_2\chi_\ff,\rho\chi_\gi)=
	\sum_{\ai\in I(T)} \frac{L^{T}(s_1, \psi_1\chi_{\ff\ai_0}) L^{T}(s_2, \ov{\psi}_2\ov{\chi}_{\ff\ai_0})}{|\ai|^{s_3}}
\rho\chi_\gi(\ai)Q_{\ai}(s_1,s_2; \psi_1, \psi_2)
\ee
	introduced in \S\ref{sec:wZ}, for $T= S \, \cup \, S_{\hi}$
	and coprime $\ff,\gi$ dividing $\hi^{r-1}$.

\end{para}
\subsection{The \texorpdfstring{$v$}~-part}\label{sec:v-part}
\begin{para}
To prepare for the main result, we introduce the generating
series for the polynomials $Q_k$ from \S\ref{sec:wZ}, together with certain subseries.
These series are all rational functions, and are directly responsible for the explicit
 Euler products that appear in the asymptotic formulas for the first and second moments.

Let $f(\mathbf{x}, y; q)$ be the function defined by
	\begin{equation*}
			f(\mathbf{x}, y; q)  =
			 \sum_{k \not \equiv 0 \!\!\!\!\pmod{r}}
			Q_{k}(\mathbf{x}; q)y^{k}
			 +  ((1 - x_1) (1 - x_2))^{-1} \, \cdot \sum_{k \equiv 0 \!\!\!\pmod{r}} Q_{k}(\mathbf{x}; q)y^{k}.
	\end{equation*}
	This is a rational function $f = N/D$, where
	\begin{equation*}
		\begin{split}
			N(\mathbf{x}, y; q) & = 1 - x_1 y - x_2 y + x_1 x_2 y - q x_1 x_2 y + q x_1 x_2 y^2 \\
			& - q^{r - 1} x_1^r x_2^r y^r (1 - y + q y - q x_1 y - q x_2 y +
			q x_1 x_2 y^2)
		\end{split}
	\end{equation*}
	and
	\[
	D(\mathbf{x}, y; q) = (1 - x_1) (1 - x_2) (1 - y) (1 - q x_1 x_2 y) (1 -
	q^{r - 1} x_1^r y^r) (1 - q^{r - 1} x_2^r y^r) (1 -
	q^r x_1^r x_2^r y^r).
	\]
	The function
	$
	f(\psi_{\scalebox{1.}{$\scriptscriptstyle 1$}}(\pi_{v})q_{v}^{\, - s_1}\!, \,  \overline{\psi}_{\scalebox{1.}{$\scriptscriptstyle 2$}}(\pi_{v})q_{v}^{\, - s_2}\!, \, \rho(\pi_{v})q_{v}^{\, - s_3}; q_{v})
	$
	is what is called the $v$-part ($v \notin S$) of 
	$
	\wZ^S(\sb;\psi_{\scalebox{1.}{$\scriptscriptstyle 1$}}, \psi_{\scalebox{1.}{$\scriptscriptstyle 2$}}, \rho)
	$.
\end{para}
\begin{para}\label{sec:sieve_fg}
	For $\kappa=0,1$ and $\varepsilon \in \{1, \ldots, r -1 \}$, define $f_\kappa^{(\varepsilon)}(\xx, y; q)$ by
	\[f_0^{(\varepsilon)}(\mathbf{x}, y; q)  =  \sum_{\substack{k \ge r  \\ k \equiv \varepsilon \!\!\!\!\pmod{r}}}
	Q_{k}(\mathbf{x}; q)y^{k},\quad
	f_1^{(\varepsilon)}(\mathbf{x}, y; q)  =  \sum_{\substack{k > 1 \\ k \equiv \varepsilon \!\!\!\!\pmod{r}}}
	Q_{k}(\mathbf{x}; q)y^{k}
	\]
	and let
	\[
	g(\mathbf{x}, y; q)  = \frac{1}{(1 - x_1) (1 - x_2)}\sum_{\substack{k \ge r \\ k \equiv 0 \!\!\!\!\pmod{r}}}
	Q_{k}(\mathbf{x}; q)y^{k}.
	\]
	Write $g$ as a power series,
	\[
	g(\mathbf{x}, y; q)  = \sum_{k_{1}\!, \, k_{2}, \, l} a(k_{1}, k_{2}, rl; q) x_{1}^{k_{1}} x_{2}^{k_{2}} y^{rl},
	\]
	and for $\varepsilon \in \{0, 1, \ldots, r -1 \}$, define the subseries $g^{(\varepsilon)}$ of $g$ by
	\[
	g^{(\varepsilon)}(\mathbf{x}, y; q)  = \sum_{\substack{k_{1}\!, \, k_{2}, \, l \\
			k_{1} - k_{2} \equiv \varepsilon \!\!\!\!\pmod{r}}} a(k_{1}, k_{2}, rl; q) x_{1}^{k_{1}} x_{2}^{k_{2}} y^{rl}.
	\]
	For an $r$-th power-free ideal $\mathfrak{a} \in I(S)$ and $\kappa=0,1$, we also define
	\[
	f_\ai^{(\kappa)}(\sb;\psi_1,\psi_2)=
	\prod_{\pp_v\mid \ai}f_\kappa^{(\ord_v \ai)}(\psi_1(\pp_v)q_v^{-s_1},\ov{\psi}_2(\pp_v)q_v^{-s_2}, q_v^{-s_3}; q_v)
	\]
	and
	\[g_{\ai,\bi}(\sb;\psi_1,\psi_2)=
	\prod_{\pp_v\mid \bi }g^{(\ord_v \ai)}(\psi_1(\pp_v)q_v^{-s_1},\ov{\psi}_2(\pp_v)q_v^{-s_2}, q_v^{-s_3}; q_v),
	\]
	if $\bi$ is an integral ideal.
\end{para}

\subsection{Decomposition in terms of MDS}
\begin{para}
For $\hi,\ff$ integral ideals, recall that $\hi^{(\ff)}$ denotes the part of $\hi$ coprime to $\ff$.

	\begin{prop}\label{prop: sieving}
	For $\kappa=0,1$, we have the identity
	\[\begin{aligned}
	 Z_\kappa(\sb; \psi_1, \psi_2, \rho; \hi)&=\sum_{\substack{\ff,\gi\mid \hi^{r-1} \\ (\ff,\gi)=1}}
	 f_\ff^{(\kappa)} (\sb; \psi_1, \psi_2)g_{\gi,\hi^{(\ff)}} (\sb; \psi_1, \psi_2)\rho(\ff)\chi_{\ff}(\gi)\\
	&\cdot\sum_{G\in\EE}\psi_G(m_{[\gi]}) \sum_{\eta\in \widehat{R}_{\ci}}  \frac{\ov{\eta}(G) }{|R_{\ci}|}
	 \wZ^{S\cup S_\hi}(\sb;\psi_1\chi_\ff, \psi_2\chi_\ff, \eta \rho\chi_{\gi}).
	\end{aligned}
	 \]
	\end{prop}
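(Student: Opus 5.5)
The plan is a local (prime-by-prime) bookkeeping argument at the places $v\in S_\hi$, carried out for $\Re(s_i)$ large and extended by meromorphic continuation. First I would split every $\ai\in I(S)$ in the sum defining $Z_\kappa(\sb;\psi_1,\psi_2,\rho;\hi)$ as $\ai=\ai'\ai''$, with $\ai''=\prod_{v\in S_\hi}\pp_v^{\ord_v\ai}$ and $\ai'\in I(S\cup S_\hi)$. The condition $\hi\mid\ai\ai_\kappa^{-1}$ involves only the exponents $k_v=\ord_v(\ai)$ for $v\in S_\hi$. For $\kappa=0$ it says $k_v\ge r$. For $\kappa=1$ it says $k_v\ge 2$. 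The polynomial $Q_\ai$ factors as a product over primes by \S\ref{sec:wZ}, and $\rho$ is multiplicative. For each $v\in S_\hi$ I would then sort $k_v$ by its residue $\varepsilon_v=k_v \bmod r$. When $\varepsilon_v\ne 0$, $\pp_v$ divides $\ai_0$ to the power $\varepsilon_v$. These $v$ assemble into the ideal $\ff=\prod\pp_v^{\varepsilon_v}$, and summing $Q_{k_v}y^{k_v}$ over the allowed $k_v$ produces exactly the factor $f_\kappa^{(\varepsilon_v)}$. This gives $f^{(\kappa)}_\ff$ and accounts for the lower bounds $k\ge r$ and $k>1$ in \S\ref{sec:sieve_fg}.

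Next I would handle the $L$-functions. Since $\ai_0=\ff\ai'_0$, we have $L^S(s_1,\psi_1\chi_{\ai_0})=L^{S\cup S_\hi}(s_1,\psi_1\chi_{\ff\ai'_0})\prod_{v\in S_\hi}L_v(\cdots)$. At $v\mid\ff$ the local factor vanishes. At $v\in S_\hi$ with $v\nmid\ff$ we have $\varepsilon_v=0$, so $\pp_v\nmid\ai_0$ and the local factors $(1-\psi_1\chi_{\ai_0}(\pp_v)x)^{-1}$ survive. In the definition of $f$, these local factors combine with the $Q_k$ sum over $k\equiv0\pmod r$, $k\ge r$, to give $g(\xx,y)$ evaluated at $x_1=\psi_1\chi_{\ai_0}(\pp_v)q_v^{-s_1}$, and similarly for $x_2$ with $\ov\psi_2\ov\chi_{\ai_0}$. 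The value $\chi_{\ai_0}(\pp_v)$ still depends on $\ai'$, so I would expand $g$ as a power series in $x_1,x_2$. The monomial $x_1^{k_1}x_2^{k_2}$ carries the character value $\chi_{\ai_0}(\pp_v)^{k_1-k_2}$. I would decompose by the class $k_1-k_2\equiv\varepsilon\pmod r$ and use reciprocity (\cref{lem: reciprocity} and \eqref{eq:chim0}) to write $\chi_{\ai_0}(\pp_v)^{\varepsilon}=\chi_{\pp_v^{\varepsilon}}(\ai_0)$ times a Hilbert-symbol factor. Collecting over $v\mid\hi^{(\ff)}$ gives the ideal $\gi=\prod\pp_v^{\varepsilon_v}$ coprime to $\ff$, the factor $g_{\gi,\hi^{(\ff)}}$, and the twist $\chi_\gi(\ai'\ff)$. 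Writing $\chi_\gi(\ff\ai'_0)=\chi_\ff(\gi)\chi_\gi(\ai'_0)$ up to the reciprocity factors then yields $\chi_\ff(\gi)\rho(\ff)$. The remaining $\ai'$-sum is the series \eqref{eq:wZ} with twist $\rho\chi_\gi$, with one caveat: $\chi_\gi$ is evaluated on $\ai'_0$ rather than $\ai'$, but these agree since $\chi_\gi$ has order $r$.

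The main obstacle is the reciprocity bookkeeping. The Hilbert-symbol factor $(m_{[\gi]},m_{[\ai']})_S=\psi_{[\ai']}(m_{[\gi]})$ depends on the class $[\ai']\in R_\ci$. To remove it I would split the $\ai'$-sum by $G=[\ai']$, and detect the class by orthogonality $\frac1{|R_\ci|}\sum_{\eta}\ov\eta(G)\eta(\ai')$. This produces the displayed $\sum_G\psi_G(m_{[\gi]})\sum_\eta\ov\eta(G)|R_\ci|^{-1}\wZ^{S\cup S_\hi}(\sb;\psi_1\chi_\ff,\psi_2\chi_\ff,\eta\rho\chi_\gi)$. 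Two checks remain here. One is that the direction and conjugation conventions in $\chi_\ff(\gi)$ versus $\chi_\gi(\ff)$ match the symbol in the statement. The other is that $Q_{\ai'}$ for $\ai'$ coprime to $\hi$ is computed with characters $\psi_j$ and not $\psi_j\chi_\ff$; this holds because $Q_{\ai'}$ depends on $\chi_{\ai_0}$ only through primes with $\pp\nmid\ai_0$, and for those the twist is already absorbed into the $L$-function arguments.
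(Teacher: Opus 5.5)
Your proposal is essentially correct and follows the paper's proof: split off the $S_\hi$-part of $\ai$, read off $f^{(\kappa)}_\ff$ and $\rho(\ff)$ (using $\rho^r=1$), combine the surviving local $L$-factors at $\hi^{(\ff)}$ with the $Q_k$ into $g$, expand $g=\sum_\varepsilon g^{(\varepsilon)}$ to produce $\gi$ and $\chi_{\ff\ai'_0}(\gi)$, apply \cref{lem: reciprocity}, and detect the class $[\ai']=G$ by orthogonality. Two corrections: first, the paper avoids your round trip through $\chi_\gi(\ff)$ (and the unverified splitting of the Hilbert symbol $(m_{[\gi]},m_{[\ff\ai']})_S$) by writing $\chi_{\ff\ai'_0}(\gi)=\chi_\ff(\gi)\chi_{\ai'_0}(\gi)$ before any reciprocity and applying reciprocity only to $\chi_{\ai'_0}(\gi)=\chi_\gi(\ai'_0)\psi_{[\ai']}(m_{[\gi]})$; second, your last ``check'' is backwards, since the factor your prime-by-prime computation actually produces is $Q_{\ai'}(s_1,s_2;\psi_1\chi_\ff,\psi_2\chi_\ff)$ (the extra $\chi_\ff(\pp)$ is invisible at $\pp\mid\ai'_0$ because $Q_k$ with $r\nmid k$ is invariant under $(x_1,x_2)\mapsto(\zeta x_1,\zeta^{-1}x_2)$ for $\zeta\in\mu_r$, but it genuinely matters at $\pp\nmid\ai'_0$, where $r\mid k$), which is exactly what $\wZ^{S\cup S_\hi}(\sb;\psi_1\chi_\ff,\psi_2\chi_\ff,\cdot)$ contains by its definition in \S\ref{sec:wZ}.
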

\begin{proof}
Write $\ai=\mi\ni$ in the series defining $Z_\kappa(\sb;\psi_1, \psi_2, \rho; \hi)$, with $(\mi,\hi)=(1)$ and $S_\ni=S_\hi$.
One easily checks that the function $Q_\ai$ is twisted multiplicative:
 \[\begin{aligned}
 Q_{\mi \ni}(s_1,s_2;\psi_1,\psi_2)&=Q_{\mi}(s_1,s_2;\psi_1\chi_{\ni_0},\psi_2\chi_{\ni_0})\cdot
 Q_{\ni}(s_1,s_2;\psi_1\chi_{\mi_0},\psi_2\chi_{\mi_0})\\
 &=Q_{\mi}(s_1,s_2;\psi_1\chi_{\ni_0},\psi_2\chi_{\ni_0})\prod_{\substack{\pp_v^k\| \ni\\ \pp_v\mid \ni_0}}
 Q_k(x_{1,v},x_{2,v};q_v  )\\
 &\qquad \qquad \cdot\prod_{\substack{\pp_v^k\| \ni\\ \pp_v\nmid \ni_0}}
 Q_k(\chi_{\mi_0\ni_0}(\pp_v)x_{1,v},\ov{\chi}_{\mi_0\ni_0}(\pp_v)x_{2,v} ;q_v  )
 \end{aligned}
 \]
where we set $x_{1,v}=\psi_1(\pp_v)q_v^{-s_1} $, $x_{2,v}=\ov{\psi}_2(\pp_v) q_v^{-s_2} $, and we used that $Q_k(x_1,x_2;q)$ depends only on $x_1x_2$ if $r\nmid k$.

We now fix $\ff\mid \hi^{r-1}$, and sum over those $\ni$ with $\ni_0=\ff$, $S_\ni=S_\hi$. Using
that the condition $\hi\mid (\ai\slash\ai_\kappa)$ is simply $\ord_v(\ni)>1$ for $v\in S_\hi$ if $\kappa=1$
 and $\ord_v(\ni)\ge r$ for $v\in S_\hi$ if $\kappa=0$, we obtain:
\[\begin{aligned}
	 Z_\kappa(\sb;\psi_1, &\psi_2, \rho; \hi)=\sum_{(\mi,\hi)=1} \sum_{\ff\mid \hi^{r-1}}
	\frac{ L^{S}(s_1, \psi_1\chi_\ff\chi_{\mi_0})L^{S}(s_2 ,\ov{\psi}_2\ov{\chi}_\ff\ov{\chi}_{\mi_0})}{ |\mi|^{s_3} }\rho(\ff\mi)\\
	& \cdot Q_{\mi}(s_1,s_2;\psi_1\chi_\ff,\psi_2\chi_\ff)
	\prod_{v\in S_\ff} f_\kappa^{(\ord_v \ff)} (x_{1,v},x_{2,v},y_{v}; q_v)\\
	& \cdot \prod_{v\in S_{\hi^{(\ff)}}}	L_v(s_1, \psi_1\chi_{\ff\mi_0})^{-1} L_v(s_2, \ov{\psi}_2\ov{\chi}_{\ff\mi_0})^{-1}
	 g(\chi_{\ff\mi_0}(\pp_v)x_{1,v},\ov{\chi}_{\ff\mi_0}(\pp_v)x_{2,v},y_{v}; q_v),
\end{aligned}
\]
where we set $y_{v}=q_v^{-s_3}$. The first product above is $f_\ff^{(\kappa)} (\sb; \psi_1, \psi_2)$, while the second becomes,
after writing $g=\sum_{\eps=0}^{r-1} g^{(\eps)}$:
\[
\frac{ L^{S\cup S_\hi}(s_1, \psi_1\chi_\ff\chi_{\mi_0})L^{S\cup S_\hi}(s_2 ,\ov{\psi}_2\ov{\chi}_\ff\ov{\chi}_{\mi_0})}
{ L^{S}(s_1, \psi_1\chi_\ff\chi_{\mi_0})L^{S}(s_2 ,\ov{\psi}_2\ov{\chi}_\ff\ov{\chi}_{\mi_0})}
\sum_{\gi\mid (\hi^{(\ff)})^{r-1}} \chi_{\ff\mi} (\gi) g_{\gi,\hi^{(\ff)}}(\sb;\psi_1,\psi_2).
\]
By \cref{lem: reciprocity} we have
$\chi_{\mi}(\gi)=\chi_{\gi}(\mi) \psi_{[\mi]}(m_{[\gi]})$. Breaking the series over $\mi$ above
into subseries such that $[\mi]=G\in\EE$, and
observing that the subseries in~\eqref{eq:wZ} with $[\ai]=G$ is given by the
inner sum in the relation to prove (by the orthogonality relation for characters) finishes the proof.
\end{proof}
\end{para}

\section{Convexity bound}\label{sec:convexity}

\begin{para}
In this section we obtain a convexity bound for $\wZ^{S\cup S_\hi}$ in the parameter $\hi$.
We use the functional equations satisfied by $\wZ^{S\cup S_\hi}$ and its associated MDS under the transformations:
\[
\sigma_1\sigma_2 \sb=(1-s_1, 1-s_2, s_1+s_2+s_3-1), \quad
\sigma_3 \sb=(s_1+s_3-1/2, s_2+s_3-1/2, 1-s_3),
\]
where $\sb=(s_1,s_2,s_3)$. The first transformation relates $\wZ^{S\cup S_\hi}$ to a linear combination of functions of the same kind,
via the functional equations of the Hecke $L$-functions in its definition. The second transformation relates $\wZ^{S\cup S_\hi}$ to a linear combination of functions $Z_\aux$, as in \cref{prop: Z funeq}.
\end{para}
\begin{para}\label{par:Zanalytic}
Using these functional equations,
together with the convexity bound for Hecke $L$-functions and for Kubota's generating series of Gauss sums (see \cite[eq. (2.3)]{FHL03})
the same argument as in~\cite[Thm. 3.5]{Di04} shows that
\[
(s-1)^2 (s+w-1-1 \slash r)^2 (w-1) (2s+w-2)
 \wZ^{S\cup S_\hi}(s,s,w; \psi_1\chi_\ff, \psi_2\chi_\ff, \rho\chi_\gi )
\]
has analytic continuation to $\C^2$.
An argument similar to that in \cite[Prop. 4.11]{DGH03} then shows
that the function above is entire of order 1 when viewed as a function of $w$, with $s$ fixed.
\end{para}

\subsection{Functional equations}

\begin{para} We will apply the functional equation
$\sigma_3\cdot \sigma_1\sigma_2 \cdot \sigma_3$, which maps
$\wZ^{S\cup S_\hi}(\sb;\psi_1\chi_\ff, \psi_2\chi_\ff, \rho\chi_\gi )$ to a linear combination of functions
of the same type, evaluated at
\[
\sigma_3\cdot \sigma_1\sigma_2 \cdot \sigma_3\sb = (s_2,s_1, 2-s_1-s_2-s_3).
\]
We specialize $s_1=s_2=1/2$, and apply a convexity bound coming from the Phragm\'en-Lindel\"of principle
on the strip $-\eps<\Re(s_3)<1+\eps$ for small $\eps>0$.
\end{para}

\begin{para}
In preparation for \cref{prop:convexity}, in \S\ref{eq:step1}-\ref{eq:step5} we collect the various functional equations in previous sections, ignoring the terms that grow more slowly than $|\hi|^\eps$ for any $\eps>0$ fixed.

Let $T=S\cup S_\hi$. For an $(r+1)$-th power-free integral ideal $\gi\in I(S)$,
we denote $\ov{\gi}= \prod_{v\in S_\gi} \pp_v^{r-\ord_v(\gi)} $
so that we have $\ov{\chi}_\gi=\chi_{\ov{\gi}}$.
\end{para}
\begin{para} \label{eq:step1}
We assume throughout that $s_1=s_2=1/2$ and $\Re(s_3)=-\eps$ for a small $\eps>0$.
By the functional equation \cref{prop: Z funeq} we have
\[\begin{aligned}
   \wZ^{T}(\sb;\psi_1\chi_\ff, &\psi_2\chi_\ff, \rho\chi_\gi ) \lle
    |\gi_1|^{1/2-\Re(s_3)}\prod_{v\in S_\infty}\frac{|\zeta_{F,v}(1-s_3)|}{|\zeta_{F,v}(s_3)|}\\
  & \cdot
   \sum_{\ti\mid(\hi^{(\gi)})^r }\sum_{\vp,\eta\in\widehat{R}_\ci}
  | L^{T}(s_1+s_2,\psi_1\ov{\psi}_2 )\wZ_\aux^T (\sigma_3 \sb; \psi_1\vp\chi_{\ff \ti\gi_1}, \psi_2\vp\chi_{\ff \ti\gi_1}, \eta\chi_\gi ) |.
\end{aligned}
\]
Note that the factor $L^{T}(s_1+s_2,\psi_1\ov{\psi}_2 )$  above has a pole at $s_1=s_2=1/2$ when $\psi_1=\psi_2$, but the product with $\wZ_\aux^T $ is holomorphic there, so this factor can be
absorbed in the error term.
\end{para}
\begin{para}\label{eq:step2}
For an ideal $\ji$ with $S_\ji\subset S_\hi$, let
$$\ji^*=\ji_0\prod_{\substack{\pp_v\mid\, \hi\\ \pp_v\nmid\, \ji_0 }}\pp_v^r,$$
so that the characters $\chi_\ji$ and $\chi_{\ji^*}$ agree on ideals coprime to $\hi$.
Expressing $\wZ_\aux^T$ as in \eqref{eq: Zaux} in terms of Kubota's Dirichlet series $\wD^T$
and using \cref{corollary: transition-places} we obtain
\[\begin{aligned}
\wZ_\aux^T (\sigma_3 \sb;\psi_1\chi_{\ff \ti\gi_1}, &\psi_2\chi_{\ff \ti\gi_1}, \rho\chi_\gi )
\approx
\sum_{\substack{\eb\in E_{(\ff \ti\gi \gi_1)^*} \\ \eb'\in E_{(\ff \ti\ov{\gi} \gi_1)^*}  }  }
\prod_{v\in S_\eb } q_v^{\frac{r-\ord_v ((\ff \ti\gi \gi_1)^*)}{2}}
\prod_{v'\in S_{\eb'}}q_{v'}^{\frac{r-\ord_{v'} ((\ff \ti\ov{\gi} \gi_1)^*)  }{2}}\\
&\cdot\sum_{\vp_1,\vp_2\in \widehat{R}_\ci}
\sum_{\ai\in I(T)} \frac{\wD^S(s_1', \gi_\eb\ai,\psi_1\vp_1\rho)
		\ov{\wD^S(\ov{s_2'}, \gi_{\eb'}\ai,\psi_2\vp_2\ov{\rho}) }}{|\ai|^{s_3'}}
		\ov{\rho}\ov{\chi}_\gi\chi_{\ff_\eb}\ov{\chi}_{\ff_{\eb'}}(\ai)
\end{aligned}
\]
where $\sigma_3 \sb=\sb'=(s_1', s_2',s_3')$. For brevity, here and in the next two paragraphs we use the symbol ``$\approx$'' to indicate that we ignored the quantities that are not
relevant for the estimates depending on $q_v^{k_v}$ for $v\in S_\hi$ and $k_v\ge 1$ in the actual expression.
\end{para}
\begin{para}\label{eq:step3}
Next we apply the functional equation of Kubota's Dirichlet series, \cref{theorem:Kubota-func-eq}:
\[
\begin{aligned}
  \sum_{\ai\in I(T)} \frac{\wD^S(s_1', \gi_\eb\ai,\psi_1\rho)
		\ov{\wD^S(\ov{s_2'}, \gi_{\eb'}\ai,\psi_2\ov{\rho}) }}{|\ai|^{s_3'}}& \ov{\rho}(\ai)
		\approx
	  |\gi_\eb|^{1/2-s_1'} |\gi_{\eb'}|^{1/2-s_2'}\\
	  \left(\frac{G(1-s_1')G(1-s_2')}{G(s_1')G(s_2')}\right)^{\frac{[F:\mathbb{Q}]}2}
	 \sum_{\vp_1,\vp_2\in \widehat{R}_\ci} \sum_{\ai\in I(T)}&
	\frac{\wD^S(s_1'', \gi_\eb\ai,\ov{\psi_1\vp_1}\ov{\rho})\ov{\wD^S(\ov{s_2''}, \gi_{\eb'}\ai,\ov{\psi_2\vp_2}\rho) }  }
	{|\ai|^{s_3''}}\psi_1\ov{\psi_2}\rho(\ai)
  \end{aligned}
\]
where $\sigma_1\sigma_2 \sb'=\sb''=(s_1'', s_2'',s_3'')=(1-s_1',1-s_2',s_1'+s_2'+s_3'-1)$.
Note that we could ignore the product over $v\in S_f$ in \cref{theorem:Kubota-func-eq} since $\Re(s_1')=\Re(s_2')=-\eps$ under our assumption.
\end{para}
\begin{para} \label{eq:step4}
We now apply \cref{corollary: transition-places2} to express $\wD^S$ back in terms of $\wD^T$:
\[\begin{aligned}
&\wD^S(s_1'', \gi_\eb \ai,\ov{\psi}_1\ov{\rho})\ov{\wD^S(\ov{s_2''}, \gi_{\eb'}\ai,\ov{\psi}_2\rho) }
	\approx \sum_{\substack{\db\in E_{\gi_\eb}\\ \db'\in E_{\gi_{\eb'}} }}
	\prod_{\substack{v\in S_\db \\ k_v= \ord_v (\gi_\eb) }   } q_v^{\frac{k_v}2-(k_v+1)s_1''}\\
	&\cdot \prod_{\substack{v\in S_{\db'} \\ k_v= \ord_v (\gi_{\eb'})} } q_v^{\frac{k_v}2-(k_v+1)s_2''}
	 \sum_{\vp_1,\vp_2\in \widehat{R}_\ci}
	\widetilde{D}^{T}(s_1'', \mathfrak{a}, \ov{\psi}_1\ov{\vp_1}\ov{\rho}\widetilde{\chi}_{\gi_\db})
	\ov{\widetilde{D}^{T}(s_2'', \mathfrak{a}, \ov{\psi}_2\ov{\vp_2}\rho\widetilde{\chi}_{\gi_{\db'}})}
	\ov{\chi}_{\ff_{\db}} \chi_{\ff_{\db'}}(\ai)
\end{aligned}
	\]
where we ignored the product over $v\in T\setminus S$ in \cref{corollary: transition-places2},  since it is bounded as $s_j''=3/2-s_j-s_3$, $j=1,2$ have real part $1+\eps$ under our assumptions.
\end{para}
\begin{para}  \label{eq:step5}
Summing over $\ai \in I(T)$ in \S\ref{eq:step4} and taking into account the characters evaluated at $\ai$
in \S\ref{eq:step3} and \S\ref{eq:step2}, we recover the MDS $\wZ_\aux^T(\sb'', \ov{\psi}_2\chi_{\ji\gi_{\db}}, \ov{\psi}_1\chi_{\ov{\ji}\gi_{\db'}} ,
 \ov{\psi}_1\psi_2\ov{\rho} \ov{\chi}_{\ji})$ where
 \[
 \ji=(\ov{\gi}\ff_\eb\ov{\ff}_{\eb^\prime} \ov{\ff}_{\db} \ff_{\db^\prime})_0
 \]
 (taking $\vp_1=\vp_2=1$ for simplicity).
Using the functional equation in \cref{prop: Z funeq2}, we obtain \\
\resizebox{\textwidth}{!}{
$
L^{T}(s_1''+s_2''+2s_3''-1,
\psi_1\ov{\psi}_2 \chi_{\gi_{\db}}\ov{\chi}_{\gi_{\db'}} \chi_\ji^2 )
\wZ_\aux^T(\sb''; \ov{\psi}_2\chi_{\gi_{\db}}\chi_{\ji}, \ov{\psi}_1\chi_{\gi_{\db'}} \ov{\chi}_{\ji},
 \ov{\psi}_1\psi_2\ov{\rho} \ov{\chi}_{\ji})\lle |\ji_1|^{1/2-\Re(s_3'')}
 $
 }
 \[
\prod_{v\in S_\infty}\frac{|\zeta_{F,v}(1-s_3'')|}{|\zeta_{F,v}(s_3'')|}
\sum_{\substack{\ti\mid\hi^r\\ (\ti,\gi)=1} }
\sum_{\vp,\eta\in\widehat{R}_\ci} |\wZ^T(\sigma_3 \sb'';\ov{\psi}_2\vp\chi_{\gi_\db\ti\ji\ov{\ji}_1}\chi_\ji\ov{\chi}_{\ji_1} ,
\ov{\psi}_1\vp\chi_{\gi_{\db'}\ti\ov{\ji}\ov{\ji}_1}, \ov{\psi}_1\psi_2\ov{\rho}\ov{\chi}_\ji \eta  )|.
\]
The factor $L^T(-)$ above can be absorbed in the error term as in~\S\ref{eq:step1}.
\end{para}

\subsection{Convexity for perfect MDS}
\begin{para}
We are ready to prove a convexity bound for  $\wZ^{S\cup S_\hi}$.

 \begin{prop}[Convexity bound] \label{prop:convexity} Let $\hi\in I(S)$ be square-free and let $\ff, \gi$ be integral ideals dividing $\hi^{r-1}$
such that $(\ff,\gi)=1$.
For $\sigma=\Re (s_3)\in(0,1)$, we have the estimate
\[\begin{aligned}
(s_3-1)^2(s_3-1/2-1/r)^2 &\wZ^{S\cup S_\hi} \big(\tfrac 12, \tfrac 12, s_3; \psi_1 \chi_{\ff}, \psi_2 \chi_{\ff}, \rho\chi_\gi \big)
\ll_\eps |\hi|^{(r-1)(1-\sigma)+\eps}  \\
&\cdot(1+|s_3|)^{4+rd(1-\sigma)+\eps}\max_{\psi\in \widehat{R}_\ci}\sum_{\ti|\hi^{r-1}}
\sum_{\substack{\ai \in I(S)_0\\ (\ai,\ti)=1}}\frac{|L(1/2, \psi\chi_{\ai\ti})|^2}{|\ai|^{1+\eps}},
\end{aligned}
\]
where $d=[F:\Q]$.
\end{prop}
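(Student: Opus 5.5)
The plan is a Phragm\'en--Lindel\"of argument in $s_3$ on the strip $-\eps\le \Re(s_3)\le 1+\eps$, with $s_1=s_2=\tfrac12$. By \S\ref{par:Zanalytic}, the function $H(s_3):=(s_3-1)^2(s_3-\tfrac12-\tfrac1r)^2\,\wZ^{S\cup S_\hi}(\tfrac12,\tfrac12,s_3;\psi_1\chi_\ff,\psi_2\chi_\ff,\rho\chi_\gi)$ is entire of order $1$ in $s_3$. The polynomial prefactor only removes the poles at $s_3=1$ and $s_3=\tfrac12+\tfrac1r$; the factors $(w-1)(2s+w-2)$ vanish or are harmless at $s=\tfrac12$, and the pole of the $L^T(s_1+s_2,\cdot)$ factor is absorbed as in \S\ref{eq:step1}. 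So it suffices to bound $H$ on the two boundary lines and interpolate linearly in $\sigma$. This gives the exponent $(r-1)(1-\sigma)$ in $|\hi|$ and $rd(1-\sigma)$ in $|s_3|$, provided that on $\Re(s_3)=1+\eps$ we have a bound $\ll |\hi|^\eps(1+|s_3|)^{4+\eps}\cdot M$ and on $\Re(s_3)=-\eps$ a bound $\ll |\hi|^{r-1+\eps}(1+|s_3|)^{4+rd+\eps}\cdot M$. Here $M$ denotes the max over $\psi$ of the sum of $|L(1/2,\psi\chi_{\ai\ti})|^2|\ai|^{-1-\eps}$.

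\textbf{Right line.} On $\Re(s_3)=1+\eps$ I would expand $\wZ^{S\cup S_\hi}$ via \eqref{eq:wZ}. Write $\ai=\ai_0\cdot(\text{$r$-th power})$. The polynomials $Q_\ai(\tfrac12,\tfrac12)$ are bounded by $|\ai/\ai_1|^{\eps}$-type factors, and the tail over $r$-th powers converges absolutely. The $L^{S\cup S_\hi}(\tfrac12,\psi_1\chi_{\ff\ai_0})$ differ from the primitive $L(\tfrac12,\psi\chi_{\ai\ti})$ by Euler factors at $S_\hi$ and $S$, which are $\ll|\hi|^\eps$. Here $\ti$ runs over divisors of $\hi^{r-1}$, and the character $\psi_1\chi_\ff$ is rewritten as a character in $\widehat R_\ci$ times $\chi_\ti$ with $\ti\mid\hi^{r-1}$ (the twist $\ff$ produces $\chi_{\ff\ai_0}$, which is $\chi_{\ai\ti}$ up to $\widehat R_\ci$ via reciprocity, Lemma~\ref{lem: reciprocity}). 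Applying Cauchy--Schwarz, $|L(\tfrac12,\cdot)L(\tfrac12,\bar\cdot)|\le |L(\tfrac12,\cdot)|^2$ type bounds, gives the sum $M$. The polynomial factor $(1+|s_3|)^4$ comes from the prefactor.

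\textbf{Left line.} On $\Re(s_3)=-\eps$ I would apply the chain of functional equations collected in \S\ref{eq:step1}--\S\ref{eq:step5}, namely $\sigma_3\sigma_1\sigma_2\sigma_3$. This maps $\wZ^{T}$ at $(\tfrac12,\tfrac12,s_3)$ to a finite linear combination of functions $\wZ^T$ at $(\tfrac12,\tfrac12,1-s_3)$, whose real part is $1+\eps$, twisted by characters $\chi_{\ff'}$, $\chi_{\gi'}$ with $\ff',\gi'$ still supported on $S_\hi$. These are then bounded by the right-line estimate. The archimedean gamma ratios $G(1-s)/G(s)$ and $\zeta_{F,v}(1-s_3)/\zeta_{F,v}(s_3)$ contribute $(1+|s_3|)^{rd(1/2+\eps)}$ from the Kubota step, and further powers from the two $\sigma_3$ steps. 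One must check that the total is $(1+|s_3|)^{rd+\eps}$.

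\textbf{Main obstacle.} The critical bookkeeping is the $\hi$-power. One must sum the products $\prod_{v\in S_\eb}q_v^{(r-k_v)/2}$, $|\gi_\eb|^{1/2-s_1'}$, $\prod q_v^{k_v/2-(k_v+1)s''}$ and $|\gi_1|^{1/2-\Re s_3}$, $|\ji_1|^{1/2-\Re s_3''}$ over all choices $\eb,\eb',\db,\db'$ and $\ti\mid\hi^r$. The claim to verify is that, prime by prime, the combined exponent of $q_v$ is at most $r-1$, using $\Re s_1'=\Re s_2'=-\eps$ and $\Re s_j''=1+\eps$. The number of terms is $\ll|\hi|^\eps$. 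I expect this local exponent count, done separately for $v\mid\ff$, $v\mid\gi$ and $v\nmid\ff\gi$ and for each choice of $e_v,d_v\in\{a,b\}$, to be the main difficulty. I would organize it as a finite table per value of $k_v=\ord_v$ and check the worst case. The worst case should be $k_v$ near $1$ with $e_v=b$, $d_v=b^{-1}$, giving $q_v^{(r-1)/2}$ from each of the two Kubota factors.
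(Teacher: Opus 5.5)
Your strategy is the paper's: Phragm\'en--Lindel\"of in $s_3$ between $\Re(s_3)=-\eps$ and $\Re(s_3)=1+\eps$, and the chain \S\ref{eq:step1}--\S\ref{eq:step5}, i.e.\ $\sigma_3\sigma_1\sigma_2\sigma_3$, which lands at $(\tfrac12,\tfrac12,1-s_3)$. Then a prime-by-prime count split into $v\mid\ff$, $v\mid\gi$, $v\nmid\ff\gi$, and Cauchy--Schwarz on the right line. The count you defer is, however, the whole substance of the paper's proof (Tables~\ref{table1}--\ref{table2}), and your forecast for it is wrong in its details.

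For $v\nmid\gi$ the extremal branch is $e_v=e_v'=a$ with $k_v=1$; this choice is forced, since $k_v=1$ allows only $e_v=a$. The full $q_v^{r-1+\eps}$ comes from the Kubota step alone, via $|\gi_\eb|^{1/2-s_1'}|\gi_{\eb'}|^{1/2-s_2'}$ with $\ord_v\gi_\eb=\ord_v\gi_{\eb'}=r-1$ and $\Re s_j'=-\eps$. Every branch with some $e_v=b$ totals at most $q_v^{r-3/2+\eps}$.

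For $v\mid\gi$ the extremal branch is $e_v=e_v'=b$. Here \S\ref{eq:step2} and \S\ref{eq:step3} each give $q_v^{(r-2)/2}$. The two $\sigma_3$-steps each add $q_v^{1/2}$, through $|\gi_1|^{1/2-s_3}$ and $|\ji_1|^{1/2-s_3''}$ (note $s_3''=s_3$).

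Choosing $d_v=b^{-1}$ never helps. It brings in $q_v^{k_v/2-(k_v+1)s_j''}$ with $\Re s_j''=1+\eps$, a negative power that outweighs any extra $q_v^{1/2}$ from $\ji_1$.

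The archimedean split is also off. On $\Re(s_3)=-\eps$ the Kubota ratio $(G(1-s_3)/G(s_3))^{d}$ has size $(1+|s_3|)^{(r-1)d+O(\eps)}$, not $rd/2$. The two products of $\zeta_{F,v}(1-s_3)/\zeta_{F,v}(s_3)$ over the $d/2$ complex places together give $(1+|s_3|)^{d+O(\eps)}$, so the total is $rd$.

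Finally, on the right line $Q_\ai(\tfrac12,\tfrac12)$ is not bounded by $|\ai/\ai_1|^{\eps}$, because the factors $|\ni_j|^{r-1-rs_j}=|\ni_j|^{r/2-1}$ in its definition grow. The tail over $r$-th powers still converges. Writing $\ai=\ai_0\bi^r$, one has $\ni_1\ni_2\mid\bi$, and $|\bi|^{r/2-1}$ is beaten by $|\bi|^{-r(1+\eps)}$.
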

\begin{proof}Fix $s_1=s_2=1/2$. We apply the functional equations in the previous paragraphs
to express the values of the function to estimate on the line $\sigma=-\eps'$ in terms of its values at $\sigma=1+\eps'$,
where we use \cref{prop:estimate at1}. For the dependence on $|s_3|$, the archimedean gamma factors enter in the functional equation in
\S\ref{eq:step1}, \S\ref{eq:step3}, and  \S\ref{eq:step5}, giving a combined contribution of
\[\bigg( \prod_{v\in S_\infty}
\frac{\zeta_{F,v}(1-s_3)}{\zeta_{F,v}(s_3)} \bigg)^2  \bigg(\frac{G(1-s_3)}{G(s_3)}\bigg)^d,
\]
with $G$ defined in \S\ref{par:G}. Applying Stirling's approximation and the Phragm\'en-Lindel\"of principle yields the bound in
$|s_3|$ stated above, with the factor of $4$ in the exponent due to the degree in $s_3$ of the factor in front.

For the $|\hi|$-dependence, for each place $v\in S_\hi$, we have to collect the contributions involving $q_v=|\pp_v|$
in each of the transformations \S\ref{eq:step1}-\ref{eq:step5}. We consider three cases.

Case I. $\pp_v^{l_v}\| \ff$.  Then $\pp_v\nmid \gi$, so there is no contribution from the functional equation in~\S\ref{eq:step1}.
In~\S\ref{eq:step2} we may have $\pp_v^{k_v}\| (\ff\ti)^*$, for any $1\le k_v\le r$. We record in Table~\ref{table1} the
contributions from  the functional equations \S\ref{eq:step1}-\ref{eq:step5}, depending on whether $v\in S_\eb$ or
$v\in S_{\eb'}$ in the functional equation in \S\ref{eq:step2}.

\begin{table}[!ht]
\begin{center}
 \begin{tabular}{c|cccc}
 \  & \S\ref{eq:step2} & \S\ref{eq:step3} & \S\ref{eq:step4}-\ref{eq:step5}& Total contribution \\ \hline
Ia.  \makecell{$v\in S_\eb$\\
 $v\in S_{\eb'} $} &   $q_v^{r-k_v}$  &  $q_v^{k_v-2+\eps}$ &    1  &  $q_v^{r-2+\eps}$ \\ \hline
 Ib.  \makecell{$v\in S_\eb$\\
 $v\notin S_{\eb'} $} &   $q_v^{\frac{r-k_v}2}$  &  $q_v^{\frac{r-2}2+\eps}$ &    $q_v^{\frac 12}$  &  $q_v^{r-\frac{k_v+1}2+\eps}$\\
 \hline
 Ic. \makecell{$v\notin S_\eb$\\
 $v\notin S_{\eb'} $} &  1 & $q_v^{r-k_v+\eps}$ & 1 & $q_v^{r-k_v+\eps}$
 \end{tabular}
\end{center}
\caption{\label{table1}Contributions from the functional equations \S\ref{eq:step1}-\ref{eq:step5} in case I. }
\end{table}
The case $v\notin S_\eb$, $v\in S_{\eb'} $ is identical to Ib.
For the contribution from \S\ref{eq:step5}, we used that $\ord_v\ff_\eb=k_v-1$ if $v\in S_\eb$
and $\ord_v\ff_\eb=0$ if $v\notin S_\eb$. We do not get any contribution from
\S\ref{eq:step4} or from $\pp_v|\widetilde{\ff}_{\db} \ff_{\db'}$ in \S\ref{eq:step5} because
$\Re (s_1'')=\Re (s_2'') =1+\eps$.

Note that $k_v\ge 2$ in cases Ia. and Ib., while $k_v\ge 1$ in Ic., so the largest contribution is $q_v^{r-1+\eps}$, occurring for $k_v=1$ in case Ic.

Case II. $\pp_v^{l_v}\| \gi$ with $1\le l_v\le r-1$. Then $\ord_v(\gi\gi_1)^*=l_v+1$,
$\ord_v(\ov{\gi}\gi_1)^*=r-l_v+1$ are both at least 2.
We consider three cases.

\begin{table}[!ht]
\begin{center}
 \begin{tabular}{c|ccccc}
 \ & \S\ref{eq:step1} & \S\ref{eq:step2} & \S\ref{eq:step3} & \S\ref{eq:step4}-\ref{eq:step5}& Total contribution \\ \hline
IIa.  \makecell{$v\in S_\eb$\\
 $v\in S_{\eb'} $} &    $q_v^{\frac 12}$ & $q_v^{\frac{r-2}2}$  & $q_v^{\frac{r-2}2+\eps}$ & $ q_v^{\frac 12}$ & $q_v^{r-1+\eps}$ \\ \hline
  IIb.  \makecell{$v\in S_\eb$\\
 $v\notin S_{\eb'} $} &$q_v^{\frac 12}$ & $q_v^{\frac{r-l_v-1}2}$  & $q_v^{l_v-1+\eps}$ & $1$ &  $q_v^{\frac{r+l_v-2}2+\eps}$\\ \hline
 IIc. \makecell{$v\notin S_\eb$\\
 $v\notin S_{\eb'} $} & $q_v^{\frac 12}$ & 1 & $q_v^{\frac{r-2}2+\eps}$ & $ q_v^{\frac 12}$ & $q_v^{\frac{r}2+\eps}$
 \end{tabular}
\end{center}
\caption{\label{table2}Contributions from the functional equations \S\ref{eq:step1}-\ref{eq:step5} in case II.}
\end{table}
The case $v\notin S_\eb$, $v\in S_{\eb'} $ is similar to IIb, with $l_v$ replaced by $r-l_v$.
Note that in case IIb., the contribution $q_v^{1/2}$ from  \S\ref{eq:step5} is offset
by the negative power of $q_v$ coming from \S\ref{eq:step4}.

we have $\ord_v \ff_\eb=l_v$ so $\ord_v \ji=0$ in \S\ref{eq:step5},
so we do not get any contribution in this case.

Case III. $\pp_v\nmid \ff\gi$. Then either we get no contribution at all, or if $\pp_v^{k_v}\|\ti$ in~\S\ref{eq:step2} the total contribution
can be computed as in case I, being at most  $q_v^{r-1+\eps}$.

In conclusion, the largest overall contribution of a place $v$ is $q_v^{r-1+\eps}$, and we obtain by the Phragm\'en-Lindel\"of principle:
\[\begin{aligned}
(1-s_3)^2&(s_3-1/2-1/r)^2 \wZ^{S\cup S_\hi} \big(\tfrac 12,\tfrac 12,s_3; \psi_1 \chi_{\ff}, \psi_2 \chi_{\ff}, \rho\chi_\gi \big)
\ll_\eps |\hi|^{(r-1)(1-\sigma)+\eps}\\
&\cdot(1+|s_3|)^{4+rd(1-\sigma)+\eps} \sum_{\substack{\ti,\ti'| \hi^{r-1}\\ \psi_1,\psi_2,\eta\in\widehat{R}_\ci}}
\left|\wZ^{S\cup S_\hi}  \big(\tfrac 12,\tfrac 12,1+\eps; \psi_1 \chi_{\ti}, \psi_2 \chi_{\ti'}, \rho\eta\chi_\gi \big) \right|.
\end{aligned}
\]

Let $T=S\cup S_\hi$.
By obvious bounds for the polynomials $Q_\ai(\tfrac 12,\tfrac 12; \psi_1,\psi_2)$ in the definition of  $\wZ^{T}$ we have
\[\wZ^{T} (\tfrac 12,\tfrac 12,1+\eps;\psi_1 \chi_{\ti}, \psi_2 \chi_{\ti'}, \rho\eta\chi_\gi )\ll \sum_{\ai \in I(T)}
\frac{|L^{T}(1/2, \psi_1\chi_{\ai_0\ti})|\cdot |L^{T}(1/2, \ov{\psi}_2\ov{\chi}_{\ai_0\ti'})|}{|\ai|^{1+\eps}}.
\]
By Cauchy-Schwarz, the sum of the right-hand side over all $\psi_1, \psi_2\in\widehat{R}_\ci$
and all $\ti,\ti'|\hi^{r-1}$ is
\[\lle \max_{\psi\in\widehat{R}_\ci} \sum_{ \ti|\hi^{r-1} }
\sum_{\ai\in I(T)}\frac{|L^{T}(1/2, \psi\chi_{\ai_0\ti})|^2}{|\ai|^{1+\eps}}.
\]
We can replace $L^T$ by the complete $L$-series, as the omitted Euler factors contribute $\lle$. Writing $\ai=\ai_0 \bi^r$ and using that
$\sum_{\bi\in I(T)} |\bi|^{-r(1+\eps)}$ converges finishes the proof.
\end{proof}
\end{para}

\begin{para} The following estimate is a standard consequence of the large sieve
for $r$-th order characters developed in~\cite{BGL14}.
\begin{prop}\label{prop:estimate at1} Let $\psi$ be
a Hecke character unramified outside $S$ of order dividing $r$. Then we have
\[
\sum_{\substack{\ai\in I(S)_0\\ (\ai,\ff)=1}}\frac{|L(1/2, \psi\chi_{\ai\ff})|^2}{|\ai|^{1+\eps}}\ll |\ff_1|^{1/2+\eps}
\]
as $\ff\in I(S)$ runs over $r$-th power-free ideals.
\end{prop}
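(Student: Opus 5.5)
Our plan is to use dyadic decomposition together with an approximate functional equation, and then to apply the large sieve inequality for $r$-th order characters of Blomer--Goldmakher--Louvel \cite[Theorem 1.3]{BGL14}. Split the sum over $\ai$ into dyadic ranges $|\ai|\asymp A$. Because of the weight $|\ai|^{-1-\eps}$, it is enough to show that
\[
\sum_{\substack{\ai\in I(S)_0,\ |\ai|\asymp A\\ (\ai,\ff)=1}}|L(\tfrac12,\psi\chi_{\ai\ff})|^2\ll (A|\ff_1|)^{\eps}\big(A+|\ff_1|^{1/2}A^{1/2}\cdots\big),
\]
or more precisely any bound of the shape $\ll (A|\ff_1|)^\eps (A+ A^{1-\delta}|\ff_1|^{1/2})$ with $\delta>0$. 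Summing over dyadic $A$ then gives $\ll|\ff_1|^{1/2+\eps}$.

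First we record the conductor. The character $\psi\chi_{\ai\ff}$ is a Hecke character of trivial infinity type. Its conductor divides $\ci\,(\ai\ff)_1$ up to the factor $\ci_E$, so it is $\ll |\ai_1||\ff_1|\le A|\ff_1|$. The approximate functional equation, which follows from the functional equation with the Gauss sum of modulus one, writes $|L(1/2,\psi\chi_{\ai\ff})|^2$ as a bounded number of sums. Each has the form $\sum_{\ni}\lambda(\ni)\psi\chi_{\ai\ff}(\ni)|\ni|^{-1/2}V(|\ni|/N)$ times the conjugate of a similar sum, with $N\ll (A|\ff_1|)^{1/2+\eps}$. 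Here $\lambda$ comes from the divisor-type coefficients of $L\cdot\ov L$. Alternatively, we use $|L|^2$ as a product of two length-$(A|\ff_1|)^{1/2}$ sums. The cleanest route is to apply Cauchy--Schwarz to the square of a single sum of length $N$ after a dyadic split in $\ni$, removing the smooth weight by Mellin inversion at a cost of $(A|\ff_1|)^\eps$.

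Next we separate $\ai$ and $\ni$. By the reciprocity law (\cref{lem: reciprocity} and \eqref{eq:chim0}) we have $\chi_{\ai\ff}(\ni)=\chi_\ff(\ni)\chi_\ni(\ai)\cdot(\text{Hilbert symbol factor depending only on classes in }R_\ci)$. Splitting $\ai,\ni$ into classes in $R_\ci$ and using orthogonality, the factors $\psi(\ni)\chi_\ff(\ni)$ are absorbed into the coefficients $b_\ni$ of $\ni$. This leaves bilinear forms $\sum_{\ai}\big|\sum_{|\ni|\asymp N}b_\ni\chi_\ni(\ai)\big|^2$ with $|b_\ni|\le |\ni|^{-1/2+\eps}$. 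The $r$-th power-free restriction on $\ai$ and the twist are harmless. The large sieve of \cite{BGL14} bounds this by
\[
(AN)^{\eps}\big(A+N+(AN)^{2/3}\big)\sum_{\ni}|b_\ni|^2\ll (AN)^\eps\big(A+N+(AN)^{2/3}\big).
\]
The general-$r$ version has an additional term that is also $\ll A+N+\min(\ldots)$; we use the form stated there.

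Finally we insert $N\le (A|\ff_1|)^{1/2+\eps}$. The term $A$ is fine. The term $N\le A^{1/2}|\ff_1|^{1/2}$ is fine. For the term $(AN)^{2/3}\le A|\ff_1|^{1/3}$, we need more care, and this is the main obstacle. A crude substitution gives $\sum_A A^{-1}\cdot A|\ff_1|^{1/3}$, which diverges logarithmically only up to $A^\eps$. This is absorbed because the sum over $A$ is weighted by $A^{-\eps}$, so it contributes $\ll|\ff_1|^{1/3+\eps}\le|\ff_1|^{1/2+\eps}$. Likewise, the term $A$ summed against $A^{-1-\eps}$ gives $O(1)$, and the term $N$ gives $\sum_A A^{-1/2-\eps}|\ff_1|^{1/2}\ll|\ff_1|^{1/2}$. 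The delicate points to verify are the following. The primitive conductor of $\psi\chi_{\ai\ff}$ involves only $(\ai\ff)_1$, not $\ff$ itself, which is why $|\ff_1|$ appears. The BGL14 large sieve applies with $\ni$ ranging over ideals (not just square-free ones) after factoring out $r$-th powers, where $\chi_\ni$ is trivial, together with divisor-bound losses. Common factors of $\ai$ and $\ni$ cause no issue since the characters vanish there.
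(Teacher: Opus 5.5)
Your overall strategy is the paper's: an approximate functional equation at $s=\tfrac12$ with $|\varepsilon|=1$, so that $|L|^2$ is bounded by the square of one sum of length about the square root of the conductor; Mellin separation of the conductor-dependent weight; the large sieve of \cite{BGL14}; and dyadic summation against $|\ai|^{-1-\eps}$. However, the central step is not justified as written.

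\cref{thm:thm3} applies only to square-free moduli and square-free arguments in $I(S)$. Your moduli $\ai$ run over all $r$-th power-free ideals, and your $\ni$ run over all nonzero ideals of $\OO$. Calling the $r$-th power-free restriction ``harmless'' skips exactly the point that needs proof. For $\ai=\pp^2\mi$ with $\mi$ square-free and coprime to $\pp$, the character $\chi_\ai$ is, up to class factors, $\chi_\pp^2\chi_\mi$. This is not among the characters $\chi_{\mathfrak u}$, $\mathfrak u$ square-free, covered by that inequality. Likewise, removing $r$-th powers from $\ni$ does not make it square-free. The reciprocity detour is also unnecessary: $\chi_{\ai\ff}(\ni)$ already has the BGL shape with $\ai$ as modulus, and \cref{lem: reciprocity} is only available for coprime $r$-th power-free ideals in $I(S)$.

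The missing reduction is the one carried out in \cref{cubic_Dirichlet_prop_var}.
\begin{itemize}
\item For the inner variable, write $\ni=\ni_S\tilde{\ni}_{\underline1}\tilde{\ni}_{\underline2}^2$ and apply Cauchy--Schwarz over $(\ni_S,\tilde{\ni}_{\underline2})$. Thanks to the weights $|\ni_S|^{-1/2}|\tilde{\ni}_{\underline2}|^{-1}$, this costs only $N^\eps$.
\item For the modulus, write $\ai=\ai_\sf\ai_\full$. For each fixed square-full $\ai_\full$, apply the large sieve in the square-free variable $\ai_\sf$, absorbing $\chi_{\ai_\full\ff}$ and the class factors into the coefficients. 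Then sum over $\ai_\full$ trivially.
\end{itemize}

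This changes your bookkeeping, so you must redo it. There are $\ll A^{1/2+\eps}$ square-full $\ai_\full$ with $|\ai_\full|\le 2A$. So the $N$-term, summed over $\ai_\full$, is $\ll A^{1/2+\eps}(A|\ff_1|)^{1/2+\eps}$ rather than $(A|\ff_1|)^{1/2+\eps}$, and it no longer has a power saving in $A$. It is nevertheless absorbed by the weight $A^{-1-\eps}$ once the large-sieve $\eps$ is taken smaller. The $(AN)^{2/3}$-term only acquires the convergent factor $\sum_{\ai_\full}|\ai_\full|^{-2/3}$, and the $A$-term the convergent factor $\sum_{\ai_\full}|\ai_\full|^{-1}$. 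So the bound $|\ff_1|^{1/2+\eps}$ survives.

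The paper does this bookkeeping more finely. It parametrizes by $|\ai_\sf|\asymp Q_1$ and $|(\ai_\full)_1|\asymp Q_2$, and uses $|\ai|\ge Q_1Q_2^2$ to compensate the factor $Q_2^{1+\eps}$ lost in the trivial sum. This yields the sharper family bound \cref{secondmoment-cor}, whose precise shape is reused for the $r=3$ improvement in \cref{prop:conv_req3}.
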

Conjecturally, the exponent $1/2$ above should be 0 (Lindel\"of's hypothesis for this family). For $r=3$ we will see that
one can replace the exponent $1/2$ by $1/3$, after further averaging over $\hi$.
\begin{proof}
This follows from the large sieve for $r$-th order characters~\cite[Thm. 1.3]{BGL14}. The case $r=3$
is worked out in~\cite[Prop. 4.2]{DFDS24}, while the general case is proved in the Appendix.
\end{proof}
Combining the previous estimate with \cref{prop:convexity} yields the following explicit convexity bound.
\begin{cor}\label{cor:convexity}
For $\Re(s_3)=\sigma\in (0, 1)$ we have:
 \[\begin{aligned}
(1-s_3)^2(s_3-\tfrac12-\tfrac1r)^2&  \wZ^{S\cup S_\hi} \big(\tfrac 12,\tfrac 12,s_3; \psi_1 \chi_{\ff}, \psi_2 \chi_{\ff}, \rho\chi_\gi \big)\ll_\eps \\
& \ll_\eps |\hi|^{1/2+(r-1)(1-\sigma)+\eps} (1+|s_3|)^{4+rd(1-\sigma)+\eps} .
   \end{aligned}
\]
\end{cor}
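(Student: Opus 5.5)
The plan is to substitute the bound of \cref{prop:estimate at1} into the right-hand side of \cref{prop:convexity}; what remains is to count the terms.

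Start from \cref{prop:convexity}. The factors $|\hi|^{(r-1)(1-\sigma)+\eps}(1+|s_3|)^{4+rd(1-\sigma)+\eps}$ already appear in the target bound, so it suffices to show
\[
\max_{\psi\in \widehat{R}_\ci}\sum_{\ti\mid\hi^{r-1}}\sum_{\substack{\ai \in I(S)_0\\ (\ai,\ti)=1}}\frac{|L(1/2, \psi\chi_{\ai\ti})|^2}{|\ai|^{1+\eps}}\ll_\eps |\hi|^{1/2+\eps}.
\]
The group $\widehat{R}_\ci$ is finite and depends only on $S$, so the maximum over $\psi$ costs nothing. Each $\psi\in\widehat{R}_\ci$ is unramified outside $S$ and has order dividing $r$, so \cref{prop:estimate at1} applies.

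Fix $\ti\mid\hi^{r-1}$. Because $\hi$ is square-free, $\ti$ is $r$-th power-free and lies in $I(S)$. \cref{prop:estimate at1}, applied with $\ff=\ti$, bounds the inner sum by $\ll|\ti_1|^{1/2+\eps}$. Since $\ti_1$ is the square-free kernel of $\ti$ and $\hi$ is square-free, $\ti_1\mid\hi$, so $|\ti_1|\le|\hi|$. One point to check is that the implied constant in \cref{prop:estimate at1} is uniform in $\ff$; the statement gives this, since $\ff$ runs over all $r$-th power-free ideals.

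It remains to count the number of $\ti\mid\hi^{r-1}$. This number is $r^{\omega(\hi)}\ll_\eps|\hi|^{\eps}$ by the usual divisor bound. Multiplying the three estimates and renaming $\eps$ gives the claim. I do not expect any real obstacle: all of the analytic difficulty sits in \cref{prop:convexity} and in the large-sieve input \cref{prop:estimate at1}.
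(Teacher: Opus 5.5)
Your proposal is correct and is exactly the paper's argument: insert \cref{prop:estimate at1} (with $\ff=\ti$ and $|\ti_1|\le|\hi|$) into the right-hand side of \cref{prop:convexity}. The $r^{\omega(\hi)}\ll_\eps|\hi|^\eps$ divisors $\ti\mid\hi^{r-1}$ and the finitely many $\psi\in\widehat{R}_\ci$ are then absorbed into the $\eps$-loss.
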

\end{para}

\subsection{An improved convexity bound}\label{sec:refined_convexity}
\begin{para}\label{par:Zrecursion}
We can improve the bound in \cref{cor:convexity} by using an inductive method first used in \cite[Sec. 6]{Di19}.
Let
\[\wg(\xx,y;q) =
\frac{\sum_{k \equiv 0 \!\!\!\!\pmod{r}}
	Q_{k}(\mathbf{x}; q)y^{k}}{(1 - x_1) (1 - x_2)}=g(\xx,y;q)+ {(1 - x_1)^{-1} (1 - x_2)^{-1}}
\]
and for $0\le  e <r $ let $\wg^{( e )}$ be the part of $\wg$ containing monomials $x_1^a x_2^b y^l$
with $a-b\equiv  e  \pmod r$ when expanded as a power series in $x_1,x_2, y$.
For $0< e <r$ let
\[
\wf^{( e )}(\xx,y;q)=\sum_{k \equiv  e  \!\!\!\!\pmod{r}}
	Q_{k}(\mathbf{x}; q)y^{k}
\]
which agrees with $f_1^{( e )}$ for $ e >1$, while   $\wf^{(1)}=f_1^{(1)}+y$.
By an argument similar to the proof of \cref{prop: sieving} we can express the perfect MDS
$\wZ^{S\cup S_\hi}$ in terms of $\wZ^{S\cup S_{\hi\pp}}$ for some prime $\pp=\pp_v\in I(S\cup S_\hi)$
with $q_v=|\pp_v|$:
\[\begin{aligned}
   &\wZ^{S\cup S_\hi}(\sb;\psi_1\chi_\ff, \psi_2\chi_\ff, \rho\chi_{\gi})=\wg^{(0)}(x_{1,v},x_{2,v},y_v;q_v ) \wZ^{S\cup S_{\hi\pp}}(\sb;\psi_1\chi_\ff, \psi_2\chi_\ff, \rho\chi_{\gi})\\
 &+  \sum_{ e =1}^{r-1} \rho\chi_\gi(\pp^ e ) \wf^{( e )}(x_{1,v},x_{2,v},y_v;q_v )
   \wZ^{S\cup S_{\hi\pp}}(\sb;\psi_1\chi_{\ff\pp^ e }, \psi_2\chi_{\ff\pp^{ e }}, \rho\chi_{\gi})\\
&+ \sum_{ e =1}^{r-1}\chi_\ff(\pp^{ e }) \wg^{( e )}(x_{1,v},x_{2,v},y_v;q_v )
\sum_{\substack{G\in\EE\\\eta\in \widehat{R}_{\ci} }} \frac{1}{|R_{\ci}|}  \ov{\eta}(G) \psi_G(m_{[\pp]}^ e )
	 \wZ^{S\cup S_{\hi\pp}}(\sb;\psi_1\chi_\ff, \psi_2\chi_\ff, \eta \rho\chi_{\gi\pp^ e }),
   \end{aligned}
\]
where $x_{1,v}=\psi_1(\pp_v)q_v^{-s_1} $, $x_{2,v}=\ov{\psi}_2(\pp_v) q_v^{-s_2} $, $y_v=q_v^{-s_3}$.

\begin{prop}\label{prop:refined-convexity}
Let $1/2<\sigma=\Re(w)<1$, let $\hi$ be square-free, and let
$\ff,\gi\mid\hi^{r-1}$ with $(\ff,\gi)=1$.  Then
\[
\begin{aligned}
(w-1)^2(w-1/2-1/r)^2&
\wZ^{S\cup S_\hi}\big(\tfrac12,\tfrac12,w;
 \psi_1\chi_\ff,\psi_2\chi_\ff,\rho\chi_\gi\big)\\
&\ll_ e  |\hi|^{(r-1)(1-\sigma)+ e }|\rad(\ff\gi)|^{1/2}
(1+|w|)^{4+rd(1-\sigma)+ e }.
\end{aligned}
\]
In particular, the exponent $1/2$ in \cref{cor:convexity} is needed only at primes dividing
$\ff\gi$.
\end{prop}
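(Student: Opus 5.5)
Write $\mathfrak{h}=\mathfrak{h}'\mathfrak{h}''$ with $\mathfrak{h}'=\rad(\mathfrak{f}\mathfrak{g})$ and $\mathfrak{h}''$ coprime to $\mathfrak{f}\mathfrak{g}$. The plan is to remove the primes of $\mathfrak{h}''$ one at a time, using the recursion of \S\ref{par:Zrecursion} in the reverse direction, and then apply \cref{prop:convexity} only at level $\mathfrak{h}'$. For this we need a version of the recursion that expresses a series at level $T\cup\{v\}$ in terms of series at level $T$. Concretely, fix a prime $\mathfrak{p}\mid\mathfrak{h}''$ and write $\mathfrak{h}=\mathfrak{h}_0\mathfrak{p}$. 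The identity in \S\ref{par:Zrecursion}, applied with $\mathfrak{h}_0$ in place of $\mathfrak{h}$ and with every twist $\chi_{\mathfrak{f}\mathfrak{p}^e}$, $\chi_{\mathfrak{g}\mathfrak{p}^e}$, $\eta$, expresses the $r^2|R_\mathfrak{c}|$ functions at level $\mathfrak{h}_0$ (twists by $\mathfrak{p}^{e}$ in $\mathfrak{f}$ or in $\mathfrak{g}$, $0\le e<r$) as a linear system in the corresponding functions at level $\mathfrak{h}$. The coefficients are the local rational functions $\widetilde{g}^{(e)}$, $\widetilde{f}^{(e)}$ evaluated at $x_{1,v}=\psi_1(\mathfrak{p})q_v^{-1/2}$, $x_{2,v}=\overline{\psi}_2(\mathfrak{p})q_v^{-1/2}$, $y_v=q_v^{-w}$. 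The first step is to invert this finite system. It is block-triangular modulo small terms: the $(0,0)$ entry $\widetilde{g}^{(0)}=1+O(q_v^{-\min(1,2\sigma)})$ dominates, and the off-diagonal entries are $O(q_v^{-\sigma})$ or smaller. Hence for $\sigma>1/2$ and $q_v$ large, the inverse has entries bounded by $1+O(q_v^{-\delta})$. For the finitely many small $q_v$ we use a constant bound $C$.

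Next, the untwisted-at-$\mathfrak{p}$ function $\widetilde{Z}^{S\cup S_{\mathfrak{h}}}(\tfrac12,\tfrac12,w;\psi_1\chi_\mathfrak{f},\psi_2\chi_\mathfrak{f},\rho\chi_\mathfrak{g})$ becomes a combination of level-$\mathfrak{h}_0$ functions. Here we must track the $q_v$-size of the coefficients and not only bound them by $O(1)$, since we want a product over primes of $\mathfrak{h}''$ of size $\prod q_v^{(r-1)(1-\sigma)+\varepsilon}$. The key bookkeeping is the following. Terms with twist $\mathfrak{p}^e$, $e\ge1$, at level $\mathfrak{h}_0$ come with coefficients of size roughly $q_v^{-e\sigma}$ or $q_v^{-\sigma-e/2}$, coming from $\widetilde{f}^{(e)}\sim y^e$ and $\widetilde{g}^{(e)}$. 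These terms are then themselves estimated at a level where $\mathfrak{p}$ divides $\rad(\mathfrak{f}\mathfrak{g})$. So the induction should be on the number of primes of $\mathfrak{h}$ not dividing $\mathfrak{f}\mathfrak{g}$, with the inductive hypothesis being the stated bound for all admissible $(\mathfrak{h},\mathfrak{f},\mathfrak{g})$ with fewer such primes.

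The inductive step then reads as follows. Writing the level-$\mathfrak{h}$ function as $\sum_j c_j(v)\,Z_j$ with $Z_j$ at level $\mathfrak{h}_0$ (the twisted ones keep $\mathfrak{p}\mid\mathfrak{f}\mathfrak{g}$, so their bound involves $|\mathfrak{h}_0|$ and an extra factor $q_v^{1/2}$ from $\rad$), we need to check
\[
\sum_j |c_j(v)|\,q_v^{\frac12\,[\mathfrak{p}\mid \text{twist}_j]}\ \ll\ q_v^{(r-1)(1-\sigma)+\varepsilon}.
\]
Untwisted terms contribute $O(1)$. Twisted terms contribute at most $q_v^{1/2-\sigma}\le1$ for $\sigma>1/2$. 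The desired bound therefore holds with much room; in fact we gain the factor $q_v^{(r-1)(1-\sigma)}$ itself. The implied constants multiply over primes as $\prod(1+Cq_v^{-\delta})\ll|\mathfrak{h}|^\varepsilon$. The base case, where every prime of $\mathfrak{h}$ divides $\mathfrak{f}\mathfrak{g}$, is \cref{cor:convexity}, since then $|\mathfrak{h}|^{1/2}=|\rad(\mathfrak{f}\mathfrak{g})|^{1/2}$. The polynomial factors $(w-1)^2(w-\frac12-\frac1r)^2$ and the $(1+|w|)$ dependence are carried along unchanged. The local coefficients are holomorphic and bounded for $\sigma>1/2$, away from the poles of $\widetilde{g}^{(e)}$ at $q_v^{r-1-r\sigma}$ and similar points, which lie in $\sigma\le (r-1)/r$. \textbf{This is the main obstacle.} The denominators $(1-q^{r-1}x_i^ry^r)$ equal $(1-q_v^{r/2-1-rw})$ at $s_i=1/2$, which has modulus $1$ when $\sigma=1/2-1/r$ and is therefore harmless for $\sigma>1/2$. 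The inversion also needs $\det$ bounded below. I would verify both by expanding the local series explicitly to first order in $q_v^{-\sigma}$, and if the determinant degenerates at small $q_v$, absorb those finitely many primes into the constant. Alternatively, for them one can simply fall back on \cref{cor:convexity}, at the cost of a bounded factor.
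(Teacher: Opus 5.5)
There is a genuine gap in the central step. The identity of \S\ref{par:Zrecursion}, applied with $\hi_0$ in place of $\hi$, is available only when the twists divide $\hi_0^{r-1}$. So for a fixed pair $(\ff,\gi)$ it gives \emph{one} equation. That equation relates the single level-$\hi_0$ function to the untwisted level-$\hi$ function and to the level-$\hi$ functions twisted by $\pp^e$ in $\ff$, or (with an $\eta$-sum) in $\gi$.

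The other ``rows'' you need are level-$\hi_0$ functions twisted by $\pp^e$. These involve characters $\chi_{\ff\pp^e},\chi_{\gi\pp^e}$ ramified at $\pp\notin S\cup S_{\hi_0}$, so they are not in the perfect family. No recursion of this form holds for them. Neither your induction hypothesis nor \cref{cor:convexity} (which needs the twists to divide $\hi^{r-1}$ at the given level) bounds them. Hence there is no square system to invert, and the level-$\hi$ function cannot be written purely in terms of level-$\hi_0$ functions.

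The problem shows in your parenthetical ``the twisted ones keep $\pp\mid\ff\gi$, so their bound involves $|\hi_0|$''. A function with $\pp$ dividing the twist lives at level $\hi$, not $\hi_0$. Your claimed gain of the whole factor $q_v^{(r-1)(1-\sigma)}$ is an artifact of this misplacement. The determinant and pole issues you single out as the main obstacle do not actually arise.

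The repair uses your own ingredients, with no inversion: solve that single equation for the untwisted level-$\hi$ function, which needs only $|\wg^{(0)}|^{-1}\ll1$. Every other term then has one fewer prime of the level outside the twist. For the level-$\hi_0$ term this is because the level dropped. For the level-$\hi$ terms with $\ff\pp^e$ or $\gi\pp^e$ it is because $\pp$ now divides the twist. Run your induction on $\omega(\hi^{(\ff\gi)})$ with $a=(r-1)(1-\sigma)+\varepsilon$, and use $|\wf^{(e)}|\ll q_v^{-\sigma}$ and $|\wg^{(e)}|\ll q_v^{-1/2}$ for $e\ge1$. This gives
\[
|\rad(\ff\gi)|^{1/2}|\hi_0|^{a}+|\rad(\ff\gi)\pp|^{1/2}|\hi|^{a}\bigl(q_v^{-\sigma}+q_v^{-1/2}\bigr)\ll|\rad(\ff\gi)|^{1/2}|\hi|^{a},
\]
since $\sigma>1/2$. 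This is your check $q_v^{1/2-\sigma}\le1$, now applied in the right frame.

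The twisted terms carry the full $|\hi|^{a}$. That is why the factor $q_v^{(r-1)(1-\sigma)}$ at primes not dividing $\ff\gi$ is kept rather than saved. The base case is \cref{cor:convexity}, and the implied constant grows by a bounded factor per prime, i.e.\ by $\ll|\hi|^{\varepsilon}$ overall. This is the paper's argument.
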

\begin{proof}
We use the same notation as in \cite[Prop. 6.3]{Di19} to facilitate
comparing with the argument there. Write
\[
 \ci_1=\rad(\ff),\qquad \ci_2=\rad(\gi),\qquad
 \ci_3=\hi^{(\ff\gi)},
\]
so that $\hi=\ci_1\ci_2\ci_3$.  We induct on $\omega(\ci_3)$, simultaneously for all characters
and all exponents at the primes dividing $\ci_1\ci_2$.  The case $\ci_3=1$ is
\cref{cor:convexity}.

Let $\pp\nmid \hi$ a prime ideal and $q=|\pp|$.
From the explicit formulas in \S\ref{sec:v-part} we have the estimates
\[
 |\wf^{( e )}(q^{-1/2},q^{-1/2},q^{-w}; q)|\ll q^{- e \sigma}\quad(1\leq e <r),
\]
and
\[
 |\wg^{( e )}(q^{-1/2},q^{-1/2},q^{-w}; q)|\ll q^{-1/2}\quad(0< e <r),
 \qquad |\wg^{(0)}(q^{-1/2},q^{-1/2},q^{-w}; q)|^{-1}\ll 1.
\]

Let $a:=(r-1)(1-\sigma)+ e $. We use the relation before the proposition, specialized to $\sb_w=(1/2,1/2,w)$,
to express the first term on the right in terms of the rest.
The induction hypothesis
together with the estimates above give:
\[
\begin{aligned}
(w-1)^2(w-1/2-1/r)^2&\wZ^{S\cup S_{\hi\pp}}(\sb_w;\psi_1\chi_\ff, \psi_2\chi_\ff,  \rho\chi_{\gi})
\lle \\
&\cdot \bigg(|\ff_1\gi_1|^{\tfrac12} |\hi|^a+
|\ff_1\gi_1\pp|^{\tfrac12} |\hi\pp|^a\cdot |\pp|^{-\sigma}+
|\ff_1\gi_1\pp|^{\tfrac12} |\hi\pp|^a\cdot |\pp|^{-\tfrac12}\bigg),
\end{aligned}
\]
where the terms on the right-hand side come from the terms containing
\[\wZ^{S\cup S_\hi}(\sb_w;\psi_1\chi_\ff, \psi_2\chi_\ff, \rho\chi_{\gi}),
\wZ^{S\cup S_{\hi\pp}}(\sb_w;\psi_1\chi_{\ff\pp^ e }, \psi_2\chi_{\ff\pp^{ e }}, \rho\chi_{\gi}),
\wZ^{S\cup S_{\hi\pp}}(\sb_w;\psi_1\chi_\ff, \psi_2\chi_\ff, \eta \rho\chi_{\gi\pp^ e }),\]
respectively
($0< e <r)$. Since $\sigma>1/2$, we obtain that the right-hand side is
$\ll |\ff_1 \gi_1|^{1/2} |\hi\pp|^{a}$, finishing the induction step.

The dependence on $w$ is unchanged from \cref{cor:convexity}.
\end{proof}
\end{para}

\section{Analytic continuation of sieved MDS}\label{sec:analytic}
\begin{para}Our goal is to extend meromorphically the MDS $Z_\kappa^S(\sb):= Z_\kappa^S(\sb;1, 1, 1)$, $\kappa=0,1$,
in \S\ref{sec:sieving} to a region
containing the points $\sb_w=(1/2, 1/2, w)$ with $\Re (w)<1$ as small as possible.
In view of~\eqref{eq:Zkappa}, we need to bound  $Z(\sb_w;\hi):=Z(\sb_w;1,1,1; \hi)$ uniformly in $|\hi|$ in such a region,
for $\hi\in I(S)$ square-free. Because of the relation in \cref{prop: sieving},
we can use the convexity bounds for $\wZ^{S\cup S_\hi}(\sb_w; \psi_1 \chi_{\ff}, \psi_2 \chi_{\ff}, \rho\chi_\gi)$ in the previous section to provide such a bound.
\end{para}

\subsection{Convexity for sieved MDS}
\begin{para}\label{sec:fg_estimates}
In order to apply \cref{prop: sieving}, we need estimates on the functions $f_\kappa^{(\eps)}$ and $g^{(\eps)}$
from \S\ref{sec:sieve_fg}. We take $x_1=x_2=q^{-1/2}$ and $y=q^{-w}$,
where $1/2 <\sigma=\Re(w)<1$, and $q=|\pp|$ for a prime ideal $\pp$.
Noting that the factor $q^{r-1} x_1^r y^r$ in the denominator $D$ of $f$ equals $q^{r/2-1-r\sigma}$,
we easily obtain for $0<\eps<r$:
\[
f_\kappa^{(\eps)}(q^{-1/2},q^{-1/2},q^{-w}; q)\ll
\begin{cases}q^{-2\sigma} & \text{if $\kappa=1$} \\
              q^{r/2-1-(r+1)\sigma} &  \text{if $\kappa=0$}
        \end{cases}
\]
\[
g^{(\eps)}(q^{-1/2},q^{-1/2},q^{-w}; q)\ll
\begin{cases}
 q^{r/2-3/2-r\sigma} & \text{if $0<\eps<r$}\\
 q^{r/2-1-r\sigma} & \text{if $\eps=0$}.
\end{cases}
\]
Note that $q^{-2\sigma}$ is a uniform bound for both $f_1^{(\eps)}$ and $g^{(\eps)}$, since we are assuming $\sigma>1/2$.
\end{para}

\begin{para}\label{par:conv}
We now apply \cref{prop: sieving} and~\cref{cor:convexity}, taking into account the estimates in \S\ref{sec:fg_estimates}.
For $1/2<\sigma=\Re(w)<1$ we have (we ignore the convexity bound in the $|w|$-aspect for simplicity):
\[
(w-1)^2(w-\tfrac12-\tfrac1r)^2 Z_\kappa(\tfrac 12,\tfrac 12,w; \hi) \ll_\eps
|\hi|^{1/2+(r-1)(1-\sigma) + \eps }\cdot
\begin{cases}
 |\hi|^{-2\sigma} & \text{if } \kappa=1 \\
 |\hi|^{r/2-1-r\sigma} & \text{if } \kappa=0 .
\end{cases}
\]
\end{para}

\begin{para}\label{par:convergence}
For $\kappa=1$, the right-hand side of \eqref{eq:Zkappa} has meromorphic continuation to the region
\[\tfrac12+(r-1)(1-\sigma)-2\sigma +\eps<-1, \quad  \sigma>\frac{r+1/2}{r+1}+\eps'.
\]
This region contains the pole at $w=1$, so we get an asymptotic formula for all $r$ with
an explicit error term determined by the abscissa of convergence above.
For $r=3$, we will show in \cref{prop:conv_req3} that we can replace $1/2$ by $1/3$
in the bound for $\sigma$. Therefore we obtain a meromorphic continuation of $Z_1^S$
to the region $\sigma>\frac 56 +\eps'$, matching the error in the asymptotic for the second moment from~\cite{DFDS24}.
\end{para}
\begin{para}\label{par:kappa0region}
For $\kappa=0$, we can improve the region of convergence by using the refined estimate in \cref{prop:refined-convexity}.
 Namely, the contribution of a pair $(\ff,\gi)$ in
\cref{prop: sieving} is bounded, apart from the common factor in $s_3=w$, by
\[
 |\hi|^{(r-1)(1-\sigma)+\eps}|\rad(\ff\gi)|^{1/2}
 |f_\ff^{(\kappa)}(\tfrac12,\tfrac12,w)
 g_{\gi,\hi^{(\ff)}}(\tfrac12,\tfrac12,w)|.
\]
Using the estimates in \S\ref{sec:fg_estimates} and taking into account \cref{prop:refined-convexity}, we check that the worst local exponent occurs at primes $\pp\mid\hi$ such that
$\pp\nmid \ff\gi $ or $\pp\|\gi$. We obtain:
\be\label{eq:refined-sieved-convexity}
(w-1)^2(w-\tfrac12-\tfrac1r)^2 Z_0(\tfrac12,\tfrac12,w;\hi)\ll_\eps |\hi|^{1/2+(r-1)(1-\sigma)+r/2-3/2-r\sigma+\eps}
 \ee
 (for $\kappa=1$ the refined convexity estimate gives no improvement).

Therefore the right-hand side of \eqref{eq:Zkappa} has meromorphic continuation to the region
\[(r-1)(1-\sigma)+r/2-1-r\sigma+\eps<-1, \quad  \sigma>\frac{3r-2}{4r-2}+\eps'.
\]
For $r=3$ and $r=4$, this region includes both poles at $w=1$ and $w=\frac12 +\frac1r$.
\end{para}

\subsection{An improved region of convergence for \texorpdfstring{$r=3$}~}
\begin{para}
We now take $r=3$ and improve the region of meromorphic continuation in \S\ref{par:convergence}.
We assume $\kappa=1$, as in the case $\kappa=0$ the region of convergence obtained above
is still better than if applying the next proposition.

\begin{prop} \label{prop:conv_req3}
For $\kappa=1$,
  the Dirichlet series
  $$ Z_\kappa^S\big(\tfrac 12, \tfrac 12, w\big) $$
  has meromorphic continuation for $\Re(w)>5/6$ with a double pole at $w=1$.
\end{prop}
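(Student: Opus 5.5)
Throughout, $r=3$ and $\kappa=1$, and the argument follows the route of \S\ref{par:convergence}. The bound obtained there came from applying \cref{prop:estimate at1} to each $\hi$ separately, which costs the factor $|\ti_1|^{1/2}\le|\hi|^{1/2}$ coming from \cref{prop:convexity}. We will instead keep the $L$-value sum from \cref{prop:convexity} inside the Möbius sum \eqref{eq:Zkappa}, and sum it over $\hi$ before estimating. The averaged quantity is then controlled by the cubic large sieve estimate of \cite[Prop. 4.2]{DFDS24}. Heuristically, this replaces the exponent $1/2$ by $1/3$ on average over $\hi$.

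The first step combines \cref{prop: sieving} with \cref{prop:convexity}, but without passing to \cref{cor:convexity}. Write $\hi=\ci_1\ci_2\ci_3$ according to the pair $(\ff,\gi)$, and use the local bounds of \S\ref{sec:fg_estimates}; for $\kappa=1$ the factors $f_1^{(\eps)}$ and $g^{(\eps)}$ are all $\ll q^{-2\sigma}$. For $1/2<\sigma<1$ this gives
\[
(w-1)^2(w-\tfrac12-\tfrac13)^2 Z_1(\tfrac12,\tfrac12,w;\hi)\ll_\eps |\hi|^{2(1-\sigma)-2\sigma+\eps}\max_{\psi}\sum_{\ti\mid\hi^{2}}\sum_{\substack{\ai\in I(S)_0\\(\ai,\ti)=1}}\frac{|L(\tfrac12,\psi\chi_{\ai\ti})|^2}{|\ai|^{1+\eps}}.
\]
Next write the cube-free ideal $\ti$ as $\ti=\bi\ei^2$ with $\bi,\ei$ square-free and coprime. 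The ideals $\ti$ dividing $\hi^2$ correspond to such pairs with $\bi\ei\mid\hi$. Summing over square-free $\hi$ and interchanging, we put $\hi=\bi\ei\hi'$, so that the factor $|\hi'|^{2-4\sigma+\eps}$ sums to a convergent series once $\sigma>3/4$. This leaves
\[
\sum_{\bi,\ei}|\bi\ei|^{2-4\sigma+\eps}\sum_{\ai}\frac{|L(\tfrac12,\psi\chi_{\ai\bi\ei^2})|^2}{|\ai|^{1+\eps}}.
\]
We now absorb $\bi$ into $\ai$: the product $\ai\bi$ is again an ideal of the form appearing in the inner sum, and each such ideal has at most $|\cdot|^\eps$ factorizations. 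Since $2-4\sigma<-1-\eps$ in our range, the weight $|\bi|^{2-4\sigma}$ is dominated by $|\bi|^{-1-\eps}$. Hence the whole expression is bounded by the series displayed in \S\ref{sec:strategy-alt},
\[
\sum_{(\ai,\ei)=1}|\ei|^{2-4\sigma}\frac{|L(\tfrac12,\psi\chi_{\ai\ei^2})|^2}{|\ai|^{1+\eps}}.
\]

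The main obstacle is the convergence of this last series for $\sigma>5/6$. We handle it by a dyadic decomposition $|\ai|\asymp Q_1$, $|\ei|\asymp Q_2$ and an application of \cite[Prop. 4.2]{DFDS24}, in its version over $F$ with the twist $\psi$, to bound the second moment sum $S(Q_1,Q_2)$ of \eqref{mom-sum-cubic-alt}. As I recall, that estimate has the shape $S(Q_1,Q_2)\ll (Q_1Q_2)^{\eps}\bigl(Q_1Q_2+Q_1^{a}Q_2^{b}\bigr)$, where the second term comes from the $(AB)^{2/3}$ term in Heath-Brown's cubic large sieve. In terms of $\ei$, the worst case should be of size about $Q_1 Q_2^{4/3}$. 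Granting this, the dyadic block contributes
\[
\ll Q_1^{\eps}\,Q_2^{2-4\sigma+4/3+\eps}
\]
after dividing by $Q_1^{1+\eps}$. This is summable over dyadic $Q_2$ exactly when $10/3<4\sigma$, that is, when $\sigma>5/6$. The work here is to check the precise exponents in \cite[Prop. 4.2]{DFDS24} and to transfer that result from $\Q(\sqrt{-3})$ to general $F$, using \cite{BGL14} or the Appendix; the threshold $5/6$ is what dictates the claimed range.

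Finally, \eqref{eq:Zkappa} converges absolutely and locally uniformly for $5/6<\Re(w)<1$ away from $w=1$ and $w=\tfrac12+\tfrac13$, with polynomial dependence on $|w|$ inherited from \cref{prop:convexity}. Together with absolute convergence for $\Re(w)>1$, this gives the meromorphic continuation. The point $w=\tfrac12+\tfrac13=5/6$ lies on the boundary of the region, so only the pole at $w=1$ remains inside it. That pole is at most double, because of the factor $(w-1)^2$, and its exact order is read off from the residues of the constituent series $\wZ^{S\cup S_\hi}$.
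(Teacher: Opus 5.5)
Your reduction to the series $\sum_{(\ai,\ei)=1}|\ei|^{2-4\sigma}|L(\tfrac12,\psi\chi_{\ai\ei^2})|^2|\ai|^{-1-\eps}$ is exactly the paper's. The trouble is the step you ``grant'' at the end, which is where the whole difficulty lies, and it fails as stated.

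\cref{secondmoment-cor}, the general-$F$ form of \cite[Prop.~4.2]{DFDS24}, with $\h=\OO$ gives $S(Q_1,Q_2)\ll(Q_1Q_2)^{\eps}\bigl(Q_1^{1/2}Q_2^{3/2}+Q_1Q_2^{4/3}\bigr)$. The first term dominates when $Q_1<Q_2^{1/3}$. For $Q_1\asymp 1$ the available bound for your dyadic block is then $Q_2^{7/2-4\sigma}$, which is summable only for $\sigma>7/8$. That is no better than \S\ref{par:convergence}.

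The same happens with the correct bookkeeping. Since $\ai$ is only cube-free, you must write $\ai=\qi_1\qi_2^2$ with $|\qi_1|\asymp Q_1$, $|\qi_2|\asymp Q_2$ for fixed $\ei$. Then \cref{secondmoment-cor} with $\h=\ei^2$ gives $Q_1^{1/2}Q_2^{3/2}|\ei|^{1/2}+Q_1Q_2^{4/3}|\ei|^{1/3}$. After dividing by $|\ai|\asymp Q_1Q_2^2$, the first term contributes $|\ei|^{1/2}$ from bounded $Q_1,Q_2$, which is just \cref{prop:estimate at1} again.

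So the large sieve alone, even after averaging over $\hi$, does not turn $1/2$ into $1/3$. The obstruction is the range where $\ai$ is small compared with $\ei$. The $|\ei|^{1/3}$ term dominates only when $Q_1/Q_2>|\ei|^{1/3}$.

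What is missing is the splitting in the paper's proof. In the range $|\qi_1/\qi_2|<|\ei|^{1/3}$, use $\ov{\chi}_{\qi_1\qi_2^2\ei^2}=\chi_{\qi_1^2\qi_2\ei}$, so that $|L(\tfrac12,\psi\chi_{\qi_1\qi_2^2\ei^2})|=|L(\tfrac12,\ov{\psi}\chi_{\qi_1^2\qi_2\ei})|$. Then move the weight onto the new cube-free ideal:
\[
\frac{|\ei|^{2-4\sigma}}{|\qi_1\qi_2^2|^{1+\eps}}=\frac{|\ei|^{2-4\sigma}\,|\qi_1/\qi_2|^{1+\eps}}{|\qi_1^2\qi_2|^{1+\eps}}\le\frac{1}{|\qi_1^2\qi_2\ei|^{1+\eps}}\qquad\text{for } \sigma>\tfrac56+\tfrac{\eps}{3}.
\]
This subseries is dominated by $\sum_{\ai\in I(S)_0}|L(\tfrac12,\psi\chi_{\ai})|^2Q_\ai|\ai|^{-1-2\eps}$. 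That series converges by Landau's lemma, because $\wZ^{S}(\tfrac12,\tfrac12,w;\psi,\psi,1)$ has nonnegative coefficients and its first pole at $w=1$.

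In the complementary range $Q_1/Q_2>|\ei|^{1/3}$, the term $|\ei|^{1/3}Q_1Q_2^{4/3}$ does dominate. The dyadic blocks then sum to $\ll|\ei|^{1/3+\eps'}$, and the $\ei$-sum converges for $\sigma>5/6$. The split point $|\ei|^{1/3}$ is chosen precisely so that both ranges give the threshold $5/6$.
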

\begin{proof}
By \cref{prop:convexity}, it is enough to show that for $\psi\in \widehat{R}_\ci$ the following series converges:
\[
S_\psi(\sigma)=\sum_{\hi=\hi_1\in I(S)} |\hi|^{2-4\sigma}\sum_{\ff|\hi^2}
\sum_{\substack{\ai\in I(S)_0\\ (\ai,\ff)=1}} \frac{|L(1/2, \psi\chi_{\ai\ff})|^2}{|\ai|^{1+\eps}},
\]
for $\sigma>5/6$. Recall that $I(S)_0$ consists of the $r$-th power-free ideals in $I(S)$. Writing $\hi=\ff_1 \gi$, the sum over $\gi$ converges (for $\sigma>3/4$), and we have
\[
S_\psi(\sigma)\ll_\eps \sum_{\substack{\ai,\ff \in I(S)_0\\ (\ai,\ff)=1}}|\ff_1|^{2-4\sigma}
\frac{|L(1/2, \psi\chi_{\ai\ff})|^2}{|\ai|^{1+\eps}}.
\]
We write $\ff=\bi\ei^2$ with $\bi$, $\ei$ square-free
and coprime, so that $\ff_1=\bi \ei$. Replacing $\ai\bi$ by $\ai$ and changing $\eps$ if necessary we have
\be\label{eq:Sbound}
S_\psi(\sigma)\ll_\eps \sum_{\substack{\ai,\ei \in I(S)_0\\ (\ai,\ei)=1}}|\ei|^{2-4\sigma}
\frac{|L(1/2, \psi\chi_{\ai\ei^2})|^2}{|\ai|^{1+\eps}}.
\ee
Let $\ai=\qi_1\qi_2^2$, with $\qi_1,\qi_2$ square-free and coprime. Note that
\[
\frac{|L( 1/2, \psi\chi_{\qi_1\qi_2^2\ei^2} )|}{|\qi_1\qi_2^2|^{1+\eps}}=
\frac{|L( 1/2, \ov{\psi}\chi_{\qi_1^2\qi_2\ei} )|}{|\qi_1^2 \qi_2|^{1+\eps}}
|\qi_1/\qi_2|^{1+\eps},
\]
and we consider first the subseries of~\eqref{eq:Sbound} for which $|\qi_1/\qi_2|<|\ei|^{1/3}$.
Since $2-4\sigma+(1+\eps)/3<-1-\eps$, namely $\sigma>5/6+\eps/3$,
this subseries is bounded by
\[\sum_{\substack{\ai,\ei \in I(S)_0\\ (\ai,\ei)=1}}
\frac{|L(1/2, \psi\chi_{\ai\ei})|^2}{|\ai\ei|^{1+\eps}}\ll \sum_{\ai \in I(S)_0}
\frac{|L(1/2, \psi\chi_{\ai})|^2}{|\ai|^{1+2\eps}}Q_\ai,
\]
with $Q_\ai=Q_\ai(1/2,1/2;\psi,\ov{\psi})$ as in \cref{Main-Thm2-r-th-power-free-alt} for $\ai\in I(S)_0$. The last series converges because
$\wZ^{S}(1/2,1/2,w; \psi,\psi,1)$ is a Dirichlet series in $w$ with nonnegative coefficients with
its first pole at $w=1$, so by Landau's lemma it converges absolutely at $w=1+2\eps$.

Therefore we can restrict to $|\qi_1/\qi_2|>|\ei|^{1/3}$ in~\eqref{eq:Sbound},
and using dyadic summation for the series over $\ai$ yields
\[
\sum_{\substack{Q_1,Q_2\text{ dyadic}\\ Q_1/Q_2>|\ei|^{1/3} }} \frac{1}{(Q_1 Q_2^2)^{1+\eps}}
\sum_{\substack{\qi_1 \asymp Q_1 \\ \qi_2\asymp Q_2 } }
|L(1/2, \psi\chi_{\qi_1 \qi_2^2\ei^2})|^2.
\]
The inner sum is
$\ll_{\eps'}(|\ei|^{1/3}Q_1Q_2^{4/3})^{1+\eps'}$ by \cite[Prop. 4.2]{DFDS24}
(or its version for a general field $F$ in \cref{secondmoment-cor}), as this term
dominates the one containing $|\ei|^{1/2}$ in loc. cit. because of the
restriction $Q_1/Q_2>|\ei|^{1/3}$.
Taking $\eps'<\eps$, the last display is $\ll_{\eps'} |\ei|^{1/3+\eps'}$, and we conclude that
$S_\psi(\sigma)$ converges for $\sigma>5/6+\eps'$.
\end{proof}
\end{para}
\begin{para}
 Putting together the results of this section we obtain:
\begin{prop}\label{prop:convergence_sievedMDS}
The sieved MDS $ Z_\kappa^S\big(\tfrac 12, \tfrac 12, w\big) $ has meromorphic continuation to the region
$\Re(w)>\dd_\kappa$ with a double pole at $w=1$, where
 \[
 \dd_\kappa=\begin{cases} \frac{r+A_r}{r+1}  & \text{if } \kappa=1 \\
		  \frac{3r-2}{4r-2}  & \text{if } \kappa=0
         \end{cases}
\]
with $A_r=1/2$ if $r>3$ and $A_3=1/3$. For $\kappa=0$ and $r=3,4$, this region also contains the second double pole at $w=1/2+1/r$. For
$1>\Re(w)>\dd_\kappa$, we have the convexity bound
\[
(w-1)^2(w-\tfrac12-\tfrac1r)^2
Z_\kappa^S\big(\tfrac 12, \tfrac 12, w\big)\ll_\eps  (1+|w|)^{rd+\eps}.
\]
\end{prop}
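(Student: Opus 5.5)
The plan is to assemble the continuation from the Möbius decomposition \eqref{eq:Zkappa}, namely $Z_\kappa^S(\sb_w)=\sum_{\hi}\mu(\hi)Z_\kappa(\sb_w;\hi)$ with $\sb_w=(\tfrac12,\tfrac12,w)$.

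First I would show that each term has the right analytic behaviour. By \cref{prop: sieving}, $Z_\kappa(\sb_w;\hi)$ is a finite linear combination of the perfect MDS $\wZ^{S\cup S_\hi}(\sb_w;\chi_\ff,\chi_\ff,\eta\chi_\gi)$. The coefficients are the rational local factors $f^{(\kappa)}_\ff$ and $g_{\gi,\hi^{(\ff)}}$, which are holomorphic for $\Re(w)>1/2$. By \S\ref{par:Zanalytic}, each building block is meromorphic in $w$, with poles in this range only at $w=1$ and $w=\tfrac12+\tfrac1r$, each of order at most $2$; these come from the factors $(s-1)^2$ and $(s+w-1-1/r)^2$ at $s=1/2$. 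So $(w-1)^2(w-\tfrac12-\tfrac1r)^2Z_\kappa(\sb_w;\hi)$ is holomorphic for $\Re(w)>1/2$.

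Next I would make the $\hi$-sum converge. For $\kappa=1$ and $r>3$, I would use the bound of \S\ref{par:conv}, coming from \cref{cor:convexity} and \S\ref{sec:fg_estimates}. There the general term is $\ll|\hi|^{1/2+(r-1)(1-\sigma)-2\sigma+\eps}$, which is summable over square-free $\hi$ exactly when $\sigma>\frac{r+1/2}{r+1}$; this is the condition of \S\ref{par:convergence}. For $\kappa=1$ and $r=3$, I would instead quote \cref{prop:conv_req3}, which gives $\sigma>5/6=\frac{3+1/3}{4}$. For $\kappa=0$, I would use \eqref{eq:refined-sieved-convexity}, which rests on \cref{prop:refined-convexity}; it gives convergence for $\sigma>\frac{3r-2}{4r-2}$, as in \S\ref{par:kappa0region}. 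In each case the series of normalized functions converges absolutely and locally uniformly on compacta away from the poles. By Weierstrass it defines a holomorphic function, so $Z_\kappa^S(\sb_w)$ continues meromorphically to $\Re(w)>\dd_\kappa$.

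To locate the poles, note that at most double poles occur, at $w=1$ and $w=\tfrac12+\tfrac1r$. The point $w=\tfrac12+\tfrac1r$ exceeds $\dd_0=\frac{3r-2}{4r-2}$ precisely when $r\le4$. For $r=3$ this is $2(3r-2)<r(4r-2)\cdot\ldots$, but it is simplest to check directly: $5/6>7/10$ and $3/4>10/14$, while for $r=5$ we get $7/10<13/18$. For $\kappa=1$ one has $\tfrac12+\tfrac1r\le\dd_1$ for all $r\ge3$, so only $w=1$ lies in the region. The double pole at $w=1$ is genuine, because its leading coefficient is the nonzero constant displayed in \cref{Main-Thm1-alt}/\cref{Main-Thm2-r-th-power-free-alt}. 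Extracting that coefficient requires the residue computation, which I would defer to the main-term section. Here I would only record that the order is at most $2$ and that the leading Laurent coefficient is a convergent nonzero Euler product.

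For the convexity bound, I would keep the $(1+|w|)^{4+rd(1-\sigma)+\eps}$ factor from \cref{prop:convexity}, which is uniform in $\hi$, and sum over $\hi$ as above. Since $1-\sigma<1/2$, this gives polynomial growth. The stated exponent $rd+\eps$ should then follow by Phragmén–Lindelöf between a line just to the right of $\dd_\kappa$ and $\Re(w)=1+\eps$, where the series is absolutely bounded. The hard step is uniformity: for $r=3$ and $\kappa=1$, the $\hi$-convergence relies on the averaged large-sieve argument of \cref{prop:conv_req3} rather than on a termwise bound. There I would check that the $|w|$-dependence factors out of \cref{prop:convexity} before averaging, so that the same argument carries the $(1+|w|)$ factor along.
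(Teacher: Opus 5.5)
Your proposal is correct and follows the paper's own route. The paper proves this by assembly, namely the M\"obius decomposition \eqref{eq:Zkappa} with \cref{prop: sieving}, summed over $\hi$ using \cref{cor:convexity} for $\kappa=1$, $r>3$, \cref{prop:conv_req3} for $r=3$, and \eqref{eq:refined-sieved-convexity} for $\kappa=0$; the genuineness of the double pole is deferred to Section~\ref{sec:residues}. One small slip: at $s=1/2$ the factor $(s-1)^2$ is just $1/4$, and the double pole at $w=1$ actually comes from $(w-1)(2s+w-2)=(w-1)^2$, i.e.\ from the collision of the polar lines $w=1$ and $w=2-2s$ (cf.\ \S\ref{par:prinpart}), though your conclusion about the pole orders is unaffected.
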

\end{para}

\section{Residues}\label{sec:residues}
\begin{para}
The goal of this section is to compute the principal part of the MDS
$ Z_\kappa^S\big(\tfrac 12, \tfrac 12, w\big) $ at $w=1$.
Using \eqref{eq:Zkappa} and \cref{prop: sieving}, this will follow
from the corresponding principal parts of the meromorphic functions
$Z_\kappa(s, s, w; \hi)$ and
$\wZ^{S\cup S_\hi}(s, s, w; \chi_\ff,\chi_\ff,\chi_\gi)$ with
$\ff,\gi|\hi^{r-1}$ coprime. As in the previous section,  we suppress the three trivial characters
and write $Z_\kappa^S(\mathbf s)$ for $Z_\kappa^S(\mathbf s;1,1,1)$ and $Z_\kappa(\sb; \hi)$.
\end{para}

\begin{para}
\label{par:prinpart}
To recover the principal part of a multiple Dirichlet series at the point where
the polar hyperplanes $w=1$ and $w=2-2s$ meet, we use the following elementary observation.

Let $f(s,w)$ be a meromorphic function such that $(w-1)(w+2s-2)f(s,w)$ is holomorphic near $(1/2,1)$.
Then the functions
\[
\Ac(s)=(2s-1)\res_{w=1} f(s,w), \quad \Bc(s)=(2s-1)\res_{w= 2-2s} f(s,w)
\]
are holomorphic at $s=1/2$, and
$f(1/2, w)$ has at most a double pole at $w=1$ with
\be\label{eq:doubleres}
\lim_{w\rightarrow 1} (w-1)^2 f(1/2,w) =\Ac(1/2)=-\Bc(1/2),
\ee
as it is seen by expressing the limit as a double limit in two ways.
A short computation then shows that the principal part of $f(1/2,w)$ at $w=1$ is
\[\frac{\Ac(1/2)}{(w-1)^2}+\frac{1}{w-1} \frac{\Ac'(1/2)+\Bc'(1/2)}{2}.
\]
\end{para}

\subsection{Principal part of the perfect MDS} \label{par:Zresidue}
\begin{para}
By the elementary observation in \S\ref{par:prinpart}, all the computations in this section
rely on determining the residues of the perfect MDS $\wZ^{S\cup S_\hi}(s,s,w;-)$ at $w=1$, and $w=2-2s$
computed in the next proposition.
Recall that
\[
 \wZ^T(\mathbf s;\psi_1,\psi_2,\rho;G)
 :=\frac{1}{\lvert R_{\mathfrak c}\rvert}
 \sum_{\eta\in\widehat R_{\mathfrak c}}
 \overline\eta(G)
 \wZ^T(\mathbf s;\psi_1,\psi_2,\rho\eta),
\]
is the subseries of $\wZ^T$ in~\S\ref{sec:wZ} with $[\ai]=G\in \EE$.
\begin{prop}\label{prop:Zresidue}
Let $\hi\in I(S)$ be square-free and let $\ff,\gi|\hi^{r-1}$ with $(\ff,\gi)=1$.
Set $T=S\cup S_\hi$, and fix a representative $G\in \EE$.
Assume $s\ne 1/2$ is in a neighborhood of $1/2$.

(a) The function
$\wZ^{T}(s,s,w; \chi_\ff, \chi_\ff, \chi_\gi )$ has a pole at $w=1$ only if $\chi_\gi=1$, and
we have
\[\res_{w=1} \wZ^{T}(s,s,w;\chi_\ff, \chi_\ff, 1;G)=\frac{1}{|R_\ci|}\Rc^T(s),
\]
where $\Rc^T(s)=\zeta_F^T(2s)\zeta_F^T(rs)^2  \res_{w=1}\zeta_F^T(w)$.

(b) We have
\[
\begin{aligned}
\res_{w=2-2s} \wZ^{T}(s,s,w; \chi_\ff, \chi_\ff, \chi_\gi;G )=
\frac{1}{|R_\ci|}\Rc^T(1-s)\ov{\chi}_\ff(\gi)\ov{\psi}_G(m_{[\gi]}) \\
\cdot |\ff_1|^{1-2s} A_{[\ff G]}(s)B_{\gi,\hi^{(\ff)}}(s),
\end{aligned}
\]
where
\[
A_{H}(s)=|D_F \cond(\psi_{H})|^{1-2s}\prod_{v\in S}
\frac{L_v(1-s,\chi_{H})L_v(1-s,\ov{\chi}_{H})}{L_v(s,\chi_{H})L_v(s,\ov{\chi}_{H})}
\]
for $H\in\EE$, and
\[
B_{\gi,\ai}(s)=\prod_{v\in S_{\ai}}\bigg(
\frac 1r \sum_{\xi\in\mu_r} \xi^{\ord_v(\gi)}\frac{(1-q_v^{-s}\xi)(1-q_v^{-s}\ov{\xi})}{(1-q_v^{s-1}\xi)(1-q_v^{s-1}\ov{\xi})}\bigg).
  \]
Note that $A_{H}(1/2)=1$, while $B_{\gi,\hi^{(\ff)}}(1/2)$ is 1 for
$\gi=1$ and 0 otherwise.
\end{prop}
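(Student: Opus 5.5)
For part (a) I will work directly from the series \eqref{eq:wZ} restricted to $[\ai]=G$. With $s_1=s_2=s$ near $1/2$, $s\neq 1/2$, I write the sum over $\ai\in I(T)$ as $\ai=\ai_0\bi^r$ with $\ai_0$ $r$-th power-free. The terms with $\ai_0\ne 1$ involve the nontrivial characters $\chi_{\ff\ai_0}$. Their contribution is holomorphic near $w=1$; this follows from the analytic continuation recalled in \S\ref{par:Zanalytic}, where the only polar divisors through $w=1$ are $w=1$ and $w=2-2s$, and from identifying the $w=1$ residue with the $\ai_0=1$ subfamily.

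For the $\ai_0=1$ subfamily I have $\ai=\bi^r$. Since $r\mid\ord_v\ai$ at every prime, $\chi_{\ff\ai_0}=\chi_\ff$, the twist $\chi_\gi(\bi^r)=1$ is trivial, and $\chi_\gi$ may be replaced by $1$. The character $\chi_\ff$ has conductor supported on $T$, so it becomes trivial in the partial $L$-functions $L^T$; more precisely, I replace $\chi_\ff$ by its restriction to $I(T)$, which equals $\psi_{[\ff]}$ times a Dirichlet character trivial away from $S_\hi$. Summing over $\bi$ with the local factors $Q_{rk}(\xx;q)$ gives an Euler product. Using the generating identity for $Q_k$ and $P_{rk}$ from \cref{prop:Z0}, I expect this Euler product to be
\[
\zeta^T_F(2s)\zeta_F^T(rs)^2\zeta_F^T(rw)\times(\text{holomorphic}),
\]
with the pole at $w=1$ coming from $|\bi|^{-rw}$ summed against $|\bi|^{r-1}$-type weights. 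I will check this at one prime using $f(\xx,y;q)$ from \S\ref{sec:v-part}: the residue is read off from the factor $(1-y)^{-1}$ in $D$, and evaluating $N/D$ at $y=1$ gives the local factor of $\zeta(2s)\zeta(rs)^2$. The class restriction $[\ai]=G$ produces the factor $1/|R_\ci|$ after averaging over $\eta\in\widehat R_\ci$, because only $\eta$ with $\eta\chi_\gi$ principal contributes a pole. This also shows that $\chi_\gi\neq 1$ gives no pole.

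For part (b) I will transport the pole at $w=1$ through the functional equation $\sigma_1\sigma_2$, which sends $(s,s,w)\mapsto(1-s,1-s,2s+w-1)$. Under this map the hyperplane $w=2-2s$ goes to $w'=1$. I apply the functional equations of $L^T(s,\chi_{\ff\ai_0})$ and $L^T(s,\ov\chi_{\ff\ai_0})$ termwise. The factor $|\ai_0\ff|_1^{1-2s}$ and the root numbers $G(\chi)\ov{G(\chi)}=1$ combine with \eqref{eq:Qfunc_eq} to give a finite combination of series $\wZ^T(1-s,1-s,2s+w-1;\cdot)$. Part (a) then applies to each of these. The factors $A_{[\ff G]}(s)$ come from the conductor $\cond(\psi_H)$ and the $S$-local Euler factors in the completed $L$-functions. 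The factor $|\ff_1|^{1-2s}$ is the conductor contribution of $\ff$. The term $\ov\chi_\ff(\gi)\ov\psi_G(m_{[\gi]})$ arises from reciprocity (\cref{lem: reciprocity}) when moving $\chi_\gi(\ai)$ across.

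The local factors $B_{\gi,\hi^{(\ff)}}$ at primes $v\in S_\hi$ not dividing $\ff$ are the main obstacle. At such $v$ the character $\chi_{\ai_0}$ is unramified but the Euler factor is omitted in $L^T$. Reinserting the local ratio $L_v(1-s)^2/L_v(s)^2$ and averaging over the possible values $\xi=\chi_{\ai_0}(\pp_v)\in\mu_r$, weighted by $\chi_\gi$, gives exactly the $\frac1r\sum_\xi\xi^{\ord_v\gi}$ expression. I will justify the equidistribution of $\xi$ by splitting the $\ai$-sum according to residue classes modulo $\pp_v$ via characters, using the same decomposition as in \S\ref{par:Zrecursion}. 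Finally, $A_H(1/2)=1$ is immediate, and $B(1/2)=\frac1r\sum_\xi\xi^{\ord_v\gi}=\delta_{\gi=1}$.
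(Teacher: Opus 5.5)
Your part (a) puts the pole at $w=1$ in the wrong place, and this is a genuine gap.

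\textbf{Why the $\ai_0=1$ subfamily cannot carry the pole.} Since $\chi_\gi(\bi^r)=1$, the subseries with $\ai_0=1$ is
$L^T(s,\chi_\ff)L^T(s,\ov\chi_\ff)\sum_{\bi\in I(T)}Q_{\bi^r}(s,s;\chi_\ff,\chi_\ff)|\bi|^{-rw}$.
The definition of $Q_\ai$ in \S\ref{sec:wZ} involves only weights $|\ni_1\ni_2|^{r-1-rs}$ with $\ni_1\ni_2\mid\bi$, not $|\bi|^{r-1}$. This gives $Q_{\bi^r}(s,s;\chi_\ff,\chi_\ff)\ll_\eps|\bi|^{r/2-1+\eps}$ for $s$ close to $1/2$. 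Hence this subseries converges absolutely for $\Re(w)>1/2+\eps$ and is holomorphic at $w=1$.

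Several further points follow from this:
\begin{itemize}
\item The factor $\zeta_F^T(rw)$ you predict would have its pole at $w=1/r$, and nothing in this subseries can produce $\zeta_F^T(2s)\zeta_F^T(rs)^2$.
\item Its prefactor $L^T(s,\chi_\ff)L^T(s,\ov\chi_\ff)$ is not $\zeta_F^T(s)^2$. Being ramified only at places of $T$ does not make $\chi_\ff$ trivial on $I(T)$.
\item Conversely, the sum of the terms with $\ai_0\neq 1$ is not holomorphic at $w=1$. Each term is entire in $w$, but the pole is generated by summing over the whole family; its residue is the average of $L(s,\chi_{\ff\ai_0})L(s,\ov\chi_{\ff\ai_0})$, not a single term.
\item Your local check is also off: $w=1$ corresponds to $y=q_v^{-1}$, not $y=1$, and $\wZ^T$ is not the product of its $v$-parts.
\end{itemize}

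\textbf{The missing idea: interchange the order of summation, as the paper does.} In the expansion of $Z^{T,(2)}$ in \cref{prop: Zaux2} (hence of $\wZ^T$ via $Z^{T,(3)}$ and \S\ref{par:ZMDSdef}), the variable $w$ sits in the Hecke $L$-function $L^T(w,\chi_\mi\ov\chi_{\mi'}\chi_\gi\eta)$.
\begin{itemize}
\item This has a pole at $w=1$ only if the character is principal, i.e.\ $\gi=1$, $\eta=1$ and $\mi,\mi'$ are $r$-th powers. The $\ei^{(1)}$-factors then force $\ei$ to be an $r$-th power too.
\item With $\psi_1=\psi_2=\chi_\ff$ one gets $L^T(2s,\psi_1\ov\psi_2)=\zeta_F^T(2s)$. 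Moreover $\chi_\ff$ is only evaluated on $r$-th powers, which is why the residue does not depend on $\ff$.
\item The remaining sum over $\bi=\ei\mi$, $\bi'=\ei\mi'$ equals $\zeta_F^T(rs)^2/\zeta_F^T(rs+1)^2$. This cancels the normalizing factors $\zeta_F^T(rs+rw+1-r)^2$ at $w=1$ and gives $\Rc^T(s)/|R_\ci|$.
\end{itemize}

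\textbf{Part (b).} Your part (b) is essentially the paper's argument: the functional equations in $s_1,s_2$ together with \eqref{eq:Qfunc_eq}, the factor $A_{[\ff G]}$ from the places in $S$, and the averaging over $\xi$ realized by expanding the local ratios at $v\in S_{\hi^{(\ff)}}$ into twists $\chi_{\gi\di\ei}\ov\chi_{\di'\ei'}$ via reciprocity. However, it relies on (a) in its twisted form, namely that there is a pole at $w'=1$ only when the $w$-character is trivial. Your argument for (a) does not establish this.
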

The local factor in the product above can be computed explicitly:
\be\label{eq:mur_sum}
\frac 1r \sum_{\xi\in\mu_r} \xi^{k}\tfrac{(1-x\xi)(1-x\ov{\xi})}{(1-y\xi)(1-y\ov{\xi})}=
\begin{cases}
\tfrac{(1-x/y)(1-xy)(y^k+y^{r-k})}{(1-y^2)(1-y^r)}  & \text{ if $k\ne 0$}\\
 \tfrac{(1+x^2)(1+y^r)-2x(y+y^{r-1})}{(1-y^2)(1-y^r)}  & \text{ if $k= 0$}
\end{cases}
\ee
for $0\le k< r$.
\begin{proof}
(a) Using the expression for $\wZ^{T}(\sb;\chi_\ff, \chi_\ff, \chi_\gi )$ in terms of $\wZ^{T,(3)}$ and
$\wZ^{T,(2)}$ in \S\ref{sec:2ndmoment}, we conclude from
the expansion of $\wZ^{T,(2)}$ in \cref{prop: Zaux2} that $\wZ^{T}(\sb;\chi_\ff, \chi_\ff, \chi_\gi )$
has a pole at $s_3=1$ if the character $\chi_\mi\ov{\chi}_{\mi'}\chi_\gi$ appearing there is trivial. That is, $\gi=1$ and
$\mi$, $\mi'$ are $r$-th powers in \cref{prop: Zaux2}, and it follows that the product
in the series over the places dividing $\ei^{(1)}$ is zero unless $\ei^{(1)}=1$, namely $\ei$ is an $r$-power as well.
We conclude:
\[\begin{aligned}
\res_{w=1} \wZ^{T}(s,s,w; \chi_\ff, \chi_\ff, 1;G)&=\zeta_F^T(2s)\zeta_F^T(rs+1)^2\res_{w=1}\zeta_F^T(w)\frac{1}{|R_\ci|}
\sum_{\bi,\bi'\in I(T)}
\frac{1}{|\bi\bi'|^{rs}}\\
&\cdot\prod_{v\in S_{\mi\mi'}} (1-q_v^{-1})\prod_{v\in S_{\ei}\setminus S_{\mi\mi'}} (1-q_v^{-1})^2
\prod_{v\in S_{\ei}\cap S_{\mi\mi'}} (1-q_v^{-1}),
\end{aligned}
\]
where in the sum we write $\bi=\ei \mi$, $\bi'=\ei \mi'$ with $(\mi,\mi')=1$. The Dirichlet series above equals
$\zeta_F^T(rs)^2 / \zeta_F^T(rs+1)^2$, finishing the proof.

(b) Using the definition of $\wZ^{T}(\sb;\chi_\ff, \chi_\ff, \chi_\gi )$  in \S\ref{sec:wZ},
we apply the functional
equations to the two $L$-functions, as well as to the polynomials $Q_\ai$. Setting $s'=1-s$, $w'=w+2s-1$ we obtain:
\[\begin{aligned}
\wZ^{T}(s,s,w; \chi_\ff, \chi_\ff, \chi_\gi;G )&=\sum_{\substack{\ai\in I(T)\\ [\ai]=G}} \frac{L^{T}(s', \chi_{\ff\ai_0})
L^{T}(s', \ov{\chi}_{\ff\ai_0})}{|\ai|^{w'}}
	\chi_\gi(\ai)Q_\ai(s',s'; 1,1)\cdot \\
	&\cdot |\ff_1 D_F \cond(\psi_{[\ff G]})|^{1-2s} \prod_{v\in T}
	 \frac{L_v(1-s,\chi_{\ff \ai_0})L_v(1-s,\ov{\chi}_{\ff \ai_0})}{L_v(s,\chi_{\ff \ai_0})L_v(s,\ov{\chi}_{\ff \ai_0})}.
\end{aligned}
	\]
For $v\in S_f$, the quantity $\chi_{\ai_0}(\pp_v)=\chi_{\pp_v}(G)\psi_G(p_v)=\chi_{G}(\pp_v)$ is independent of $\ai$ with $[\ai]=G$,
where we used the definition of the character $\chi_{\pp_v}\in\widehat{R}_\ci$ in \S\ref{sec:cleaning}.
This gives the factor $A_{[\ff G]}(s)$ in the formula.

Setting $\hi'=\hi^{(\ff)}$, we can write the product of the local factors at $v\in S_\hi$ as follows:
\[
\prod_{v\in S_{\hi'}} \frac{1}{(1-q_v^{-rs'})^2} \sum_{\substack{\di,\di'|\hi'\\ \ei,\ei'|\hi'^{r-1} }}
\mu(\di) \mu(\di')\chi_{\ai_0\ff}(\di\ei)  \ov{\chi}_{\ai_0\ff}(\di'\ei')|\di\di'|^{-s}  |\ei\ei'|^{-s'}.
\]
Using the reciprocity law $\chi_\ai(\di\ei)=\chi_{\di\ei}(\ai)\psi_{G}(m_{[\di\ei]})$ we obtain
\[\begin{aligned}
\wZ^{T}(s,s,w; \chi_\ff, \chi_\ff, \chi_\gi;G )&=|\ff_1|^{1-2s} A_{[\ff G]}(s)
\prod_{v\in S_{\hi'}} \tfrac{1}{(1-q_v^{-rs'})^2}\sum_{\substack{\di,\di'|\hi'\\ \ei,\ei'|\hi'^{r-1} }}
\chi_\ff(\di\ei)\ov{\chi}_\ff(\di'\ei') \psi_G\big(\tfrac{m_{[\di\ei]}}{m_{[\di'\ei']}} \big)\cdot\\
&\cdot
\mu(\di) \mu(\di') |\di\di'|^{-s}  |\ei\ei'|^{-s'}
\wZ^{T}(s',s',w';\chi_\ff, \chi_\ff, \chi_{\gi\di\ei}\ov{\chi}_{\di'\ei'};G ).
\end{aligned}
\]
By part (a), the right-hand side has a pole at $w'=1$ only if $\gi\di\ei/(\di'\ei')$ is an $r$-th power, and the residue at $w'=1$ of the sum
above gives
\[
\frac{1}{|R_\ci|}\ov{\chi}_\ff(\gi)\ov{\psi}_G(m_{[\gi]})
\Rc^T(s') \sum_{\substack{\di,\di'|\hi',\ \ei,\ei'|\hi'^{r-1} \\  \gi\di\ei/(\di'\ei') \text{ $r$-th power} }}
\mu(\di) \mu(\di') |\di\di'|^{-s}  |\ei\ei'|^{-s'}.
\]
The remaining sum is an Euler product, and combining it with $\prod_{v\in S_{\hi'}} (1-q_v^{-rs'})^{-2}$ yields the factor $B_{\gi,\hi^{(\ff)}}(s)$ and finishes the proof of (b).
\end{proof}
\end{para}

\begin{para}\label{par:prinpartZh}
We now turn to the determination of the residues of $Z_\kappa(s,s,w;\hi)$.
Using the decomposition in \cref{prop: sieving} and part (a) of \cref{prop:Zresidue},
we conclude that $Z_\kappa(s,s,w;\hi)$ has a pole at $w=1$ with residue
\be\label{eq:resZhw1}
\res_{w=1}Z_\kappa(s,s,w; \hi)=\Rc^{S\cup S_{\hi}}(s)
\sum_{\ff|\hi^{r-1}} f_{\ff}^{(\kappa)}(s,s,1) g_{1,\hi^{(\ff)}}(s,s,1),
\ee
where we suppress the two trivial characters from the arguments of
$f_{\mathfrak f}^{(\kappa)}$ and $g_{1,\mathfrak h^{(\mathfrak f)}}$.
Setting $s'=1-s$, we similarly have
\[
\res_{w=2-2s} Z_\kappa (s, s,w; \hi)=\Rc^{S\cup S_{\hi}}(s')A(s)
\!\!\!  \sum_{\substack{\ff\mid \hi^{r-1} \\ \gi\mid (\hi^{(\ff)})^{r-1} }}
|\ff_1|^{1-2s}	 f_{\ff}^{(\kappa)}(s,s,2s') g_{\gi,\hi^{(\ff)}}(s,s, 2s')B_{\gi,\hi^{(\ff)}}(s),
\]
where the function
\be\label{eq:Afun}
A(s)=\frac{1}{|R_\ci|}\sum_{G\in \EE} A_{[\ff G]}(s)
\ee
is independent of $\ff$ and satisfies $A(1/2)=1$ (see the definition of $A_H$ in \cref{prop:Zresidue}).
As in~\S\ref{par:prinpart}, let
\[
\Ac_\kappa(s;\hi)=(2s-1)\res_{w=1}Z_\kappa(s,s,w; \hi),\quad
\Bc_\kappa(s;\hi)=(2s-1)\res_{w=2-2s}Z_\kappa(s,s,w; \hi).
\]
We will show in \cref{lem:funeq} that $\Bc_\kappa(s;\hi)=-\Ac_\kappa(1-s;\hi)A(s)$,
and this relation and the observation in~\S\ref{par:prinpart} prove the following result.
\end{para}

\begin{prop}\label{prop:prinpartZh}
The principal part of $Z_\kappa(\tfrac 12,\tfrac 12,w;\hi)$ at $w=1$
is given by
\[\frac{\Ac_\kappa(\tfrac 12;\hi)}{(w-1)^2}+\frac{\Ac_\kappa'(\tfrac12;\hi )-\tfrac12  \Ac_\kappa(\tfrac12 ;\hi) A'(\tfrac12)}{w-1},
\]
with $A(s)$ given in \eqref{eq:Afun} and
\[\Ac_\kappa(\tfrac 12;\hi)=\zeta_F^{S\cup S_{\hi}}(r/2)^2  [\res_{w=1}\zeta_F^{S\cup S_{\hi}}(w)]^2
\sum_{\ff|\hi^{r-1}} f_{\ff}^{(\kappa)}(\tfrac12,\tfrac12,1) g_{1,\hi^{(\ff)}}(\tfrac12,\tfrac12,1).
\]
\end{prop}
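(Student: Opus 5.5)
The plan is to apply the elementary observation of \S\ref{par:prinpart} to $f(s,w)=Z_\kappa(s,s,w;\hi)$. This requires three ingredients: the two residues $\Ac_\kappa(s;\hi)$ and $\Bc_\kappa(s;\hi)$, and the relation $\Bc_\kappa(s;\hi)=-\Ac_\kappa(1-s;\hi)A(s)$ announced just before the statement (to be proved as \cref{lem:funeq}). Only the region near $(1/2,1)$ matters.

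\textbf{Step 1: the hypothesis of \S\ref{par:prinpart}.} We first check that $(w-1)(w+2s-2)Z_\kappa(s,s,w;\hi)$ is holomorphic near $(1/2,1)$. By \cref{prop: sieving}, $Z_\kappa(\cdot;\hi)$ is a finite linear combination of the $\wZ^{S\cup S_\hi}$, with coefficients $f^{(\kappa)}_\ff g_{\gi,\hi^{(\ff)}}$ that are holomorphic near $(1/2,1)$. By \S\ref{par:Zanalytic}, the only polar divisors of the $\wZ^{S\cup S_\hi}$ passing through that point are $w=1$ and $2s+w-2=0$. The factor $(s-1)^2$ there is harmless for $s$ near $1/2$, and $w=1/2+1/r$ lies away from the point.

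\textbf{Step 2: the principal part.} By \eqref{eq:doubleres}, the coefficient of $(w-1)^{-2}$ is $\Ac_\kappa(1/2;\hi)$, and the coefficient of $(w-1)^{-1}$ is $(\Ac_\kappa'(1/2)+\Bc_\kappa'(1/2))/2$. Differentiating $\Bc_\kappa(s)=-\Ac_\kappa(1-s)A(s)$ and using $A(1/2)=1$ gives
\[
\Bc_\kappa'(1/2)=\Ac_\kappa'(1/2)-\Ac_\kappa(1/2)A'(1/2).
\]
Hence the simple-pole coefficient is $\Ac_\kappa'(1/2)-\tfrac12\Ac_\kappa(1/2)A'(1/2)$, as stated.

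\textbf{Step 3: the value $\Ac_\kappa(1/2;\hi)$.} Start from \eqref{eq:resZhw1}. The factor $\Rc^T(s)$ contains $\zeta_F^T(2s)$, which has a simple pole at $s=1/2$ with residue $\tfrac12\res_{w=1}\zeta_F^T(w)$. Multiplying by $(2s-1)$ and letting $s\to1/2$ therefore gives
\[
\zeta_F^T(r/2)^2\,[\res_{w=1}\zeta_F^T(w)]^2\sum_{\ff\mid\hi^{r-1}} f_\ff^{(\kappa)}(\tfrac12,\tfrac12,1)\,g_{1,\hi^{(\ff)}}(\tfrac12,\tfrac12,1),
\]
with $T=S\cup S_\hi$. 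The $f$ and $g$ factors are finite at $s=1/2$ by \S\ref{sec:fg_estimates}.

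\textbf{Step 4: the relation $\Bc_\kappa=-\Ac_\kappa(1-s)A$ (main obstacle).} Compare the residue formula at $w=2-2s$ from \S\ref{par:prinpartZh} with \eqref{eq:resZhw1} at $s'=1-s$. The factors $(2s-1)$ and $(2s'-1)$ differ by a sign, and $\Rc^T(s')$ appears in both. What remains is a local identity at each $v\in S_\hi$:
\[
\sum_{\ff,\gi}|\ff_1|^{1-2s}f^{(\kappa)}_\ff(s,s,2-2s)\,g_{\gi,\hi^{(\ff)}}(s,s,2-2s)\,B_{\gi,\hi^{(\ff)}}(s)=\sum_{\ff}f^{(\kappa)}_\ff(s',s',1)\,g_{1,\hi^{(\ff)}}(s',s',1).
\]
This factors over primes $\pp\mid\hi$. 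The plan is to prove it as an identity of rational functions in $x=q^{-s}$, using the explicit generating functions $N/D$ from \S\ref{sec:v-part}, the character sum \eqref{eq:mur_sum}, and the functional equation \eqref{eq:Qfunc_eq} of the $Q_k$. The conceptual reason it should hold is that $Q_k(\xx;q)$ transforms under $x_i\mapsto 1/(qx_i)$ with the factor $|\ai/\ai_1|^{1-2s}$. Summing over $\ord_v(\gi)$ against the $\mu_r$-average in $B$ should reassemble the full $g$-series at the reflected point, while the $|\ff_1|^{1-2s}$ factor accounts for the primitive part. I expect this verification, separately for $\kappa=0,1$ and for the residue classes $\varepsilon$, to be the laborious part. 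If it resists direct manipulation, I would check it with a computer algebra system as a polynomial identity in $x$, $q$ for each fixed $r$, then argue uniformity in $r$ from the closed forms.
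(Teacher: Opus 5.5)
Your proposal is correct and follows the paper's route. The paper likewise combines the residue formulas at $w=1$ (\eqref{eq:resZhw1}) and at $w=2-2s$ with the observation of \S\ref{par:prinpart}, the only real input being the relation $\Bc_\kappa(s;\hi)=-\Ac_\kappa(1-s;\hi)A(s)$, which is the local identity of \cref{lem:funeq}.

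For that identity you do not need a per-$r$ computer check, which in any case would not cover all $r$. The paper proves it uniformly in $r$ by splitting $f=f^++f^-$. It transfers the functional equation of $Q_k$ to $f^-$, hence to $\sum_\varepsilon f_\kappa^{(\varepsilon)}$, where the local factor $|\ff_1|^{1-2s}=qx^2$ supplies the multiplier. It also transfers it to $f^+$, hence to $g$. Finally, it writes $g^{(\varepsilon)}$ and $B^{(\varepsilon)}$ as $\mu_r$-averages, so that the sum over $\varepsilon$ collapses to the diagonal and gives exactly $g^{(0)}(y,y,q^{-1})$.
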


\begin{lem} \label{lem:funeq}
Setting $s'=1-s$ we have
\[
\sum_{\substack{\ff\mid \hi^{r-1} \\ \gi\mid (\hi^{(\ff)})^{r-1} }}
	|\ff_1|^{1-2s} f_{\ff}^{(\kappa)}(s,s,2s') g_{\gi,\hi^{(\ff)}}(s,s, 2s')B_{\gi,\hi^{(\ff)}}(s)=
	\sum_{\ff\mid \hi^{r-1}} f_{\ff}^{(\kappa)}(s',s',1) g_{1,\hi^{(\ff)}}(s',s', 1).
\]
\end{lem}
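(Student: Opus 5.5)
Both sides are Euler products over the places $v\in S_\hi$, so the plan is to reduce the identity to a local one at a single prime and then check that local identity from the generating functions of \S\ref{sec:v-part}.

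\emph{Reduction to one prime.} Since $\hi$ is square-free, the summation over pairs $(\ff,\gi)$ with $\ff\mid\hi^{r-1}$ and $\gi\mid(\hi^{(\ff)})^{r-1}$ factors over $v\in S_\hi$. Each $v$ independently takes one of three forms:
\begin{itemize}
\item $\ord_v\ff=e\in\{1,\dots,r-1\}$ (and then $\ord_v\gi=0$);
\item $\ord_v\ff=0$ and $\ord_v\gi=k\in\{0,\dots,r-1\}$.
\end{itemize}
The factors $|\ff_1|^{1-2s}$, $f^{(\kappa)}_\ff$, $g_{\gi,\hi^{(\ff)}}$ and $B_{\gi,\hi^{(\ff)}}$ are all products of local factors indexed by these data. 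The same holds on the right-hand side, where the $v$-factor is $f_\kappa^{(e)}(x',x',y')$ if $\ord_v\ff=e\ge1$ and $g^{(0)}(x',x',y')$ otherwise. So it suffices to prove, for a single prime with $q=q_v$, $x=q^{-s}$, $y=q^{-2s'}$, $x'=q^{-s'}$ and $y'=q^{-1}$, the local identity
\[
q^{1-2s}\sum_{e=1}^{r-1} f_\kappa^{(e)}(x,x,y)+\sum_{k=0}^{r-1} g^{(k)}(x,x,y)\,\beta_k(s)
= \sum_{e=1}^{r-1} f_\kappa^{(e)}(x',x',y') + g^{(0)}(x',x',y').
\]
Here $\beta_k(s)$ is the local factor of $B$ from \eqref{eq:mur_sum}, taken with $x=q^{-s}$ and $y=q^{s-1}$.

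\emph{Using $\beta_k$ as a character projection.} By definition $\beta_k=\frac1r\sum_{\xi\in\mu_r}\xi^k\phi(\xi)$, where
\[
\phi(\xi)=\frac{(1-q^{-s}\xi)(1-q^{-s}\bar\xi)}{(1-q^{s-1}\xi)(1-q^{s-1}\bar\xi)}.
\]
Since $g^{(k)}$ collects the monomials $x_1^{a}x_2^{b}$ with $a-b\equiv k$, we have
\[
\sum_k g^{(k)}(x,x,y)\,\beta_k=\frac1r\sum_{\xi\in\mu_r}\phi(\xi)\,g(\bar\xi x,\xi x,y)
\]
(up to the convention of $\xi$ versus $\bar\xi$). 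Note also that $g$ is a rational function built from $Q_k$ divided by $(1-x_1)(1-x_2)$, and $\phi(\xi)$ supplies exactly a ratio of such factors at $s$ and at $1-s$. This strongly suggests using the local functional equation \eqref{eq:Qfunc_eq} for $Q_k$. With $x_1=\bar\xi q^{-s}$ and $x_2=\xi q^{-s}$ we get $qx_1x_2=q^{1-2s}$. Under $s\mapsto 1-s$, this is the local shadow of the global computation in \cref{prop:Zresidue}(b), where the $L$-functions and $Q_\ai$ were functionally transformed.

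\emph{Main obstacle: verifying the local identity.} The substance is showing that the $\xi$-average reproduces exactly $f^{(\cdot)}_\kappa$ and $g^{(0)}$ at $s'$. I would prove it by direct rational-function verification.
\begin{enumerate}
\item Write $F_{\mathrm{tot}}(x_1,x_2,y)=\sum_k Q_k y^k$ explicitly from $\sum_k P_k y^k$ and the $Q$–$P$ generating relation.
\item Split $F_{\mathrm{tot}}$ by residue classes of $k$ modulo $r$, which isolates $f^{(e)}$ for $e\ne0$ and $\wg$ for $e=0$.
\item Apply \eqref{eq:Qfunc_eq} to each $Q_k$, with $|\pp^k/\pp|^{1-2s}$ producing $q^{(k-1)(1-2s)}$. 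This matches the factor $q^{1-2s}$ times $y^k=q^{-2s'k}$ against $y'^k=q^{-k}$ for $e\ne0$.
\end{enumerate}
The delicate part is the $r\mid k$ class, where the factor $(1-x_1)^{-1}(1-x_2)^{-1}$ must be traded against $\phi(\xi)$ and the average over $\mu_r$. The subtraction of the $k=0$ and $k=1$ terms (for $\kappa=1$), respectively the $k<r$ terms (for $\kappa=0$), must also cancel consistently on both sides. If the bookkeeping proves unwieldy, I would fall back on a finite symbolic check. Both sides are rational in $q^{-s}$ and $q$ of bounded degree for each fixed $r$, so the check amounts to identifying numerators after clearing the common denominator $D$ from \S\ref{sec:v-part}. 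The only $r$-dependence is through $y^r$ and $x^r$, so the check can be organized uniformly in $r$.
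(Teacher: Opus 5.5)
Your reduction to a single prime and your treatment of the $f$-terms match the paper's proof and are complete. For $r\nmid k$, the relation $Q_k(x_1,x_2)=(qx_1x_2)^{k-1}Q_k((qx_1)^{-1},(qx_2)^{-1})$ gives $q^{1-2s}Q_k(x,x)y^k=Q_k(x',x')y'^k$ term by term in $k$. So the truncations defining $f_\kappa^{(e)}$ need no separate ``consistent cancellation''.

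The gap is in the $g$-terms. You flag them as the delicate part but never actually treat them. Your fallback, a symbolic check for each fixed $r$, cannot prove the lemma for all $r\ge 3$, and you do not say how the check would be ``organized uniformly in $r$''.

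The missing observation is the functional equation for the class $r\mid k$, where the exponent is $k$ rather than $k-1$:
\[
Q_k(x_1,x_2)=(qx_1x_2)^{k}\,Q_k\big((qx_1)^{-1},(qx_2)^{-1}\big).
\]
This holds as an identity of rational functions in independent $x_1,x_2$. Dividing by $(1-x_1)(1-x_2)$ gives, termwise in $k$,
\[
g\Big(\frac{1}{qx_1},\frac{1}{qx_2},qx_1x_2y\Big)=\frac{(1-x_1)(1-x_2)}{\big(1-(qx_1)^{-1}\big)\big(1-(qx_2)^{-1}\big)}\,g(x_1,x_2,y).
\]

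Now evaluate this at $(x_1,x_2)=(\xi x,\bar\xi x)$, which is exactly the point in your projection formula
\[
\sum_k g^{(k)}(x,x,y)\,\beta_k=\frac1r\sum_{\xi\in\mu_r}\phi(\xi)\,g(\xi x,\bar\xi x,y).
\]
Since $(qx)^{-1}=x'$ and $qx^2y=y'$, the image point is $(\bar\xi x',\xi x',y')$, and the multiplier is precisely $\phi(\xi)$. Hence the left-hand $g$-sum equals
\[
\frac1r\sum_{\xi\in\mu_r}g(\bar\xi x',\xi x',y')=\sum_{k}\Big(\frac1r\sum_{\xi\in\mu_r}\xi^{-k}\Big)g^{(k)}(x',x',y')=g^{(0)}(x',x',y'),
\]
by orthogonality on $\mu_r$. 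This is the right-hand $g$-term, and the lemma follows. This short argument is how the paper concludes. With it your outline becomes a complete proof; without it the central step is only asserted.
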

\begin{proof} Both sides are Euler products, so we have to prove the equality of their $v$-parts for $v\in S_\hi$:
\[
\sum_{\eps=1}^{r-1} q x^2 f_\kappa^{(\eps)} (x,x,y^2) +
\sum_{\eps=0}^{r-1} g^{(\eps)} (x,x,y^2) B^{(\eps)}(x,y)=
\sum_{\eps=1}^{r-1} f_\kappa^{(\eps)} (y,y,q^{-1})+ g^{(0)} (y,y,q^{-1}),
\]
where $q=q_v$, $x=q^{-s}$, $y=q^{-s'}=(qx)^{-1}$ and
\[
B^{(\eps)}(x,y)=\frac 1r \sum_{\xi\in\mu_r} \xi^{\eps}\frac{(1-x\xi)(1-x\ov{\xi})}{(1-y\xi)(1-y\ov{\xi})}.
\]
We omit the last argument $q=q_v$ from the notation of the $v$-part and derived functions, as $v$ is fixed.

Decompose the $v$-part $f$ in \S\ref{sec:v-part} as $f=f^++f^-$, where $f^+$ is the part of $f$ containing the powers
$y^k$ with $k\equiv 0 \pmod r$. The polynomials $Q_k$ satisfy the functional equation
\[
Q_k(x_1, x_2;q) =Q_k((qx_1)^{-1}, (qx_2)^{-1};q) \cdot
\begin{cases}
(qx_1x_2)^{k-1} & \text{ if $k\not\equiv 0 \pmod r$}\\
(qx_1x_2)^{k} & \text{ if $k\equiv 0 \pmod r$},
\end{cases}
 \]
which yields the following functional equations for $f^\pm$:
\[
f^-\big(\tfrac 1{qx_1}, \tfrac 1{qx_2}, qx_1x_2y\big)=qx_1 x_2 f^-(x_1,x_2,y),
\]
\[
f^+\big(\tfrac 1{qx_1}, \tfrac 1{qx_2}, qx_1x_2y\big)=\frac{(1-x_1)(1-x_2)}{(1-(qx_1)^{-1})(1-(qx_2)^{-1})} f^+(x_1,x_2,y).
\]
Note that
\[
\sum_{\eps=1}^{r-1} f_\kappa^{(\eps)} (x_1,x_2,y)=f^-(x_1,x_2,y)-
\begin{cases}
 y & \text{ if $\kappa=1$} \\
  \sum_{k=1}^{r-1} P_k(x_1,x_2;q)y^k & \text{ if $\kappa=0$}.
\end{cases}
\]
so $\sum_{\eps=1}^{r-1} f_\kappa^{(\eps)}$ satisfies the same functional equation as $f^-$.
Therefore the first sums on both sides in the $v$-part identity match.

To check that the terms containing $g$ also match,  note that $g=f^+-\tfrac{1}{(1-x_1)(1-x_2)}$ also satisfies the functional
equation satisfied by $f^+$. We have
\[\begin{aligned}
\sum_{\eps=0}^{r-1} g^{(\eps)} (x,x,y^2)B^{(\eps)}(x,y)
&=\sum_{\eps=0}^{r-1}\frac{1}{r^2}\sum_{\xi,\xi'\in \mu_r}
\xi^{-\eps}\xi'^{\eps}g (\xi x,\ov{\xi}x,y^2)\frac{(1-x\xi')(1-x\ov{\xi}')}{(1-y\xi')(1-y\ov{\xi}')} \\
&=\sum_{\eps=0}^{r-1}g^{(\eps)} (y,y,q^{-1}) B^{(\eps)}(y,y),
\end{aligned}
\]
where in the second sum only the terms with $\xi=\xi'$ survive, and the second equality follows from the functional equation for $g$. Since $B^{(\eps)}(y,y)$ vanishes unless $\eps=0$, when it equals 1, the last sum equals $g^{(0)} (y,y,q^{-1})$
finishing the proof.
\end{proof}

\subsection{Principal part of the sieved MDS}\label{par:res_Zsieved}
\begin{para}
We combine~\eqref{eq:Zkappa} and \cref{prop:prinpartZh} to compute the principal part of the sieved
second moment MDS $Z_\kappa^S(\tfrac12,\tfrac12,w)$ over $r$-th power-free ideals ($\kappa=0$),
or over square-free ideals ($\kappa=1$). Recall the residue
$\Rc^S(s)=\zeta_F^S(2s)\zeta_F^S(rs)^2  \res_{w=1}\zeta_F^S(w)$ appearing in \cref{prop:Zresidue}.
\end{para}
\begin{prop}\label{prop:prinpartZsieved}
 The principal part of $Z_\kappa^S(\tfrac12,\tfrac12,w)$ at $w=1$ is given by
 \[
 \frac{\Ac_\kappa(\tfrac 12)}{(w-1)^2}+\frac{\Ac_\kappa'(\tfrac12)-\tfrac12  \Ac_\kappa(\tfrac12) A'(\tfrac12)}{w-1},
 \]
 where $A(s)$ is given by~\eqref{eq:Afun} and
 \[
 \Ac_\kappa(s)=(2s-1)\Rc^{S}(s) \prod_{v\notin S}(1-q_v^{-rs})(1-q_v^{-1}) P_r^{(\kappa)}(q_v^{-s};q_v),
 \]
 for polynomials $P_r^{(\kappa)}(x;q)$ given by
 \[
P_r^{(\kappa)}(x;q)=\begin{cases}
(1-x^r)\left(\frac{1-q^{-r}}{1-q^{-1}}-q^{1-r}x^2\frac{1-(qx^2)^{r-1}}{1-qx^2}\right)+2x^r & \text{ if $\kappa=0$}\\
               1+q^{-1}-x^2q^{-1}+x^r(1-q^{-1}+x^2q^{-1})      & \text{ if $\kappa=1$}.
                    \end{cases}
\]
Moreover, the Euler product in the definition of $\Ac_\kappa(s)$ converges absolutely for $\Re(s)\ge 1/2$.
\end{prop}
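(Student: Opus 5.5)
We will obtain the principal part by summing the principal parts of \cref{prop:prinpartZh} over $\hi$ with weight $\mu(\hi)$, using \eqref{eq:Zkappa}. The plan is to define, for $s$ near $1/2$,
\[
\Ac_\kappa(s)=\sum_{\hi\in I(S)}\mu(\hi)\Ac_\kappa(s;\hi),\qquad \Bc_\kappa(s)=\sum_{\hi\in I(S)}\mu(\hi)\Bc_\kappa(s;\hi).
\]
We must show these series converge absolutely and locally uniformly in a neighbourhood of $s=1/2$. Given this, differentiation term by term is legitimate. Since $A(s)$ in \eqref{eq:Afun} is independent of $\hi$, \cref{lem:funeq} then gives $\Bc_\kappa(s)=-\Ac_\kappa(1-s)A(s)$. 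The elementary observation of \S\ref{par:prinpart} then yields exactly the stated principal part, just as in the proof of \cref{prop:prinpartZh}.

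To interchange the $\hi$-sum with taking principal parts, we use the uniform bounds of \S\ref{par:conv} (and \eqref{eq:refined-sieved-convexity} for $\kappa=0$). These show that $\sum_\hi\mu(\hi)(w-1)^2(w-\tfrac12-\tfrac1r)^2Z_\kappa(\tfrac12,\tfrac12,w;\hi)$ converges uniformly on compact sets near $w=1$ with $\Re(w)>\dd_\kappa$. Hence the Laurent coefficients at $w=1$ add up. A Cauchy-integral argument on a small circle around $w=1$ suffices. The same argument, run with $s$ in a small neighbourhood of $1/2$, gives the convergence of the series defining $\Ac_\kappa$ and $\Bc_\kappa$.

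The main computation is the Euler product. By \eqref{eq:resZhw1} we have $\Ac_\kappa(s;\hi)=(2s-1)\Rc^{S\cup S_\hi}(s)\prod_{v\in S_\hi}c_v(s)$. Here
\[
c_v(s)=\sum_{\eps=1}^{r-1}f_\kappa^{(\eps)}(x,x,q_v^{-1})+g^{(0)}(x,x,q_v^{-1}),\qquad x=q_v^{-s}.
\]
We also have $\Rc^{S\cup S_\hi}(s)=\Rc^S(s)\prod_{v\in S_\hi}(1-q_v^{-2s})(1-q_v^{-rs})^2(1-q_v^{-1})$. Summing $\mu(\hi)$ over square-free $\hi$ therefore produces
\[
(2s-1)\Rc^S(s)\prod_{v\notin S}\Big(1-(1-x^2)(1-x^r)^2(1-q_v^{-1})c_v(s)\Big).
\]
It remains to verify the local identity
\[
1-(1-x^2)(1-x^r)^2(1-q^{-1})c_v=(1-x^r)(1-q^{-1})P_r^{(\kappa)}(x;q).
\]
For this, write $c_v$ in closed form from $f=N/D$. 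We use $f^-(x,x,y)-\sum_{\eps}f_\kappa^{(\eps)}=y$ for $\kappa=1$, respectively $\sum_{k<r}P_k y^k$ for $\kappa=0$. We also use $g^{(0)}=g$ at $x_1=x_2$ up to the $\eps\neq0$ parts. Care is needed here: $g^{(0)}$ retains only the monomials with $k_1\equiv k_2\pmod r$, so we extract it by averaging $g(\xi x,\bar\xi x,y)$ over $\xi\in\mu_r$, as in the proof of \cref{lem:funeq}. Evaluating at $y=q^{-1}$ then reduces everything to rational-function algebra. I expect this explicit simplification, especially the $\mu_r$-averaging for $g^{(0)}$ and tracking the correction $\sum_{k<r}P_k y^k$ when $\kappa=0$, to be the main obstacle. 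I would sanity-check the result at $s=1/2$ against the constant $P_r^{(1)}$ stated in \cref{Main-Thm1-alt}.

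Finally, for absolute convergence when $\Re(s)\ge 1/2$, expand $(1-x^r)(1-q^{-1})P_r^{(\kappa)}(x;q)$. It should equal $1+O(q^{-2\Re s}\cdot q^{-1}+q^{-r\Re s}+q^{-2})$. For $r\ge3$ and $\Re s\ge1/2$ every error term is $O(q^{-3/2})$, so $\prod_v$ converges absolutely.
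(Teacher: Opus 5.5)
Your proposal is correct and is essentially the paper's own argument. You define $\Ac_\kappa(s)=\sum_{\hi}\mu(\hi)\Ac_\kappa(s;\hi)$ and add up the principal parts of \cref{prop:prinpartZh} using \eqref{eq:Zkappa} and the uniform bounds of Section~\ref{sec:analytic}. You then evaluate the Euler product from \eqref{eq:resZhw1} via exactly the local identity you wrote down. That identity does check out: after the $\mu_r$-averaging, the terms $\sum_{k\equiv 0\,(r)}\frac{1-(qx^2)^k}{1-qx^2}y^k$ cancel between $f^-$ and $g^{(0)}$, and at $y=q^{-1}$ the factor $(1-q^{r-1}x^ry^r)^{-2}$ cancels completely, leaving the stated $P_r^{(\kappa)}$.

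Two small corrections. First, the bounds of \S\ref{par:conv} are only stated at $s=1/2$. So the locally uniform convergence in $s$ that you need to differentiate $\sum_\hi\mu(\hi)\Ac_\kappa(s;\hi)$ termwise should be read off from the explicit local factors, which are $O(q_v^{-1-\delta})$ uniformly near $s=1/2$. Second, the series $\Bc_\kappa(s)$ is not needed, since summing the principal parts already computed in \cref{prop:prinpartZh} gives the result directly.
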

\begin{proof}
Define
\[
\Ac_\kappa(s)= \sum_{\hi\in I(S)} \mu(\hi)\Ac_\kappa(s;\hi)=(2s-1)\sum_{\hi\in I(S)} \mu(\hi) \res_{w=1} Z_\kappa(s,s,w;\hi) ,
\]
with the residues given by~\eqref{eq:resZhw1}; we will show shortly that this agrees with the definition in the statement of the proposition.
The decomposition in~\eqref{eq:Zkappa} and the meromorphic
continuation of $Z_\kappa^S(\tfrac12,\tfrac12,w)$ established in Section~\ref{sec:analytic} prove the formula
for the principal part, taking into account the formula for the principal part of
$Z_\kappa(\tfrac12,\tfrac12,w;\hi)$ in \cref{prop:prinpartZh}.

To prove the Euler product formula for $\Ac_\kappa(s)$, we use~\eqref{eq:resZhw1} to write
 \[\begin{aligned}
\Ac_\kappa(s)=(2s-1)\Rc^{S}(s)
\sum_{\hi\in I(S)} \mu(\hi)&\prod_{v\in S_\hi}(1-q_v^{-1})(1-q_v^{-2s}) (1-q_v^{-rs})^2 \\
&\cdot\sum_{\ff\mid \hi^{r-1}} f_{\ff}^{(\kappa)}(s, s, 1)
g_{1,\hi^{(\ff)}}(s, s, 1),
\end{aligned}
\]
The sum over $\hi\in I(S)$ equals an Euler product with local factor at $v\notin S$:
\[
1-(1-q_v^{-1})(1-q_v^{-2s}) (1-q_v^{-rs})^2\left(\sum_{\eps=1}^{r-1}f_\kappa^{(\eps)}(q_v^{-s},q_v^{-s},q_v^{-1};q_v)+
g^{(0)}(q_v^{-s},q_v^{-s},q_v^{-1};q_v)   \right)
\]
Using the formula for the $v$-part $f$ in \S\ref{sec:v-part}, this local factor equals
\[
(1-q_v^{-rs})(1-q_v^{-1}) P_r^{(\kappa)}(q_v^{-s};q_v),
\]
with $P_r^{(\kappa)}(x;q)$ as given above.

The convergence of $\Ac_\kappa(s)$ follows from the explicit formula for its Euler factors.
\end{proof}

\begin{para}
By the Tauberian argument discussed in more detail in the next section,
we obtain that the Euler product in the leading term in Theorems~\ref{Main-Thm1-alt}
and~\ref{Main-Thm2-r-th-power-free-alt} is given by
\be\label{eq:eulerprod}
\lim_{w\rightarrow 1} (w-1)^2 Z_\kappa^S\big(\tfrac 12,\tfrac 12,w\big)=\zeta_F^S(r/2) \big[\res_{w=1} \zeta_F^S(w)\big]^2
\prod_{v\notin S} (1-q_v^{-1})P_r^{(\kappa)}\big(q_v^{-1/2}\big),
\ee
where $P_r^{(\kappa)}\big(q_v^{-1/2}\big):= P_r^{(\kappa)}(q_v^{-1/2};q_v)$, with the polynomials $P_r^{(\kappa)}(x)$ given by
\be \label{eq:euler_factors_2ndmom}
P_r^{(\kappa)}(x)=\begin{cases}(1-x^{r})\left(\frac{1-x^{2r}}{1-x^2}-(r-1)x^{2r}\right)+2x^r & \text{ if $\kappa=0$}\\
               1+x^2-x^4+x^r(1-x^2+x^4)      & \text{ if $\kappa=1$}.
                    \end{cases}
\ee
Note that $P_r^{(\kappa)}(x)>0$ for $0<x<1$, and that
 $(1-q^{-1})P_r^{(\kappa)}(q^{-1/2};q)
 =1+O\bigl(q^{-\min(2,r/2)}\bigr)$, so the Euler products above converge and are positive.

The entire principal part will be computed in more generality in \cref{prop:Zbres} (taking $\bi=1$ there).
\begin{rem}
For $r=2$, we have $P_2^{(0)}(x)=P_2^{(1)}(x)=1+2x^2-2x^4+x^6$, and the Euler product
above--completed with a local factor $(1-1/q_v)$ to account for the factor $\zeta(r/2)$ removed above--matches
the leading term in the asymptotic formula for the second moment of quadratic Dirichlet $L$-functions over $\Q$
\cite[eq. (1.3.5)]{CFKRS}.

For $r=3,4$, $P_r^{(1)}(x)$ matches the local Dirichlet polynomial appearing in the asymptotic formulas in the cubic case~\cite{DFDS24}
and in the quartic case~\cite{CFD26}.
\end{rem}
\end{para}

\section{Mollification}\label{sec:mollification}
\begin{para}
Fix $M$ to be chosen later in terms of $X$.
For $\ai\in I(S)$ we consider a mollifier
\[
M_\ai=\sum_{\substack{\bi\in I(S)_0 \\ |\bi|\le M}} \lambda_\bi|\bi|^{1/2}\chi_\ai(\bi),
\]
where $\lam_\bi\in \mathbb{R}$ will be chosen such that $\lambda_\bi\ll |\bi|^{-1+\eps}$.
We start with a mollifier supported on $r$-th power-free ideals, which in principle would lead to
a better proportion of non-vanishing.
Later on we will specialize as in~\cite{DFDS24} to a mollifier supported
on square-free ideals, as the technical complications required by the more general setting
do not seem to justify the modest gains in the non-vanishing proportion.

Recall that $I(S)_\kappa\subset I(S)$ denotes the subset of square-free ideals for $\kappa=1$,
and $r$-th power-free ideals for $\kappa=0$.
For $\kappa=0,1$, we are interested in the mollified first and second moments
\[
\Sc_1^{(\kappa)}(M;W)=\sum_{\ai \in I(S)_\kappa} L^S(\tfrac 12,\chi_\ai) M_\ai W\big(\tfrac{|\ai|}{X}\big),\]
\[
\Sc_2^{(\kappa)}(M;W)=\sum_{\ai \in I(S)_\kappa} |L^S(\tfrac 12,\chi_\ai) M_\ai|^2
Q_\ai W\big(\tfrac{|\ai|}{X}\big),
\]
where $Q_\ai = \prod_{\pp^{n_\pp}\|\ai} n_\pp$,
and $0\le W(x)\le 1$ is a Schwartz function supported on $[1,2]$. The mollifier $M_\ai$,
its length $M$ and the coefficients $\lam_\bi$ will depend on $\kappa$, but for simplicity
we omit this dependence until we choose the mollifier in \S\ref{sec:sqfreemol}.

We start with the second mollified moment, as the analysis for the first moment is very similar,
requiring a specialization of the perfect MDS from the second moment.
\end{para}
\subsection{Second mollified moment}
\begin{para}
Denoting by $\wW(s)=\int_0^\infty W(x)x^{s-1}dx$ the Mellin transform of $W$, by Mellin inversion we have
\be\label{eq:mol2ndmom}
\Sc_2^{(\kappa)}(M;W)=\sum_{\substack{\bi,\bi'\in I(S)_0 \\ |\bi|,|\bi'|\le M}}
\lam_\bi\lam_{\bi'}|\bi\bi'|^{1/2}
\frac{1}{2\pi i}\int_{(2)}\wW(w)Z_\kappa^S(\tfrac 12, \tfrac 12, w;\chi_{\bi\ov{\bi'}})X^w dw
\ee
where we define a twisted version of the MDS $Z_\kappa^S(\sb):=Z_\kappa^S(\sb;1,1,1)$ from~\eqref{eq:Zsieved}:
\[
Z_\kappa^S(\sb;\chi_\bi):=
	\sum_{\ai\in I(S)_\kappa}\frac{
	L^{S}(s_1,  \chi_{\mathfrak{a}})
	L^{S}(s_2, \overline{\chi}_{\mathfrak{a}})}
	{|\mathfrak{a}|^{s_3}} \chi_\ai(\bi)Q_\ai(s_1,s_2),
\]
for $\bi\in I(S)$ an $(r+1)$-th power-free ideal, and
$Q_\ai(s_1,s_2)=Q_\ai(s_1,s_2;1,1)$. Note that $Q_\ai=Q_\ai(1/2,1/2)$ in the definition
of~$\Sc_2^{(\kappa)}$,
and recall that $\ov{\bi}=(\rad \bi)^r/\bi$, so that $\ov{\chi}_\bi=\chi_{\ov{\bi}}$.

Here and in the sequel we replace the ideal $\ff=\bi \ov{\bi'}$ by the $(r+1)$-th power-free ideal
\be\label{eq:fstar}
\ff^*:=\ff_0 \prod_{v\in S_\ff \setminus S_{\ff_0}} \pp_v^r,
\ee
so that $\chi_{\ff}=\chi_{\ff^*}$ and $\chi_\ai(\ff)=\chi_\ai(\ff^*)$ for all $\ai\in I(S)$
(note that this differs from the definition $\ji^*$ in \S\ref{eq:step2}).
\end{para}

\begin{para} \label{par:Zbsieving}
Next we study the analytic properties of $Z_\kappa^S(\sb;\chi_\bi)$, following
\S\ref{sec:sieving}. For $\hi\in I(S)$ square-free with $(\hi,\bi)=1$, we define
	\[
	Z_\kappa(\sb; \chi_\bi;  \mathfrak{h}) = \sum_{\substack{\mathfrak{a} \in I(S\cup S_\bi) \\ \mathfrak{h} \mid  \ai\ai_\kappa^{-1}}}
	\frac{L^{S}(s_1, \chi_{\mathfrak{a}_{0}}) L^{S}(s_2, \overline{\chi}_{\mathfrak{a}_{0}})}{|\mathfrak{a}|^{s_3}}
	\chi_\ai(\bi)Q_\ai(s_1, s_2),
	\]
so that we have
\be\label{eq: sieving_mol}
Z_\kappa^S(\sb;\chi_\bi)=
	\sum_{\mathfrak{h} \in I(S\cup S_\bi)}\mu(\mathfrak{h}) Z_\kappa(\sb; \chi_\bi;  \mathfrak{h}).
\ee
We have the following decomposition $Z_\kappa(\sb; \chi_\bi;  \hi)$
in terms of the perfect MDS $\wZ^{S\cup S_{\bi\hi}}$, extending \cref{prop: sieving}.
With the notation in \S\ref{sec:sieving}, we set  $f_\ai^{(\kappa)} (\sb):=f_\ai^{(\kappa)} (\sb;1,1)$,
	 $g_{\ai,\bi} (\sb):=g_{\ai,\bi} (\sb;1,1)$ to simplify notation
\begin{prop}\label{prop: sieving2}
Let $\hi\in I(S)$ be square-free and $\bi\in I(S)$ be $(r+1)$-th power-free with $(\hi,\bi)=1$.
For $\kappa=0,1$, we have the identity
	\[\begin{aligned}
	 Z_\kappa(\sb; \chi_\bi;  \mathfrak{h})&=\prod_{v\in S_\bi}\frac{1}{(1-q_v^{-rs_1})(1-q_v^{-rs_2})}
	 \sum_{\di,\di'| (\rad \bi)^{r-1}} \frac{1}{|\di|^{s_1}|\di'|^{s_2}}
	 \sum_{\substack{\ff,\gi\mid \hi^{r-1} \\ (\ff,\gi)=1}}f_\ff^{(\kappa)} (\sb)g_{\gi,\hi^{(\ff)}} (\sb)\cdot \\
	 &\cdot\frac{1}{|R_\ci|}\sum_{\substack{G\in \EE\\\eta\in\widehat{R}_\ci }} \chi_\ff(\bi\gi\di)\ov{\chi}_\ff(\di') \psi_G\big(\tfrac{m_{[\bi\gi\di]}}{m_{[\di']}}\big)\ov{\eta}(G)
	 \wZ^{S\cup S_{\bi\hi}} (\sb,\chi_\ff,\chi_\ff, \eta\chi_{\bi\gi\di}\ov{\chi}_{\di'}).
\end{aligned}
	 \]
\end{prop}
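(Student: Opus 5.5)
The proof will follow that of \cref{prop: sieving}, with one new ingredient. The places in $S_\bi$ must be adjoined to the set $T$, because the sum defining $Z_\kappa(\sb;\chi_\bi;\hi)$ runs over $\ai\in I(S\cup S_\bi)$. The $L$-functions, however, are still the partial $L$-functions $L^S$ and not $L^{S\cup S_\bi}$.

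\textbf{Step 1: restore the Euler factors at $\bi$.} For $v\in S_\bi$ we write
\[
L_v(s_1,\chi_{\ai_0})=(1-q_v^{-rs_1})^{-1}\sum_{k=0}^{r-1}\chi_{\ai_0}(\pp_v)^k q_v^{-ks_1},
\]
using $\chi_{\ai_0}(\pp_v)^r=1$, and similarly for the conjugate factor in $s_2$. Multiplying over $v\in S_\bi$, this gives
\[
L^S(s_1,\chi_{\ai_0})L^S(s_2,\ov{\chi}_{\ai_0})=\prod_{v\in S_\bi}\frac{1}{(1-q_v^{-rs_1})(1-q_v^{-rs_2})}\sum_{\di,\di'\mid(\rad\bi)^{r-1}}\frac{\chi_{\ai_0}(\di)\ov{\chi}_{\ai_0}(\di')}{|\di|^{s_1}|\di'|^{s_2}}\,L^{S\cup S_\bi}(s_1,\chi_{\ai_0})L^{S\cup S_\bi}(s_2,\ov{\chi}_{\ai_0}).
\]
After this substitution, the twist $\chi_\ai(\bi)$ combines with $\chi_{\ai_0}(\di)\ov{\chi}_{\ai_0}(\di')$. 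Since $\chi_\ai(\bi)=\chi_{\ai_0}(\bi)\chi_{\ai_0}(\ldots)$ up to $r$-th powers, the combined twist becomes $\chi_{\ai}(\bi\di)\ov{\chi}_{\ai}(\di')$, up to the replacement of $\ai$ by $\ai_0$, which is harmless because the arguments are coprime to $\ai$.

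\textbf{Step 2: run the sieving argument of \cref{prop: sieving}.} We write $\ai=\mi\ni$ with $(\mi,\hi)=1$ and $S_\ni=S_\hi$, and use the twisted multiplicativity of $Q_\ai$. We then sum over $\ni$ with $\ni_0=\ff$, which produces the factors $f^{(\kappa)}_\ff$. At the places of $\hi^{(\ff)}$ we split $g=\sum_\eps g^{(\eps)}$, which produces $g_{\gi,\hi^{(\ff)}}$ together with a character factor $\chi_{\ff\mi}(\gi)$.

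At this stage the full character attached to $\ai=\mi\ni$ is
\[
\chi_{\mi\ni}(\bi\di)\ov{\chi}_{\mi\ni}(\di')\,\chi_{\ff\mi}(\gi).
\]
The $\ni$-part of this character evaluates to $\chi_\ff(\bi\gi\di)\ov{\chi}_\ff(\di')$, since $\chi_\ni=\chi_{\ni_0}$ on ideals coprime to $\ni$. The $\mi$-part is $\chi_\mi(\bi\gi\di)\ov{\chi}_\mi(\di')$.

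\textbf{Step 3: apply reciprocity to the $\mi$-part.} By \cref{lem: reciprocity}, applied through $r$-th power-free representatives via \eqref{eq:fstar}, the $\mi$-part becomes
\[
\chi_{\bi\gi\di}(\mi)\,\ov{\chi}_{\di'}(\mi)\,\psi_{[\mi]}\big(m_{[\bi\gi\di]}/m_{[\di']}\big).
\]
We split the sum according to the class $[\mi]=G\in\EE$ and use orthogonality over $\eta\in\widehat{R}_\ci$ to isolate each class. The resulting sum over $\mi$ is exactly the series \eqref{eq:wZ} with $T=S\cup S_{\bi\hi}$ and character $\eta\chi_{\bi\gi\di}\ov{\chi}_{\di'}$. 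Collecting all factors gives the stated identity.

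\textbf{Main obstacle.} The delicate point is the bookkeeping of the Hilbert-symbol factors $\psi_G(\cdot)$ when reciprocity is applied to the non-squarefree ideals $\bi\gi\di$ and $\di'$. One must check that $m_{[\cdot]}$ is multiplicative enough for the factor to split as $\psi_G(m_{[\bi\gi\di]})\ov{\psi}_G(m_{[\di']})$. Since $\psi_G$ is a character and $m_E$ is multiplicative in the exponents of the chosen generators, this holds up to $r$-th powers, which are killed by $\psi_G$. We also need $\chi_\ff(\di)$ to make sense, which requires $\ff$ coprime to $\bi$; this follows from $(\hi,\bi)=1$.
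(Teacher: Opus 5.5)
Your proposal is correct and follows the paper, whose proof just reruns \cref{prop: sieving} with $\rho(\ai)$ replaced by $\chi_\ai(\bi)$ via reciprocity. Your Step 1 (restoring the Euler factors at $S_\bi$ to produce the $\di,\di'$ prefactor) and your choice to apply \cref{lem: reciprocity} only to the $\mi$-part before splitting by classes make explicit what the paper leaves implicit. The ``main obstacle'' you flag is not one: $\psi_G$ is a character, and the statement is already phrased in terms of $m_{[\bi\gi\di]}$ and $m_{[\di']}$, so no multiplicativity of $m_{[\cdot]}$ is needed.
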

\begin{proof}
 The proof is similar to that of \cref{prop: sieving}, replacing the character $\rho(\ai)$ there  by
 $\chi_\ai(\bi)=\chi_\bi(\ai)\psi_{[\ai]}(m_{[\bi]})$.
\end{proof}
\end{para}
\subsection{Convexity bound}\label{sec:convexity_bh}
\begin{para}

We first record the uniform version of the convexity estimate for the perfect
MDS needed in \cref{prop: sieving2}.  Let $\bi$ be $(r+1)$-th power-free, to be thought of as
the $(r+1)$-th power-free part of $\bi\di\ov{\di'}$ in the proposition.
Let $\hi\in I(S\cup S_\bi)$ be square-free, and let
$\ff,\gi\mid \hi^{r-1}$ with $(\ff,\gi)=1$.

Letting $\sb_w=\big(\tfrac12,\tfrac12,w\big)$, we have
uniformly for $1/2<\sigma=\Re(w)<1$:
\begin{equation}\label{eq:twisted-perfect-convexity}
\begin{aligned}
(w-1)^2&\big(w-\tfrac12-\tfrac1r\big)^2\,
 \wZ^{S\cup S_{\bi\hi}}
 \big(\sb_w;\chi_\ff,\chi_\ff,
       \eta\chi_{\bi\gi}\big)  \\
 &\ll_\eps
 |\rad\bi|^{(1/2+r-1)(1-\sigma)+\eps}
 |\hi|^{1/2+(r-1)(1-\sigma)+\eps}
 (1+|w|)^{4+rd(1-\sigma)+\eps} .
\end{aligned}
\end{equation}
For the case $\kappa=0$ we also need the refined estimate,
and the same argument in \S\ref{sec:refined_convexity} gives
\begin{equation}\label{eq:twisted-perfect-refined}
\begin{aligned}
 (w-1)^2&\big(w-\tfrac12-\tfrac1r\big)^2\,
 \wZ^{S\cup S_{\bi\hi}}
 \big(\sb_w;\chi_\ff,\chi_\ff,
       \eta\chi_{\bi\gi}\big)  \\
 &\ll_\eps
 |\rad\bi|^{(1/2+r-1)(1-\sigma)+\eps}
 |\hi|^{(r-1)(1-\sigma)+\eps}|\rad(\ff\gi)|^{1/2}
 (1+|w|)^{4+rd(1-\sigma)+\eps} .
\end{aligned}
\end{equation}
To prove the first estimate,
note that on the line of
absolute convergence $\Re(w)=1+\eta$, the Euler factors at $S_\bi$ contribute
$|\rad\bi|^\eta$, and the remaining Dirichlet series is bounded independently of $\bi$.
On the line  $\Re(w)=-\eta$, we have a bound of
$|\hi \rad\bi|^{1/2+r-1+\eta} $ as in the proof of  \cref{prop:convexity}, combined with
\cref{cor:convexity}. The Phragm\'en-Lindel\"of principle therefore gives the bound in
\eqref{eq:twisted-perfect-convexity}.

This also proves the
refined form \eqref{eq:twisted-perfect-refined}, since the induction in
\cref{prop:refined-convexity} is unchanged, involving
the primes dividing $\hi$ and not $\ff,\gi$.
\end{para}

\subsection{Analytic continuation}

\begin{para}
Using \cref{prop: sieving2} and the convexity bounds for the perfect MDS we obtain the following
twisted version of \cref{prop:convergence_sievedMDS}, with the same region of convergence.
\begin{prop}\label{prop:convergence_sievedMDS_twisted}
Let $\bi\in I(S)$ be $(r+1)$-th power-free.
The MDS $Z_\kappa^S (\tfrac 12, \tfrac 12, w;\chi_\bi)$ has meromorphic continuation
to the region
\[
\Re (w)>\dd_\kappa,\qquad
\dd_\kappa=\begin{cases} \frac{r+A_r}{r+1}  & \text{if } \kappa=1 \\[3pt]
		  \frac{3r-2}{4r-2}  & \text{if } \kappa=0,
         \end{cases}
\]
with a double pole at $w=1$ and a possible second double pole at $w=1/2+1/r$.

Moreover, for small $\eps>0$ we have the uniform estimate for  $\dd_\kappa+\eps\le \Re (w)=\sigma\le 1$:
 \be\label{eq:convexZb}
(w-1)^2(w-\tfrac12-\tfrac1r)^2 Z_\kappa^S (\tfrac 12, \tfrac 12, w; \chi_\bi)
\ll_\eps |\rad{\bi}|^{(1/2+r-1)(1-\sigma)+\eps}(1+|w|)^{4+ rd(1-\sigma)+\eps}.
 \ee
\end{prop}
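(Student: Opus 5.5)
The plan is to repeat the argument of \cref{prop:convergence_sievedMDS} with \cref{prop: sieving2} in place of \cref{prop: sieving}. The extra sums over $\di,\di'\mid(\rad\bi)^{r-1}$ and over $G,\eta$ there are finite. Their number is $\ll_\eps|\rad\bi|^\eps$, and the weights $|\di|^{-1/2}|\di'|^{-1/2}$ together with $\prod_{v\in S_\bi}(1-q_v^{-r/2})^{-2}$ are bounded by $|\rad\bi|^\eps$. For each such term, I will replace the $(r+1)$-th power-free part of $\bi\gi\di\ov{\di'}$ by its starred form as in \eqref{eq:fstar}. Its radical divides $\rad\bi$ times primes of $\hi$, so the perfect MDS $\wZ^{S\cup S_{\bi\hi}}(\sb_w;\chi_\ff,\chi_\ff,\eta\chi_{\bi\gi\di}\ov{\chi}_{\di'})$ falls under the uniform convexity estimates \eqref{eq:twisted-perfect-convexity} and \eqref{eq:twisted-perfect-refined}. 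In applying them, the $\bi$-dependence only enters through the factor $|\rad\bi|^{(1/2+r-1)(1-\sigma)+\eps}$; the primes of $\bi$ are not counted in $\hi$, since $(\hi,\bi)=1$.

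Next I will insert the local estimates of \S\ref{sec:fg_estimates} for $f_\ff^{(\kappa)}$ and $g_{\gi,\hi^{(\ff)}}$. They are unchanged because the characters are trivial. Summing over $\ff,\gi\mid\hi^{r-1}$ exactly as in \S\ref{par:conv} (for $\kappa=1$) and \S\ref{par:kappa0region} (for $\kappa=0$) gives
\[
(w-1)^2(w-\tfrac12-\tfrac1r)^2Z_\kappa(\sb_w;\chi_\bi;\hi)\ll_\eps |\rad\bi|^{(1/2+r-1)(1-\sigma)+\eps}|\hi|^{-1-\delta}(1+|w|)^{4+rd(1-\sigma)+\eps}
\]
for some $\delta>0$ whenever $\sigma\ge\dd_\kappa+\eps$. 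Summing against $\mu(\hi)$ in \eqref{eq: sieving_mol} then converges absolutely and locally uniformly. This yields the meromorphic continuation and the bound \eqref{eq:convexZb} simultaneously.

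The poles come only from the building blocks, which are located by \cref{prop:Zresidue}: poles at $w=1$ and $w=2-2s$ (merging into a double pole at $s=1/2$), and the Kubota poles at $w=1/2+1/r$. These are bounded by the factor $(w-1)^2(w-\tfrac12-\tfrac1r)^2$, as in \S\ref{par:Zanalytic}.

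The main obstacle is the case $\kappa=1$, $r=3$, where $A_3=1/3$ requires redoing \cref{prop:conv_req3} with the twist. I would go back to \cref{prop:convexity}, which for the twisted blocks carries a bound in terms of sums $\sum_{\ai}|L(1/2,\psi\chi_{\ai\ti})|^2|\ai|^{-1-\eps}$ with $\ti$ now divisible by part of $\bi$. I would then rerun the splitting $|\qi_1/\qi_2|\lessgtr|\ei|^{1/3}$ and use \cite[Prop. 4.2]{DFDS24} (or \cref{secondmoment-cor}). The hope is that the $\bi$-part costs at most $|\rad\bi|^{1/2+\eps}$, which is absorbed into the stated exponent since $(1/2+r-1)(1-\sigma)\ge$ the needed power only if $\sigma$ is not close to $1$. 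If this fails near $\sigma=1$, I would instead interpolate via Phragm\'en--Lindel\"of between the line $\Re(w)=1+\eps$ (bound $|\rad\bi|^\eps$) and the line $\Re(w)=5/6+\eps$ (bound $|\rad\bi|^{C}$ from the large sieve), checking that this convex combination stays below $(1/2+r-1)(1-\sigma)$.
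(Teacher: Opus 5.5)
Your outline is the paper's proof. It uses \cref{prop: sieving2}, the $\bi$-only factors bounded by $|\rad\bi|^{\eps}$, the uniform block bounds \eqref{eq:twisted-perfect-convexity} and \eqref{eq:twisted-perfect-refined}, the local estimates of \S\ref{sec:fg_estimates}, and, for $\kappa=1$, $r=3$, a rerun of \cref{prop:conv_req3}, which the text says applies verbatim. For $\kappa=0$, and for $\kappa=1$ with $r\ge 4$, this is complete. The one step that does not go through as you wrote it is the uniformity in $\bi$ when $\kappa=1$, $r=3$ and $5/6<\sigma\le 7/8$. There the per-$\hi$ bound is not summable, so \eqref{eq:convexZb} has to come out of the averaging itself.

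Your first option, a $\bi$-loss of $|\rad\bi|^{1/2}$ not multiplied by $1-\sigma$, is never absorbed into $(r-\tfrac12)(1-\sigma)$ for $\sigma>5/6$. Such a loss is what you get by inserting the $\bi$-primes into the $L$-value sums of \cref{prop:convexity} as stated, since there that factor multiplies both lines. Already $\tfrac12\le\tfrac52(1-\sigma)$ forces $\sigma\le 4/5$.

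Your fallback fails for the same reason. Interpolating between $\Re w=1+\eps$ and $\Re w=\tfrac56+\eps$ reproduces $\tfrac52(1-\sigma)$ only if the bound on the left line is already $|\rad\bi|^{5/12}$. The crude count there gives $|\rad\bi|^{(r-1)/6+1/2}=|\rad\bi|^{5/6}$.

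What is missing is the observation the paper uses to prove \eqref{eq:twisted-perfect-convexity}. On $\Re w=1+\eta$ the twist by $\chi_\bi$ is free, since only Euler factors at $S_\bi$ change. So the $\bi$-part of the large-sieve loss belongs to the line $\Re w=-\eta$ only, and you must keep the two line bounds separate through the averaging.

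Concretely, each block is $\ll M_0(\hi)^{1-\sigma}M_1(\hi)^{\sigma}$, up to $\eps$ and the $|w|$-factor. Here $M_0=|\hi\rad\bi|^{r-1}N_0$, where $N_0$ is the $L$-value sum of \cref{prop:convexity} taken over $\ti\mid(\hi\rad\bi)^{r-1}$. The other factor is $M_1=\sum_{\ai}|L(\tfrac12,\chi_{\ff\ai_0})|^2|\ai|^{-1-\eta}$, which is independent of $\bi$.

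H\"older then gives $\sum_\hi b_\hi N_0^{1-\sigma}M_1^{\sigma}\le\big(\sum_\hi b_\hi N_0\big)^{1-\sigma}\big(\sum_\hi b_\hi M_1\big)^{\sigma}$ with $b_\hi=|\hi|^{2-4\sigma}$. Rerun the splitting of \cref{prop:conv_req3}, placing the fixed $\bi$-part of the modulus in $\h$ in \cref{secondmoment-cor}, and using \cref{prop:estimate at1} for the piece $|\qi_1/\qi_2|<|\ei|^{1/3}$. This gives $\sum_\hi b_\hi N_0\ll|\rad\bi|^{1/2+\eps}$ and $\sum_\hi b_\hi M_1\ll 1$ for $\sigma>5/6$. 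Together with the factor $|\rad\bi|^{(r-1)(1-\sigma)}$ this yields exactly $|\rad\bi|^{(r-1/2)(1-\sigma)+\eps}$, and no further interpolation is needed.
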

Note that the region above contains the second double pole only
for $\kappa=0$ and $r=3,4$.
\begin{proof}
Let $\sb_w=\big(\tfrac12,\tfrac12,w\big)$.
We apply \cref{prop: sieving2} and the estimates in subsection~\ref{sec:convexity_bh}.
For $\hi\in I(S\cup S_\bi)$ square-free, the factors depending only on $\bi$ are
 \[
  \prod_{v\in S_\bi}
  \frac{1}{\big(1-q_v^{-r/2}\big)^2}
  \sum_{\di,\di'\mid (\rad\bi)^{r-1}} |\di\di'|^{-1/2}
  \ll_\eps |\rad(\bi)|^\eps .
\]

For $\kappa=1$, we
use \eqref{eq:twisted-perfect-convexity}, and the
local estimates for $f_\ff^{(\kappa)}$ and $g_{\gi,\hi^{(\ff)}}$ from
\S\ref{sec:fg_estimates} to obtain,
\begin{equation}\label{eq:Zbh-kappa1-bound}
\begin{aligned}
 (w-1)^2&\big(w-\tfrac12-\tfrac1r\big)^2 Z_1(\sb_w;\chi_\bi;\hi)
 \ll_\eps\\
 &|\rad\bi|^{(1/2+r-1)(1-\sigma)+\eps}
 |\hi|^{1/2+(r-1)(1-\sigma)-2\sigma+\eps}
 (1+|w|)^{4+rd(1-\sigma)+\eps}.
\end{aligned}
\end{equation}
For $r=3$, the averaging argument used in \cref{prop:conv_req3}
applies verbatim to yield the larger region of convergence involving $A_3=1/3$
for the sieved series in~\eqref{eq: sieving_mol}.

For $\kappa=0$, using instead the refined perfect MDS estimate
\eqref{eq:twisted-perfect-refined}, exactly as in the proof of the untwisted
case in \S\ref{par:kappa0region}, yields the same region of convergence as there.

The convexity bound in $\bi$ and $|w|$ follows from~\eqref{eq:twisted-perfect-convexity}
and~\eqref{eq:twisted-perfect-refined}, finishing the proof.
\end{proof}
In the next section, we will see  that the pole at $w=1$ is genuinely of order two,
and we will compute its principal part.
\end{para}

\subsection{Residues}

\begin{para}\label{par:resw1}
We now extend the principal part computations from Section~\ref{sec:residues} to the twisted MDS considered here.

Proposition \ref{prop: sieving2} implies that $Z_\kappa(s,s,w; \chi_\bi;\hi)$
has a pole at $w=1$, coming from the terms $\wZ^{S\cup S_{\bi\hi}}$ for which $\gi=1$ and $\bi\di/\di'$ an $r$-th power.
We obtain
\be\label{eq:resw1}
\res_{w=1}Z_\kappa(s,s,w; \chi_\bi;\hi)=\Rc^{S\cup S_{\bi\hi}}(s)B_\bi(s)
\sum_{\ff|\hi^{r-1}} f_{\ff}^{(\kappa)}(s,s,1) g_{1,\hi^{(\ff)}}(s,s,1),
\ee
where we recall that $\Rc^T(s)=\zeta_F^T(2s)\zeta_F^T(rs)^2  \res_{w=1}\zeta_F^T(w)$
and we define
\be\label{eq:Bbs}
B_\bi(s):=\prod_{v\in S_\bi}(1-q_v^{-rs})^{-2}\sum_{\substack{\di,\di'| \rad(\bi)^{r-1}\\\bi\di/\di' \text{ $r$-th power} }} |\di\di'|^{-s}=
\prod_{v\in S_\bi}\frac{q_v^{-su_v}+q_v^{-s(r-u_v)}}{(1-q_v^{-rs})(1-q_v^{-2s})},
\ee
with $u_v=\ord_v(\bi)$ (the identity is a consequence of~\eqref{eq:mur_sum} for $x=0$).
\end{para}

\begin{para}\label{par:resw'1}
By \cref{prop: sieving2} and part (b) of \cref{prop:Zresidue}, we also have (setting $s'=1-s$)
\[\begin{aligned}
\res_{w=2-2s} Z_\kappa &(s, s,w; \chi_\bi;\hi)=\Rc^{S\cup S_{\bi\hi}}(s')
\prod_{v\in S_\bi}(1-q_v^{-rs})^{-2}\sum_{\di,\di'| \rad(\bi)^{r-1}} |\di\di'|^{-s}\\
	 &\cdot  \sum_{\substack{\ff\mid \hi^{r-1} \\ \gi\mid (\hi^{(\ff)})^{r-1} }}
|\ff_1|^{1-2s}	 f_{\ff}^{(\kappa)}(s,s,2s') g_{\gi,\hi^{(\ff)}}(s,s, 2s')A(s)B_{\bi\di\ov{\di'}\gi,\bi\hi^{(\ff)}}(s),
\end{aligned}
\]
where $A(s)$ is given in~\eqref{eq:Afun} and $B_{\ti,\ai}(s)=B_{\ti_0,\ai}(s)$ is defined as in \cref{prop:Zresidue}
for $\ti$ not $r$-th power free. The function $B_{\bullet,\bullet}(s)$ is multiplicative and
\[\prod_{v\in S_\bi}(1-q_v^{-rs})^{-2}
	 \sum_{\di,\di'| \rad(\bi)^{r-1}} |\di\di'|^{-s} B_{\bi\di\ov{\di'},\bi}(s)=B_\bi(s'),\]
with $B_\bi(s)$ defined in~\eqref{eq:Bbs}. We obtain
\be\label{eq:resw'1}
\begin{aligned}
\res_{w=2-2s} Z_\kappa & (s, s,w; \chi_\bi;\hi)=\Rc^{S\cup S_{\bi\hi}}(s')B_\bi(s') A(s)\\
&\cdot  \sum_{\substack{\ff\mid \hi^{r-1} \\ \gi\mid (\hi^{(\ff)})^{r-1} }}
	|\ff_1|^{1-2s} f_{\ff}^{(\kappa)}(s,s,2s') g_{\gi,\hi^{(\ff)}}(s,s, 2s')B_{\gi,\hi^{(\ff)}}(s).
\end{aligned}
\ee
As in~\S\ref{par:prinpart}, define
\[
\Ac_\kappa(s;\bi,\hi)=(2s-1)\res_{w=1}Z_\kappa(s,s,w;\chi_\bi; \hi),\quad
\Bc_\kappa(s;\bi,\hi)=(2s-1)\res_{w=2-2s}Z_\kappa(s,s,w; \chi_\bi;\hi).
\]
We note that by \cref{lem:funeq} we have as before $\Bc_\kappa(s;\bi,\hi)=-\Ac_\kappa(1-s;\bi,\hi)A(s)$.
We obtain as in \S\ref{par:prinpartZh} that the principal part of $Z_\kappa(\tfrac 12, \tfrac 12,w; \chi_\bi;\hi)$
at $w=1$ is given by
\[
\frac{\Ac_\kappa(\tfrac 12;\bi,\hi)}{(w-1)^2}+\frac{\Ac_\kappa'(\tfrac12;\bi,\hi )-\tfrac12  \Ac_\kappa(\tfrac12; \bi,\hi) A'(\tfrac12)}{w-1}.
\]
\end{para}

\begin{para}
We now determine the principal part of $Z_\kappa^S (\tfrac 12, \tfrac 12,w; \chi_\bi)$
at $w=1$. For an $(r+1)$-th power-free ideal $\bi\in I(S)$, let
\[
\Ac_\kappa^\bi(s)= \sum_{\hi\in I(S\cup S_\bi)} \mu(\hi)\Ac_\kappa(s;\bi,\hi)
=(2s-1)\sum_{\hi\in I(S\cup S_\bi)} \mu(\hi) \res_{w=1} Z_\kappa(s,s,w;\bi,\hi) .
\]
We obtain from~\eqref{eq:resw1} and~\eqref{eq:resZhw1} that
\[
\Ac_\kappa^\bi(s)=\Ac_\kappa(s)\prod_{v\in S_\bi} \tfrac{q_v^{-\ord_v(\bi)s}+q_v^{-(r-\ord_v(\bi))s}}{P_r^{(\kappa)}(q_v^{-s};q_v)},
\]
where $\Ac_\kappa(s)$ is given in~\cref{prop:prinpartZsieved}.

We obtain the following generalization of \cref{prop:prinpartZsieved}, whose proof is identical using the computations above.
Let $\dlog f=f'/f$ denote the logarithmic derivative of a function $f(s)$.
\begin{prop} \label{prop:Zbres}
(a) Let $\bi\in I(S)$ be $(r+1)$-th power-free.
With the notation above, the principal part of $Z_\kappa^S (\tfrac12,\tfrac12,w; \chi_\bi)$ at $w=1$ is
\[
\frac{\Ac_\kappa^\bi(\tfrac12)}{(w-1)^2}+\frac{\Ac_\kappa^\bi(\tfrac12)}{w-1}
\big(\dlog\Ac_\kappa^\bi(\tfrac12)  + K_0  \big)
\]
where $K_0=-A'(1/2)/2$.

(b) Assume $\bi=\ei \mi, \bi' =\ei\mi'$ are $r$-th power-free with $\mi$, $\mi'$ coprime. For
$\ff=(\bi\ov{\bi'})^*$ defined in~\eqref{eq:fstar} we have
\[
\Ac_\kappa^\ff(\tfrac 12)= D_\kappa g_\kappa(\ei^{(\mi\mi')}) \frac{h_\kappa(\mi\mi')}{|\mi\mi'|^{1/2}},
\]
where
\[
D_\kappa=\Ac_\kappa(\tfrac12) =
\zeta_F^S(r/2) \big[\res_{w=1} \zeta_F^S(w)\big]^2
\prod_{v\notin S} (1-q_v^{-1})P_r^{(\kappa)}\big(q_v^{-1/2}\big),
\]
\[
g_\kappa(\ei)=\prod_{v\in S_\ei} \frac{1+q_v^{-r/2}}{P_r^{(\kappa)}\big(q_v^{-1/2}\big)}, \quad
h_\kappa(\mi)=\prod_{v\in S_\mi} \frac{1+q_v^{-(r-2\ord_v(\mi))/2}}{P_r^{(\kappa)}\big(q_v^{-1/2}\big)},
\]
and
\[
\dlog \Ac_\kappa^\ff(\tfrac 12)=-\log \prod_{v\in S_{\mi\mi'}} q_v^{\min (u_v,r-u_v)}+K_1+\sum_{v\in S_{\ei^{(\mi\mi')}}} K_v \frac{\log q_v}{q_v} +
\sum_{v\in S_{\mi\mi'}} K_v' \frac{\log q_v}{q_v^{1/2}}
\]
where $K_1$ is a constant, $K_v, K_v'$ are uniformly bounded and $u_v=\ord_v(\mi\mi')$.

\end{prop}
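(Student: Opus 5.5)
For part (a), I will repeat the proof of \cref{prop:prinpartZsieved}, with the untwisted residues replaced by the twisted ones from \S\ref{par:resw1}--\S\ref{par:resw'1}. By \eqref{eq: sieving_mol} and the meromorphic continuation in \cref{prop:convergence_sievedMDS_twisted}, the principal part of $Z_\kappa^S(\tfrac12,\tfrac12,w;\chi_\bi)$ at $w=1$ is the $\mu(\hi)$-weighted sum over $\hi\in I(S\cup S_\bi)$ of the principal parts of $Z_\kappa(\tfrac12,\tfrac12,w;\chi_\bi;\hi)$. Each of these equals $\Ac_\kappa(\tfrac12;\bi,\hi)(w-1)^{-2}+(\Ac_\kappa'(\tfrac12;\bi,\hi)-\tfrac12\Ac_\kappa(\tfrac12;\bi,\hi)A'(\tfrac12))(w-1)^{-1}$. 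The relation $\Bc_\kappa=-\Ac_\kappa(1-s)A(s)$ supplies this via \cref{lem:funeq} and the identity for $B_{\bi\di\ov{\di'},\bi}$.

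To sum over $\hi$ and to interchange it with differentiation at $s=\tfrac12$, I need absolute and locally uniform convergence of $\sum_\hi\mu(\hi)\Ac_\kappa(s;\bi,\hi)$ near $s=\tfrac12$. This follows from the Euler product form, exactly as in \cref{prop:prinpartZsieved}. The sum is therefore $\Ac_\kappa^\bi(s)$, and the coefficient of $(w-1)^{-1}$ is $(\Ac_\kappa^\bi)'(\tfrac12)+K_0\Ac_\kappa^\bi(\tfrac12)$, which is the stated form. I should also check that $\Ac_\kappa^\bi(\tfrac12)\neq0$, so that the logarithmic derivative makes sense. This holds because $P_r^{(\kappa)}(x)>0$ and the numerators $q_v^{-u s}+q_v^{-(r-u)s}$ are positive.

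For part (b), I compute $\ff=(\bi\ov{\bi'})^*$ place by place.
\begin{itemize}
\item At $v\mid\ei$ with $v\nmid\mi\mi'$, the order of $\bi\ov{\bi'}$ is $\ord_v\ei+(r-\ord_v\ei)=r$. Hence $\ord_v\ff=r$, and the twisting factor at $s=\tfrac12$ is $(1+q_v^{-r/2})/P_r^{(\kappa)}(q_v^{-1/2})$. This gives $g_\kappa$.
\item At $v\mid\mi$ (say), $v\nmid\mi'$, we have $\ord_v\ff\equiv \ord_v\mi\pmod r$. The factor becomes $(q_v^{-u/2}+q_v^{-(r-u)/2})/P = q_v^{-u/2}(1+q_v^{-(r-2u)/2})/P$, with $u=\ord_v(\mi\mi')$. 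The $q_v^{-u/2}$ assemble into $|\mi\mi'|^{-1/2}$. I need the convention $\min(u,r-u)$ to be consistent with $\bi$ being $r$-th power-free, so that $u<r$, and I will check this.
\end{itemize}
Since $\Ac_\kappa(\tfrac12)=D_\kappa$ by \eqref{eq:eulerprod}, the value formula follows.

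For the logarithmic derivative, I differentiate $\log\Ac_\kappa^\ff(s)=\log\Ac_\kappa(s)+\sum_{v\in S_\ff}[\log(q_v^{-us}+q_v^{-(r-u)s})-\log P_r^{(\kappa)}(q_v^{-s};q_v)]$ at $s=\tfrac12$. The term $\dlog\Ac_\kappa(\tfrac12)$ is the constant $K_1$; here I must check that $(2s-1)\Rc^S(s)$ is regular and nonzero at $\tfrac12$, which holds because $\zeta_F^S(2s)$ has a simple pole there.
\begin{itemize}
\item At $v\mid\ei$ only, $u=r$. The derivative of $\log(q_v^{-rs}+1)$ is $O(q_v^{-r/2}\log q_v)$, and $\frac{d}{ds}\log P_r^{(\kappa)}(q_v^{-s};q_v)=O(q_v^{-1}\log q_v)$ because $P=1+O(q^{-1})$ with $x^2=q^{-1}$. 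Both are absorbed into $K_v\log q_v/q_v$.
\item At $v\mid\mi\mi'$, $\frac{d}{ds}\log(q_v^{-us}+q_v^{-(r-u)s})=-\min(u,r-u)\log q_v+O(q_v^{-|r-2u|/2}\log q_v)$. When $|r-2u|\ge1$ the error is $O(q_v^{-1/2}\log q_v)$. When $r=2u$, the two terms are equal and the derivative is exactly $-\tfrac r2\log q_v$.
\end{itemize}
This gives the stated expansion. The only real care needed is the uniformity of these bounds in $v$ and $u$, and the bookkeeping of which places fall into which case. I expect no substantive obstacle.
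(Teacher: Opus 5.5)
Your proposal is correct and follows the paper's own argument. The paper proves (a) by repeating the proof of \cref{prop:prinpartZsieved}, using the twisted residues \eqref{eq:resw1}, \eqref{eq:resw'1}, the relation $\Bc_\kappa(s;\bi,\hi)=-\Ac_\kappa(1-s;\bi,\hi)A(s)$, and the Euler product $\Ac_\kappa^\bi(s)=\Ac_\kappa(s)\prod_{v\in S_\bi}\bigl(q_v^{-\ord_v(\bi)s}+q_v^{-(r-\ord_v(\bi))s}\bigr)/P_r^{(\kappa)}(q_v^{-s};q_v)$; part (b) is exactly the place-by-place evaluation and logarithmic differentiation of this product that you carry out, and your handling of $\ord_v\ff$ and of the error terms at each type of place is right.
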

\end{para}
\subsection{Twisted second moment asymptotics}
\begin{para} From the region of meromorphic continuation in \cref{prop:convergence_sievedMDS_twisted} and the residue
computation in~\cref{prop:Zbres}, we obtain the following asymptotic formula. It captures the secondary term only for $\kappa=0$ and $r=3,4$.

\begin{thm} \label{thm:twisted_2ndmom}
Fix $\kappa\in\{0,1 \}$. Let $\bi\in I(S)$ be $r$-th power-free and
let $W(u)$ be a Schwartz function with compact support on $(1,2)$ satisfying $0\le W(u) \le 1$.
Then, for every $\varepsilon > 0$, we have
\[\begin{aligned}
\sum_{\ai \in I(S)_\kappa}  \big|L^S(\tfrac{1}{2},& \chi_{\ai})\big|^2 \chi_\ai(\bi)Q_\ai W\big(\tfrac{|\ai|}{X}\big)
=X R_{W,\bi}^{(\kappa)}(\log X)  \\
&+X^{\frac{1}{2}+\frac{1}{r}} S_{W,\bi}^{(\kappa)}(\log X)
+O\Big(X^{\dd_\kappa+\varepsilon} |\rad{\bi}|^{(1/2+r-1)(1-\dd_\kappa)+\varepsilon} \Big),
\end{aligned}
\]
for some explicit linear polynomials $R_{W,\bi}^{(\kappa)}$, $S _{W,\bi}^{(\kappa)}$.
The leading coefficient of $R_{W,\bi}^{(\kappa)}$ is
\[
\wW(1)\zeta_F^S(r/2) \big[\res_{w=1} \zeta_F^S(w)\big]^2
\prod_{v\notin S} (1-q_v^{-1})P_r^{(\kappa)}\big(q_v^{-1/2}\big)
\prod_{v\in S_\bi} \tfrac{q_v^{-\ord_v(\bi)/2}+q_v^{-(r-\ord_v(\bi))/2}}{P_r^{(\kappa)}(q_v^{-1/2})}.
\]
Here $\dd_\kappa$ is given explicitly in \cref{prop:convergence_sievedMDS_twisted},
$P_r^{(\kappa)}$ is given in~\eqref{eq:euler_factors_2ndmom}, and  $Q_\ai:=\prod_{v\in S_\ai} \ord_v(\ai)$ (so $Q_\ai=1$ if $\kappa=1$).
\end{thm}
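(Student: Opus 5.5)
The proof is a contour-shift argument applied to the twisted sieved series $Z_\kappa^S(\tfrac12,\tfrac12,w;\chi_\bi)$. First I will check that the left-hand side is a Mellin integral of this series. For $\ai\in I(S)_\kappa$ the $r$-th power-free part is $\ai_0=\ai$. Also, $Q_\ai(\tfrac12,\tfrac12;1,1)$ equals $\prod_{\pp^{n}\|\ai} n$ for $r$-th power-free $\ai$: for $1\le k<r$ we have $Q_k=P_k=\sum_{\alpha<k}(qx_1x_2)^\alpha$, which equals $k$ at $x_1=x_2=q^{-1/2}$. Finally, $L^S(\tfrac12,\ov{\chi}_\ai)=\ov{L^S(\tfrac12,\chi_\ai)}$. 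Mellin inversion then gives
\[
\sum_{\ai \in I(S)_\kappa}\big|L^S(\tfrac12,\chi_\ai)\big|^2\chi_\ai(\bi)Q_\ai W\big(\tfrac{|\ai|}{X}\big)=\frac{1}{2\pi i}\int_{(2)}\wW(w)\,Z_\kappa^S(\tfrac12,\tfrac12,w;\chi_\bi)\,X^w\,dw .
\]
Since $\bi$ is $r$-th power-free, it is in particular $(r+1)$-th power-free, so \cref{prop:convergence_sievedMDS_twisted} and \cref{prop:Zbres} apply to it directly.

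Next I will shift the contour to $\Re(w)=\dd_\kappa+\eps$. By \cref{prop:convergence_sievedMDS_twisted}, the integrand is meromorphic in $\Re(w)>\dd_\kappa$, and its only possible poles are the double poles at $w=1$ and $w=\tfrac12+\tfrac1r$. Since $W$ is smooth and compactly supported in $(1,2)$, $\wW(w)$ decays faster than any power of $|w|$ in vertical strips. Combined with the polynomial growth in \eqref{eq:convexZb}, this makes the horizontal segments negligible and the shift legitimate. On $\Re(w)=\dd_\kappa+\eps$, away from the two poles, \eqref{eq:convexZb} bounds the integrand by $|\rad\bi|^{(1/2+r-1)(1-\sigma)+\eps}(1+|w|)^{C}|\wW(w)|X^{\sigma}$. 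This gives the stated error term $O(X^{\dd_\kappa+\eps}|\rad\bi|^{(1/2+r-1)(1-\dd_\kappa)+\eps})$, after replacing $\eps$ by a smaller multiple.

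The double pole at $w=1$ contributes
\[
\res_{w=1}\wW(w)X^wZ_\kappa^S(\tfrac12,\tfrac12,w;\chi_\bi)=X\big(\Ac_\kappa^\bi(\tfrac12)\wW(1)\log X+\Ac_\kappa^\bi(\tfrac12)\wW'(1)+c_{-1}\wW(1)\big),
\]
where $c_{-1}$ is the simple-pole coefficient from \cref{prop:Zbres}(a). This is $XR^{(\kappa)}_{W,\bi}(\log X)$ with $R^{(\kappa)}_{W,\bi}$ linear. Its leading coefficient is $\wW(1)\Ac_\kappa^\bi(\tfrac12)$, and I will evaluate it via $\Ac_\kappa^\bi(s)=\Ac_\kappa(s)\prod_{v\in S_\bi}\frac{q_v^{-\ord_v(\bi)s}+q_v^{-(r-\ord_v(\bi))s}}{P_r^{(\kappa)}(q_v^{-s};q_v)}$ at $s=\tfrac12$. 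Here \eqref{eq:eulerprod} supplies $\Ac_\kappa(\tfrac12)=\zeta_F^S(r/2)[\res\zeta_F^S]^2\prod_{v\notin S}(1-q_v^{-1})P_r^{(\kappa)}(q_v^{-1/2})$, which gives exactly the stated constant.

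When $\tfrac12+\tfrac1r>\dd_\kappa$, which happens only for $\kappa=0$ and $r=3,4$, the double pole at $w=\tfrac12+\tfrac1r$ contributes $X^{\frac12+\frac1r}S^{(\kappa)}_{W,\bi}(\log X)$ with $S$ linear. Otherwise that pole lies left of the contour; its contribution is absorbed in the error term and I set $S^{(\kappa)}_{W,\bi}=0$. I do not need explicit values for $S$, only that the pole has order at most two.

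The main obstacle is making the contour shift uniform in $\bi$. I must make sure that the bound \eqref{eq:convexZb} on the line $\Re(w)=\dd_\kappa+\eps$ comes with a $\bi$-dependence no worse than $|\rad\bi|^{(1/2+r-1)(1-\sigma)+\eps}$, including near the $\bi$-independent poles. This dependence is supplied by \cref{prop:convergence_sievedMDS_twisted}. To control the factor $(w-1)^{-2}(w-\tfrac12-\tfrac1r)^{-2}$ on the line, I will keep the contour a fixed distance from both poles; where it passes near $\tfrac12+\tfrac1r$, I will indent it locally around that pole, which costs only a constant.
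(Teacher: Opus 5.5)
Your proposal is correct and follows essentially the same route as the paper: Mellin inversion, a contour shift to $\Re(w)=\dd_\kappa+\eps$ justified by the meromorphic continuation and the uniform bound \eqref{eq:convexZb} of \cref{prop:convergence_sievedMDS_twisted} together with the rapid decay of $\wW$, and the main terms read off from the principal part in \cref{prop:Zbres}. The only superfluous step is the indentation: the shifted line already stays at distance at least $\eps$ from $w=\tfrac12+\tfrac1r$. The distance is exactly $\eps$ only for $\kappa=1$, $r=3$, where $\dd_1=\tfrac56$ coincides with the pole, so the factor $|w-\tfrac12-\tfrac1r|^{-2}$ is simply $O_\eps(1)$ on the line.
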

\begin{proof}
By Mellin inversion, the moment sum equals
\[
\frac{1}{2\pi i}\int_{(2)}\wW(w)Z_\kappa^S(\tfrac 12, \tfrac12, w;\chi_{\bi})X^w dw.
\]
By the meromorphic continuation of $Z_\kappa^S(\tfrac 12, \tfrac12,w;\chi_{\bi})$ in \cref{prop:convergence_sievedMDS_twisted},
its polynomial growth on vertical strips, and the fast decay of the Mellin transform on vertical lines,
we can move the line of integration to $\Re(w)=\dd_\kappa+\eps$, picking up the residue(s) at the pole(s) in between. The integral
on the new line is bounded by the error stated in the theorem, finishing the proof.
\end{proof}

\end{para}

\subsection{Mollifier length}

\begin{para}
Coming back to the mollified second moment in~\eqref{eq:mol2ndmom}, we
use the region of convergence and the estimates in \cref{prop:convergence_sievedMDS_twisted}
to move the line of integration to $\Re (w)=\wdd_\kappa+\eps$,
where we set $\wdd_\kappa=\max(\dd_\kappa, 1/2+1/r)$ to avoid picking up the residue at the second pole.
We obtain
\[\begin{aligned}
\frac{1}{2\pi i}\int_{(2)} \wW(w)Z_\kappa^S(\tfrac 12, \tfrac 12, w;\chi_{\bi\ov{\bi'}})X^w dw=&
\res_{w=1} \wW(w) Z_\kappa^S(\tfrac 12, \tfrac 12, w;\chi_{\bi\ov{\bi'}})X^w +\\
&+O(X^{\wdd_\kappa+\eps}|\bi\bi'|^{(1/2+r-1)(1-\wdd_\kappa)+\eps}).
\end{aligned}\]
 Explicitly
$\wdd_\kappa$ differs from the boundary of the region of convergence $\dd_\kappa$ only for $\kappa=0$ and $r=3,4$:
\[
\wdd_1=\dd_1=\frac{r+A_r}{r+1},\qquad \wdd_0=\begin{cases}\tfrac12+\tfrac1r & \text{$r=3,4$ }\\
                          \tfrac{3r-2}{4r-2} & \text{$r>4$ }.
                         \end{cases}
\]
We require the error in~\eqref{eq:mol2ndmom} to be  $o(X)$, and, using that $\lam_\bi\ll |\bi|^{-1+\eps}$,
this happens if
\[ \sum_{\substack{\bi,\bi'\in I(S)_0 \\ |\bi|,|\bi'|\le M}}|\lam_\bi\lam_{\bi'}|
|\bi\bi'|^{1/2 +(1/2+r-1)(1-\wdd_\kappa)}\ll M^{2(1/2+(1/2+r-1)(1-\wdd_\kappa))}=o(X^{1-\wdd_\kappa}).
\]
It follows that the error in ~\eqref{eq:mol2ndmom} has size $o(X)$ if $M=X^{\theta_\kappa-\eps}$ for
\be\label{eq:thetakappa}
\theta_\kappa=\frac{1-\wdd_\kappa}{1+(2r-1)(1-\wdd_\kappa)}.
\ee

For $r=3$, $\kappa=1$, we have $\wdd_1=5/6$, and $\theta_1=1/11$.
\end{para}
\begin{para}
Putting together the results of this section we obtain:
\begin{prop}\label{prop:mol2ndmom}
For $M=X^{\theta_\kappa-\eps}$ with $\theta_\kappa$ defined in \eqref{eq:thetakappa} we have:
\[\begin{aligned}
\Sc_2^{(\kappa)}(M;W)&=D_\kappa \wW(1) X
\sum_{\substack{\bi,\bi'\in I(S)_0 \\ |\bi|,|\bi'|\le M}}
\lam_\bi\lam_{\bi'}|\ei| g_\kappa(\ei^{(\mi\mi')}) h_\kappa(\mi\mi')\\
&\cdot \bigg(\log X-\log\prod_{v\in S_{\mi\mi'}} q_v^{\min (u_v,r-u_v)} + R(\chi_{\bi\ov{\bi'}}) \bigg)
+o(X)
\end{aligned}
\]
where we write $\bi=\ei\mi$, $\bi'=\ei\mi'$ with $\mi$, $\mi'$ coprime, $u_v=\ord_v(\mi\mi')$,
$D_\kappa$, $g_\kappa$, $h_\kappa$ are defined in \cref{prop:Zbres}, and
$$
R(\chi_{\bi\ov{\bi'}} )=\tfrac{\wW'}{\wW}(1)+K+\sum_{v\in S_{\ei^{(\mi\mi')}}} K_v \frac{\log q_v}{q_v} +
\sum_{v\in S_{\mi\mi'}} K_v' \frac{\log q_v}{q_v^{1/2}}
$$
for $K,K_v,K_v'\ll 1$.
\end{prop}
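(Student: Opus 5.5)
We begin from the Mellin representation \eqref{eq:mol2ndmom}. For each pair $\bi,\bi'$ we set $\ff=(\bi\ov{\bi'})^*$ as in \eqref{eq:fstar}, so that $\chi_\ai(\bi)\ov{\chi_\ai(\bi')}=\chi_\ai(\ff)$. Using \cref{prop:convergence_sievedMDS_twisted}, we shift the line of integration to $\Re(w)=\wdd_\kappa+\eps$. The only pole in between is the double pole at $w=1$; by the definition of $\wdd_\kappa$ we stay to the right of $1/2+1/r$ when that pole is not needed. On the new line the integrand is bounded by \eqref{eq:convexZb}, together with the rapid decay of $\wW$, giving $O(X^{\wdd_\kappa+\eps}|\rad\ff|^{(r-1/2)(1-\wdd_\kappa)+\eps})$. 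Since $|\rad\ff|\le|\bi\bi'|$ and $\lam_\bi\ll|\bi|^{-1+\eps}$, summing over $|\bi|,|\bi'|\le M$ bounds the total error by $X^{\wdd_\kappa+\eps}M^{1+(2r-1)(1-\wdd_\kappa)+\eps}$. With $M=X^{\theta_\kappa-\eps}$ and $\theta_\kappa$ as in \eqref{eq:thetakappa}, this is $o(X)$, as already computed in the paragraph preceding the statement.

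Next we compute the residue at $w=1$. Let $\Ac=\Ac^\ff_\kappa(\tfrac12)$. Multiplying the principal part in \cref{prop:Zbres}(a) by $\wW(w)X^w=\wW(1)X\bigl(1+(w-1)(\log X+\tfrac{\wW'}{\wW}(1))+\dots\bigr)$, the residue equals
\[
\wW(1)X\,\Ac\,\bigl(\log X+\tfrac{\wW'}{\wW}(1)+\dlog\Ac+K_0\bigr).
\]
Then \cref{prop:Zbres}(b) gives $\Ac=D_\kappa g_\kappa(\ei^{(\mi\mi')})h_\kappa(\mi\mi')|\mi\mi'|^{-1/2}$. Multiplying by the prefactor $\lam_\bi\lam_{\bi'}|\bi\bi'|^{1/2}=\lam_\bi\lam_{\bi'}|\ei||\mi\mi'|^{1/2}$ cancels $|\mi\mi'|^{-1/2}$ and produces the weight $|\ei|g_\kappa h_\kappa$. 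The formula for $\dlog\Ac$ supplies the term $-\log\prod_{v\in S_{\mi\mi'}}q_v^{\min(u_v,r-u_v)}$ and the remaining terms. We set $K=K_0+K_1$ and collect everything else into $R(\chi_{\bi\ov{\bi'}})$.

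The main point requiring care is to check that \cref{prop:Zbres}(b) applies to $\ff=(\bi\ov{\bi'})^*$ for every pair of $r$-th power-free $\bi,\bi'$. This requires $\ff$ to be $(r+1)$-th power-free. At a prime dividing $\ei$ but not $\mi\mi'$, the local exponent of $\bi\ov{\bi'}$ is a multiple of $r$, so $\ff^*$ contributes exactly $\pp_v^r$, and this produces the factor $g_\kappa$ through \eqref{eq:Bbs} with $u_v=r$. At primes dividing $\mi\mi'$, the exponent of $\bi\ov{\bi'}$ reduces modulo $r$ to $u_v$, which gives $h_\kappa$. One must also check that the constants $K_v,K_v'$ and the error exponents are uniform in $\bi,\bi'$. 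This uniformity is exactly what \cref{prop:Zbres} and \eqref{eq:convexZb} provide, so once the error estimate above is in place the proposition follows.
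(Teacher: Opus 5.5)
Your proposal is correct and follows the paper's argument. You shift the contour to $\Re(w)=\wdd_\kappa+\eps$ via \cref{prop:convergence_sievedMDS_twisted} and \eqref{eq:convexZb}, which gives the $\theta_\kappa$ condition, and you read off the residue at $w=1$ from \cref{prop:Zbres}(a),(b) with $K=K_0+K_1$. One small slip is harmless: at primes dividing $\mi'$ the exponent of $(\bi\ov{\bi'})^*$ is $r-u_v$ rather than $u_v$, but this does not matter because the local factor $q_v^{-us}+q_v^{-(r-u)s}$ is symmetric under $u\leftrightarrow r-u$.
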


\end{para}

\subsection{First mollified moment}

\begin{para}After applying Mellin inversion, the first mollified moment becomes:
\be\label{eq:mol1stmom}
\Sc_1^{(\kappa)}(M;W)=\sum_{\substack{\bi\in I(S)_0 \\ |\bi|\le M}}
\lam_\bi|\bi|^{1/2}
\frac{1}{2\pi i}\int_{(2)}\wW(w)Z_\kappa^S(\tfrac 12, w;\chi_{\bi})X^w dw
\ee
where for $\kappa=0,1$ we define for $\bi\in I(S)_0$:
\[
Z_\kappa^S(s,w;\chi_\bi):=
	\sum_{\ai\in I(S)_\kappa}
	\frac{L^{S}(s,  \chi_{\mathfrak{a}})}{|\mathfrak{a}|^{w}}
	\chi_\ai(\bi).
\]
We are led to consider the two-variable MDS obtained by letting $s_2\rightarrow \infty$ in the second moment MDS:
\[
\wZ^S(s,w; \psi,\rho)= \sum_{\ai\in I(S)}
\frac{L^{S}(s,  \psi \chi_{\ai_0})}{|\ai|^{w}}
\rho(\ai)Q_\ai(s,\infty;\psi,1),
\]
using that $\lim_{s_2\rightarrow\infty} L(s_2,\chi_{\ai})=1$.
Setting the second variable $\infty$ in $Q_\ai$ is equivalent to setting $x_2=0$ in $Q_k(x_1,x_2;q)$
and in the $v$-part $f(x_1,x_2,y;q)$ from \S\ref{sec:v-part}. In particular,
note that $Q_\ai(s,\infty;\psi,1)=1$ if $\ai$ is $r$-th power-free.
\end{para}
\begin{para}
Define $Z_\kappa(s,w;\chi_\bi;\hi)$ as in \S\ref{par:Zbsieving} after letting $s_2\rightarrow \infty$,
so that $Z_\kappa^S(s,w;\chi_\bi)$ can be expressed as in \S\ref{par:Zbsieving} in terms of $Z_\kappa(s,w;\chi_\bi;\hi)$.
The analogue of \cref{prop: sieving2} becomes
\be\label{eq:Zsieved_1stmom}
\begin{aligned}
Z_\kappa(s,w;\chi_\bi;\hi)&=
\prod_{v\in S_\bi}\frac{1}{1-q_v^{-rs}}
	 \sum_{\di| \bi_1^{r-1}} \frac{1}{|\di|^{s}}
	 \sum_{\substack{\ff,\gi\mid \hi^{r-1} \\ (\ff,\gi)=1}}
	 f_\ff^{(\kappa)} (s,\infty, w)g_{\gi,\hi^{(\ff)}} (s,\infty, w)\cdot \\
	 &\cdot\frac{1}{|R_\ci|}\sum_{\substack{G\in \EE\\\eta\in \widehat{R}_\ci }}
	 \chi_\ff(\bi\gi\di)\psi_G\big( m_{[\bi\gi\di]}\big)\ov{\eta}(G)
	 \wZ^{S\cup S_{\bi\hi}} (s,w;\chi_\ff, \eta\chi_{\bi\gi\di}).
\end{aligned}
\ee

\end{para}

\subsection{Convexity bound}\label{sec:convexity_1stmom}
\begin{para}
The perfect MDS for the first moment was first studied by
Fisher and Friedberg in the quadratic case over function fields~\cite{FF04}, and by Friedberg, Hoffstein
and Lieman over number fields \cite{FHL03}. They proved that
\[
(s-1) (s+w-1-1 \slash r) (w-1)
 \wZ^{S\cup S_\hi}(s,w; \psi\chi_\ff, \rho\chi_\gi )
\]
has analytic continuation to $\C^2$, and it is entire of order 1 as a function of $w$ for fixed $s$.
\end{para}
\begin{para}\label{par:convexity1stmom}
Let $\bi\in I(S)$ be an $(r+1)$-th power-free ideal, which will later be taken to be the
$(r+1)$-th power-free ideal $(\bi\di)^*$ in~\eqref{eq:Zsieved_1stmom}.
As in Section \ref{sec:convexity}, we can apply the functional equation
$\sigma_1\sigma_3\sigma_1$ to obtain the following convexity bound:
\[
\begin{aligned}
 (w-1)&\big(w-\tfrac12-\tfrac1r\big) \,
 \wZ^{S\cup S_{\bi\hi}} \big(1/2,w;\chi_\ff, \chi_{\bi\gi}\big)  \\
 &\ll_\eps
 |\rad\bi|^{(\frac{1}{4}+\frac{r}{2})(1-\sigma)+\eps}
 |\hi|^{\frac{1}{4} + \frac{r-1}{2}(1-\sigma)+\eps}\prod_{\pp_v\| \gi} q_v^{\frac{1-\sigma}{2}}
 (1+|w|)^{2+\tfrac{rd}{2}(1-\sigma)+\eps} .
\end{aligned}
\]
The exponent $1/4$ above  comes from an analogue of \cref{prop:convexity},
where we use
\cref{prop:estimate at1}, and Cauchy-Schwarz
\[
\bigg(\sum_{\ai\in I(S\cup S_\ff)_0}\frac{|L(1/2, \psi\chi_{\ai\ff})|}{|\ai|^{1+\eps}}\bigg)^2\ll
\sum_{\ai\in I(S\cup S_\ff)_0}\frac{|L(1/2, \psi\chi_{\ai\ff})|^2}{|\ai|^{1+\eps}}\ll
|\ff_1|^{1/2+\eps}.
\]
\end{para}
\begin{para}\label{par:refined_convexity1stmom}
 We also have a refined estimate, using the recursive method in \S\ref{sec:refined_convexity}:
 \[
 \begin{aligned}
 (w-1)&\big(w-\tfrac12-\tfrac1r\big) \,
 \wZ^{S\cup S_{\bi\hi}} \big(1/2,w;\chi_\ff, \chi_{\gi\bi}\big)  \\
 &\ll_\eps
 |\rad\bi|^{(\frac{1}{4}+\frac{r}{2})(1-\sigma)+\eps}
 |\hi|^{\frac{r-1}{2}(1-\sigma)+\eps}|\rad\ff\gi|^{\frac{1}{4}}
 \prod_{\pp_v\| \gi} q_v^{\frac{1-\sigma}{2}}
 (1+|w|)^{2+\frac{rd}{2}(1-\sigma)+\eps} .
\end{aligned}
 \]

\end{para}

\subsection{Analytic continuation}\label{sec:analytic_1stmom}
\begin{para}
Using the decomposition~\eqref{eq:Zsieved_1stmom} and the convexity bounds in the previous subsection
we obtain the following region of meromorphic continuation for the twisted sieved MDS. Recall
 that $A_r=1/2$ for $r\ge 4$ while  $A_3=1/3$.
 \begin{prop}\label{prop:convergence_sievedMDS_twisted1st}
  The MDS $Z_\kappa^S (\tfrac 12, w;\chi_\bi)$ has meromorphic continuation
to the region
$$\Re (w)>\dd_\kappa'+\eps,\qquad  \dd_\kappa'=\begin{cases} \frac{r+1+A_r}{r+3}  & \text{if } \kappa=1 \\
		  \frac{2r-1}{3r-1}  & \text{if } \kappa=0,
         \end{cases}
$$
with simple poles at $w=1$ and possibly at $w=\tfrac12+\tfrac1r$.

Moreover, we have the uniform estimate for  $\dd_\kappa'<\Re (w)=\sigma<1$:
 \be\label{eq:convexZb1stmom}
(w-1)(w-\tfrac12-\tfrac1r) Z_\kappa^S (\tfrac 12, w; \chi_\bi)
\ll_\eps |\rad{\bi}|^{\big(\tfrac{1}{4}+\tfrac{r}{2}\big)(1-\sigma)+\eps}(1+|w|)^{2+ \frac{rd}2(1-\sigma)+\eps}.
 \ee
 \end{prop}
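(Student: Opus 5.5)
The plan mirrors the proof of \cref{prop:convergence_sievedMDS_twisted}, now for the first-moment MDS. I would start from the decomposition \eqref{eq:Zsieved_1stmom} combined with the sieving identity $Z_\kappa^S(\tfrac12,w;\chi_\bi)=\sum_{\hi}\mu(\hi)Z_\kappa(\tfrac12,w;\chi_\bi;\hi)$, where $\hi$ runs over square-free ideals in $I(S\cup S_\bi)$. The factors depending only on $\bi$, namely $\prod_{v\in S_\bi}(1-q_v^{-r/2})^{-1}\sum_{\di\mid\bi_1^{r-1}}|\di|^{-1/2}$, are $\ll_\eps|\rad\bi|^\eps$. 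The sums over $G,\eta$ are bounded and contribute nothing in the aspects that matter, so the task reduces to bounding each term $f_\ff^{(\kappa)}g_{\gi,\hi^{(\ff)}}\wZ^{S\cup S_{\bi\hi}}(\tfrac12,w;\chi_\ff,\eta\chi_{(\bi\gi\di)^*})$ and then summing over $\ff,\gi\mid\hi^{r-1}$ and over $\hi$.

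For the local factors I would redo the estimates of \S\ref{sec:fg_estimates} with $x_2=0$, $x_1=q^{-1/2}$, $y=q^{-w}$. With $x_2=0$ the denominator loses the factors $(1-qx_1x_2y)$, $(1-q^{r-1}x_2^ry^r)$ and $(1-q^rx_1^rx_2^ry^r)$. One should get $f_1^{(\eps)}\ll q^{-\min(2\sigma,\ldots)}$, coming from $k=2$ and from $k\equiv1\ (r)$ with $k>1$ giving $q^{r/2-1-(r+1)\sigma}$, and $f_0^{(\eps)}\ll q^{(r-1)/2-(r+1)\sigma}$-type terms. For $g^{(\eps)}$, the term $q^{r-1}x_1^ry^r$ gives $q^{r/2-1-r\sigma}$ for $\eps\equiv0$. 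For $\eps\ne0$ there is an extra $x_1$ power, giving roughly $q^{r/2-3/2-r\sigma}$.

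For $\kappa=1$ I would feed in the convexity bound of \S\ref{par:convexity1stmom}. The worst local exponent at a prime of $\hi$ is $\tfrac14+\tfrac{r-1}{2}(1-\sigma)$ plus the local saving. Taking the $f_1$ saving $-2\sigma$, convergence of $\sum_\hi$ requires $\tfrac14+\tfrac{r-1}{2}(1-\sigma)-2\sigma<-1$, which is exactly $\sigma>\frac{r+3/2}{r+3}=\frac{r+1+1/2}{r+3}$. This matches $\dd_1'$ with $A_r=1/2$. I still need to check that the factor $\prod_{\pp\|\gi}q_v^{(1-\sigma)/2}$ is absorbed, since $g^{(\eps)}$ saves at least $q^{-2\sigma}$ there for $\sigma>1/2$. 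For $r=3$ I would replace $1/4$ by $1/6$ via the averaging argument of \cref{prop:conv_req3}. The required input is that the Cauchy–Schwarz square root of the bound in that proof converges in the corresponding region, which gives $A_3=1/3$.

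For $\kappa=0$ I would use the refined estimate of \S\ref{par:refined_convexity1stmom}. Here primes $\pp\mid\hi$ with $\pp\nmid\ff\gi$ carry exponent $\tfrac{r-1}{2}(1-\sigma)+r/2-1-r\sigma$. Primes dividing $\ff\gi$ pick up a further $\tfrac14$ (plus possibly $(1-\sigma)/2$) but gain a stronger local saving. Solving $\tfrac{r-1}{2}(1-\sigma)+\tfrac r2-1-r\sigma<-1$ gives $\sigma>\frac{2r-1}{3r-1}$, which is $\dd_0'$. I expect the \textbf{main obstacle} to be this case check: I must verify that at primes $\pp\|\gi$ the combination $\tfrac14+\tfrac{1-\sigma}{2}+\tfrac{r-1}{2}(1-\sigma)+(r/2-3/2-r\sigma)$ stays below the unramified exponent. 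I must also verify that the $f_0$ primes do likewise, and adjust the local bounds if a case is borderline.

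The poles at $w=1$ and $w=\tfrac12+\tfrac1r$ are inherited from the perfect MDS through the factor $(w-1)(w-\tfrac12-\tfrac1r)$. The uniform bound \eqref{eq:convexZb1stmom} then follows by summing the termwise bounds, keeping the $|\rad\bi|$ and $(1+|w|)$ dependence from \S\ref{par:convexity1stmom}.
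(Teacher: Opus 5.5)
Your plan is the paper's proof. You use the sieving identity over $\hi$ and bound each term of \eqref{eq:Zsieved_1stmom} by local factors times the convexity bounds of \S\ref{par:convexity1stmom} and \S\ref{par:refined_convexity1stmom}. You identify the worst local exponent as $\pp\mid\ff$ for $\kappa=1$ and $\pp\nmid\ff\gi$ for $\kappa=0$. For $r=3$, $\kappa=1$, you use a Cauchy--Schwarz reduction to the series of \cref{prop:conv_req3}, which gives $13/18$. The thresholds you solve for are the right ones.

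The one thing to fix is your local bound for $f_0^{(\eps)}$. The bound $q^{(r-1)/2-(r+1)\sigma}$ is too large by $q^{1/2}$, and with it the case check you flag actually fails. At a prime $\pp\mid\ff$ the exponent would exceed the unramified ($g^{(0)}$) one by $\tfrac14+\tfrac12-\sigma=\tfrac34-\sigma$. This is positive on the whole range $\dd_0'<\sigma<\tfrac34$ (recall $\dd_0'=\tfrac{2r-1}{3r-1}<\tfrac23$), so you would only reach $\sigma>\tfrac{4r+1}{6r+2}$, not $\dd_0'$.

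The correct bound is the same as in \S\ref{sec:fg_estimates}. With $x_2=0$ and $k\not\equiv 0\pmod r$ one has
$$Q_k(q^{-1/2},0;q)=\sum_{0\le j\le k/r}q^{j(r/2-1)}.$$
The leading term $k=r+1$ therefore gives $f_0^{(\eps)}\ll q^{r/2-1-(r+1)\sigma}$.

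With this correction, primes $\pp\mid\ff$ are smaller than the $g^{(0)}$ primes by $q^{1/4-\sigma}$. Primes $\pp\|\gi$ are smaller by $q^{1/4-\sigma/2}$: there $g^{(1)}\ll q^{r/2-3/2-r\sigma}$, while the refined bound carries the extra factor $q^{1/4+(1-\sigma)/2}$. Both ratios are $<1$ for $\sigma>1/2$, and higher exponents in $\ff,\gi$ are smaller still. Your argument then goes through exactly as in the paper.
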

 Note that the region of meromorphic continuation contains the pole at $w=\tfrac12+\tfrac1r$ for $r=3$ if $\kappa=1$, and for $3\le r\le 6$ if $\kappa=0$.
\begin{proof}
We use the first-moment sieving identity
\[
        Z_\kappa^S(\tfrac12,w;\chi_\bi)
        =
        \sum_{\hi\in I(S\cup S_\bi)}\mu(\hi)
        Z_\kappa(\tfrac12,w;\chi_\bi;\hi),
\]
multiplied by the factor $(w-1)(w-\tfrac12-\tfrac1r)$ to kill possible poles.
We bound each term $Z_\kappa(\tfrac12,w;\chi_\bi;\hi)$ using the decomposition~\eqref{eq:Zsieved_1stmom} in terms of perfect MDS.
A typical term in the decomposition is
\[f_\ff^{(\kappa)}(\tfrac12,\infty,w)\,
 g_{\gi,\hi^{(\ff)}}(\tfrac12,\infty,w)
 \wZ^{S\cup S_{\bi\hi}}(\tfrac12,w;\chi_\ff,\eta\chi_{\bi\gi\di})
\]
with $\ff,\gi,\di$ as in~\eqref{eq:Zsieved_1stmom}.

For a prime $\pp| \hi$ with $q=|\pp|$, the local factors in the coefficients  $f_\ff^{(\kappa)}$, $ g_{\gi,\hi^{(\ff)}}$, namely
$$f_\kappa^{(\eps)}(q^{-1/2}, 0 ,q^{-w};q),\qquad g^{(\eps)}(q^{-1/2}, 0 ,q^{-w};q),$$
satisfy the same bounds as their evaluation at $(q^{-1/2}, q^{-1/2} ,q^{-w})$ in \S\ref{sec:fg_estimates}.
In addition, the refined convexity bound for $\wZ^{S\cup S_{\bi\hi}}$
in \S\ref{par:refined_convexity1stmom} contributes the common factor
\[
        q^{\frac{r-1}{2}(1-\sigma)}
\]
at this prime.  If $\pp\mid \ff \gi$, it also contributes
the localized large-sieve factor $q^{1/4}$; if $\pp\|\gi$,
it contributes the additional factor $q^{(1-\sigma)/2}$.
We next determine the largest such exponent.

\noindent\textbf{The square-free case $\kappa=1$.}
The maximal local exponent occurs if $\pp\mid \ff$ and it equals
\[
E(\sigma)=\frac{r-1}{2}(1-\sigma)+\frac{1}{4}-2\sigma;
\]
note that in this case we do not need the refined convexity bound,
as the largest contribution is the same in both convexity bounds for $\pp\mid \ff$.
Therefore $Z_\kappa(\tfrac12,w;\chi_\bi;\hi)\ll_\eps |\hi|^{E(\sigma)+\eps}$, and
the condition $E(\sigma)<-1$ gives
$$\sigma >   \frac{r+1+ 1/2}{r+3}.$$
When $r=3$, we can improve the fraction $1/2$ to $1/3$ in the bound for $\sigma$,
as in the second-moment argument.
Using the first moment analogue of \cref{prop:convexity}
we obtain that the sum over $\hi$ in the sieving identity converges if the following series converges
\[
T_\psi(\sigma)
:=
\sum_{\hi=\hi_1}
|\hi|^{1-3\sigma+\eps}
\sum_{\ff\mid\hi^2}
\sum_{\ai\in I(S\cup S_{\bi\hi})_0}
\frac{|L(1/2,\psi\chi_{\ai\ff})|}{|\ai|^{1+\eps}}.
\]
Applying Cauchy-Schwarz to the outer sum gives
\[T_\psi(\sigma)^2
\ll_\eps  \bigg(\sum_{\hi=\hi_1}\sum_{\ff\mid\hi^2}|\hi|^{-1-\eps}\bigg)
\sum_{\hi=\hi_1}
|\hi|^{3-6\sigma+\eps}
\sum_{\ff\mid\hi^2}
\sum_{\ai\in I(S\cup S_{\bi\hi})_0}
\frac{|L(1/2,\psi\chi_{\ai\ff})|^2}{|\ai|^{1+\eps}}.
\]
Comparing to the first equation in the  proof of \cref{prop:conv_req3},
the last series converges if $3-6\sigma<-4/3$, namely for $\sigma>13/18$.
That is the same as replacing $1/2$ by $1/3$ in the earlier bound.

\noindent\textbf{The $r$-th power-free case $\kappa=0$.}
The dominant local term is when $\pp\nmid \ff\gi$ (the $g^{(0)}$ term),
for which the local exponent is
\[
        E_0(\sigma)
        =
        \frac{r-1}{2}(1-\sigma)+\frac r2-1-r\sigma .
\]
It is here that the refined convexity bound saves a factor $1/4$.
The terms with $\pp\mid \ff$ are smaller by the factor
$q^{1/4-\sigma}$, and the terms with $\pp\|\gi$ are smaller by the factor $q^{1/4-\sigma/2}$;
this is $<1$ because $\sigma>1/2$.  The remaining $g^{(\epsilon)}$, $\eps>1$, terms are smaller
still.  Hence absolute convergence in the sieving identity happens for
\[
        E_0(\sigma)<-1, \text{ that is \ }   \sigma>\frac{2r-1}{3r-1}.
\]
Including the uniform bounds in $|\bi|$ and $|w|$ finishes the proof.
\end{proof}
 \end{para}

\subsection{Residues}\label{sec:residues_1stmom}
\begin{para}
As in \cref{prop:Zresidue}, the function
$\wZ^{S\cup S_{\bi\hi}} (s,w;\chi_\ff, \chi_{\bi\gi\di};G)$ has a pole
at $w=1$ only if $\chi_{\bi\gi\di}=1$, and for $T= S\cup S_{\bi\hi}$ we have
\[
\res_{w=1} \wZ^{T}(s,w; \chi_\ff, 1; G)= \frac{1}{|R_\ci|} \zeta_F^T(rs)\res_{w=1}\zeta_F^T(w).
\]
We conclude from~\eqref{eq:Zsieved_1stmom} that
\[\res_{w=1} Z_\kappa(s,w;\chi_\bi;\hi) =\zeta_F^T(rs)\res_{w=1}\zeta_F^T(w)
\prod_{v\in S_\bi}\frac{q_v^{-s(r-\ord_v\bi )}}{1-q_v^{-rs}}\sum_{\ff|\hi^{r-1}}
f_\ff^{(\kappa)} (s,\infty, 1)g_{1,\hi^{(\ff)}} (s,\infty, 1),
\]
and the MDS $Z_\kappa^S(s,w;\chi_\bi)$ has a simple pole at $w=1$,
with:
\[
\res_{w=1}Z_\kappa^S(s,w;\chi_\bi)=\zeta_F^S(rs) \res_{w=1}\zeta_F^S(w)
\prod_{v\notin S}(1-q_v^{-1}) Q_r^{(\kappa)}(q_v^{-s};q_v)
\prod_{v\in S_\bi} \frac{q_v^{-(r-\ord_v(\bi))s}}{Q_r^{(\kappa)}(q_v^{-s};q_v)}.
\]
The polynomial $Q_r^{(\kappa)}(x;q)$ is determined from the $v$-part $f(x,0, y;q)$ as follows:
\[
Q_r^{(\kappa)}(q_v^{-s};q_v)=\frac{1-(1-q_v^{-1})(1-q_v^{-rs})\bigg(g^{(0)}(q_v^{-s},0,q_v^{-1};q_v)+
\sum_{\eps=1}^{r-1}f_\kappa^{(\eps)}(q_v^{-s},0,q_v^{-1};q_v)
   \bigg)}{1-q_v^{-1}}.
\]
The explicit formula for the $v$-part $f$ in \S\ref{sec:v-part} gives:
\be
\label{eq:euler_factors_1stmom}
Q_r^{(\kappa)}(x;q)=\begin{cases}
\displaystyle \frac{1-q^{-r}}{1-q^{-1}}-x^r\frac{q^{-1}-q^{-r}}{1-q^{-1}} & \text{ if $\kappa=0$}\\
               1+q^{-1}-x^rq^{-1}      & \text{ if $\kappa=1$}.
                    \end{cases}
\ee
Let $Q_r^{(\kappa)}(x)=Q_r^{(\kappa)}(x;1/x^2)$. Note that $Q_r^{(\kappa)}(x;q)$ and
$Q_r^{(\kappa)}(x)$ are positive for $0<x<1$, so the Euler product above is positive.
\end{para}

\subsection{Twisted first moment asymptotics}

\begin{para}
Putting together the results of Subsections \ref{sec:convexity_1stmom}-\ref{sec:residues_1stmom} gives, as in the second-moment case, the following theorem.
\begin{thm} \label{thm:twisted_first_moment}
Fix $\kappa\in\{0,1 \}$.
Let $W(u)$ be a Schwartz function with compact support on $(1,2)$ satisfying $0\le W(u) \le 1$. Let $\bi\in I(S)$ be $r$-th power-free.
Then, for every $\varepsilon > 0$, we have
\[\begin{aligned}
\sum_{\ai \in I(S)_\kappa}  L^S(\tfrac{1}{2}, \chi_{\ai}) \chi_\ai(\bi) W\big(\tfrac{|\ai|}{X}\big)
&=X \wW(1) C_{\kappa,\bi} +X^{\frac{1}{2}+\frac{1}{r}}\wW(\tfrac 12+\tfrac 1r) E_{\kappa,\bi}\\
&+O\Big(X^{\dd_\kappa'+\varepsilon} |\rad{\bi}|^{\big(\tfrac{1}{4}+\tfrac{r}{2}\big)(1-\dd_\kappa')+\eps} \Big),
\end{aligned}
\]
where $E_{\kappa,\bi}$ can be computed from the residue at $\tfrac12+\tfrac1r$ of the Kubota Dirichlet series, and
\[
C_{\kappa,\bi}=\zeta_F^S(r/2) \res_{w=1} \zeta_F^S(w)
\prod_{v\notin S} (1-q_v^{-1})Q_r^{(\kappa)}\big(q_v^{-1/2}\big)
\prod_{v\in S_\bi} \tfrac{q_v^{-(r-\ord_v(\bi))/2}}{Q_r^{(\kappa)}(q_v^{-1/2})}.
\]
Here $\dd_\kappa'$ is given in \cref{prop:convergence_sievedMDS_twisted1st}, and
$Q_r^{(\kappa)}(x)=Q_r^{(\kappa)}(x;1/x^2)$ with $Q_r^{(\kappa)}(x;q)$
given in~\eqref{eq:euler_factors_1stmom}.
\end{thm}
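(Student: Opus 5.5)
The proof follows the pattern of \cref{thm:twisted_2ndmom}, using the first-moment MDS $Z_\kappa^S(\tfrac12,w;\chi_\bi)$ of \S\ref{sec:analytic_1stmom}. By Mellin inversion the left-hand side equals
\[
\frac{1}{2\pi i}\int_{(2)}\wW(w)\,Z_\kappa^S(\tfrac12,w;\chi_\bi)\,X^w\,dw .
\]
On $\Re(w)=2$ this is justified by absolute convergence: each $L^S(\tfrac12,\chi_\ai)$ is polynomially bounded in $|\ai|$ by the convexity bound for Hecke $L$-functions, and $\chi_\ai(\bi)$ has modulus at most one.

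By \cref{prop:convergence_sievedMDS_twisted1st}, the function $(w-1)(w-\tfrac12-\tfrac1r)Z_\kappa^S(\tfrac12,w;\chi_\bi)$ continues holomorphically to $\Re(w)>\dd_\kappa'$. It also satisfies the polynomial bound \eqref{eq:convexZb1stmom} in $|w|$ and $|\rad\bi|$, uniformly in the strip $\dd_\kappa'+\eps\le\sigma\le 1$. To the right of $\sigma=1$ the bound holds trivially by absolute convergence, so it holds on the whole region swept out by the contour shift.

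Since $W$ is smooth with compact support in $(1,2)$, the Mellin transform $\wW(w)$ decays faster than any power of $|w|$ on vertical strips. I therefore shift the contour to $\Re(w)=\dd_\kappa'+\eps$. The horizontal segments at height $\pm T$ vanish as $T\to\infty$, again by the rapid decay of $\wW$.

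The shift crosses the simple pole at $w=1$, and also the pole at $w=\tfrac12+\tfrac1r$ whenever $\dd_\kappa'<\tfrac12+\tfrac1r$. The residue at $w=1$ is $\wW(1)X\cdot\res_{w=1}Z_\kappa^S(\tfrac12,w;\chi_\bi)$. Section~\ref{sec:residues_1stmom} gives this residue as an Euler product evaluated at $s=1/2$:
\begin{itemize}
\item $\zeta_F^S(rs)$ specializes to $\zeta_F^S(r/2)$;
\item the local factors $Q_r^{(\kappa)}(q_v^{-1/2};q_v)$ become $Q_r^{(\kappa)}(q_v^{-1/2})$, since $q_v=1/x^2$ when $x=q_v^{-1/2}$;
\item the $S_\bi$-factors become $q_v^{-(r-\ord_v\bi)/2}/Q_r^{(\kappa)}(q_v^{-1/2})$.
\end{itemize}
Together these give exactly $C_{\kappa,\bi}$.

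The residue at $w=\tfrac12+\tfrac1r$ is $\wW(\tfrac12+\tfrac1r)X^{1/2+1/r}E_{\kappa,\bi}$, where $E_{\kappa,\bi}$ is defined as that residue. I would track it through the sieving identity \eqref{eq:Zsieved_1stmom}: the pole comes from the pole of the Kubota series $\wD$ at $s=\tfrac12+\tfrac1r$, which produces the hyperplane $s+w=1+\tfrac1r$ in the perfect MDS. Summing these over $\hi$ converges in the same region, so $E_{\kappa,\bi}$ is well defined. I do not attempt to compute it explicitly, consistent with the statement. When $\dd_\kappa'\ge\tfrac12+\tfrac1r$, the term $X^{1/2+1/r}$ is already absorbed in the error, so the formula holds with either convention.

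The remaining integral on $\Re(w)=\dd_\kappa'+\eps$ is bounded via \eqref{eq:convexZb1stmom} by
\[
X^{\dd_\kappa'+\eps}\,|\rad\bi|^{(\frac14+\frac r2)(1-\dd_\kappa')+\eps}\int|\wW(w)|(1+|w|)^{O(1)}\,|dw|,
\]
and the integral converges. The only delicate point is that the $\bi$-dependence of the convexity bound must be uniform. This is guaranteed by \cref{prop:convergence_sievedMDS_twisted1st}, once we note that the factor $\prod_{v\in S_\bi}(1-q_v^{-r/2})^{-1}\sum_{\di}|\di|^{-1/2}$ in \eqref{eq:Zsieved_1stmom} contributes only $|\rad\bi|^\eps$.
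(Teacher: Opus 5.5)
Your proposal is correct and follows essentially the same argument as the paper: Mellin inversion, then shifting the contour to $\Re(w)=\dd_\kappa'+\eps$ using the continuation and uniform bound of \cref{prop:convergence_sievedMDS_twisted1st} together with the rapid decay of $\wW$, with the main term $C_{\kappa,\bi}$ read off from the residue formula of \S\ref{sec:residues_1stmom} at $s=1/2$. (One minor quibble: absolute convergence for $1<\Re(w)\le 5/4$ does not follow from convexity alone, so it is not ``trivial,'' but it does follow from \cref{prop:estimate at1} via Cauchy--Schwarz, as in \S\ref{sec:convexity_1stmom}.)
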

This asymptotic formula captures the second-order term for  $r=3$ if $\kappa=1$,
and $r\le 6$ if $\kappa=0$.
\begin{proof}
By Mellin inversion, the moment sum equals
\[
\frac{1}{2\pi i}\int_{(2)}\wW(w)Z_\kappa^S(\tfrac 12, w;\chi_{\bi})X^w dw.
\]
By the meromorphic continuation of $Z_\kappa^S(\tfrac 12, w;\chi_{\bi})$ in \cref{prop:convergence_sievedMDS_twisted1st},
its polynomial growth on vertical strips, and the fast decay of the Mellin transform on vertical lines,
we can move the line of integration to $\Re(w)=\dd_\kappa'+\eps$, picking up the residue(s) at the pole(s) in between. The integral
on the new line is bounded by the error stated in the theorem, finishing the proof.
\end{proof}
\end{para}

\begin{para}
Using the previous theorem, we can bound the error in the first mollified moment,
with the same mollifier length $M$ as for the second moment.
\begin{prop}\label{prop:mol1stmom}
 For $M=X^{\theta_\kappa-\eps}$ with $\theta_\kappa$ defined in \eqref{eq:thetakappa} we have:
\[
\Sc_1^{(\kappa)}(M;W)=C_\kappa\wW(1)  X
\sum_{\substack{\bi\in I(S)_0 \\ |\bi|\le M}}\lam_\bi r_\kappa(\bi) +o(X)
\]
where
\[C_\kappa:=\zeta_F^S(r/2) \res_{w=1}\zeta_F^S(w)\prod_{v\notin S} (1-q_v^{-1})Q_r^{(\kappa)}\big(q_v^{-1/2}\big),\quad
r_\kappa(\bi)=\prod_{v\in S_\bi} \frac{q_v^{u_v-r/2}}{Q_r^{(\kappa)}\big(q_v^{-1/2}\big)}
\]
with $u_v=\ord_v\bi$.
\end{prop}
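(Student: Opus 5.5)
We start from the Mellin-inverted expression \eqref{eq:mol1stmom} and insert \cref{thm:twisted_first_moment} for each $\bi\in I(S)_0$ with $|\bi|\le M$. Taking $\bi$ in the theorem to be the mollifier index gives
\[
\frac{1}{2\pi i}\int_{(2)}\wW(w)Z_\kappa^S(\tfrac 12, w;\chi_{\bi})X^w\,dw
= X\wW(1)C_{\kappa,\bi}+X^{\frac12+\frac1r}\wW(\tfrac12+\tfrac1r)E_{\kappa,\bi}
+O\Big(X^{\dd_\kappa'+\eps}|\rad\bi|^{(\frac14+\frac r2)(1-\dd_\kappa')+\eps}\Big).
\]
The main term is $C_{\kappa,\bi}=C_\kappa\prod_{v\in S_\bi}q_v^{-(r-u_v)/2}/Q_r^{(\kappa)}(q_v^{-1/2})$. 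Multiplying by $\lam_\bi|\bi|^{1/2}=\lam_\bi\prod_{v\in S_\bi}q_v^{u_v/2}$ gives $\lam_\bi C_\kappa r_\kappa(\bi)$, since $q_v^{u_v/2-(r-u_v)/2}=q_v^{u_v-r/2}$. Summing over $\bi$ therefore produces exactly the stated main term $C_\kappa\wW(1)X\sum_\bi\lam_\bi r_\kappa(\bi)$.

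It remains to show that the other two contributions are $o(X)$ when $M=X^{\theta_\kappa-\eps}$.

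\emph{Error term.} We use $\lam_\bi\ll|\bi|^{-1+\eps}$ and $|\rad\bi|\le|\bi|$. Summing over $|\bi|\le M$ gives
\[
\sum_{|\bi|\le M}|\bi|^{-1/2+(\frac14+\frac r2)(1-\dd_\kappa')+\eps}\ll M^{\frac12+(\frac14+\frac r2)(1-\dd_\kappa')+\eps},
\]
so the total error is $X^{\dd_\kappa'+\eps}M^{\frac12+(\frac14+\frac r2)(1-\dd_\kappa')+\eps}$. We compare this with the second-moment constraint that defines $\theta_\kappa$, namely $M^{1+(2r-1)(1-\wdd_\kappa)}\le X^{1-\wdd_\kappa}$. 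One checks in each case ($\kappa=0,1$, and $r=3$ versus $r\ge 4$) that $\dd_\kappa'\le\wdd_\kappa$. For example, when $\kappa=1$ this is $\frac{r+1+A_r}{r+3}\le\frac{r+A_r}{r+1}$, which holds because $A_r<1$. The exponent of $M$ in the first-moment error is also at most half of the corresponding second-moment exponent $1+(2r-1)(1-\wdd_\kappa)$, up to the comparison of $1-\dd_\kappa'$ with $1-\wdd_\kappa$. Concretely, we need
\[
\big(\tfrac12+(\tfrac14+\tfrac r2)(1-\dd_\kappa')\big)\theta_\kappa<1-\dd_\kappa'.
\]
Since the factor multiplying $1-\dd_\kappa'$ is smaller than $\frac{r}{2}+\frac14$, this follows from $\theta_\kappa\le\frac{1-\wdd_\kappa}{1+(2r-1)(1-\wdd_\kappa)}$ together with $1-\dd_\kappa'\ge 1-\wdd_\kappa$. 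It amounts to an elementary inequality between linear fractional functions, which we would verify case by case.

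\emph{Secondary term.} The term $X^{\frac12+\frac1r}\wW(\tfrac12+\tfrac1r)\sum_\bi\lam_\bi|\bi|^{1/2}E_{\kappa,\bi}$ needs a bound on $E_{\kappa,\bi}$ polynomial in $|\rad\bi|$. We get one from \eqref{eq:convexZb1stmom}: the residue at $w=\tfrac12+\tfrac1r$ is bounded by a contour integral on a small circle, giving $E_{\kappa,\bi}\ll|\rad\bi|^{(\frac14+\frac r2)(\frac12-\frac1r)+\eps}$. Hence this contribution is $\ll X^{\frac12+\frac1r}M^{\frac12+(\frac14+\frac r2)(\frac12-\frac1r)+\eps}$. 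Because $\frac12+\frac1r\le\wdd_\kappa$ when $r\ge3$, it is dominated by the same computation as the error term, hence $o(X)$ (when the pole lies to the left of $\dd_\kappa'$, we can instead shift directly past it). The main obstacle is this bookkeeping of exponents across the cases to confirm that the single $\theta_\kappa$ suffices; if the inequality failed in some case, we would fall back on replacing the exponent by the exact ratio.
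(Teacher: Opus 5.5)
Your proposal is correct and follows the paper's proof except in how it treats the secondary pole at $w=\tfrac12+\tfrac1r$.

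\textbf{Where the routes differ.} The paper shifts the contour in \eqref{eq:mol1stmom} only to $\Re(w)=\wdd'_\kappa+\eps$, where $\wdd'_\kappa=\max(\dd'_\kappa,\tfrac12+\tfrac1r)$. Only the pole at $w=1$ is crossed, so $E_{\kappa,\bi}$ never appears. The paper then checks $\theta_\kappa<\frac{2(1-\wdd'_\kappa)}{1+(r+1/2)(1-\wdd'_\kappa)}$. You instead shift all the way to $\dd'_\kappa+\eps$, as in \cref{thm:twisted_first_moment}. You then bound the simple-pole residue $E_{\kappa,\bi}$ by evaluating \eqref{eq:convexZb1stmom} at $\sigma=\tfrac12+\tfrac1r$.

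\textbf{Nothing is gained.} Whenever that pole lies in the region, $1-\wdd'_\kappa=\tfrac12-\tfrac1r$. Your secondary-term bound $X^{\frac12+\frac1r}M^{\frac12+(\frac14+\frac r2)(\frac12-\frac1r)+\eps}$ is then exactly the paper's error term, up to $X^{\eps}$. So your route gives the same constraint on $\theta_\kappa$, at the cost of an extra step.

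\textbf{Two small corrections.}
\begin{enumerate}
\item The inequality $\dd'_1\le\wdd_1$ reduces to $r+2A_r\ge 1$, not to $A_r<1$.
\item No case-by-case verification is needed at the end. The map $y\mapsto \frac{2y}{1+(r+\frac12)y}$ is increasing. Moreover, $\frac{x}{1+(2r-1)x}<\frac{2x}{1+(r+\frac12)x}$ holds for every $x>0$. Hence it suffices to know $1-\dd'_\kappa\ge 1-\wdd_\kappa$ and $\tfrac12-\tfrac1r\ge 1-\wdd_\kappa$, and you have established both.
\end{enumerate}
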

\begin{proof}
Define $\wdd_\kappa'=\max \big(\dd_\kappa', \tfrac12+\tfrac1r\big) $. Moving the line of integration in the integral in
\eqref{eq:mol1stmom} to $\Re(w)=\wdd_\kappa'+\eps$ picks up only the pole at $w=1$, and by the same argument in
 \cref{thm:twisted_first_moment} and \eqref{eq:mol1stmom} we obtain
 \[
 \begin{aligned}
    \Sc_1^{(\kappa)}(M;W)-C_\kappa\wW(1)  X \sum_{\substack{\bi\in I(S)_0 \\ |\bi|\le M}}\lam_\bi r_\kappa(\bi)
&\ll_\eps X^{\wdd_\kappa'+\eps} \sum_{\substack{\bi\in I(S)_0 \\ |\bi|\le M}}
|\lam_\bi| |\bi|^{1/2} |\bi|^{\big(\tfrac{1}{4}+\tfrac{r}{2}\big)(1-\wdd_\kappa')+\eps}\\
&\ll_\eps X^{\wdd_\kappa'+\eps} M^{\tfrac12+\big(\tfrac{1}{4}+\tfrac{r}{2}\big)(1-\wdd_\kappa')+\eps},
   \end{aligned}
\]
where we used that $\lam_\bi\ll |\bi|^{-1+\eps}$.
Given that $M=X^{\theta_\kappa-\eps}$, the last expression is $o(X)$ provided
\[
\theta_\kappa<\theta_\kappa':= \frac{2(1-\wdd_\kappa') }{1+(r+1/2)(1-\wdd_\kappa')}.
\]
This is easily verified in all cases, finishing the proof.
\end{proof}
\end{para}

\subsection{Choosing the mollifier}\label{sec:sqfreemol}
\begin{para}
Comparing propositions \ref{prop:mol1stmom} and \ref{prop:mol2ndmom} to the similar estimate
for the case of cubic characters in \cite[Sec. 9]{DFDS24}, we note that we can choose
a similar mollifier, supported on square-free ideals, to obtain a positive proportion of non-vanishing for $L(1/2,\chi_\ai)$, both in the family of square-free $\ai$ and in the family of $r$-th power-free $\ai$. Thus we now set
\[
M_\ai=\sum_{\substack{\bi\in I(S)_1 \\ |\bi|\le M_\kappa}}
\lam_\kappa(\bi)|\bi|^{1/2}\chi_\ai(\bi),
\]
where we write $\lam_\bi=\lam_\kappa(\bi)$, and $M_\kappa=X^{\theta_\kappa-\eta}$ with $\theta_\kappa$ defined in~\eqref{eq:thetakappa} and $\eta>0$ small.
\end{para}
\begin{para}
Let $C_\kappa$, $D_\kappa$ be the Euler products, and $r_\kappa$, $g_\kappa$, $h_\kappa$ be the multiplicative
functions defined in \cref{prop:mol1stmom} and in \cref{prop:Zbres}, restricted to square-free ideals.
Define two multiplicative functions on square-free ideals by their values on primes:
\be\label{eq:GH}
G_\kappa(\pp_v)=r_\kappa(\pp_v)-h_\kappa(\pp_v), \quad
H_\kappa(\pp_v)=g_\kappa(\pp_v)-q_v^{-1}h_\kappa(\pp_v)^2,
\ee
and let
\[
E_\kappa= \prod_{v\not\in S} (1-q_v^{-1})\big(1+q_v^{-1}\tfrac{G_\kappa(\pp_v)^2}{H_\kappa(\pp_v)}\big).
\]
As in \cite[Sec. 9]{DFDS24}, one checks that
\[G_\kappa(\pp_v)=-1+O(q_v^{-1}), \qquad H_\kappa(\pp_v)=1+O(q_v^{-1}),
\]
and a short computation using the formulas for $P_r^{(\kappa)}$
shows that $H_\kappa(\pp_v)>0$ for all $v$. Therefore the Euler product $E_\kappa$ is convergent
and positive.

As in \cite[Sec. 9]{DFDS24}, for $\bi$ square-free with $|\bi|<M_\kappa$ we define
\[
\xi_\kappa(\bi)=\frac{C_\kappa}{D_\kappa \log M_\kappa} \cdot \frac{G_\kappa(\bi)}{|\bi|H_\kappa(\bi)},
\]
and extend $\xi_\kappa(\bi)$ by 0 for other $\bi$.
We choose the mollifier supported on $\bi \in I(S)_1$ with $|\bi|<M_\kappa$
defined by
\[
\lam_\kappa(\bi)=\sum_{\substack{\ai\in I(S)_1\\|\ai\bi|<M_\kappa }} \mu(\ai)h_\kappa(\ai)\xi_\kappa(\ai\bi).
\]

When $r=3$ and $\kappa=1$, the arithmetic functions defined here agree with the ones defined in \cite[Sec. 9]{DFDS24}, and in general one easily checks that they satisfy the same growth estimates.
\end{para}
\subsection{Proof of non-vanishing theorems}\label{sec:non-vanishing}
\begin{para}
We now prove Theorems~\ref{Nonvanishing-sq-free-alt} and~\ref{Nonvanishing-r-th-power-free-alt}.
Let $\rho_F^S:=\res_{s=1} \zeta_F^S(s)$. With the notation in the previous subsection,
the same argument as in \cite[Sec. 9]{DFDS24}  gives
\[
\Sc_1^{(\kappa)}(M;W)\sim X \wW(1) \frac{C_\kappa^2E_\kappa}{D_\kappa}\rho_F^S , \ \  \quad
\Sc_2^{(\kappa)}(M;W)\sim X \wW(1) \frac{C_\kappa^2E_\kappa}{D_\kappa}\rho_F^S \bigg(1+\frac{1}{\theta_\kappa-\eta}\bigg).
\]

Assume $W\ne 0$. Since $0\le W\le 1$ and $Q_\ai\ge 1$ we have
\[\begin{aligned}
\sum_{\substack{\ai\in I(S)_\kappa \\ L(1/2,\chi_\ai)\ne 0\\X<|\ai|<2X}} 1&\ge
\sum_{\substack{\ai\in I(S)_\kappa \\ L(1/2,\chi_\ai)\ne 0\\X<|\ai|<2X}} W\big(\tfrac{|\ai|}{X}\big) Q_\ai^{-1} \\
&\ge
\frac{ | \Sc_1^{(\kappa)}(M;W)|^2 }{\Sc_2^{(\kappa)}(M;W)}=
X \left(\wW(1) \frac{C^2_\kappa E_\kappa}{D_\kappa} \rho_F^S \frac{\theta_\kappa}{\theta_\kappa+1}-\eps'\right) +o(X),
\end{aligned}
\]
for some $\eps'>0$ depending on $\eta$ linearly. Note that
\[\sum_{\substack{\ai\in I(S)_\kappa \\|\ai|<X}} 1 \sim X \frac{\rho_F^S}{\zeta_F^S(n_\kappa)},
\]
where $n_1=2$, $n_0=r$. Letting $W$ approach the indicator function of the interval $(1,2)$ and using dyadic summation,
we conclude that the proportion of non-vanishing is at least
\[
\frac{C_\kappa^2E_\kappa}{D_\kappa}\zeta_F^S(n_\kappa) \frac{\theta_\kappa}{\theta_\kappa+1}-\eps'.
\]
\end{para}
\begin{para}
For $\kappa=1$, direct computation shows that we have as in  \cite[Sec. 9.3.1]{DFDS24}
\[
\frac{C_1^2 E_1}{D_1} =1/\zeta_F^S(2),
\]
so the proportion of non-vanishing is $\ge \frac{\theta_1}{\theta_1+1}-\eps$. This proves \cref{Nonvanishing-sq-free-alt}.
\end{para}
\begin{para} \label{par:non-vanish0}
For $\kappa=0$, an explicit computation shows that
\[
\frac{C_0^2 E_0}{D_0}=\prod_{v\notin S} \bigg(1-q_v^{-2}-q_v^{-3} \frac{a_r(q_v^{-1})}{b_r(q_v^{-1})}\bigg),
\]
where
\[
b_r(y)=\sum_{j=0}^{r-2} \big(1+\tfrac{j(j+1)}{2} \big) y^j +\tfrac{r(r-1)}{2} y^{r-1}+ \sum_{j=r}^{2r-2} \tfrac{j(2r-1-j)}{2} y^j,
\]
is a polynomial with positive coefficients,
$a_3(y)=y^4-y$, and for $r\ge 4$:
\[
a_r(y)=\sum_{j=0}^{r-4} (j+1)y^j-(r-2)\sum_{j=r-2}^{2r-5} y^j + \sum_{j=2r-2}^{3r-5} (3r-4-j) y^j.
\]
Denoting by
$$
\alpha_r=\frac{C_0^2 E_0}{D_0}\zeta_F^S(r), \quad u_v(r)=  \frac{a_r(q_v^{-1})}{b_r(q_v^{-1})(1-q_v^{-2})},
$$
we obtain the formula for $\alpha_r$ in the statement in \cref{Nonvanishing-r-th-power-free-alt}.
It remains to show the claimed bounds on $u_v(r)$, namely $u_v(3)<0$ and $0<u_v(r)<1$ for $r\ge 4$

\end{para}

\begin{para}For $r=3$ it is clear that $a_3(y)<0$ for $0<y<1$, and since $b_3$ has positive coefficients we have $u_v(3)<0$.

For $r\ge 4$, and $0<y\le 1/2$ we have:
\[a_r(y)\geq 1-(r-2)\sum_{j=r-2}^{2r-5}y^j
\geq 1-\frac{(r-2)y^{r-2}}{1-y}\geq 0,
\]
and the inequality is strict in the range under consideration. To prove the upper bound, one checks that
$$\frac{b_r(y)(1-y^2)-a_r(y)}{1-y}$$
is a nonzero polynomial with nonnegative coefficients, so $u_v(r)<1$.
\end{para}

\begin{para} Finally, let $F=\Q(\mu_{2r})$ (which is the same as $\Q(\mu_{r})$ if $r$ odd). We use only the bounds on $u_v$ in the previous paragraph
to prove the statement in \cref{rem:non-vanishing-alt}, namely that:
\[
\prod_{v\notin S} (1-q_v^{-3} u_v(r) )> \begin{cases}
                                          1 & \text{ for $r=3$}\\
                                          2/3 & \text{ for $r=4$}\\
                                          \dfrac{r^2+2r-1}{r(2r+1)} & \text{ for $r>4$}.
                                         \end{cases}
\]
For $r=3$ we have $u_v(3)<0$ and the statement is trivial.

For $r\ge 4$, all the terms in the product are between 0 and 1, so it is enough to prove the stronger inequality
with the product over all finite primes of $F$ unramified
over $\Q$. Let $S_r$ be the set of ramified places, namely those above the places in $r$.
Since $u_v<1$ we have
\[\begin{aligned}
\prod_{v\notin S_r} (1-q_v^{-3} u_v(r) ) &\ge 1-\sum_{v\notin S_r } q_v^{-3}\\
&\ge 1-  \varphi(2r)\sum_{m\geq1}(1+mr)^{-3}
> 1- \frac{\varphi(2r)\zeta(3)}{r^3}> 1-\frac{\zeta(3)}{r^2},
  \end{aligned}
\]
where in the second inequality we used that for $v\notin S_r$ we have $q_v\equiv 1 \pmod {r}$, and the
number of places of any fixed norm is at most $\varphi(2r)$.
Since $\zeta(3)=1.202...<5/4$, for $r\ge 4$
the last number is greater than $11/12$,
which is greater than the lower bound claimed above.
\end{para}

\appendix

\section{Some consequences of the large sieve inequality for \texorpdfstring{$r$}~-th order characters}
In this appendix we give a proof of \cref{prop:estimate at1}.
We need straightforward generalizations of Propositions 4.1 and 4.2 from \cite{DFDS24}
to the case of $r$-th order characters, which are based on the large sieve inequality
of Blomer, Goldmakher and Louvel.

\begin{thm}\emph{\cite[Thm.~1.3]{BGL14}}
    \label{thm:thm3}
    Let $M, N \geq 1/2$, and let $\lambda_{\ai} \in \C$ be a sequence of complex numbers indexed by $\ai \in I(S)$. Then,  for any $\varepsilon > 0$,
    \[
    \sum_{ \substack{\ai \in I(S)\\ |\ai|\leq M}} \mu(\ai)^2 \,
    \bigl| \sum_{ \substack{\b \in I(S)\\ |\b|\leq N}} \mu(\b)^2 \lambda_{\b} \chi_{\ai}(\b) \bigr|^2
    \ll
    (MN)^\varepsilon (M+N +(MN)^{2/3})
    \sum_{  \substack{\b \in I(S)\\ |\b|\leq N}} \mu(\b)^2|\lambda_{\b} |^2.
    \]

\end{thm}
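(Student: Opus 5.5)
This theorem is quoted from \cite[Thm.~1.3]{BGL14}, so in the paper it is simply invoked. Still, here is the route I would follow to reprove it in the present setting, which is the Heath-Brown duality approach adapted to $r$-th order symbols over $F$.

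\textbf{Reductions.} First I would reduce to primitive characters. For square-free $\ai\in I(S)$, the character $\chi_\ai$ is primitive of conductor $\ai\ci_{[\ai]}$. Splitting both $\ai$ and $\b$ into the finitely many classes of $R_\ci$ and using the reciprocity law \eqref{eq: reciprocity}, $\chi_\ai(\b)$ differs from $\chi_\b(\ai)$ only by a root of unity that depends on the classes. So the problem is symmetric in $(\ai,\b)$ up to finitely many twists, and the left side is $\ll$ the quantity $B(M,N)$, meaning the operator norm squared of the bilinear form $\sum \alpha_\ai\beta_\b\chi_\ai(\b)$ over square-free $\ai,\b$. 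Duality then gives $B(M,N)=B(N,M)$.

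\textbf{Two elementary bounds.} The first is $B(M,N)\ll (MN)^\eps(M+N^2)$. I would get this from Pólya--Vinogradov for the Hecke characters $\chi_\ai\overline{\chi}_{\ai'}$, of conductor $\ll MM'$, after expanding the square on the $\b$ side and separating the diagonal. The second comes from Cauchy--Schwarz plus multiplicativity: $B(M,N)\ll B(M/k,\cdot)$-type recursions. Following Heath-Brown, I would write $\b=\b_1\b_2$, or raise the inner sum to an $r$-th power, so that $\chi_\ai(\b)^r$ becomes trivial. This gives a relation of the shape $B(M,N)\ll M^{\eps}\, B(M,N^k)^{1/k}$-ish. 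Here the $r$-th power structure replaces the quadratic trick $\chi(\b)^2=1$.

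\textbf{Bootstrapping, the main obstacle.} The hardest step is the iteration. One sets $\theta=\inf\{t: B(M,N)\ll (MN)^{\eps}(M+N+(MN)^{t})\}$ and combines the recursion with the Pólya--Vinogradov and duality bounds to force $\theta\le 2/3$. The exponent $2/3$, rather than Heath-Brown's $1/2$, comes from the Gauss-sum and duality structure at order $r\ge3$. I would also need to track the square-free and $r$-th-power conditions through the multiplicative splitting, and handle the unit and class-group twists uniformly over $F$. This is exactly what \cite{BGL14} carries out, so I would follow their argument.
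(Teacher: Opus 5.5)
The paper gives no proof of this statement. It is quoted from \cite[Thm.~1.3]{BGL14} and used as a black box in the appendix, so your first sentence already does exactly what the paper does. The reproof you outline, however, would not work, and the problem is in the step you present as elementary, not in the final bootstrapping. The relation $B(M,N)\ll M^{\eps}B(M,N^k)^{1/k}$ is false. For $N=1$ the inner sum is the single term $\b=\OO$, so $B(M,1)\asymp M$, and the relation would give $M\ll M^{\eps+1/k}$. What H\"older actually gives, with $S_\ai=\sum_\b\mu^2(\b)\lambda_\b\chi_\ai(\b)$, is $\sum_\ai\mu^2(\ai)|S_\ai|^2\le M^{1-1/k}\big(\sum_\ai\mu^2(\ai)|S_\ai|^{2k}\big)^{1/k}$. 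Moreover, $S_\ai^k=\sum\lambda_{\b_1}\cdots\lambda_{\b_k}\chi_\ai(\b_1\cdots\b_k)$ involves products of different ideals, not $\chi_\ai(\b)^r$. Its coefficients are not supported on square-free ideals, and their $\ell^2$-norm is not controlled by $\|\lambda\|_2^{2k}$ in general. So this step leaves the class of forms defining $B$ and does not contract. What remains is P\'olya--Vinogradov plus duality, and that gives only $B(M,N)\ll(MN)^\eps\min(M+N^2,N+M^2)$. At $M=N$ this is $N^{2+\eps}$, against $N^{4/3+\eps}$ in the theorem. Since this function is itself symmetric in $(M,N)$, no bootstrapping between these two inputs can improve it. (Also, P\'olya--Vinogradov applied to $\chi_\ai\overline{\chi}_{\ai'}$, of conductor $\ll M^2$, gives $N+M^2$. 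The bound $M+N^2$ you state comes from expanding in $\b$ and using reciprocity; by duality both hold.)

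The missing idea is Poisson summation in the outer variable, which is the engine of Heath-Brown's method. After expanding the square in $\b$, use reciprocity to view $\ai\mapsto\chi_\ai(\b)\overline{\chi_\ai(\b')}$ as a character of conductor $\approx|\b\b'|\approx N^2$, summed over $\ai$ of norm about $M$. Poisson turns this into a dual sum of length $K\approx N^2/M$, with a factor $M/N$, involving the Gauss sums $g(k;\chi_\b\overline{\chi}_{\b'})$. For coprime $\b,\b'$ and $(k,\b\b')=1$ these factor by \cref{lem-Hecke-prod-G-sum-comp} and \eqref{eq: reciprocity}. The result is $\overline{\chi}_\b\chi_{\b'}(k)\,G(\chi_\b)\overline{G(\chi_{\b'})}$ times a root of unity depending only on classes in $R_\ci$. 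Since $|G(\chi_\b)|=1$, the dual sum is again a large-sieve form. Its moduli are $k$ of norm $\ll N^2/M$, and its coefficients $\lambda_\b G(\chi_\b)$ have the same $\ell^2$-norm. Schematically this gives $B(M,N)\lesssim M+\tfrac MN B(N^2/M,N)$, plus the contribution of exceptional frequencies ($k$ sharing factors with $\b\b'$, or not $r$-th power free). This recursion, together with $B(M,N)=B(N,M)$, is what closes by induction. Your account of the exponent is also off. Heath-Brown's quadratic large sieve has no $(MN)^{1/2}$ term at all; its bound is $(MN)^\eps(M+N)$. Heuristically, in the cubic case the $(MN)^{2/3}$ is the contribution of the roughly $K^{1/3}$ frequencies $k$ that are cubes, where the dual character is principal: $\tfrac MN\cdot(N^2/M)^{1/3}\cdot N=(MN)^{2/3}$.
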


We first collect some standard facts. Let $F$ be a totally imaginary number field of degree $d$ with ring of integers $\OO$ containing the $r$-th roots of unity.
Let $S$ be the finite set of places fixed in Section~\ref{sec:notation}, and
denote by $I$ the set of all ideals in $\OO$.

\subsection{Hecke \texorpdfstring{$L$}~-functions}

Let $\mathfrak{m} \subset \OO$ be an ideal and let $\psi \pmod{\mathfrak{m}}$ be a Hecke character of conductor
$\mathfrak{c}_{\psi} \in I$ and trivial infinity type. The Hecke $L$-function attached to $\psi$ is given by
\begin{equation} \label{Lspsi}
L(s,\psi):=\sum_{0 \neq \mathfrak{n} \in I} \frac{\psi(\mathfrak{n})}{|\mathfrak{n}|^{s}}, \qquad \Re(s)>1.
\end{equation}
We set $\psi(\mathfrak{n})=0$ whenever $\mathfrak{n}$ and $\mathfrak{m}$ are not coprime.  The gamma factor from \cite[(5.3)]{IK} is
$$
\gamma(s)=\left(\pi^{-s} \Gamma\left(\frac{s}{2}\right)\Gamma\left(\frac{s+1}{2}\right)\right)^{d/2}=\left(2\pi^{1/2}(2\pi)^{-s}\Gamma(s)\right)^{d/2}.
$$

The completed Hecke $L$-function of $\psi$ is
\begin{equation} \label{completed}
\Lambda(s,\psi):= \left(\frac{\Gamma(s)}{(2 \pi)^{s}}\right)^{d/2} (|d_{F}| |\mathfrak{c}_\psi|)^{s/2}  L(s,\psi), \qquad s \in \C.
\end{equation}
The completed $L$-function $\Lambda(s,\psi)$ is entire, provided that $\psi$ is primitive and
$\mathfrak{c}_{\psi} \neq \OO$. Furthermore, it satisfies the functional equation
\begin{equation*}
\Lambda(s,\psi)=G(\psi) \Lambda(1-s,\overline{\psi}).
\end{equation*}


\subsection{Approximate functional equations} Let $A\in \Z_{\geq3}$, and  $\displaystyle C(u)=\left(\cos\frac{\pi u}{4A}\right)^{-4dA}$. The function $C(u)$ is even, holomorphic in $|\Re(u)|<4$, and  satisfies
\begin{enumerate}[label=(\alph*)]
\item $C$ is bounded in $|\Re(u)|<4$;
\item $\overline{C(u)}=C(\bar{u})$;
\item $C(0)=1$.
\end{enumerate}
For  $\Re(s)\geq -3$ let $V_s: \R_{>0}\to \C$ be defined by
\begin{equation} \label{Vsdef}
V_s(y):=\frac{1}{2 \pi i} \int_{(3)} (2 \pi)^{-dw/2} y^{-w}  C(w) \left(\frac{\Gamma(s+w)}{\Gamma(s)}\right)^{d/2} \frac{dw}{w}.
\end{equation}
Note that we have $\overline{V_s(y)}=V_{\overline{s}}(y)$.

\begin{prop}\label{decay_lemma} \emph{\cite[Prop.~5.4]{IK}} Let $q_\infty=\left((|s|+3)(|s+1|+3)\right)^{d/2}$ and assume $\Re(s)\geq 3\alpha>0$. Then, for all $y>0$,
\begin{enumerate}[label=(\roman*)]
\item $\displaystyle y^aV_s^{(a)}(y)\ll_{A,\alpha,a}  \left(1+\frac{y}{q_\infty^{1/2}}\right)^{-A} \lesssim \left(1+\frac{y}{(|s|+1)^{d/2}} \right)^{-A}$,
\item $\displaystyle y^aV_s^{(a)}(y)=\delta_a +O_{A,\alpha,a} \left(\frac{y^\alpha}{q_\infty^{\alpha/2}}\right)=\delta_a +O_{A,\alpha,a} \left( \frac{y^\alpha}{(|s|+1)^{d\alpha/2}} \right)$,
\end{enumerate}
where $\delta_0=1$, $\delta_a=0$, if $a>0$.
\end{prop}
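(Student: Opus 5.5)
The plan is to follow the classical argument of \cite[Prop.~5.4]{IK}, moving the contour in the integral \eqref{Vsdef} and controlling the integrand by Stirling's formula together with the decay of $C(u)$.

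\textbf{Derivatives and the basic estimate.} First I would differentiate under the integral sign:
\[
y^aV_s^{(a)}(y)=\frac{1}{2\pi i}\int_{(3)} (2\pi)^{-dw/2}y^{-w}\,p_a(w)\,C(w)\Big(\frac{\Gamma(s+w)}{\Gamma(s)}\Big)^{d/2}\frac{dw}{w},
\qquad p_a(w)=\prod_{j=0}^{a-1}(-w-j).
\]
Here $p_a$ is a polynomial of degree $a$, with $p_0=1$, and $p_a(0)=0$ for $a>0$.

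The key input is a uniform Stirling estimate: for $\Re(s)\ge 3\alpha$, $w=u+it$ and $\Re(s+w)\ge 2\alpha$ with $u$ in a fixed compact set,
\[
\Big|\frac{\Gamma(s+w)}{\Gamma(s)}\Big|\ll_{\alpha,u}(|s|+3)^{u}\,e^{\pi|t|/2}.
\]
Raised to the power $d/2$, this exponential growth is far weaker than the decay $|C(u+it)|\ll e^{-\pi d|t|}(1+o(1))$. That decay comes from $|\cos(\pi(u+it)/4A)|^{-4dA}\asymp e^{-\pi d|t|}$ for $|t|$ large, on any line $|u|\le \min(4,2A)-\delta$, where $C$ is holomorphic since the first zero of the cosine is at $u=2A\ge 6$. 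Hence on every vertical line in this strip the integrand is absolutely integrable, uniformly in $s$, and it carries the factor $|y^{-w}|(|s|+3)^{du/2}\asymp (y/q_\infty^{1/2})^{-u}$.

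\textbf{Proof of (i).} For $y\le q_\infty^{1/2}$ I would keep the line $\Re(w)=3$ (or any fixed small positive line); the integral is then $O(1)$ times $(y/q_\infty^{1/2})^{-3}$, and it is trivially $O(1)$ after moving to $\Re(w)=\alpha$ without crossing any pole. For $y>q_\infty^{1/2}$ I would shift to the right, to $\Re(w)=A'$ with $A'$ as large as the holomorphy strip of $C$ allows. No poles are crossed there, since $\Gamma(s+w)$ has none for $\Re(w)>0$. This gives the bound $(y/q_\infty^{1/2})^{-A'}$. Combining the two ranges yields the bound $(1+y/q_\infty^{1/2})^{-A}$; if the strip of $C$ is narrower than needed, I would adjust the constant $A$ in $C$ accordingly. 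The second inequality in (i) is just $q_\infty\asymp(|s|+1)^{d}$.

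\textbf{Proof of (ii).} Here I would shift the contour left to $\Re(w)=-\alpha$. Since $\Re(s+w)\ge 2\alpha>0$ on the way, the only singularity crossed is at $w=0$. Its residue is $p_a(0)C(0)=\delta_a$, because $C(0)=1$ and for $a>0$ the factor $p_a$ cancels the pole. The remaining integral is $\ll (y/q_\infty^{1/2})^{\alpha}$ by the estimate above with $u=-\alpha$.

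\textbf{Main obstacle.} The delicate point is the uniformity of the Stirling ratio bound in $s$ for $\Re(s)$ only bounded below by $3\alpha$, including small $|s|$. This has to hold both on lines with $u<0$ (near $\Gamma$'s poles) and when $|t|$ is comparable to $|\Im s|$. I would handle it by splitting into $|t|\le |s|/2$, where the ratio is $\asymp |s|^{u}$, and $|t|>|s|/2$, where the crude bound $e^{\pi|t|/2}(1+|t|)^{|u|}$ suffices because $C$ gives exponential decay.
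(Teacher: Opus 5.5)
Your route is the one behind \cite[Prop.~5.4]{IK}, which the paper simply cites: differentiate under the integral, shift right for large $y$, shift left across $w=0$ for (ii), and let the decay of $C$ beat the gamma ratio. Part (ii) is correct as you wrote it.

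\textbf{The gap.} The step that fails is the range $y\le q_\infty^{1/2}$ in (i). On any line $\Re(w)=c>0$, your own estimate bounds the integral by a constant times $(y/q_\infty^{1/2})^{-c}$. This is $\ge 1$ and blows up as $y\to 0$, so neither $\Re(w)=3$ nor $\Re(w)=\alpha$ gives a bound uniform in small $y$. The boundedness of $V_s(y)$ as $y\to 0$ reflects the residue $1$ at $w=0$, and no absolute-value estimate on a line to the right of that pole can produce it.

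The fix is to take this range from (ii). For $y\le q_\infty^{1/2}$ you have $y^aV_s^{(a)}(y)=\delta_a+O\big((y/q_\infty^{1/2})^{\alpha}\big)=O(1)$, while $(1+y/q_\infty^{1/2})^{-A}\ge 2^{-A}$ there. So (i) should be proved by shifting to $\Re(w)=A$ when $y>q_\infty^{1/2}$, and by (ii) when $y\le q_\infty^{1/2}$.

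\textbf{The Stirling bound.} Your bound $\ll_{\alpha,u}(|s|+3)^u e^{\pi|t|/2}$ is not uniform as stated. For $s=3\alpha+i\tau$ and $w=-\alpha-i\tau$, you get $|\Gamma(s+w)/\Gamma(s)|=\Gamma(2\alpha)/|\Gamma(3\alpha+i\tau)|\asymp_\alpha|\tau|^{1/2-3\alpha}e^{\pi|\tau|/2}$. This exceeds $(|s|+3)^{-\alpha}e^{\pi|\tau|/2}$ by a factor $|\tau|^{1/2-2\alpha}$. Your ``crude'' bound $(1+|t|)^{|u|}e^{\pi|t|/2}$ and the claim $\asymp|s|^u$ for $|t|\le|s|/2$ are likewise off, by powers of $|t|$ and by factors $e^{c|t|}$ respectively. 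What is true, and suffices, is
\[
\Big|\frac{\Gamma(s+w)}{\Gamma(s)}\Big|\ll_{\alpha,u}(|s|+3)^{u}(1+|t|)^{|u|+1}e^{\pi|t|/2}.
\]
To see this, factor the ratio through $\Gamma(s+it)$.
\begin{itemize}
\item Horizontal shift: Stirling plus compactness give $|\Gamma(s+w)/\Gamma(s+it)|\ll_{\alpha,u}(|s+it|+1)^u\ll(|s|+3)^u(1+|t|)^{|u|}$.
\item Vertical shift: use $\Im\psi(x+iy)=\sum_{n\ge0}\frac{y}{(n+x)^2+y^2}$. The $n=0$ term contributes $|s|/|s+it|\le 1+|t|/\Re(s)$, and the remaining terms contribute at most $e^{\pi|t|/2}$.
\end{itemize}
After raising to the power $d/2$, the polynomial loss is absorbed by $|C(u+it)|\ll e^{-\pi d|t|}$.

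\textbf{The contour in (i).} Do not restrict to $|u|\le\min(4,2A)-\delta=4-\delta$; that only gives decay exponents below $4$. Nor should you ``adjust $A$ in $C$'', since that changes $V_s$ itself. $C$ is holomorphic for $|\Re(w)|<2A$, and on $\Re(w)=A$ one has $|\cos(\tfrac{\pi}{4}+\tfrac{i\pi t}{4A})|^2=\tfrac12+\sinh^2(\tfrac{\pi t}{4A})$. Hence $|C(A+it)|\ll_A e^{-\pi d|t|}$, and you can shift exactly to $\Re(w)=A$, which yields the exponent $A$ in the statement.
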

\begin{thm} \emph{\cite[Thm.~5.3]{IK}} \label{afe}
Let   $X>0$ and $\OO\neq \q\in I(S)_0$. For $s$ such that $0\leq \Re(s)\leq 1$ we have
\begin{equation*}
L(s,\chi_\q)=\sum_{0 \neq \mathfrak{n} \in I} \frac{\chi_\q(\mathfrak{n})}{|\mathfrak{n}|^s} V_s \left( \frac{|\mathfrak{n}|}{X \sqrt{|d_F| |\ci_{\chi_\q}|}}   \right) +\varepsilon(\q,s) \sum_{0 \neq \mathfrak{n} \in I} \frac{\overline{\chi_\q(\mathfrak{n})}}{|\mathfrak{n}|^{1-s}}
V_{1-s} \left( \frac{X |\mathfrak{n}|}{\sqrt{|d_F| |\ci_{\chi_\q}|}} \right),
\end{equation*}
where
$$\varepsilon(\q,s)= (|d_F| |\ci_{\chi_\q}|)^{1/2-s} (2 \pi)^{d(2s-1)/2} \left(\frac{\Gamma(1-s)}{\Gamma(s)}\right)^{d/2}G(\chi_\q).$$
\end{thm}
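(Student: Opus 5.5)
This is the standard approximate functional equation (\cite[Thm.~5.3]{IK}) specialized to $\psi=\chi_\q$. I would derive it from the contour-shift argument applied to the completed $L$-function $\Lambda(s,\chi_\q)$ and its functional equation $\Lambda(s,\chi_\q)=G(\chi_\q)\Lambda(1-s,\overline{\chi}_\q)$.

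\textbf{Setup.} Since $\q\ne\OO$ is $r$-th power-free, $\chi_\q$ is nontrivial of conductor $\q_1\ci_{\sss E}$ and trivial infinity type (as $r>2$). I would first reduce to the primitive character inducing $\chi_\q$. The series $\sum\chi_\q(\mathfrak n)|\mathfrak n|^{-s}$ in the statement is read with $\chi_\q$ primitive, so $\Lambda(s,\chi_\q)$ is entire.

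\textbf{Contour integral.} For $0\le\Re(s)\le1$, consider
\[
I(X,s)=\frac{1}{2\pi i}\int_{(3)}\Lambda(s+w,\chi_\q)X^{w}C(w)\frac{dw}{w}.
\]
The conditions on $C$ justify the contour manipulations: $C$ is holomorphic and bounded for $|\Re(u)|<4$, and $C(0)=1$. Moreover, $\Lambda$ decays rapidly in vertical strips by Stirling's formula, because of the factor $\Gamma(s+w)^{d/2}$.

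\textbf{Shift and functional equation.} I would move the contour to $\Re(w)=-3$, picking up the residue $\Lambda(s,\chi_\q)$ at $w=0$. On the new line, I would apply the functional equation and substitute $w\mapsto -w$, using that $C$ is even. This gives
\[
\Lambda(s,\chi_\q)=I(X,s)+G(\chi_\q)\,\frac{1}{2\pi i}\int_{(3)}\Lambda(1-s+w,\overline{\chi}_\q)X^{-w}C(w)\frac{dw}{w}.
\]

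\textbf{Expanding the integrals.} Both integrands lie in $\Re>1$, where the Dirichlet series converge absolutely. I would expand them termwise and divide by the gamma-and-conductor factor $\big(\Gamma(s)(2\pi)^{-s}\big)^{d/2}(|d_F||\ci_{\chi_\q}|)^{s/2}$. The first integral then produces the weight $V_s$ evaluated at $|\mathfrak n|/(X\sqrt{|d_F||\ci_{\chi_\q}|})$, matching~\eqref{Vsdef}. The second produces $V_{1-s}$ evaluated at $X|\mathfrak n|/\sqrt{\cdots}$. It also leaves a prefactor equal to the ratio of the archimedean and conductor factors at $1-s$ and at $s$, times $G(\chi_\q)$. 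That prefactor is exactly $\varepsilon(\q,s)$.

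\textbf{Expected obstacle.} There is no real difficulty here; the main care is bookkeeping. One must get the normalization of the $X$-scaling and of the factors $(2\pi)^{-dw/2}$ right. One must also check that $\Re(s)\ge0$ keeps $\Gamma(s+w)/\Gamma(s)$ pole-free on the shifted contours, except at $s=0$, which is handled by continuity.
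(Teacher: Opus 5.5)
Your proposal is correct and is exactly the standard contour-shift argument behind \cite[Thm.~5.3]{IK}, which the paper cites rather than reproves. Three ingredients make it work, as you have them: the evenness, boundedness and normalization $C(0)=1$ of $C$, which justify the shift to $\Re(w)=-3$ and the reflection $w\mapsto -w$; reading $\chi_\q$ as primitive, which is the right interpretation of the statement; and $\q\neq\OO$, which forces a nontrivial conductor so that $\Lambda(s,\chi_\q)$ is entire.

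One small bookkeeping addendum: besides $s=0$, the point $s=1$ must also be read as a limit. There the factor $\Gamma(1-s)^{d/2}$ in $\varepsilon(\q,s)$ has a pole, which cancels the zero of $V_{1-s}$ coming from $1/\Gamma(1-s)^{d/2}$, and your continuity argument covers this case verbatim.
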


Let
$d(\mathfrak{n})$, $0 \neq \mathfrak{n} \in I$,
be the divisor function on ideals.
For  $\varepsilon>0$, we have
$
d(\mathfrak{n}) \ll_{\varepsilon} |\mathfrak{n}|^{\varepsilon} $ for all $\mathfrak{n}$.
\begin{lem} \label{device}  \emph{\cite[Lemma~3.9]{DFDS24}}
Let $0\neq \q \in I(S)_0$,
$U>0$, and $s\in \C$ with $\Re(s) \in (0,1]$. Then, for any $0 \leq \alpha < \Re(s) \leq 1$ we have

\begin{align*}
L(s,\chi_\q)^2
 =\sum_{0 \neq \mathfrak{n} \in I} d(\mathfrak{n}) \chi_\q(\mathfrak{n}) |\mathfrak{n}|^{-s} e^{-|\mathfrak{n}|/U}
-\frac{1}{2 \pi i} \int_{(\alpha)} L(w,\chi_\q)^2 \Gamma(w-s) U^{w-s} dw.
 \end{align*}
 \end{lem}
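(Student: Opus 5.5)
The plan is to follow the standard smoothing argument of \cite[Lemma~3.9]{DFDS24}, an inverse Mellin transform combined with a contour shift. The starting point is the Cahen--Mellin identity
\[
e^{-y}=\frac{1}{2\pi i}\int_{(c)}\Gamma(z)\,y^{-z}\,dz,\qquad c>0,\ y>0.
\]

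\textbf{Step 1: the absolutely convergent integral.} Choose $c$ so that $\Re(s)+c>1$. Then $L(s+z,\chi_\q)^2=\sum_{\mathfrak n}d(\mathfrak n)\chi_\q(\mathfrak n)|\mathfrak n|^{-s-z}$ converges absolutely on the line $\Re(z)=c$. Inserting $y=|\mathfrak n|/U$ in the identity and exchanging sum and integral (justified by absolute convergence and the exponential decay of $\Gamma$ on vertical lines) gives
\[
\sum_{\mathfrak n} d(\mathfrak n)\chi_\q(\mathfrak n)|\mathfrak n|^{-s}e^{-|\mathfrak n|/U}
=\frac{1}{2\pi i}\int_{(c)} L(s+z,\chi_\q)^2\,\Gamma(z)\,U^{z}\,dz .
\]

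\textbf{Step 2: the contour shift.} Substitute $w=s+z$, which turns the integrand into $L(w,\chi_\q)^2\Gamma(w-s)U^{w-s}$ on the line $\Re(w)=\Re(s)+c$. Then move the line to $\Re(w)=\alpha$ with $0\le\alpha<\Re(s)$.
\begin{itemize}
\item Since $\q\ne\OO$ is $r$-th power-free, $\chi_\q$ is a nontrivial character. Hence $L(w,\chi_\q)$ is entire: its completion is entire by the discussion in the appendix, and the archimedean gamma factor has no zeros.
\item The only pole crossed is therefore that of $\Gamma(w-s)$ at $w=s$. Its residue is $L(s,\chi_\q)^2U^0=L(s,\chi_\q)^2$.
\item No other poles of $\Gamma(w-s)$ are crossed, because $\Re(w-s)>-\Re(s)+\alpha>-1$ on the region swept out, given $\alpha\ge0$ and $\Re(s)\le1$.
\end{itemize}
Rearranging yields exactly the stated identity, with the minus sign in front of the integral along $(\alpha)$.

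\textbf{Step 3: horizontal segments (the main obstacle).} One must check that the horizontal segments vanish as $|\Im w|\to\infty$, and that the integral on $(\alpha)$ converges. Here $L(w,\chi_\q)$ grows at most polynomially in $|\Im w|$, uniformly for $\alpha\le\Re(w)\le\Re(s)+c$; this follows from the functional equation and Phragmén--Lindelöf, i.e.\ the convexity bound. Meanwhile $\Gamma(w-s)$ decays exponentially by Stirling. So the segments tend to zero and the shift is justified. The polynomial dependence on $|\q|$ is irrelevant, since $\q$ is fixed.
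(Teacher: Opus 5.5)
Your argument is the standard one, and it is the argument behind \cite[Lemma~3.9]{DFDS24}, which the paper cites without proof. It consists of Mellin inversion of $e^{-y}$, then a shift from $\Re(w)=\Re(s)+c$ to $\Re(w)=\alpha$ past the simple pole of $\Gamma(w-s)$ at $w=s$, with convexity and Stirling controlling the horizontal segments. Two points need correcting.

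\textbf{The corner case $\alpha=0$, $\Re(s)=1$.} Your chain $\Re(w-s)>\alpha-\Re(s)>-1$ is only $\Re(w-s)\ge \alpha-\Re(s)\ge -1$. In this corner, the pole of $\Gamma(w-s)$ at $w=s-1$ lies \emph{on} the line $\Re(w)=0$.
\begin{itemize}
\item For $s=1$ this is harmless. $L(w,\chi_\q)$ has a trivial zero of order at least $d/2\ge 1$ at $w=0$, coming from the pole of the archimedean factor $\Gamma(w)^{d/2}$. So $L(w,\chi_\q)^2$ kills the simple pole of $\Gamma(w-1)$.
\item For $s=1+it_0$ with $t_0\neq 0$, one has in general $L(it_0,\chi_\q)\neq 0$. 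The integrand then has a genuine simple pole on the contour, so the integral over $(0)$ does not converge.
\end{itemize}
This corner must therefore be excluded, i.e.\ one needs $\alpha>\Re(s)-1$. This costs nothing in the paper, which applies the lemma only with $\alpha=1/2$ and $1/2<\Re(s)\le 1$.

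\textbf{The hypothesis $\q\neq\OO$.} You use $\q\neq\OO$, whereas the statement as printed only assumes $\q\neq 0$. Your reading is the necessary one. For $\q=\OO$ the character $\chi_\q$ is trivial, so $L(w,\chi_\q)^2$ has a double pole at $w=1$. Your shift crosses it, since $\Re(s)+c>1>\alpha$, and its residue would have to be added. Your reading is also consistent with the application in Proposition~\ref{secondmoment}, where the sum is restricted to $\q\h\neq\OO$.
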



\subsection{Second moment bounds} \label{second_moment_section}

We next introduce the notation used in this subsection in connection with the large sieve.
\begin{notation}
Let $S$ be the finite set of places fixed in Section~\ref{sec:notation}.
Denote by $I^S\subset I$ the ideals supported on $S$ and by $I(S)$ the ideals coprime
to the finite places in $S$. For $R>0$, let $I_R:= \big \{\b\in I ~\big|~ |\b| \leq R \}$ and recall from Section~\ref{sec:notation} that $I(S)_0=\{\ai\in I(S)~|~\ai=\ai_0\}$. We decompose $\b\in I$ as  $\b=\b_S\bt$, where  $\b_S\in I^S$,  $\bt\in I(S)$. We write $\bt=\bt_{\underline{1}}\bt_{\underline{2}}^2$, i.e. $\bt_{\underline{1}}$ is the square-free part of $\bt$. If $\b\in I(S)$, then $\bt=\b$, and if $\b\in I(S)_0$, then $\bt=\b=\b_0$. Note that the notation  $\bt_{\underline{1}}$ differs from $\bt_1$, which denotes the square-free kernel of $\bt_0$.

We use $\b\sim B$ to  mean $B<|\b|\leq 2B$, and $x \asymp N$ to mean that there exist constants $c_1,c_2>0$ such that
$c_1 N \leq x \leq c_2 N$.

We decompose $\q\in I(S)_0$ as  $\q=\q_\sf\q_\full$, with  $\q_\sf, \q_\full\in I(S)_0$ relatively prime, $\q_\sf$ square-free and $\q_\full$ square-full. 
For $Q_1, Q_2 \geq 1/2$, we denote $Q=Q_1 Q_2$. Let
\begin{equation*}
\Eu{F}(Q_1, Q_2):= \big \{\q=\q_\sf \q_\full  \in I(S)_0~\Big|~ |\q_\sf| \asymp Q_1, ~ |(\q_\full)_1|\asymp Q_2 \big \}.
\end{equation*}
\end{notation}

\begin{prop}\label{cubic_Dirichlet_prop_var} \emph{\cite[Prop.~4.1]{DFDS24}}
     Let $\varepsilon, R>0$, $Q_1, Q_2>1/2$ and  $\varphi: I\to \C$, such that  $|\varphi(\mathfrak{b})| \ll |\mathfrak{b}|^\varepsilon$, and $\supp(\varphi)\subseteq I_R$.
     For $ \q \in I(S)_0$, denote
    \begin{equation*}
       \displaystyle \Eu{R}(\q) := \sum_{0 \neq \b \in I_R } \frac{\varphi(\mathfrak{b}) \chi_\q(\mathfrak{b})}{|\mathfrak{b}|^{1/2}}.
    \end{equation*}
Then,  we have
    \begin{equation} \label{R_cubic_large_sieve}
    \sum_{\q \in \Eu{F}(Q_1, Q_2)} |\Eu{R}(\q)|^2 \ll_{\varepsilon} (QR)^{\varepsilon}Q_2 \left(Q_1 + R + (Q_1R)^{2/3} \right).
    \end{equation}
\end{prop}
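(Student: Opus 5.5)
We reduce the sum over $\Eu{F}(Q_1,Q_2)$ to a sum over square-free moduli, to which the large sieve of \cref{thm:thm3} applies. Write $\q=\q_\sf\q_\full$. Since $\q_\full$ is square-full and $r$-th power-free, its square-free kernel $(\q_\full)_1$ determines $\q_\full$ up to $O_\eps(|(\q_\full)_1|^\eps)$ choices of exponents in $\{2,\dots,r-1\}$. So we fix $\q_\full$ with $|(\q_\full)_1|\asymp Q_2$; there are $\ll Q_2^{1+\eps}$ of these. For each one we bound
\[
\sum_{\substack{\q_\sf \text{ square-free}\\ |\q_\sf|\asymp Q_1,\ (\q_\sf,\q_\full)=1}} |\Eu{R}(\q_\sf\q_\full)|^2 .
\]
Summing over $\q_\full$ then produces the factor $Q_2$.

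For fixed $\q_\full$, multiplicativity of the symbol in the top argument gives $\chi_{\q_\sf\q_\full}(\b)=\chi_{\q_\sf}(\b)\chi_{\q_\full}(\b)$, up to a bounded twist coming from the class representatives $E\in\EE$. We make this precise by splitting $\q_\sf$ and $\b$ according to their classes in $R_\ci$ and using the reciprocity law \cref{lem: reciprocity}; this costs only $O(1)$ subsums with unimodular coefficients. The factor $\chi_{\q_\full}(\b)$ is absorbed into new coefficients $\varphi'(\b)=\varphi(\b)\chi_{\q_\full}(\b)$, which are still $\ll|\b|^\eps$ and supported on $I_R$. Next we write $\b=\b_S\bt$ with $\bt=\bt_{\underline1}\bt_{\underline2}^2$.
\begin{itemize}
\item The $S$-part $\b_S$ ranges over $\ll R^\eps$ ideals of norm $\le R$, and on it $\chi_{\q_\sf}$ depends only on the class data modulo $\ci$, so it is handled in the same way as the class splitting.
\item For the factor $\bt_{\underline2}^2$, $\chi_{\q_\sf}(\bt_{\underline2}^2)$ is a character value of $\bt_{\underline2}$. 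We therefore further decompose $\bt_{\underline2}$ into its square-free kernel and remaining powers and pull $\chi_{\q_\sf}$ of these outside the inner structure.
\end{itemize}

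The cleanest route, following \cite[Prop.~4.1]{DFDS24}, is to apply Cauchy--Schwarz over the "non-square-free" parts of $\b$, say $\b=\b'\ci'$ with $\b'$ square-free and $\ci'$ ranging over the square-full and $S$-parts. After that step, \cref{thm:thm3} applies to
\[
\sum_{\b' \text{ sq-free},\ |\b'|\le R/|\ci'|} \frac{\varphi'(\b'\ci')\chi_{\q_\sf}(\b')}{|\b'|^{1/2}}
\]
with weights $\chi_{\q_\sf}(\ci')$ factored out of the inner sum. The large sieve gives
\[
\ll (Q_1R)^\eps\bigl(Q_1+R'+(Q_1R')^{2/3}\bigr)\sum_{|\b'|\le R'}|\b'|^{-1+\eps}
\ll (Q_1R)^{2\eps}\bigl(Q_1+R+(Q_1R)^{2/3}\bigr),
\]
where $R'=R/|\ci'|$. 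Finally we sum over $\ci'$ with weight $|\ci'|^{-1/2}$ inside the Cauchy--Schwarz. The sum $\sum_{\ci' \text{ square-full}}|\ci'|^{-1/2+\eps}$ diverges only logarithmically-in-power, namely as $\ll R^{\eps}$ when restricted to $|\ci'|\le R$, because there are $\ll X^{1/2}$ square-full ideals of norm $\le X$. So this sum contributes at most $R^{2\eps}$.

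The main obstacle is the Cauchy--Schwarz step. Splitting with $|\ci'|^{-1/2}$ weights would lose a factor $\sum|\ci'|^{-1/2}$, which is only $\ll R^\eps$ for square-full $\ci'$; this is harmless but has to be checked carefully. There is also the reduction of $\chi_{\q_\sf\q_\full}$ to a product of a character in $\q_\sf$ and a bounded twist depending on $\b$, uniformly across classes $E\in\EE$. For this I would first try the factorization $\chi_\q=\widetilde\chi_\q\psi_E$ from \S\ref{sec: res symbols} together with orthogonality over $\widehat R_\ci$, so that all twists become finitely many Hecke characters of conductor dividing $\ci$, which are absorbed into the coefficients. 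Collecting the pieces gives \eqref{R_cubic_large_sieve}.
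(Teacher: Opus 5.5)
Your proposal is correct and takes essentially the same route as the paper. You fix $\q_\full$ (at most $Q_2^{1+\varepsilon}$ choices), apply Cauchy--Schwarz over the non-square-free part of $\b$ with weights $|\cdot|^{-1/2}$ (costing only $R^{\varepsilon}$), and then apply \cref{thm:thm3} to the square-free part with $\chi_{\q_\full}$ absorbed into the coefficients. The differences are cosmetic: you split $\b$ into coprime square-free and square-full parts rather than $\b_S\bt_{\underline 1}\bt_{\underline 2}^2$ with dyadic ranges, and you explicitly handle the class-dependent twist relating $\chi_{\q_\sf\q_\full}$ to $\chi_{\q_\sf}\chi_{\q_\full}$ by splitting $\q_\sf$ into $O(1)$ classes, a point the paper's proof uses without comment.
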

\begin{proof} The statement is trivial for $R<1$ and we henceforth assume that $R\geq 1$.
    Write $\b=\b_S\bt_{\underline{1}}\bt_{\underline{2}}^2$. Dyadically partition  $\bt_{\underline{1}} \sim B_1$ and $\bt_{\underline{2}} \sim B_2$, and write
    \begin{align} \label{R_dyadic}
        \Eu{R}(\q) = \sum_{\substack{B_1, B_2 \text{ dyadic} \\ B_1,B_2\gg 1 \\ B_1 B_2^2 \ll R}} \  \sum_{\b_S\in I^S} \frac{1}{|\b_S|^{1/2}}  \mathop{\sum \sum}_{\substack{ \bt_{\underline{1}},\bt_{\underline{2}} \in I(S) \\ |\bt_{\underline{1}}| \sim B_1\\ \ |\bt_{\underline{2}}| \sim B_2}}  \mu^2(\bt_{\underline{1}}) \frac{\varphi(\b_S \bt_{\underline{1}} \bt_{\underline{2}}^2) \chi_\q(\b_S \bt_{\underline{1}} \bt_{\underline{2}}^2)}{\left|  \bt_{\underline{1}} \right|^{1/2} \left| \bt_{\underline{2}}\right| }.
    \end{align}

Then, $\Eu{R}(\q)$ is the following sum over data $\lambda=(\b_S,B_1,B_2, \bt_{\underline{2}})$, where $\b_S\in I^S$, $B_1,B_2$  dyadic,  $B_1,B_2\gg 1$, $B_1 B_2^2 \ll R$, $|\bt_{\underline{2}}| \sim B_2$
\begin{equation*}
 \sum_{\lambda}     \frac{\chi_\q(\b_S\bt_{\underline{2}}^2)}{\big|\b_S\big|^{1/4} \left|\bt_{\underline{2}}\right|^{1/2}}  \cdot   \sum_{ |\bt_{\underline{1}}| \sim B_1}  \frac{\mu^2(\bt_{\underline{1}})\varphi(\b_S \bt_{\underline{1}} \bt_{\underline{2}}^2)\chi_\q(\bt_{\underline{1}})}{\left|\b_S\right|^{1/4} \left|\bt_{\underline{1}}\right|^{1/2}\left| \bt_{\underline{2}}\right|^{1/2}}\  .
\end{equation*}
Applying the (Hermitian version) Cauchy-Schwarz inequality we obtain
$$
|\Eu{R}(\q)|^2\leq \sum_{\lambda}    \frac{1}{\left|\b_S\right|^{1/2} \left|\bt_{\underline{2}}\right|}\cdot
\sum_{\lambda}     \frac{1}{\left|\b_S\right|^{1/2} \left|\bt_{\underline{2}}\right|}  \cdot    \left| \sum_{\substack{ |\bt_{\underline{1}}| \sim B_1 }} \frac{\mu^2(\bt_{\underline{1}}) \varphi(\b_S \bt_{\underline{1}}\bt_{\underline{2}}^2) \chi_\q(\bt_{\underline{1}})}{|\bt_{\underline{1}}|^{1/2}} \right|^2 .
$$
The first sum is $O_\varepsilon(R^\varepsilon)$, and we conclude
\begin{equation*}
\resizebox{\textwidth}{!}{
$\displaystyle
        \sum_{\q \in \Eu{F}(Q_1, Q_2)} |\Eu{R}(\q)|^2 \ll_\varepsilon R^\varepsilon \  \sum_{\substack{B_1,B_2 \text{ dyadic} \\ B_1,B_2\gg 1 \\ B_1 B_2^2 \ll R}} \ \sum_{\b_S\in I^S} \frac{1}{|\b_S|^{1/2}B_2} \sum_{|\bt_{\underline{2}}| \sim B_2} \sum_{\q \in \Eu{F}(Q_1, Q_2)} \left| \sum_{|\bt_{\underline{1}}| \sim B_1 } \frac{\mu^2(\bt_{\underline{1}}) \varphi(\b_S\bt_{\underline{1}}\bt_{\underline{2}}^2) \chi_\q(\bt_{\underline{1}})}{|\bt_{\underline{1}}|^{1/2}} \right|^2 .
$
}
\end{equation*}
Decompose $\q=\q_\sf\q_\full$. For fixed $\q_\full=\mathfrak{v}$, $\b_S$, and $\bt_{\underline{2}}$, we use  the hypothesis on $\varphi$ and Theorem \ref{thm:thm3} to  obtain that for any $\varepsilon> 0$ (after choosing $\varepsilon$ sufficiently small and renaming the final loss)
    \begin{align*}
\displaystyle
\sum_{|\mathfrak{u}| \sim Q_1 } \mu^2(\mathfrak{u}) &
\left| \sum_{|\bt_{\underline{1}}| \sim B_1 } \frac{\mu^2(\bt_{\underline{1}}) \varphi(\b_S\bt_{\underline{1}}\bt_{\underline{2}}^2) \chi_{\mathfrak{v}}(\bt_{\underline{1}})\chi_{\mathfrak{u}}(\bt_{\underline{1}})}{|\bt_{\underline{1}}|^{1/2}} \right|^2 \ll \\
&\ll (Q_1R)^\varepsilon\bigl(Q_1+R+(Q_1R)^{2/3}\bigr)   \sum_{|\bt_{\underline{1}}| \sim B_1 } \mu^2(\bt_{\underline{1}}) \left|  \frac{\chi_{\mathfrak{v}}(\bt_{\underline{1}})}{|\bt_{\underline{1}}|^{1/2}} \right|^2\\
&\ll (Q_1R)^\varepsilon\bigl(Q_1+R+(Q_1R)^{2/3}\bigr)  .
\end{align*}
Therefore, 
\begin{equation} \label{eq:fixed-full}
\begin{aligned}
        \sum_{\substack{|\mathfrak{u}| \sim Q_1,\\ (\mathfrak{u},\mathfrak{v})=1}}  \mu(\mathfrak{u})^2 |\Eu{R}(\mathfrak{u}\mathfrak{v})|^2 & \ll_\varepsilon (Q_1 R)^\varepsilon\bigl(Q_1+R+(Q_1R)^{2/3}\bigr)  \  \sum_{\substack{B_1,B_2 \text{ dyadic} \\ B_1,B_2\gg 1 \\ B_1 B_2^2 \ll R}} \ \sum_{\b_S\in I^S} \frac{1}{|\b_S|^{1/2}} \\
        &  \ll_\varepsilon (Q_1 R)^\varepsilon\bigl(Q_1+R+(Q_1R)^{2/3}\bigr) ,
\end{aligned}
\end{equation}
uniformly in the square-full ideal $\mathfrak{v}\in I(S)_0$.

To quantify the effect of the summation over $\mathfrak{v}$, note that if
\(\mathfrak{a}=\mathfrak{v}_1\) is fixed, then each prime dividing
\(\mathfrak{a}\) occurs in \(\mathfrak{v}\) with one of the exponents
\(2,\ldots,r-1\).  There are at most \((r-2)^{\omega(\mathfrak{a})}\) possibilities, where $\omega(\ai)$ denotes, as usual, the number of prime divisors of $\ai$.  The standard bound
\(
 (r-2)^{\omega(\mathfrak a)}
 \ll_{\varepsilon,r}| \mathfrak{a}|^{\varepsilon}
\)
and the ideal-counting estimate give
\[
 \#\left\{\mathfrak v:
 \begin{array}{l}
 \mathfrak v\ {\rm square\mbox{-}full\ and}\
 r{\rm\mbox{-}th\ power\mbox{-}free},\\[-2pt]
 |\mathfrak{v}_1|\asymp Q_2
 \end{array}\right\}
 \ll_{\varepsilon,r}Q_2^{1+\varepsilon}.
\]
After summing \eqref{eq:fixed-full} over \(\mathfrak{v}\), and  renaming the loss, we obtain \eqref{R_cubic_large_sieve}.
\end{proof}

\begin{prop} \label{secondmoment} \emph{\cite[Prop.~4.2]{DFDS24}}
 Let $\h \in I(S)_0$ and denote  $H=|\mathfrak{c}_{\chi_\h}|/|(\ci_{\chi_\h})_S| $.
 Let $\varepsilon>0$, $Q_1, Q_2>1/2$, and $\frac{1}{2} \leq R \leq (QH)^{100}$. Let  $\varphi: I\to \C$ such that
  $|\varphi(\mathfrak{b})| \ll |\mathfrak{b}|^\varepsilon$, and $\supp(\varphi)\subseteq I_R$.
 For $\q \in I(S)_0$, denote
    \begin{equation*}
       \displaystyle \Eu{R}(\q) := \sum_{0 \neq \b \in I_R } \frac{\varphi(\b) \chi_\q(\b)}{|\b|^{1/2}}.
    \end{equation*}
Then, for any $s\in \C$ with $\Re(s)\in [1/2,1]$, and with the notation $T=1+|\Im(s)|$, we have
    \begin{equation} \label{second}
        \sum_{\substack{\q \in \Eu{F}(Q_1, Q_2) \\ (\q, \h) = 1 \\ \q\h\neq \OO}} |L(s,\chi_{\q\h}) \cdot \Eu{R}(\q\h)|^2
\ll_{\varepsilon,\sigma} (QHT )^\varepsilon Q_2 \left((QHT^d)^{1/2}  R + Q_1 (Q_2 HT^dR^2)^{1/3}\right).
    \end{equation}
\end{prop}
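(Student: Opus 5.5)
We will follow the proof of \cite[Prop.~4.2]{DFDS24}, replacing the cubic large sieve by \cref{cubic_Dirichlet_prop_var}. Write $\sigma=\Re(s)$. For each $\q$ in the sum, $\chi_{\q\h}$ is primitive of conductor $\asymp$ (up to the bounded $S$-part) $|\q_1|H\asymp Q_1Q_2H$. By convexity, $|L(s,\chi_{\q\h})|\ll (QHT^d)^{(1-\sigma)/2+\varepsilon}$ in this range.

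The first step is to bound $|L(s,\chi_{\q\h})|^2$ using \cref{device}, shifting the contour to $\Re(w)=\alpha$ with $\alpha$ slightly less than $1/2$. The sum term is $\sum_{\mathfrak n} d(\mathfrak n)\chi_{\q\h}(\mathfrak n)|\mathfrak n|^{-s}e^{-|\mathfrak n|/U}$, which we cut dyadically at $|\mathfrak n|\le U^{1+\varepsilon}$; the tail is negligible. The integral term is $\ll U^{\alpha-\sigma}\int |L(\alpha+it,\chi_{\q\h})|^2|\Gamma(w-s)|\,dt$. Applying the functional equation (\cref{afe}) moves this to $\Re=1-\alpha$, and we then treat it by Cauchy--Schwarz together with the approximate functional equation. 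The working principle is that $|L|^2$ equals a Dirichlet polynomial of length about $(QHT^d)^{1+\varepsilon}$ with divisor-bounded coefficients, plus a dual polynomial of the same type weighted by the root number. Concretely, we will apply \cref{afe} to $L(s,\chi_{\q\h})^2$, viewed as the $L$-function of $\chi_{\q\h}^2$ convolved with $1*1$, with balancing parameter $X=1$. This gives two sums of length $N\asymp (QHT^d)^{1+\varepsilon}$ with smooth weights $V_s$, which we separate from $\q$ by Mellin inversion using \cref{decay_lemma}.

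Next we multiply by $|\Eu R(\q\h)|^2$ and expand. Each term becomes a product of two Dirichlet polynomials in $\chi_{\q}$: one of length $N$ with coefficients $d(\mathfrak n)|\mathfrak n|^{-\sigma}$, and $\Eu R$ of length $R$. The factor $\chi_\h$ is absorbed into the coefficients, since $(\q,\h)=1$.

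We then split dyadically $|\mathfrak n|\sim N_0\le N$ and estimate in two ways.

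\emph{Method (i).} Merge the product into a single polynomial of length $N_0R$, normalize by $|\mathfrak b|^{-1/2}$, and apply \cref{cubic_Dirichlet_prop_var}. This gives
\[
Q_2\bigl(Q_1+N_0R+(Q_1N_0R)^{2/3}\bigr)(N_0R)^{\varepsilon}\,N_0^{1-2\sigma}.
\]
Since $\sigma\ge1/2$, taking $N_0=N$ yields the term $Q_2(QHT^d)^{1/2}R$. This needs the balance $N^{1/2}$, obtained by splitting the AFE at the square root of the analytic conductor of each single $L$-factor.

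\emph{Method (ii).} Keep the $L$-values and use Cauchy--Schwarz in the form $|L\,\Eu R|^2\le |L|^2\cdot|\Eu R|^2$. Then use H\"older with the pointwise convexity bound for the portion where $Q_1$ is large. The cross term $(Q_1N_0R)^{2/3}$, optimized against $N_0\le (Q_2HT^d)^{1/2}Q_1^{1/2}$, produces the second term $Q_1(Q_2HT^dR^2)^{1/3}$.

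The main obstacle is getting exactly the stated shape rather than a weaker bound. A naive length $N=QHT^d$ would give $(QHT^d)R$. We must therefore use $L(s)^2$ only through a genuine square-root-length approximate functional equation, with dual sums carrying the Gauss sum $G(\chi_{\q\h})$ of modulus $1$. Then each of the two pieces of length $\asymp(QHT^d)^{1/2}$ is handled by \cref{cubic_Dirichlet_prop_var} separately. The condition $R\le (QH)^{100}$ ensures all $\varepsilon$-losses are $(QHT)^\varepsilon$, and the $T$-dependence enters only through the conductor $T^d$ via \cref{decay_lemma}.
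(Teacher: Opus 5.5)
The one idea that makes this work appears only in your last paragraph, and it is exactly the paper's argument. Approximate the \emph{single} factor $L(s,\chi_{\q\h})$ by a polynomial $A$ of length $Z=(QHT)^{\varepsilon}(QHT^d)^{1/2}$, merge $A\cdot\Eu{R}(\q\h)$ into one polynomial of length $ZR$, and apply \cref{cubic_Dirichlet_prop_var} once. That single application already gives the whole right-hand side, namely $Q_2(Q_1+ZR+(Q_1ZR)^{2/3})$. Here $(Q_1ZR)^{2/3}=(QHT)^{O(\varepsilon)}Q_1(Q_2HT^dR^2)^{1/3}$, and $Q_1$ is dominated by this term.

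The steps you actually commit to before that paragraph do not prove the bound. Applying the AFE to $L(s,\chi_{\q\h})^2$ gives a polynomial $B$ approximating $L^2$, so the quantity to control becomes $|B|\,|\Eu{R}(\q\h)|^2$. This is not the squared modulus of a Dirichlet polynomial, so \cref{cubic_Dirichlet_prop_var} does not apply to it. ``Merging'' $B\cdot\Eu{R}$ instead bounds $\sum_{\q}|B\,\Eu{R}|^2$, which is morally $|L|^4|\Eu{R}|^2$: the wrong quantity, at the wrong length $QHT^d$ (as you noticed, it yields $Q_2\,QHT^dR$). Method (ii) is not a real step either. The relation $|L\Eu{R}|^2=|L|^2|\Eu{R}|^2$ is an identity, and inserting the pointwise convexity bound would lose at least a factor $Q_1^{1/2}$ when $R\asymp 1$. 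The term $Q_1(Q_2HT^dR^2)^{1/3}$ is just the cross term $(Q_1\cdot ZR)^{2/3}$ of the same large-sieve application, not the output of a separate optimization. Both the $L^2$ AFE and Method (ii) should be dropped.

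The treatment of $\Re(s)\in(1/2,1]$ also needs reorganizing. Using \cref{device} with $\alpha<1/2$ and then the functional equation sends you to $\Re=1-\alpha>1/2$. That problem is no easier than the original (you would still need the AFE there), and it costs a factor $(QHT^d)^{1-2\alpha}$. With large $U$, the first term is again an unsquared polynomial bounding $|L|^2$.

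The paper's order is cleaner. First take $\Re(s)=1/2$. In \cref{afe} with $X=1$ one has $|\varepsilon(\q\h,s)|=1$, and since $1-s=\bar s$ and $V_{1-s}=\overline{V_s}$, the dual sum is the complex conjugate of the main one. Hence $|L(s,\chi_{\q\h})|\le 2|A|$. Then:
\begin{enumerate}
\item truncate at $|\mathfrak n|\ll Z$ by \cref{decay_lemma};
\item remove the $\q$-dependence of the weight through its Mellin integral on $\Re(w)=\varepsilon$, truncated at height $Z^{\varepsilon}$, and use Cauchy--Schwarz in $w$;
\item note that the merged coefficients $\varphi_{w,\h}(\m)=\chi_\h(\m)\sum_{\m=\b\mathfrak n}\varphi(\b)|\mathfrak n|^{-w-i\Im(s)}$ are $\ll|\m|^{2\varepsilon}$ by the divisor bound, so \cref{cubic_Dirichlet_prop_var} applies.
\end{enumerate}
For $\Re(s)>1/2$, apply \cref{device} with $\alpha=1/2$ and $U=1$. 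The series is $O(1)$ absolutely, so its contribution is $\ll\sum_{\q}|\Eu{R}(\q\h)|^2$, again controlled by \cref{cubic_Dirichlet_prop_var}. The integral over $\Re(w)=1/2$ is handled by the critical-line case together with $\Gamma(x+iy)\ll_x e^{-|y|}$, which is where the $T^d$-dependence comes from.
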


\begin{proof}
Let \begin{equation*}
\Eu{T}(Q_1, Q_2,\h,s):=\sum_{\substack{\q \in \Eu{F}(Q_1, Q_2) \\ (\q, \h) = 1\\ \q\h\neq \OO}} |L(s,\chi_{\q\h}) \cdot \Eu{R}(\q\h)|^2 .
\end{equation*}
Write $\h = \h_\sf\h_\full$, $\q = \q_\sf\q_\full$, and $H_1=|\h_\sf|$, $H_2=|(\h_\full)_1|$, $|\q_\sf| \asymp Q_1$, $|(\q_\full)_1| \asymp Q_2$. Then,
\begin{align*}
|\ci_{\chi_\h}| / |(\ci_{\chi_\h})_S|  = &~|\h_\sf(\h_\full)_1| =   H, \quad
|\ci_{\chi_\q}| / |(\ci_{\chi_\q})_S| =  |\q_\sf(\q_\full)_1| \asymp Q,\\
&\quad \quad \text{ and }  |\ci_{\chi_{\q\h}}| / |(\ci_{\chi_{\q\h}})_S| \asymp  QH.
\end{align*}
We first consider  $s$ for which $\Re(s)=1/2$.  We apply \cref{afe} for $X=1$, and use $|\varepsilon(\q\h,s)|=1$ and $V_{1-s}(y)=\overline{V_s(y)}$, to obtain
\begin{equation}\label{AFEabsbound}
\Eu{T}(Q_1, Q_2,\h,s) \quad \lesssim  \sum_{\substack{\q \in \Eu{F}(Q_1, Q_2) \\ (\q, \h) = 1,\, \q\h\neq \OO}} \left| \sum_{0 \neq \mathfrak{n} \in I} \frac{\chi_{\h\q}(\mathfrak{n})}{|\mathfrak{n}|^{s}}
V_{s} \left(  \frac{|\mathfrak{n}|}{ \sqrt{|d_F||\ci_{\chi_{\q\h}}|} } \right) \right|^2 \cdot |\Eu{R}(\q\h)|^2.
\end{equation}

Denote $Z=(QHT)^{\varepsilon}(QHT^d)^{1/2}$.
Using the decay of $V_{s}(\cdot)$ as in Proposition \ref{decay_lemma} (for any fixed $A\in \Z_{\geq 3}$),  the ideals with $|\mathfrak{n}| \gg Z$ in \eqref{AFEabsbound} can be removed at the cost of an error term $O_A(Z^{-A})$. Multiplying by \(| \Eu{R}(\q\h) |^2\) and summing over \(\mathfrak q\) still produces a negligible quantity because \(R\leq(QH)^{100}\) and the decay order can be chosen to be arbitrarily large.

 Using the integral representation \eqref{Vsdef} for $V_{s}(\cdot)$,
separate variables, move the contour to $\Re(w)=\varepsilon$,
and truncate the integral to $[\varepsilon-i Z^{\varepsilon},\varepsilon+i Z^{\varepsilon}]$
up to negligible error $O_A(Z^{-A})$. Apply the Cauchy--Schwarz inequality on the integral to conclude that the right side of \eqref{AFEabsbound} is
\begin{align} \label{intermed}
\ll &\ Z^{-A}+ Z^{\varepsilon}
 \int_{\varepsilon-i Z^{\varepsilon}}^{\varepsilon+i Z^{\varepsilon}}
 \sum_{\substack{\q \in \Eu{F}(Q_1, Q_2) \\ (\q, \h) = 1,\,\q\h\neq \OO}}
\left| \sum_{\substack{0 \neq \mathfrak{n} \in I  \\ |\mathfrak{n}| \ll Z }} \frac{\chi_{\q\h}(\mathfrak{n})}{|\mathfrak{n}|^{s+w}} \right|^2\cdot |\Eu{R}(\q\h)|^2\ dw.
\end{align}
For a fixed $w$, apply Proposition \ref{cubic_Dirichlet_prop_var}, for
$$
\Eu{R}_{w,\h}(\q)= \sum_{\substack{0 \neq \mathfrak{n} \in I  \\ |\mathfrak{n}| \ll Z }} \frac{\chi_{\q\h}(\mathfrak{n})}{|\mathfrak{n}|^{s+w}} \sum_{0\neq \b\in I_{R}} \frac{\varphi(\b) \chi_{\q\h}(\b)}{|\b|^{1/2}}=\sum_{0\neq \m\in I_{ZR}} \frac{\varphi_{w,\h}(\m) \chi_{\q}(\m)}{|\m|^{1/2}},
$$
where $\displaystyle \varphi_{w,\h}(\m)= \chi_{\h}(\m)\sum_{\substack{\m=\b\mathfrak{n}\\0\neq \b\in I_{R}\\ 0\neq \mathfrak{n}\in I_{Z}} }\frac{\varphi(\b)}{|\mathfrak{n}|^{w+\sqrt{-1}\Im(s)}}$. We
conclude that
\begin{align*}
     \Eu{T}(Q_1, Q_2, \h, s)
     & \ll (ZRQ)^\varepsilon Q_2 \left( Q_1 + ZR + (Q_1 Z R)^{2/3} \right) \\
    &\ll (QHT)^\varepsilon Q_2 \left( Q_1 +  (QHT^d)^{1/2}  R + Q_1(Q_2HR^2T^d)^{1/3} \right),
\end{align*}
which establishes \eqref{second} when $\Re(s)=1/2$.

Now consider \eqref{second} in the case $1/2< \Re(s) \leq 1$.  Lemma \ref{device} (with $\alpha=1/2$ and $U=1$) gives
\begin{equation*}
| L(s,\chi_{\q\h}) |^2
 \leq
 \left|
 \sum_{\mathfrak n}
 \frac{d(\mathfrak n)\chi_{\q\h}(\mathfrak n)}
      {| \mathfrak n |^{s}}e^{-| \mathfrak n|}
 \right| +
 \frac{1}{2\pi}\int_{(1/2)}
 | L(w,\chi_{\q\h}) |^2
| \Gamma(w-s) |\,dw.
\end{equation*}
The first series is \(O_{\Re(s)}(1)\) absolutely.  After multiplication
by \(| \Eu{R}(\mathfrak q\mathfrak h)| ^2\), its family sum is
controlled by \cref{cubic_Dirichlet_prop_var} and is dominated by the
right side of \eqref{second}.  For the integral, the established case of \eqref{second} when $\Re(s)=1/2$ and the estimate
 \begin{equation*} \label{gammabd}
\Gamma(x+iy) \ll_x e^{-|y|},
 \end{equation*}
 give the claimed powers of $T$. 
\end{proof}

\begin{cor} \label{secondmoment-cor}
Let $\psi$ be a Hecke character of order dividing $r$, unramified outside $S$.
 Let $\h \in I(S)_0$, and set $H=|\mathfrak{c}_{\chi_\h}|/ |(\ci_{\chi_\h})_S| $.
 Fix $\varepsilon, Q_1, Q_2>0$. Then,
    \begin{equation} \label{second-cor}
        \sum_{\substack{\q \in \Eu{F}(Q_1, Q_2) \\ (\q, \h) = 1}} |L(1/2, \psi\chi_{\q\h}) |^2
\ll_{\varepsilon} (QH )^\varepsilon Q_2 \left((QH)^{1/2} + Q_1 (Q_2 H)^{1/3}\right).
    \end{equation}
\end{cor}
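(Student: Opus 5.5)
This is a direct specialization of \cref{secondmoment}. Take $s=1/2$, so $T=1$. Take $\varphi$ supported on $\b=\OO$ with $\varphi(\OO)=1$, so $R=1$ and $\Eu{R}(\q\h)=\chi_{\q\h}(\OO)=1$ identically. With these choices the left side of \eqref{second} is exactly the sum in \eqref{second-cor}, apart from the twist by $\psi$ and the possible term $\q\h=\OO$.

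The right side of \eqref{second} then becomes
\[
(QH)^\varepsilon Q_2\bigl((QH)^{1/2}+Q_1(Q_2H)^{1/3}\bigr),
\]
which is the claimed bound. The excluded term $\q\h=\OO$ occurs only when $\h=\OO$ and $\q=\OO$, that is, only when $Q_1,Q_2\asymp 1$. It contributes $|L(1/2,\psi)|^2\ll 1$, which is absorbed into the right side. The constraint $R\le (QH)^{100}$ is trivially satisfied.

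The one real point is the twist by $\psi$, which is of order dividing $r$ and unramified outside $S$. I would redo the proof of \cref{secondmoment} with $\chi_{\q\h}$ replaced by $\psi\chi_{\q\h}$ throughout. The only place where the characters need more than formal manipulation is the application of the large sieve, \cref{thm:thm3}, inside \cref{cubic_Dirichlet_prop_var}. There the factor $\psi(\mathfrak n)$ depends only on $\mathfrak n$, not on $\q$, so it is absorbed into the coefficients $\varphi_{w,\h}(\m)$. It has modulus at most $1$, so the hypothesis $|\varphi|\ll|\m|^\varepsilon$ is preserved and the large sieve applies verbatim.

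The approximate functional equation, \cref{afe}, also holds for $\psi\chi_{\q\h}$. This character is primitive up to the bounded conductor part at $S$. Its conductor differs from that of $\chi_{\q\h}$ only at places in $S$, so the analytic conductor remains $\asymp QH$. The root number still has modulus one, and the cutoff length $Z$ is unchanged. This twisting bookkeeping is the only step requiring care, and I expect no genuine obstacle. Alternatively, one can avoid it by writing $\psi=\chi_{\ti}$ times a character in $\widehat R_\ci$ (as in \S\ref{sec:cleaning}) and noting that all estimates are uniform over the finitely many characters in $\widehat R_\ci$.
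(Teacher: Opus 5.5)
Your proposal is correct and matches the paper's proof. The paper likewise reruns the argument of Proposition~\ref{secondmoment} with $s=1/2$, $R=1$ and $\Eu{R}\equiv 1$, notes that the conductor of $\psi\chi_{\q\h}$ is still $\asymp QH$ away from $S$, and absorbs $\psi$ into the large-sieve coefficients $\varphi_{w,\h}(\mathfrak{n})=\chi_{\h}(\mathfrak{n})\psi(\mathfrak{n})|\mathfrak{n}|^{-w}$ before invoking Proposition~\ref{cubic_Dirichlet_prop_var}. The only difference is cosmetic: the paper discards the possible term with $\psi\chi_{\q\h}$ trivial rather than the term $\q\h=\OO$, which is the same thing, since for $\q\h\neq\OO$ the character $\chi_{\q\h}$ is ramified at the primes of $\q\h$ while $\psi$ is unramified outside $S$.
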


\begin{proof} We can safely ignore the potential term for which $\psi\chi_{\q\h}$ is the trivial character.
We proceed as in the proof of Proposition \ref{secondmoment}, but with $R=1$ and $\Eu{R}(\q)=1$, for all $\q\in I(S)_0$. Let \begin{equation*}
\Eu{T}(Q_1, Q_2,\h):=\sum_{\substack{\q \in \Eu{F}(Q_1, Q_2)\\ (\q, \h) = 1 \\ \psi\chi_{\q\h} \neq \chi_{\OO} }} |L(1/2,\psi\chi_{\q\h}) |^2 .
\end{equation*}
Write $\h = \h_\sf \h_\full $, $\q = \q_\sf \q_\full$, $H_1=|\h_\sf|$, $H_2=|(\h_\full)_1|$, $|\q_\sf| \asymp Q_1$, $|(\q_\full)_1| \asymp Q_2$. Then,
\begin{equation*}
|\ci_{\chi_\h}| / |(\ci_{\chi_\h})_S|  = ~|\h_\sf(\h_\full)_1|=   H, \quad
|\ci_{\chi_\q}| / |(\ci_{\chi_\q})_S| =  |\q_\sf(\q_\full)_1|\asymp Q.  
\end{equation*}
Since $\psi$ is fixed and  unramified outside $S$, we have $|\ci_{\psi\chi_{\q\h}}| / |(\ci_{\psi\chi_{\q\h}})_S| \asymp  |\ci_{\chi_{\q\h}}| / |(\ci_{\chi_{\q\h}})_S| \asymp QH$.

The equation \eqref{AFEabsbound} becomes

\begin{equation}\label{AFE}
\Eu{T}(Q_1, Q_2,\h) \quad \lesssim  \sum_{\substack{\q \in \Eu{F}(Q_1, Q_2) \\ (\q, \h) = 1 }} \left| \sum_{0 \neq \mathfrak{n} \in I} \frac{\psi\chi_{\q\h}(\mathfrak{n})}{|\mathfrak{n}|^{1/2}}
V_{1/2} \left(  \frac{|\mathfrak{n}|}{ \sqrt{|d_F||\ci_{\psi\chi_{\q\h}}|} } \right) \right|^2 .
\end{equation}
Continuing with the argument, with the notation $Z=(QH)^{\varepsilon}(QH)^{1/2}$, the right side of \eqref{AFE} is
\begin{align} \label{intermed2}
\ll &\ Z^{-A}+ Z^{\varepsilon}
 \int_{\varepsilon-i Z^{\varepsilon}}^{\varepsilon+i Z^{\varepsilon}} \sum_{\substack{\q \in \Eu{F}(Q_1, Q_2) \\ (\q, \h) = 1}}
\left| \sum_{\substack{0 \neq \mathfrak{n} \in I  \\ |\mathfrak{n}| \ll Z }} \frac{\psi\chi_{\q\h}(\mathfrak{n})}{|\mathfrak{n}|^{1/2+w}} \right|^2 dw.
\end{align}
For a fixed $w$, apply Proposition \ref{cubic_Dirichlet_prop_var}, for
$$
\Eu{R}_{w,\h}(\q)= \sum_{\substack{0 \neq \mathfrak{n} \in I  \\ |\mathfrak{n}| \ll Z }} \frac{\psi\chi_{\q\h}(\mathfrak{n})}{|\mathfrak{n}|^{1/2+w}} =\sum_{0\neq \mathfrak{n}\in I_{Z}} \frac{\varphi_{w,\h}(\mathfrak{n}) \chi_{\q}(\mathfrak{n})}{|\mathfrak{n}|^{1/2}},
$$
with $\displaystyle \varphi_{w,\h}(\mathfrak{n})= \chi_{\h}(\mathfrak{n})\frac{\psi(\mathfrak{n})}{|\mathfrak{n}|^w}$, supported on $I_Z$. We
conclude that
\begin{align*}
     \Eu{T}(Q_1, Q_2, \h)
     & \ll (ZQ)^\varepsilon Q_2 \left( Q_1 + Z + (Q_1 Z )^{2/3} \right) \\
    &\ll (QH)^\varepsilon Q_2 \left( Q_1 +  (QH)^{1/2}   + Q_1(Q_2 H)^{1/3} \right),
\end{align*}
which is precisely our claim.
\end{proof}

The following proposition restates \cref{prop:estimate at1}
and is the main result of this appendix.

\begin{prop}\label{prop:estimate at11} Let $\psi$ be a Hecke character of order dividing $r$, unramified outside $S$. Let $ \h\in I(S)_0$, $\varepsilon>0$, and denote $H=|\ci_{\chi_\h}| /  |(\ci_{\chi_\h})_S| $. Then,
\[
\sum_{\substack{\q \in I(S)_0 \\ (\q,\h)=1}}  \frac{|L(1/2, \psi\chi_{\q\h})|^2}{|\q|^{1+\varepsilon}} \ll H^{1/2+\varepsilon}.
\]
\end{prop}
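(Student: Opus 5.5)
The plan is to deduce the bound from \cref{secondmoment-cor} by dyadic decomposition of the family $\q\in I(S)_0$, using the parametrization $\q=\q_\sf\q_\full$ with $|\q_\sf|\asymp Q_1$ and $|(\q_\full)_1|\asymp Q_2$. We have $|\q|=|\q_\sf||\q_\full|\ge Q_1 Q_2^2$ up to constants, because a square-full ideal $\q_\full$ satisfies $|\q_\full|\ge|(\q_\full)_1|^2$. For each dyadic pair $(Q_1,Q_2)$ the weight $|\q|^{-1-\varepsilon}$ is therefore at most $\ll (Q_1Q_2^2)^{-1-\varepsilon}$. This gives
\[
\sum_{\substack{\q \in I(S)_0 \\ (\q,\h)=1}}  \frac{|L(1/2, \psi\chi_{\q\h})|^2}{|\q|^{1+\varepsilon}}
\ll \sum_{Q_1,Q_2\ \text{dyadic}} (Q_1Q_2^2)^{-1-\varepsilon}\sum_{\substack{\q \in \Eu{F}(Q_1, Q_2) \\ (\q, \h) = 1}} |L(1/2, \psi\chi_{\q\h}) |^2 .
\]
The possible term with $\psi\chi_{\q\h}$ trivial is excluded, or handled separately, as in the corollary.

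Next we insert \cref{secondmoment-cor}. With $Q=Q_1Q_2$, each dyadic block is bounded by
\[
(Q_1Q_2^2)^{-1-\varepsilon}(QH)^{\varepsilon}Q_2\Big((Q_1Q_2H)^{1/2}+Q_1(Q_2H)^{1/3}\Big)
\ll H^{\varepsilon}(Q_1Q_2)^{O(\varepsilon)-\varepsilon}\Big(H^{1/2}Q_1^{-1/2}Q_2^{-1/2}+H^{1/3}Q_2^{-2/3}\Big).
\]
Choose the $\varepsilon$ in the corollary smaller than the $\varepsilon$ in the weight $|\q|^{-1-\varepsilon}$. Then both terms carry a negative power of $Q_1$ and of $Q_2$, or at least a factor $(Q_1Q_2)^{-\delta}$ with $\delta>0$; in the second term the $Q_1$-dependence is only $(Q_1Q_2)^{-\delta}$. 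The dyadic sums over $Q_1$ and $Q_2$ therefore converge geometrically. The result is $\ll H^{1/2+\varepsilon}+H^{1/3+\varepsilon}\ll H^{1/2+\varepsilon}$ since $H\ge1$.

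The only delicate point is bookkeeping the $\varepsilon$'s: the loss $(QH)^{\varepsilon'}$ from \cref{secondmoment-cor} has to be beaten by the saving $(Q_1Q_2^2)^{-\varepsilon}$. This works by taking $\varepsilon'<\varepsilon/2$, and the remaining power $H^{\varepsilon'}$ is absorbed after renaming $\varepsilon$. The ranges $Q_1,Q_2\ge 1/2$ include the degenerate cases $\q_\sf=\OO$ or $\q_\full=\OO$, which the corollary covers. The $S$-part of the conductor is irrelevant because $H$ is normalized to exclude it and $\psi$ is fixed.
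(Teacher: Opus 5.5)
Your proposal is correct and is essentially the paper's proof. The paper likewise decomposes dyadically with $|\q_\sf|\sim Q_1$ and $|(\q_\full)_1|\sim Q_2$, bounds $|\q|^{-1-\varepsilon}\ll (Q_1Q_2^2)^{-1-\varepsilon}$, and applies \cref{secondmoment-cor} with $\varepsilon/2$ to get dyadic blocks of size $H^{1/2+\varepsilon}\bigl(Q_1^{-1/2-\varepsilon/2}Q_2^{-1/2-3\varepsilon/2}+Q_1^{-\varepsilon/2}Q_2^{-2/3-3\varepsilon/2}\bigr)$, which sum geometrically.
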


\begin{proof}
Write $\q = \q_\sf \q_\full$, and dyadically partition  $|\q_\sf| \sim Q_1$, and $|(\q_\full)_1|\sim Q_2$ . As usual, we denote $Q=Q_1 Q_2$. Then, by using Corollary \ref{secondmoment-cor} for $\varepsilon/2$ we obtain
\begin{align*}
\sum_{\substack{\q \in I(S)_0 \\ (\q,\h)=1}} & \frac{|L(1/2, \psi\chi_{\q\h})|^2}{|\q|^{1+\varepsilon}}=  \sum_{Q_1, Q_2 \text{ dyadic} }
        \sum_{\substack{\q \in \Eu{F}(Q_1, Q_2) \\ (\q, \h) = 1}} \frac{|L(1/2, \psi\chi_{\q\h}) |^2}{|\q|^{1+\varepsilon}} \\
        &\ll_{\varepsilon}H^{1/2+\varepsilon}\sum_{Q_1, Q_2 \text{ dyadic} }
      \left( Q_1^{-\varepsilon/2-1/2}  Q_2^{-3\varepsilon/2-1/2} +
      Q_1^{-\varepsilon/2}   Q_2^{-3\varepsilon/2-2/3} \right).
\end{align*}
Our claim now follows from the convergence of the sum.
\end{proof}

\bibliographystyle{amsalpha}
\bibliography{HOC_references}

@article{BGL14,
  author  = {Blomer, Valentin and Goldmakher, Leo and Louvel, Beno{\^i}t},
  title   = {{$L$}-functions with {$n$}-th order twists},
  journal = {International Mathematics Research Notices},
  year    = {2014},
  number  = {7},
  pages   = {1925--1955}
}

@article{BB06,
  author  = {Brubaker, Ben and Bump, Daniel},
  title   = {On {Kubota}'s {Dirichlet} series},
  journal = {Journal f{\"u}r die reine und angewandte Mathematik},
  volume  = {598},
  year    = {2006},
  pages   = {159--184}
}

@book{CF,
  editor    = {Cassels, J. W. S. and Fr{\"o}hlich, Albrecht},
  title     = {Algebraic Number Theory},
  publisher = {Academic Press},
  year      = {1967}
}

@article{CFKRS,
  author  = {Conrey, J. Brian and Farmer, David W. and Keating, Jonathan P.
             and Rubinstein, Michael O. and Snaith, Nina C.},
  title   = {Integral moments of {$L$}-functions},
  journal = {Proceedings of the London Mathematical Society},
  series  = {3},
  volume  = {91},
  number  = {1},
  year    = {2005},
  pages   = {33--104}
}

@misc{DFDS24,
  author        = {David, Chantal and de Faveri, Alessandro and Dunn, Alexander
                   and Stucky, Joshua},
  title         = {Non-vanishing for cubic {Hecke} {$L$}-functions},
  year          = {2024},
  eprint        = {2410.03048},
  archiveprefix = {arXiv},
  primaryclass  = {math.NT},
  note          = {arXiv: 2410.03048}
}

@misc{CFD26,
  author        = {Castillo, Cruz and de Faveri, Alexandre and Dunn, Alexander},
  title         = {Non-vanishing for quartic {Hecke} {$L$}-functions and ranks
                   of elliptic curves},
  year          = {2026},
  eprint        = {2604.01316},
  archiveprefix = {arXiv},
  primaryclass  = {math.NT},
  note          = {arXiv: 2604.01316}
}

@article{Di04,
  author  = {Diaconu, Adrian},
  title   = {Mean square values of {Hecke} {$L$}-series formed with
             {$r$}-th order characters},
  journal = {Inventiones Mathematicae},
  volume  = {157},
  number  = {3},
  year    = {2004},
  pages   = {635--684}
}

@article{Di19,
  author  = {Diaconu, Adrian},
  title   = {On the third moment of {$L(1/2,\chi_d)$} {I}: the rational
             function field case},
  journal = {Journal of Number Theory},
  volume  = {198},
  year    = {2019},
  pages   = {1--42}
}

@article{Di-Wh21,
  author  = {Diaconu, Adrian and Whitehead, Ian},
  title   = {On the third moment of {$L(1/2,\chi_d)$} {II}: the number
             field case},
  journal = {Journal of the European Mathematical Society},
  volume  = {23},
  number  = {6},
  year    = {2021},
  pages   = {2051--2070}
}

@article{FHL03,
  author  = {Friedberg, Solomon and Hoffstein, Jeffrey and Lieman, Daniel},
  title   = {Double {Dirichlet} series and the {$n$}-th order twists of
             {Hecke} {$L$}-series},
  journal = {Mathematische Annalen},
  volume  = {327},
  number  = {2},
  year    = {2003},
  pages   = {315--338}
}

@article{GL13,
  author  = {Goldmakher, Leo and Louvel, Beno{\^i}t},
  title   = {A quadratic large sieve inequality over number fields},
  journal = {Mathematical Proceedings of the Cambridge Philosophical Society},
  volume  = {154},
  number  = {2},
  year    = {2013},
  pages   = {193--212}
}

@book{I,
  author    = {Iwaniec, Henryk},
  title     = {Topics in Classical Automorphic Forms},
  series    = {Graduate Studies in Mathematics},
  volume    = {17},
  publisher = {American Mathematical Society},
  address   = {Providence, RI},
  year      = {1997}
}

@book{IK,
  author    = {Iwaniec, Henryk and Kowalski, Emmanuel},
  title     = {Analytic Number Theory},
  series    = {American Mathematical Society Colloquium Publications},
  volume    = {53},
  publisher = {American Mathematical Society},
  address   = {Providence, RI},
  year      = {2004}
}

@book{Kub69,
  author    = {Kubota, Tomio},
  title     = {On Automorphic Forms and the Reciprocity Law in a Number Field},
  publisher = {Kinokuniya Book Store Co.},
  address   = {Tokyo},
  year      = {1969}
}

@article{KP84,
  author  = {Kazhdan, David A. and Patterson, S. J.},
  title   = {Metaplectic forms},
  journal = {Publications Math{\'e}matiques de l'IH{\'E}S},
  volume  = {59},
  year    = {1984},
  pages   = {35--142},
  doi     = {10.1007/BF02698770}
}

@book{N,
  author    = {Neukirch, J{\"u}rgen},
  title     = {Algebraic Number Theory},
  series    = {Grundlehren der mathematischen Wissenschaften},
  volume    = {322},
  publisher = {Springer-Verlag},
  address   = {Berlin},
  year      = {1999}
}

@book{R,
  author    = {Rosen, Michael},
  title     = {Number Theory in Function Fields},
  series    = {Graduate Texts in Mathematics},
  volume    = {210},
  publisher = {Springer-Verlag},
  address   = {New York},
  year      = {2002}
}

@article{S00,
  author  = {Soundararajan, Kannan},
  title   = {Nonvanishing of quadratic {Dirichlet} {$L$}-functions at
             {$s=1/2$}},
  journal = {Annals of Mathematics},
  volume  = {152},
  year    = {2000},
  pages   = {447--488}
}

@article{DR,
  author  = {Dunn, Alexander and Radziwi{\l\l}, Maksym},
  title   = {Bias in cubic {Gauss} sums: {Patterson}'s conjecture},
  journal = {Annals of Mathematics},
  series  = {2},
  volume  = {200},
  number  = {3},
  year    = {2024},
  pages   = {967--1057}
}

@article{EP92,
  author  = {Eckhardt, C. and Patterson, S. J.},
  title   = {On the {Fourier} coefficients of biquadratic theta series},
  journal = {Proceedings of the London Mathematical Society},
  series  = {3},
  volume  = {64},
  number  = {2},
  year    = {1992},
  pages   = {225--264},
  doi     = {10.1112/plms/s3-64.2.225}
}

@article{Ham,
    AUTHOR = {Hamdar, Mohammad H.},
     TITLE = {Hecke {$L$}-functions away from the central line},
   JOURNAL = {Math. Ann.},
  FJOURNAL = {Mathematische Annalen},
    VOLUME = {394},
      YEAR = {2026},
    NUMBER = {3},
     PAGES = {Paper No. 74, 46},
      ISSN = {0025-5831,1432-1807},
   MRCLASS = {11M06 (11F30 11L05 11N75 11R04 11R42)},
  MRNUMBER = {5036071},
       DOI = {10.1007/s00208-026-03341-8},
       URL = {https://doi.org/10.1007/s00208-026-03341-8},
}

@misc{DFL25,
  author        = {David, Chantal and Florea, Alexandra M. and Lal{\'i}n,
                   Matilde},
  title         = {Nonvanishing of {$L$}-functions associated to fixed order
                   characters over function fields},
  year          = {2025},
  eprint        = {2506.07815},
  archiveprefix = {arXiv},
  primaryclass  = {math.NT},
  note          = {arXiv: 2506.07815}
}

@article{Flo17a,
  author  = {Florea, Alexandra M.},
  title   = {The fourth moment of quadratic {Dirichlet} {$L$}-functions over
             function fields},
  journal = {Geometric and Functional Analysis},
  volume  = {27},
  number  = {3},
  year    = {2017},
  pages   = {541--595},
  doi     = {10.1007/s00039-017-0409-8},
  url     = {https://doi.org/10.1007/s00039-017-0409-8}
}

@misc{Sh-St24,
  author        = {Shen, Quanli and Stucky, Joshua},
  title         = {The fourth moment of quadratic {Dirichlet} {$L$}-functions
                   {II}},
  year          = {2024},
  eprint        = {2402.01497},
  archiveprefix = {arXiv},
  primaryclass  = {math.NT},
  note          = {arXiv: 2402.01497}
}

@misc{GR,
  author = {Goel, Shivani and Ray, Anwesh},
  title  = {The second moment of cubic {Dirichlet} {$L$}-functions over
            function fields},
  year   = {2025},
  note   = {arXiv: 2505.12015}
}

@article{Flo17b,
  author  = {Florea, Alexandra M.},
  title   = {The second and third moment of {$L(1/2,\chi)$} in the
             hyperelliptic ensemble},
  journal = {Forum Mathematicum},
  volume  = {29},
  number  = {4},
  year    = {2017},
  pages   = {873--892},
  doi     = {10.1515/forum-2015-0152},
  url     = {https://doi.org/10.1515/forum-2015-0152}
}

@article{BF,
  author  = {Bui, H. M. and Florea, Alexandra M.},
  title   = {Zeros of quadratic {Dirichlet} {$L$}-functions in the
             hyperelliptic ensemble},
  journal = {Transactions of the American Mathematical Society},
  volume  = {370},
  number  = {11},
  year    = {2018},
  pages   = {8013--8045},
  doi     = {10.1090/tran/7317},
  url     = {https://doi.org/10.1090/tran/7317}
}

@misc{BDPW23,
  author        = {Bergstr{\"o}m, Jonas and Diaconu, Adrian and Petersen, Dan
                   and Westerland, Craig},
  title         = {Hyperelliptic curves, the scanning map, and moments of
                   families of quadratic {$L$}-functions},
  year          = {2023},
  eprint        = {2302.07664},
  archiveprefix = {arXiv},
  primaryclass  = {math.NT},
  note          = {arXiv: 2302.07664}
}

@article{Flo17c,
  author  = {Florea, Alexandra M.},
  title   = {Improving the error term in the mean value of
             {$L(1/2,\chi)$} in the hyperelliptic ensemble},
  journal = {International Mathematics Research Notices},
  year    = {2017},
  number  = {20},
  pages   = {6119--6148},
  doi     = {10.1093/imrn/rnv387},
  url     = {https://doi.org/10.1093/imrn/rnv387}
}

@article{BY10,
  author  = {Baier, Stephan and Young, Matthew P.},
  title   = {Mean values with cubic characters},
  journal = {Journal of Number Theory},
  volume  = {130},
  number  = {4},
  year    = {2010},
  pages   = {879--903},
  doi     = {10.1016/j.jnt.2009.11.007},
  url     = {https://doi.org/10.1016/j.jnt.2009.11.007}
}

@book{KS99a,
  author    = {Katz, Nicholas M. and Sarnak, Peter},
  title     = {Random Matrices, {Frobenius} Eigenvalues, and Monodromy},
  series    = {American Mathematical Society Colloquium Publications},
  volume    = {45},
  publisher = {American Mathematical Society},
  address   = {Providence, RI},
  year      = {1999},
  doi       = {10.1090/coll/045},
  url       = {https://doi.org/10.1090/coll/045}
}

@article{Jut81,
  author  = {Jutila, Matti},
  title   = {On the mean value of {$L(1/2,\chi)$} for real characters},
  journal = {Analysis},
  volume  = {1},
  number  = {2},
  year    = {1981},
  pages   = {149--161},
  doi     = {10.1524/anly.1981.1.2.149},
  url     = {https://doi.org/10.1524/anly.1981.1.2.149}
}

@article{ELS,
  author  = {Ellenberg, Jordan S. and Li, Wanlin and Shusterman, Mark},
  title   = {Nonvanishing of hyperelliptic zeta functions over finite
             fields},
  journal = {Algebra \& Number Theory},
  volume  = {14},
  number  = {7},
  year    = {2020},
  pages   = {1895--1909},
  doi     = {10.2140/ant.2020.14.1895},
  url     = {https://doi.org/10.2140/ant.2020.14.1895}
}

@article{DFL22,
  author  = {David, Chantal and Florea, Alexandra M. and Lal{\'i}n, Matilde},
  title   = {The mean values of cubic {$L$}-functions over function fields},
  journal = {Algebra \& Number Theory},
  volume  = {16},
  number  = {5},
  year    = {2022},
  pages   = {1259--1326},
  doi     = {10.2140/ant.2022.16.1259},
  url     = {https://doi.org/10.2140/ant.2022.16.1259}
}

@article{DFL21,
  author  = {David, Chantal and Florea, Alexandra M. and Lal{\'i}n, Matilde},
  title   = {Nonvanishing for cubic {$L$}-functions},
  journal = {Forum of Mathematics, Sigma},
  volume  = {9},
  year    = {2021},
  pages   = {Paper No. e69, 58},
  doi     = {10.1017/fms.2021.62},
  url     = {https://doi.org/10.1017/fms.2021.62}
}

@article{Luo,
  author  = {Luo, Wenzhi},
  title   = {On {Hecke} {$L$}-series associated with cubic characters},
  journal = {Compositio Mathematica},
  volume  = {140},
  number  = {5},
  year    = {2004},
  pages   = {1191--1196},
  doi     = {10.1112/S0010437X0400051X},
  url     = {https://doi.org/10.1112/S0010437X0400051X}
}

@article{Gu,
  author  = {G{\"u}lo{\u{g}}lu, Ahmet M.},
  title   = {Non-vanishing of cubic {Dirichlet} {$L$}-functions over the
             {Eisenstein} field},
  journal = {Proceedings of the American Mathematical Society},
  volume  = {153},
  number  = {5},
  year    = {2025},
  pages   = {1947--1961}
}

@article{DG22,
  author  = {David, Chantal and G{\"u}lo{\u{g}}lu, Ahmet M.},
  title   = {One-level density and non-vanishing for cubic {$L$}-functions
             over the {Eisenstein} field},
  journal = {International Mathematics Research Notices},
  year    = {2022},
  number  = {23},
  pages   = {18833--18873},
  doi     = {10.1093/imrn/rnab240},
  url     = {https://doi.org/10.1093/imrn/rnab240}
}

@article{GY,
  author  = {G{\"u}lo{\u{g}}lu, Ahmet M. and Yesilyurt, Hamza},
  title   = {Mollified moments of cubic {Dirichlet} {$L$}-functions over the
             {Eisenstein} field},
  journal = {Journal of Mathematical Analysis and Applications},
  volume  = {533},
  number  = {2},
  year    = {2024},
  pages   = {Paper No. 128014, 49},
  doi     = {10.1016/j.jmaa.2023.128014},
  url     = {https://doi.org/10.1016/j.jmaa.2023.128014}
}

@article{DGH03,
  author  = {Diaconu, Adrian and Goldfeld, Dorian and Hoffstein, Jeffrey},
  title   = {Multiple {Dirichlet} series and moments of zeta and
             {$L$}-functions},
  journal = {Compositio Mathematica},
  volume  = {139},
  number  = {3},
  year    = {2003},
  pages   = {297--360},
  doi     = {10.1023/B:COMP.0000018137.38458.68},
  url     = {https://doi.org/10.1023/B:COMP.0000018137.38458.68}
}

@article{Su82,
  author  = {Suzuki, Toshiaki},
  title   = {Some results on the coefficients of the biquadratic theta series},
  journal = {Journal f{\"u}r die reine und angewandte Mathematik},
  volume  = {340},
  year    = {1983},
  pages   = {70--117}
}

@book{H,
  author    = {Hasse, Helmut},
  title     = {Vorlesungen {\"u}ber Zahlentheorie},
  series    = {Die Grundlehren der mathematischen Wissenschaften},
  volume    = {59},
  publisher = {Springer-Verlag},
  address   = {Berlin--G{\"o}ttingen--Heidelberg},
  year      = {1950}
}

@article{HB,
  author  = {Heath-Brown, D. R.},
  title   = {Kummer's conjecture for cubic {Gauss} sums},
  journal = {Israel Journal of Mathematics},
  volume  = {120},
  number  = {Part A},
  year    = {2000},
  pages   = {97--124},
  doi     = {10.1007/s11856-000-1273-y},
  url     = {https://doi.org/10.1007/s11856-000-1273-y}
}

@incollection{Hof93,
  author    = {Hoffstein, Jeffrey},
  title     = {Eisenstein series and theta functions on the metaplectic group},
  booktitle = {Theta Functions: From the Classical to the Modern},
  editor    = {Murty, M. Ram},
  series    = {CRM Proceedings \& Lecture Notes},
  volume    = {1},
  publisher = {American Mathematical Society},
  address   = {Providence, RI},
  year      = {1993},
  pages     = {65--104}
}

@article{Pat1,
  author  = {Patterson, S. J.},
  title   = {A cubic analogue of the theta series},
  journal = {Journal f{\"u}r die reine und angewandte Mathematik},
  volume  = {296},
  year    = {1977},
  pages   = {125--161},
  doi     = {10.1515/crll.1977.296.125},
  url     = {https://doi.org/10.1515/crll.1977.296.125}
}

@misc{DM23,
  author  = {David, Chantal and Meisner, Patrick},
  title   = {Expected values of cubic {Dirichlet} {$L$}-functions away from
             the central point},
  year    = {2023},
  eprint  = {2305.15120},
  archiveprefix = {arXiv},
  primaryclass  = {math.NT},
  note    = {arXiv: 2305.15120}
}

@article{AK14,
  author  = {Andrade, Julio C. and Keating, Jonathan P.},
  title   = {Conjectures for the integral moments and ratios of
             {$L$}-functions over function fields},
  journal = {Journal of Number Theory},
  volume  = {142},
  year    = {2014},
  pages   = {102--148}
}

@article{CF00,
  author  = {Conrey, J. Brian and Farmer, David W.},
  title   = {Mean values of {$L$}-functions and symmetry},
  journal = {International Mathematics Research Notices},
  year    = {2000},
  number  = {17},
  pages   = {883--908}
}

@article {DPP23,
    AUTHOR = {Diaconu, Adrian and Pa\c{s}ol, Vicen\c{t}iu and Popa,
              Alexandru A.},
     TITLE = {Quadratic {W}eyl group multiple {D}irichlet series of type
              {$D^{(1)}_4$}},
   JOURNAL = {Amer. J. Math.},
  FJOURNAL = {American Journal of Mathematics},
    VOLUME = {147},
      YEAR = {2025},
    NUMBER = {5},
     PAGES = {1159--1212},
      ISSN = {0002-9327,1080-6377},
   MRCLASS = {11F68 (11M06 11M32 11R58 17B67)},
  MRNUMBER = {4969289},
MRREVIEWER = {Qiao\ Zhang},
       DOI = {10.1353/ajm.2025.a971089},
       URL = {https://doi.org/10.1353/ajm.2025.a971089},
}

@article{FF04,
  author  = {Fisher, Benji and Friedberg, Solomon},
  title   = {Double {Dirichlet} series over function fields},
  journal = {Compositio Mathematica},
  volume  = {140},
  number  = {3},
  year    = {2004},
  pages   = {613--630}
}

@article{Ho-Ro92,
  author  = {Hoffstein, Jeffrey and Rosen, Michael},
  title   = {Average values of {$L$}-series in function fields},
  journal = {Journal f{\"u}r die reine und angewandte Mathematik},
  volume  = {426},
  year    = {1992},
  pages   = {117--150},
  doi     = {10.1515/crll.1992.426.117},
  url     = {https://doi.org/10.1515/crll.1992.426.117}
}

@article{KeSn00a,
  author  = {Keating, Jonathan P. and Snaith, Nina C.},
  title   = {Random matrix theory and {$\zeta(1/2+it)$}},
  journal = {Communications in Mathematical Physics},
  volume  = {214},
  number  = {1},
  year    = {2000},
  pages   = {57--89}
}

@article{KeSn00b,
  author  = {Keating, Jonathan P. and Snaith, Nina C.},
  title   = {Random matrix theory and {$L$}-functions at {$s=1/2$}},
  journal = {Communications in Mathematical Physics},
  volume  = {214},
  number  = {1},
  year    = {2000},
  pages   = {91--110}
}

@misc{MPPRW24,
  author        = {Miller, Jeremy and Patzt, Peter and Petersen, Dan and
                   Randal-Williams, Oscar},
  title         = {Uniform twisted homological stability},
  year          = {2024},
  eprint        = {2402.00354},
  archiveprefix = {arXiv},
  primaryclass  = {math.AT},
  note          = {arXiv: 2402.00354}
}

@article{Ru-Wu15,
  author  = {Rubinstein, Michael O. and Wu, Kaiyu},
  title   = {Moments of zeta functions associated to hyperelliptic curves
             over finite fields},
  journal = {Philosophical Transactions of the Royal Society A},
  volume  = {373},
  number  = {2040},
  year    = {2015},
  pages   = {20140307},
  note    = {37 pages}
}

@article{Young13,
  author  = {Young, Matthew P.},
  title   = {The third moment of quadratic {Dirichlet} {$L$}-functions},
  journal = {Selecta Mathematica},
  series  = {New Series},
  volume  = {19},
  number  = {2},
  year    = {2013},
  pages   = {509--543}
}

@misc{HZ25,
  author        = {Hong, Ziwei and Zheng, Zhiyong},
  title         = {Twisted second moment of primitive cubic {$L$}-functions},
  year          = {2025},
  eprint        = {2506.14656},
  archiveprefix = {arXiv},
  primaryclass  = {math.NT},
  doi           = {10.48550/arXiv.2506.14656},
  url           = {https://arxiv.org/abs/2506.14656},
  note          = {arXiv: 2506.14656}
}

@unpublished{DMPPW,
  author = {Diaconu, Adrian and Miller, Jeremy and Patzt, Peter and Petersen, Dan and Wang, Victor},
  title  = {Moments for higher degree extensions},
  note   = {In preparation}
}

@misc{FDH26,
  author      = {De Faveri, Alexandre and Dunn, Alexander and Hoffstein, Jeffrey},
  title           = {Non-orthogonality of the cubic and quartic large sieves via Rankin-Selberg},
  year          = {2026},
  eprint        = {2607.07911},
  archiveprefix = {arXiv},
  primaryclass  = {math.NT},
  url           = {https://arxiv.org/abs/2607.07911},
  note          = {arXiv: 2607.07911}
}
\Addresses

\end{document}